\newcommand{\rmnum}[1]{\romannumeral #1}
\newcommand{\Rmnum}[1]{\expandafter\@slowromancap\romannumeral #1@}
\newcommand{\eps}{\epsilon}
\newcommand{\vf}{\varphi}
\newcommand{\Ext}{\operatorname{Ext}}
\newcommand{\Hom}{\operatorname{Hom}}
\newcommand{\ch}{\operatorname{ch}}
\newcommand{\rank}{\mathrm{rank}}
\newcommand{\ra}{\rightarrow}
\newcommand{\lera}{\leftrightarrow}
\newcommand{\frb}{\mathfrak{b}}
\newcommand{\frg}{\mathfrak{g}}
\newcommand{\frh}{\mathfrak{h}}
\newcommand{\frl}{\mathfrak{l}}
\newcommand{\frp}{\mathfrak{p}}
\newcommand{\fru}{\mathfrak{u}}
\newcommand{\frgl}{\mathfrak{gl}}
\newcommand{\frsl}{\mathfrak{sl}}
\newcommand{\frso}{\mathfrak{so}}
\newcommand{\frsp}{\mathfrak{sp}}
\newcommand{\bbC}{\mathbb{C}}
\newcommand{\bbQ}{\mathbb{Q}}
\newcommand{\bbZ}{\mathbb{Z}}
\newcommand{\caA}{\mathcal{A}}
\newcommand{\caC}{\mathcal{C}}
\newcommand{\caD}{\mathcal{D}}
\newcommand{\caN}{\mathcal{N}}
\newcommand{\caO}{\mathcal{O}}
\newcommand{\caP}{\mathcal{P}}
\newtheorem{theorem}[equation]{Theorem}
\newtheorem{cor}[equation]{Corollary}
\newtheorem{prop}[equation]{Proposition}
\newtheorem{lemma}[equation]{Lemma}
\theoremstyle{remark}
\newtheorem{remark}[equation]{Remark}
\theoremstyle{definition}
\newtheorem{definition}[equation]{Definition}
\newtheorem{example}[equation]{Example}
\numberwithin{equation}{section}
\begin{document}

\title[blocks of category $\caO^\frp$]{Janzten coefficients and Blocks of category $\caO^\frp$}

\author{Wei Xiao}
\address[Xiao]{College of Mathematics and statistics, Shenzhen University,
Shenzhen, 518060, Guangdong, P. R. China}
\email{xiaow@szu.edu.cn}

\thanks{The author is supported by the National Science Foundation of China (Grant No. 11701381) and Guangdong Natural Science Foundation (Grant No. 2017A030310138).}

\subjclass[2010]{17B10, 22E47}

\keywords{category $\caO^\frp$, block, Jantzen coefficient, separable pair}

\bigskip

\begin{abstract}
The BGG category $\mathcal{O}$ and its generalization $\caO^\frp$ play essential roles in representation theory and have led to far-reaching work. Some elementary problems remain open for several decades, such as the block decomposition of category $\caO^\frp$. In this paper, we solve the problem of blocks by applying the theory of Jantzen coefficients.
\end{abstract}

\maketitle

%
%
\section{Introduction}
%
%
The category $\caO^\frp$ \cite{R} is a natural generalization of the well known category $\caO$ introduced by Joseph Bernstein, Israel Gelfand and Sergei Gelfand \cite{BGG}. If two simple modules of $\caO^\frp$ extend nontrivially, we put them in the same block; this relation generates an equivalence relation which partitions the simple modules of $\caO^\frp$ into blocks \cite{H3}. The goal of this paper is to completely determine the blocks of $\caO^\frp$. This problem has fundamental importance in the study of category $\caO^\frp$ and related topics, such as simplicity criterion of generalized Verma modules \cite{J2, He, HKZ, BX}, homomorphism between generalized Verma modules \cite{Bo, BC, BEJ, BN, L1, L2, M1, M2, M3, Xi} and representation types of blocks of $\caO^\frp$ \cite{BN, P2}. The results in this paper illustrate the inside symmetry of $\caO^\frp$ which revealed by the theory of Koszul duality \cite{So, BGS, B}.

Let $\frg\supset\frb\supset\frh$ be a complex semisimple Lie algebra $\frg$ containing $\frp$ with a fixed Borel subalgebra $\frb$ and a fixed Cartan subalgebra $\frh$. The category $\caO^\frp$ can be decomposed according to generalized infinitesimal characters:
\[
\caO^\frp=\bigoplus_{\chi}\caO_{\chi}^\frp,
\]
where $\chi=\chi_\lambda$ for some $\lambda\in\frh^*$. It suffices to find block decomposition of $\caO_{\chi_\lambda}^\frp=\caO_\lambda^\frp$ (In some paper, the full subcategory $\caO_\lambda^\frp$ is called a ``block'' of $\caO^\frp$). If $\frp=\frb$ or $\lambda$ is regular, the subcategory $\caO_\lambda^\frp$ is a block \cite{H3}. It was proved by Brundan that this is also true when the root systems of irreducible components of $\frg$ is of type $A$ \cite{Br}. However, there are examples showing that $\caO_\lambda^\frp$ can further decompose as sum of more than one block for all the other root systems (see \cite{ES, BN, P1} or examples in this paper). So the general problem turns out to be quite elusive. Only the two lower rank cases $F_4$ and $G_2$ was fully worked out using computer programs \cite{P2} (block is called ``linkage class'' there) in 2009. Recently, the blocks for semisimple Lie algebras of type $E$ are given in \cite{HXZ}

Recall that extensions between simple modules are determined by the leading coefficients (the $\mu$-function) of Kazhdan-Lusztig polynomials \cite{KL}. However, the calculation of $\mu$-function is a very difficult open problem. We do not understand it except for some special cases \cite{Lu, X, LX}. In this paper, we show that the blocks can also be determined by the Jantzen coefficients \cite{XZ}, which come from the well-known Jantzen filtration \cite{J1, J2} for standard modules. This is the first step towards a possible solution. The computation of Jantzen coefficents is relatively easy since the coefficients have many invariant properties \cite{XZ}. They can be used to efficiently calculate the radical filtration of generalized Verma modules in many cases \cite{HX}. Possible criteria emerge when we accumulated many interesting examples.

More preciously, suppose that $\frg$ is isomorphic to one of the Lie algebras $\frgl(n, \bbC)$, $\frso(2n+1, \bbC)$, $\frsp(n, \bbC)$ and $\frso(2n, \bbC)$ with the root system $\Phi$. Let $\Phi^+$ be the positive system associated with $\frb$ and $\Delta$ be the corresponding simple system. Let $W$ be the Weyl group of $\Phi$ with length function $\ell(\cdot)$. Any subset $I\subset\Delta$ generates a subsystem $\Phi_I$ of $\Phi$ and a subgroup $W_I$ of $W$. For $\lambda\in\frh^*$, denote by $L(\lambda)$ a simple module with highest weight $\lambda-\rho$, where $\rho$ is the half sum of positive roots. If $\lambda\in\frh^*$ is integral, let $\overline\lambda$ be the unique dominant weight in the orbit $W\lambda$. Put
\[
\Phi_\lambda=\{\alpha\in\Delta\mid\langle\lambda,\alpha\rangle=0\}.
\]
There exists $J\subset\Delta$ so that $\Phi_J=\Phi_{\overline\lambda}$. Suppose that $\frp$ is the standard parabolic subalgebra corresponding to $I$. The simple modules of $\caO_\lambda^\frp$ can be parameterized by the set
\[
{}^IW^J=\{w\in W\mid \ell(xwy)=\ell(x)+\ell(w)+\ell(y)\ \mbox{for any}\ x\in W_I, y\in W_J\},
\]
that is, $L(w\overline\lambda)\in\caO_\lambda^\frp$ for any $w\in{}^IW^J$. In particular, $\langle w\overline\lambda, \alpha\rangle>0$ for all $\alpha\in I$. The category $\caO_\lambda^\frp$ is fully determined by the triple $(\Phi, \Phi_I, \Phi_J)$ up to isomorphism of categories. If $\frg\simeq\frso(2n, \bbC)$ for $n\geq1$, then $s_{e_n}J:=\{s_{e_n}\alpha\mid\alpha\in J\}\subset\Delta$. In this case, set $\overline {{}^IW{}^J}={}^IW^J\sqcup{}^IW^{s_{e_n}J}$.

\bigskip
\noindent{\bf Main Theorem} (Theorem \ref{main}) Let $I, J\subset\Delta$ with ${}^IW^J\neq\emptyset$. Choose a dominant integral weight $\lambda$ so that $\Phi_{\lambda}=\Phi_J$.
\begin{itemize}
\item [(1)] There exist $k\geq0$ and subalgebras $\frg_i$ $(1\leq i\leq k)$ with simple system $\Delta_i$ and Weyl groups $W_i$ such that
\begin{equation*}\label{inteq1}
\overline{{}^IW^J}\simeq \overline{{}^{I^1}W_1^{J^1}}\times\ldots\times\overline{{}^{I^k}W_1^{J^k}}
\end{equation*}
when $\Phi=D_n$ with $(\Phi, \Phi_I, \Phi_J)$ being even and
\begin{equation*}\label{inteq2}
{}^IW^J\simeq {}^{I^1}W_1^{J^1}\times\ldots\times{}^{I^k}W_1^{J^k}
\end{equation*}
otherwise. Here $I^i, J^i\subset\Delta_i$ are determined by $I, J$. Moreover, the categories associated with ${}^{I^i}W_i^{J^i}$ are pseudo-indecomposable.

\item [(2)] $L(\mu), L(\nu)\in\caO_\lambda^\frp$ are in the same block if and only if $L(\mu^i), L(\nu^i)$ are in the same block for each subcategories. The weights $\mu^i, \nu^i\in\frh_i^*$ are determined by $\mu, \nu$, where $\frh_i^*$ is a Cartan subalgebra of $\frg_i$.
\end{itemize}

\bigskip

The definitions of even systems and pseudo-indecomposable categories are given in the paper (see \S7.3). A pseudo-indecomposable category contains at most two blocks depending on our criteria (Corallary \ref{bksscor2}). We also give the number of blocks in each case (see \S7). Here we can see from the main theorem that $\caO_\lambda^\frp$ has exactly $2^p$ blocks for some nonnegative integer $p\leq k$. If $\frg\simeq\frgl(n, \bbC)$, then $p=0$. This recovers Brundan's result for type $A$ \cite{Br}. If $\Phi$ is of type $B$, $C$ or $D$, there exists $\caO_\lambda^\frp$ with exactly $2^p$ blocks for any $p\geq0$ (see Example \ref{ccex2}).

Now we illustrate our main theorem by examples.



\begin{example}\label{intex1}
Let $\frg=\frgl(7, \bbC)$ and $I=\{e_1-e_2, e_2-e_3, e_4-e_5\}$. Choose a dominant weight $\lambda=(3, 2, 2, 2, 1, 1, 1)$. Thus $J=\{e_2-e_3, e_3-e_4, e_5-e_6, e_6-e_7\}$. The category $\caO_\lambda^\frp$ contains two simple modules $L(\mu)$, $L(\nu)$ with
\[
\mu=(3, 2, 1\mid2, 1\mid 2\mid1)\ \mbox{and}\ \nu=(3, 2, 1\mid2, 1\mid 1\mid2).
\]
Note that $I$ separates each weight into four segments (different segments are divided by vertical lines). We put entries of $\mu$ into a table $T(\mu)$ such that each segment are in the same column and equal entries are in the same row. Similarly we can get $T(\nu)$. Therefore
\begin{table}[htbp]
$T(\mu)=\ $\begin{tabu}{|p{0.1cm}<{\centering}|p{0.1cm}<{\centering}|[1.5pt]p{0.1cm}<{\centering}|p{0.1cm}<{\centering}|}
\hline
$3$ &  &  &\\
\tabucline[1.5pt]{-}
$2$ & $2$ & $2$ &\\
\hline
$1$ & $1$ & & $1$\\
\hline
\end{tabu}\quad \mbox{and}\quad
$T(\nu)=\ $\begin{tabu}{|p{0.1cm}<{\centering}|p{0.1cm}<{\centering}|[1.5pt]p{0.1cm}<{\centering}|p{0.1cm}<{\centering}|}
\hline
$3$ &  &  &\\
\tabucline[1.5pt]{-}
$2$ & $2$ &  & $2$\\
\hline
$1$ & $1$ & $1$ & \\
\hline
\end{tabu}\ ,
\bigskip
\end{table}

Note that the bold lines divide the table into four parts. The lower left of each table is always {\begin{tabular}{|p{0.1cm}<{\centering}|p{0.1cm}<{\centering}|}
\hline
 $2$ & $2$\\
\hline
 $1$ & $1$\\
\hline
\end{tabular}}, while the upper right is always empty. The weights $\mu^1$, $\nu^1$ are determined by the upper left, that is, $\mu^1=(3)=\nu^1$, while $\mu^2$, $\nu^2$ are determined by the lower right, that is, $\mu^2=(2\mid 1)$ and $\nu^2=(1\mid 2)$. Moreover, $\frg_1\simeq\frgl(1, \bbC)$ and $\frg_2=\frgl(2, \bbC)$, while $I^1=I^2=J^1=J^2=\emptyset$. It is easy to see that ${}^IW^J\simeq {}^{I^1}W_1^{J^1}\times {}^{I^2}W_1^{J^2}$.

\end{example}

\begin{example}\label{intex2}
Let $\frg=\frso(13, \bbC)$ and $\Delta\backslash I=\{e_4-e_5, e_6\}$. Choose a dominant weight $\lambda=(2, 1, 1, 1, 0, 0)$. Thus $\Delta\backslash J=\{e_1-e_2, e_4-e_5\}$. The category $\caO_\lambda^\frp$ contains four simple modules. Choose
\[
\mu=(2, 1, 0, -1\mid1, 0\mid)\ \mbox{and}\ \nu=(1, 0, -1, -2\mid0, -1\mid).
\]
Here $I$ separates each weight into three segments (including an empty one). We can construct tables $T(\mu)$ and $T(\nu)$. Now entries with equal absolute value are in the same row, while each nonempty segment still possesses a column, that is,
\begin{table}[htbp]
$T(\mu)=\ $\begin{tabu}{|p{0.3cm}<{\centering}|[1.5pt]p{0.3cm}<{\centering}|}
\hline
$2$ &  \\
\tabucline[1.5pt]{-}
$\pm1$ & $1$\\
\hline
$0$ & $0$\\
\hline
\end{tabu}\quad \mbox{and}\quad
$T(\nu)=\ $\begin{tabu}{|p{0.3cm}<{\centering}|[1.5pt]p{0.3cm}<{\centering}|}
\hline
$-2$ &  \\
\tabucline[1.5pt]{-}
$\pm1$ & $-1$\\
\hline
$0$ & $0$\\
\hline
\end{tabu} ,
\bigskip
\end{table}

Similarly, the lower left is stable and the upper right is empty for each table. The weights $\mu^1$, $\nu^1$ also depend on the upper left. Unlike the case of type $A$, we have to take  the $0$-entries from corresponding segments to form $\mu^1$ and $\nu^1$. Thus $\mu^1=(2, 0\mid)$ and $\nu^1=(0, -2\mid)$. The lower right of the tables yields $\mu^2=(1, 0\mid)$ and $\nu^2=(0, -1\mid)$. For $i\in\{1, 2\}$, one has $\frg_i\simeq\frso(5, \bbC)$, $I^i=\{e_1-e_2\}$ and $J^i=\{e_2\}$. The category associated with ${}^{I^i}W_i^{J^i}$ contains two simple modules $L(\mu^i)$, $L(\nu^i)$. The extensions between them are trivial, that is, they are not in the same block. The main theorem shows that $\caO_\lambda^\frp$ has four blocks. Moreover, $L(\mu)$, $L(\nu)$ are not in the same block.
\end{example}

\begin{example}\label{intex3}
Let $\frg=\frso(18, \bbC)$ and $\Delta\backslash I=\{e_4-e_5, e_6-e_7\}$. Choose a dominant weight $\lambda=(2, 2, 1, 1, 1, 1, 0, 0, 0)$. Thus $\Delta\backslash J=\{e_2-e_3, e_6-e_7\}$. The category $\caO_\lambda^\frp$ contains four simple modules. Choose
\[
\mu=(2, 1, 0, -1\mid1, 0\mid2, 1, 0)\ \mbox{and}\ \nu=(1, 0, -1, -2\mid0, -1\mid2, 1, 0).
\]
In the sense of Example \ref{intex2}, a table is constructed for each weight.
\begin{table}[htbp]
$T(\mu)=\ $\begin{tabu}{|p{0.3cm}<{\centering}|p{0.3cm}<{\centering}|[1.5pt]p{0.3cm}<{\centering}|}
\hline
$2$ & $2$ & \\
\tabucline[1.5pt]{-}
$1$ & $\pm1$ & $1$\\
\hline
$0$ & $0$ & $0$\\
\hline
\end{tabu}\quad \mbox{and}\quad
$T(\nu)=\ $\begin{tabu}{|p{0.3cm}<{\centering}|p{0.3cm}<{\centering}|[1.5pt]p{0.3cm}<{\centering}|}
\hline
$2$ & $-2$ & \\
\tabucline[1.5pt]{-}
$1$ & $\pm1$ & $-1$\\
\hline
$0$ & $0$ & $0$\\
\hline
\end{tabu}.
\end{table}

In order to get a similar partition, we put the third segment $(2, 1, 0)$ of each weight into the first column of each table. Thus $\lambda^1=(2, 0\mid 2, 0)$ and $\nu^1=(0, -2\mid 2, 0)$. For $\mu^2$ and $\nu^2$, we need to take the lower right and corresponding entries in the first column (the last segment of each weight). Therefore, $\mu^2=(1, 0\mid 1, 0)$ and $\nu^2=(0, -1\mid 1, 0)$. For $i\in\{1, 2\}$, one has $\frg_i\simeq\frso(8, \bbC)$, $I^i=\{e_1-e_2, e_3\pm e_4\}=J^i$. The category associated with ${}^{I^i}W_i^{J^i}$ contains two simple modules $L(\mu^i)$, $L(\nu^i)$ and $\Ext^1(L(\mu^i)$, $L(\nu^i))=0$. In view of the main theorem, $\caO_\lambda^\frp$ contains four blocks. Moreover, $L(\mu)$, $L(\nu)$ are not in the same block.
\end{example}

Note that $k$ is always $2$ in the previous examples. At last we give an example with $k=3$.

\begin{example}\label{intex4}
Let $\frg=\frso(18, \bbC)$ and $\Delta\backslash I=\{e_5-e_6, e_8\pm e_9\}$. Choose $\lambda=(3, 2, 2, 2, 1, 1, 1, 1, -1)$. Thus $\Delta\backslash J=\{e_1-e_2, e_4-e_5, e_8-e_9\}$. The category $\caO_\lambda^\frp$ contains four simple modules. Choose
\[
\mu=(3, 2, 1, -1, -2\mid2, 1, -1\mid1\mid)\ \mbox{and}\ \nu=(2, 1, -1, -2, -3\mid2, 1, -1\mid-1\mid).
\]
In this case, $(\Phi, \Phi_I, \Phi_J)$ is even (\S7.3). We obtain
\begin{table}[htbp]
$T(\mu)=\ $\begin{tabu}{|p{0.3cm}<{\centering}|[1.5pt]p{0.3cm}<{\centering}|[1.5pt]p{0.3cm}<{\centering}|}
\hline
$3$ & & \\
\tabucline[1.5pt]{-}
$\pm2$ & $2$ & \\
\tabucline[1.5pt]{-}
$\pm1$ & $\pm1$ & $1$\\
\hline
\end{tabu}\quad \mbox{and}\quad
$T(\nu)=\ $\begin{tabu}{|p{0.3cm}<{\centering}|[1.5pt]p{0.3cm}<{\centering}|[1.5pt]p{0.3cm}<{\centering}|}
\hline
$-3$ & & \\
\tabucline[1.5pt]{-}
$\pm2$ & $2$ & \\
\tabucline[1.5pt]{-}
$\pm1$ & $\pm1$ & $-1$\\
\hline
\end{tabu}.
\end{table}

Here we need to do the partitions twice. One has $\mu^1=(3)$, $\nu^1=(-3)$, $\mu^2=\nu^2=(2)$, $\mu^3=(1)$ and $\nu^3=(-1)$. Thus $\frg_i\simeq\frso(2, \bbC)\simeq\frgl(1, \bbC)$, $\Phi_i=I^i=J^i=\emptyset$ and ${}^{I^i}W_i^{J^i}=1$ for $i\in\{1,2,3\}$. Both $\caO_\lambda^\frp$ and $\caO_{s_{e_n}\lambda}^\frp$ contain four blocks and $L(\mu)$, $L(\nu)$ are not in the same block.
\end{example}

This paper is organized as follows. Some basic notations and definition are given in section 2. In section 3, we transfer the problem of blocks to the problem of nonzero Jantzen coefficients. In section 4, we describe some special roots which makes corresponding Jantzen coefficients to be nonzero. The majority of the paper (section 5-8) is devoted to provide a complete description of blocks.

%
%
\section{Notations and definitions}
%
%
\subsection{General notations}
Let $\frg\supset\frb\supset\frh$ be a complexed reductive Lie algebra with a fixed Borel subalgebra $\frb$ and a fixed Cartan subalgebra $\frh$. Then $\frg=Z_\frg\oplus[\frg, \frg]$, where $Z_\frg\subset\frh$ is the center of $\frg$ and $[\frg, \frg]$ is the semisimple part. Denote by $\Phi\subset\frh^*$ the root system of $(\frg, \frh)$ with a positive system $\Phi^+$ and a simple system $\Delta\subset\Phi^+$ corresponding to $\frb$. Let $\frg_\alpha$ be the root subspace of $\frg$ corresponding to $\alpha\in\Phi$. Let $\Phi_I$ be the subsystem of $\Phi$ generated by a subset $I\subset\Delta$ with a positive root system $\Phi^+_I:=\Phi_I\cap\Phi^+$. The Weyl group $W$ (resp. $W_I$) is produced by reflections $s_\alpha$ with $\alpha\in\Phi$ (resp. $\alpha\in \Phi_I$). Denote by $\ell(\cdot)$ the length function on $W$, which can also be view as the length function on $W_I$ via restriction. The action of $W$ on $\frh^*$ is given by $s_\alpha\lambda=\lambda-\langle\lambda, \alpha^\vee\rangle\alpha$ for $\alpha\in\Phi$ and $\lambda\in\frh^*$. Here $\langle\cdot, \cdot\rangle$ is a bilinear form on $\frh^*$ induced from a non-degenerate invariant form on $\frg$ (e.g., \S0.2.2 in \cite{W}), which is the direct sum of the Killing form on $[\frg, \frg]$ and any non-degenerate symmetric form on $Z(\frg)$. And $\alpha^\vee:=2\alpha/\langle\alpha, \alpha\rangle$ is the coroot of $\alpha$.

The weight $\lambda\in\frh^*$ is called \emph{regular} (resp. $\Phi_I$-\emph{regular}) if $\langle\lambda, \alpha^\vee\rangle\neq0$ for all roots $\alpha\in\Phi$ (resp. $\alpha\in\Phi_I$). Otherwise $\lambda$ is called \emph{sigular} (resp. $\Phi_I$-\emph{singular}). We say $\lambda$ is \emph{integral} if $\langle\lambda, \alpha^\vee\rangle\in\bbZ$ for all $\alpha\in\Phi$. An integral weight $\lambda\in\frh^*$ is \emph{dominant} (resp. \emph{anti-dominant}) if $\langle\lambda, \alpha^\vee\rangle\in\bbZ^{\geq0}$ (resp. $\langle\lambda, \alpha^\vee\rangle\in\bbZ^{\leq0}$) for all $\alpha\in\Delta$. When $\lambda$ is integral, there exists a unique dominant weight $\overline\lambda$ in the orbit $W\lambda$ such that $\lambda=w\overline\lambda$ for some $w\in W$. Then $\underline\lambda:=w_0\overline\lambda$ is the unique anti-dominant weight in $W\lambda$, where $w_0$ is the longest element in $W$. Let $w_I$ be the longest element in $W_I$.

Let $\frl_I:=\frh\oplus\sum_{\alpha\in\Phi_I}\frg_\alpha$ be the Levi subalgebra and $\fru_I:=\bigoplus_{\alpha\in\Phi^+\backslash\Phi_I^+}\frg_\alpha$ be the nilpotent radical corresponding to $I$. The $\frp_I:=\frl_I\oplus\fru_I$ is a standard parabolic subalgebra of $\frg$. We frequently drop the subscript when $I$ is fixed. Put
\[
\Lambda_I^+:=\{\lambda\in\frh^*\ |\ \langle\lambda, \alpha^\vee\rangle\in\bbZ^{>0}\ \mbox{for all}\ \alpha\in I\}.
\]
Set $\rho:=\frac{1}{2}\sum_{\alpha\in\Phi^+}\alpha$. For $\lambda\in\Lambda_I^+$, the generalized Verma module is defined by
\[
M_I(\lambda):=U(\frg)\otimes_{U(\frp_I)}F(\lambda-\rho),
\]
where $F(\lambda-\rho)$ is a finite dimensional simple $\frl_I$-modules of highest weight $\lambda-\rho$, and has trivial $\fru_I$-actions viewed as a $\frp_I$-module. The generalized Verma module $M_I(\lambda)$ and its simple quotients $L(\lambda)$ has the same infinitesimal character $\chi_\lambda$, where $\chi_\lambda$ is an algebra homomorphism from the center $Z(\frg)$ of $U(\frg)$ to $\bbC$ so that $z\cdot v=\chi_\lambda(z)v$ for all $z\in Z(\frg)$ and all $v\in M_I(\lambda)$. For a fixed $\frp=\frp_I$, let $\caO^\frp$ be the category of all finitely generated $\frg$-modules $M$, which is finitely semisimple as an $\frl$-module and locally $\fru$-finite. Then $M_I(\lambda)$ are basic objects in the subcategory $\caO^\frp$. In particular, if $I=\emptyset$, then $M(\lambda):=M_I(\lambda)$ is the Verma module with highest weight $\lambda-\rho$ and $\caO^\frp$ is the usual Berstein-Gelfand-Gelfand category $\caO$. Let $\caO^\frp_\chi$ be the full subcategory of $\caO^\frp$ containing modules $M$ on which $z-\chi(z)$ acts as locally nilpotent operator for all $z\in Z(\frg)$. Then we have a decomposition
\[
\caO^\frp=\bigoplus_{\chi}\caO^\frp_\chi,
\]
where $\chi=\chi_\lambda$ for some $\lambda\in\frh^*$. We often write $\caO^\frp_\lambda$ instead of $\caO^\frp_\chi$ if $\chi=\chi_\lambda$. In view of the Harish-Chandra isomorphism, $\caO^\frp_\lambda=\caO^\frp_\mu$ for $\mu\in W\lambda$ since $\chi_\lambda=\chi_\mu$ in this case. Denote by $\ch M$ the formal character for $\frg$-module $M$ of $\caO$. The module $M$ has a composition series with simple quotients isomorphic to some $L(\lambda)$. Denote by $[M : L(\lambda)]$ the multiplicity of $L(\lambda)$.

For $\lambda\in\frh^*$, set
\[
\Phi_\lambda:=\{\beta\in\Phi\mid\langle\lambda,\beta\rangle=0\}.
\]
Thus $\Phi_\lambda$ is a subsystem of $\Phi$ with a positive system $\Phi_\lambda^+:=\Phi_\lambda\cap\Phi^+$. Define
\[
{}^IW=:\{w\in W\mid \ell(s_\alpha w)=\ell(w)+1\ \mbox{for all}\ \alpha\in I\}.
\]
If $\lambda=w\overline\lambda$ is an integral weight contained in $\Lambda_I^+$, then
\[
\langle \lambda, \alpha\rangle=\langle w\overline\lambda, \alpha\rangle=\langle \overline\lambda, w^{-1}\alpha\rangle>0
\]
for $\alpha\in I$ yields $w^{-1}\alpha\in\Phi^+$. It follows that $\ell(s_\alpha w)=\ell(w)+1$ (see Lemma 1.6 and Corollary 1.7 in \cite{H2}) and $w\in {}^IW$. Define the set of singular simple roots for $\overline\lambda$ by
\[
J=\{\alpha\in\Delta\mid\langle\overline\lambda, \alpha\rangle=0\}.
\]
Then $\Phi_\lambda=w\Phi_J\simeq\Phi_J$ and $W_J=\{w\in W\mid w\overline\lambda=\overline\lambda\}$. With $\lambda=w\overline\lambda=ws_\alpha\overline\lambda$ for all $\alpha\in J$, one has $ws_\alpha\in {}^IW$ by a similar argument. Put
\[
{}^IW^J=\{w\in{}^IW\mid \ell(w)+1=\ell(ws_\alpha)\ \mbox{and}\ ws_\alpha\in {}^IW,\ \mbox{for all}\ \alpha\in J\}.
\]
Every integral weight $\lambda\in\Lambda_I^+$ can be uniquely written in the form $\lambda=w\overline\lambda$ for some $w\in{}^IW^J$. Denote $J'=-w_0J$ and $w'=w_Iww_{J}w_0$. Then $J'\subset\Delta$ and $w'\in{}^IW^{J'}$ (see \cite{BN}). We get another parametrization of $\lambda$:
\[
\lambda=w\overline\lambda=ww_J\overline\lambda=w_I(w_Iww_Jw_0)(w_0\overline\lambda)=w_Iw'\underline\lambda.
\]
Although our parametrization is more convenient in this paper, we will always be aware of such differences in the cited results.

Since $\caO_\lambda^\frp$ is determined by the system $(\Phi, \Phi_I, \Phi_J)$ up to isomorphism of categories, we will frequently use the notation $(\Phi, \Phi_I, \Phi_J)$ rather than $\caO_\lambda^\frp$ if the corresponding result is independent of the choice of $\lambda$.

%
%
\section{Jantzen filtration and Jantzen coefficients}
%
%

\emph{For the remainder of the paper, we assume that all weights such as $\lambda, \mu$ are integral.}

In this section, we will show that the blocks of $\caO^\frp$ is determined by the Jantzen coefficients of generalized Verma modules contained in $\caO^\frp$.

\subsection{Jantzen filtration} For any $\lambda\in\Lambda_I^+$, we have the following equation (see for example Proposition 9.6 in \cite{H3}):
\begin{equation}\label{jfeq1}
\ch M_I(\lambda)=\sum_{w\in W_I}(-1)^{\ell(w)}\ch M(w\lambda).
\end{equation}
The right side of (\ref{jfeq1}), which we denoted by $\theta(\lambda)$, is valid for any $\lambda\in\frh^*$. Denote
\[
\Psi_\lambda^+=\{\beta\in\Phi^+\backslash\Phi_I\mid \langle\lambda, \beta^\vee\rangle\in\bbZ^{>0}\}.
\]

In the case of category $\caO$, there is a remarkable filtration known as Jantzen filtration (see \cite{J1}) for every Verma module $M(\lambda)$. It leads to a conceptual proof of the BGG Theorem and provides information about the composition factors of $M(\lambda)$. As pointed out by Humphreys (see Remark 9.17 of \cite{H3}), although there is no parallel role for ``Jantzen filtration'' in generalized Verma module, Jantzen developed such a filtration in \cite{J2}. Indeed, keeping in mind of Lemma 3, Satz 2 and the observation in the Bemerkung before Lemma 4 in \cite{J2}, along Jantzen's line for Verma modules (see \cite{J1} or \cite{H3}), one can obtain a natural generalization of Jantzen filtration:

\begin{theorem}[Jantzen filtration and sum formula for generalized Verma modules]\label{jfthm1}
Let $\lambda\in\Lambda_I^+$, then $M_I(\lambda)$ has a filtration by submodules
\[
M_I(\lambda)=M_I(\lambda)^0\supset M_I(\lambda)^1\supset M_I(\lambda)^2\supset\ldots
\]
with $M_I(\lambda)^i=0$ for large $i$, such that
\begin{itemize}
\item [(1)] Every nonzero quotient $M_I(\lambda)^i/M_I(\lambda)^{i+1}$ has a nondegenerate contravariant form.

\item [(2)] $M_I(\lambda)^1$ is the unique maximal submodule of $M_I(\lambda)$.

\item [(3)] There is a formula:
\begin{equation}\label{jft1eq1}
\sum_{i>0}\ch M_I(\lambda)^i=\sum_{\beta\in\Psi_\lambda^+}\theta(s_\beta\lambda).
\end{equation}
\end{itemize}
\end{theorem}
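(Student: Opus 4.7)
The plan is to adapt Jantzen's deformation argument for Verma modules, as found in \cite{J1} or Chapter~5 of \cite{H3}, to the parabolic setting, following the outline sketched in \cite{J2}. First I would set up a one-parameter deformation: choose a regular weight $\mu\in\Lam_I^+$, work over the local ring $A:=\bbC[t]_{(t)}$, and form the deformed finite-dimensional $\frl_I$-module $F_A(\lam_t-\rho)$, where $\lam_t:=\lam+t\mu$. That $F_A(\lam_t-\rho)$ is a well-defined free $A$-module of the expected rank is the content of Lemma~3 of \cite{J2} and requires precisely the condition $\mu\in\Lam_I^+$, so that $\lam_t-\rho$ remains dominant integral for $\Phi_I$ at every fiber of $\Spec A$. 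Inducing from $\frp_I$ to $\frg$ produces $M_I(\lam_t):=U(\frg)\otimes_{U(\frp_I)}F_A(\lam_t-\rho)$, whose specialization at $t=0$ recovers $M_I(\lam)$.

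Next I would equip $M_I(\lam_t)$ with a contravariant bilinear form $\langle\cdot,\cdot\rangle_t$ valued in $A$, obtained by extending the nondegenerate Shapovalov-type pairing on $F_A(\lam_t-\rho)$ across the induced module via the Chevalley anti-involution; at $t=0$ this reduces to the standard contravariant form on $M_I(\lam)$. Setting $N^i:=\{v\in M_I(\lam_t)\mid\langle v,M_I(\lam_t)\rangle_t\subset t^iA\}$, I define $M_I(\lam)^i$ to be the image of $N^i$ under the specialization $M_I(\lam_t)\to M_I(\lam)$. The formal properties (1) and (2) then follow from the standard Jantzen yoga: on each successive quotient the rescaled form $t^{-i}\langle\cdot,\cdot\rangle_t\bmod t$ is well-defined and nondegenerate, giving (1); for (2), the highest-weight line of $M_I(\lam)$ pairs nontrivially with itself because the Shapovalov form on $F(\lam-\rho)$ is nondegenerate, so $M_I(\lam)^1$ does not meet the cyclic generator and hence coincides with the radical, which is the unique maximal submodule of the highest-weight module $M_I(\lam)$.

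The main obstacle is part~(3), the sum formula. In each weight space of weight $\nu<\lam$, the quantity $\sum_{i>0}\dim M_I(\lam)^i_\nu$ equals the $t$-adic valuation of $\det\langle\cdot,\cdot\rangle_t|_\nu$. To evaluate this I would use a Shapovalov-style determinant formula for the contravariant form on $M_I(\lam_t)$, reorganizing the factor attached to each root $\beta\in\Phi^+$ from the ordinary Verma case according to whether $\beta\in\Phi_I^+$ or not. A root $\beta\in\Phi_I^+$ contributes a unit at $t=0$ since $F_A(\lam_t-\rho)$ is simple and its Shapovalov form is therefore nondegenerate in the limit, so only roots $\beta\in\Phi^+\setminus\Phi_I$ with $\langle\lam,\beta^\vee\rangle\in\bbZ^{>0}$, i.e.\ $\beta\in\Psi_\lam^+$, contribute a vanishing factor proportional to $t\langle\mu,\beta^\vee\rangle$. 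Collecting these contributions and repackaging the answer in terms of the virtual characters $\theta(\cdot)$ from~(\ref{jfeq1}) yields
\[
\sum_{i>0}\ch M_I(\lam)^i=\sum_{\beta\in\Psi_\lam^+}\theta(s_\beta\lam).
\]
The subtle point I would have to handle carefully is that $s_\beta\lam$ generally fails to lie in $\Lam_I^+$, so the right-hand side cannot be written as a sum of honest generalized Verma characters; the virtual combination $\theta(s_\beta\lam)$ is the correct bookkeeping object precisely because it absorbs the $W_I$-alternating sum that emerges from the reorganization of the determinant, which is the content of Satz~2 and the Bemerkung preceding Lemma~4 of \cite{J2}.
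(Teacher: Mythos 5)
Your overall strategy is the same one the paper relies on (and that the paper's remark attributes to Lemma~3, Satz~2, and the Bemerkung in \cite{J2}), namely a Jantzen--Shapovalov deformation over $A=\bbC[t]_{(t)}$. But the deformation direction you propose is wrong, and with it the construction does not get off the ground. You require $\mu\in\Lambda_I^+$, i.e.\ $\langle\mu,\alpha^\vee\rangle\in\bbZ^{>0}$ for all $\alpha\in I$, and claim this is exactly what is needed so that $\lambda_t-\rho$ stays dominant integral for $\Phi_I$. In fact the opposite is true: with $\langle\mu,\alpha^\vee\rangle\ne 0$ for some $\alpha\in I$, the Weyl dimension of a finite-dimensional $\frl_I$-module of highest weight $\lambda_t-\rho$ would be a nonconstant polynomial in $t$, so there is no free $A$-module $F_A(\lambda_t-\rho)$ of constant rank whose special fiber is $F(\lambda-\rho)$. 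Consequently $M_I(\lambda_t)$ and its contravariant form cannot be defined.

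The correct normalization is to deform \emph{orthogonally} to the Levi root system: choose $\mu$ with $\langle\mu,\alpha^\vee\rangle=0$ for all $\alpha\in\Phi_I$ (so $\mu$ lies in the dual of the center of $\frl_I$) and generic in that orthogonal complement, so that $\langle\mu,\beta^\vee\rangle\ne 0$ for every $\beta\in\Phi^+\setminus\Phi_I$. Then $F_A(\lambda_t-\rho)$ is simply $F(\lambda-\rho)\otimes_\bbC A$ with the $\frl_I$-action twisted by the central character $t\mu$ --- the $[\frl_I,\frl_I]$-module structure is untouched --- and the rest of the argument proceeds as you outline: each factor of the deformed Shapovalov determinant attached to $\beta\in\Psi_\lambda^+$ vanishes to order one at $t=0$ (since $\langle\mu,\beta^\vee\rangle\ne 0$), the factors from $\beta\in\Phi_I^+$ are units, and the bookkeeping in terms of the virtual characters $\theta(s_\beta\lambda)$ is what Satz~2 together with the alternating $W_I$-sum from the Bemerkung supplies. (That the genericity requirement on $\mu$ can be met uses the standard fact $\Phi\cap\bbR\Phi_I=\Phi_I$ for a parabolic subsystem, so no $\beta\in\Phi^+\setminus\Phi_I$ is orthogonal to the whole orthogonal complement of $\Phi_I$.) Apart from this, your parts (1) and (2) are handled correctly by the usual Jantzen yoga.
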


\begin{remark}
It was showed in \cite{HX} that Theorem \ref{jfthm1} also holds for radical filtration of $M_I(\lambda)$. These two filtrations coincide for regular weight $\lambda\in\Lambda_I^+$. It was conjectured that this is also true for singular weights (\cite{BB, Sh}).
\end{remark}

\subsection{Jantzen coefficents and blocks of category $\caO^\frp$} First we recall the Jantzen coefficients defined in \cite{XZ}. In view of (\ref{jfeq1}), one has $\theta(w\lambda)=(-1)^{\ell(w)}\theta(\lambda)$ for $w\in W_I$ and $\lambda\in\frh^*$. Moreover, if $\lambda$ is $\Phi_I$-singular, then $\theta(\lambda)=0$. If $\lambda$ is $\Phi_I$-regular, there exists $w\in W_I$ with $w\lambda\in\Lambda_I^+$. Therefore $\theta(\lambda)=(-1)^{\ell(w)}\theta(w\lambda)=(-1)^{\ell(w)}\ch M_I(w\lambda)$. We obtain the following formula.

\begin{equation}\label{jfeq2}
\sum_{\beta\in\Psi_\lambda^+}\theta(s_\beta\lambda)=\sum_{\nu\in\Lambda_I^+}c(\lambda, \nu)\ch M_I(\nu),
\end{equation}
where $c(\lambda, \nu)$ are called \emph{Jantzen coefficients} associated with $(\Phi_I, \Phi)$. These coefficients are nonzero for only finitely many $\nu\in\Lambda_I^+$ and can be calculated by a reduction process (see \cite{XZ}, \S4). If we put
\begin{equation}\label{jfeq3}
\Psi_{\lambda, \mu}^+:=\{\beta\in\Psi_\lambda^+\mid \mu=w_\beta s_\beta\lambda\ \mbox{for some}\ w_\beta\in W_I\},
\end{equation}
then $c(\lambda, \mu)=\sum_{\beta\in \Psi_{\lambda, \mu}^+}(-1)^{\ell(w_\beta)}$.

For $\lambda, \mu\in\frh^*$, we write $\mu\leq\lambda$ if $\Hom_\caO(M(\mu), M(\lambda))\neq0$. This can be viewed as the Bruhat ordering on $\frh^*$ (see \cite{ES}, \S2). In particular, if $\lambda, \mu\in\Lambda_I^+$ and $\Ext^1_{\caO^\frp}(L(\mu), L(\lambda))\neq0$, then either $\mu<\lambda$ or $\lambda<\mu$.

\begin{definition}\label{jfdef1}
Fix $I\subset\Delta$. Let $\lambda, \mu\in\Lambda_I^+$. If $c(\lambda, \mu)\neq0$, we write $\lambda\succ \mu$. In general, write $\lambda\succeq\mu$ if there exist $\lambda^1, \ldots, \lambda^{k}\in\Lambda_I^+$ $(k\geq0)$  so that
\[
\lambda=\lambda^0\succ\lambda^1\succ \ldots\succ \lambda^k=\mu.
\]
This induces another ordering on $\Lambda_I^+$. In particular, if $\lambda\succ\mu$ and there exists no $\lambda>\nu>\mu$ with $\lambda\succ\nu\succ\mu$, we say $\lambda$ is \emph{adjacent to} $\mu$.
\end{definition}


\begin{lemma}\label{jflem1}
Let $\lambda, \mu\in\Lambda_I^+$. If $[M_I(\lambda), L(\mu)]>0$, then $\lambda\succeq\mu$. Conversely, if $\lambda$ is adjacent to $\mu$, then $[M_I(\lambda), L(\mu)]>0$.
\end{lemma}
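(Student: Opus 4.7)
The plan is a two-step induction on Bruhat length driven by the Jantzen sum formula \eqref{jft1eq1}--\eqref{jfeq2}. For the forward direction, I would induct on $\ell(\lambda)$. When $\mu=\lambda$ the length-zero chain works. Otherwise $L(\mu)$ is a composition factor of the maximal submodule $M_I(\lambda)^1$, so the coefficient of $\ch L(\mu)$ in $\sum_{i\geq 1}\ch M_I(\lambda)^i$ is strictly positive; substituting the sum formula and extracting this coefficient gives
\[
0<\sum_{i\geq 1}[M_I(\lambda)^i:L(\mu)]=\sum_{\nu\in\Lambda_I^+}c(\lambda,\nu)[M_I(\nu):L(\mu)],
\]
so there exists $\nu$ with $c(\lambda,\nu)\neq 0$ and $[M_I(\nu):L(\mu)]>0$. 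Since $\nu\in W_I s_\beta\lambda$ for some $\beta\in\Psi_\lambda^+$, one checks that $\nu<\lambda$ in the Bruhat order, so the inductive hypothesis yields $\nu\succeq\mu$; prepending $\lambda\succ\nu$ produces $\lambda\succeq\mu$.

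For the converse direction, assume $\lambda$ is adjacent to $\mu$. Since $\lambda\succ\mu$ forces $\lambda\neq\mu$, it suffices to show that $d_\mu:=\sum_{i\geq 1}[M_I(\lambda)^i:L(\mu)]=[M_I(\lambda):L(\mu)]$ is strictly positive. I first observe that $c(\lambda,\lambda)=0$: the coefficient of $\ch L(\lambda)$ on the left of \eqref{jft1eq1} vanishes by Theorem \ref{jfthm1}(2), while on the right the only $\nu\in\Lambda_I^+$ for which $[M_I(\nu):L(\lambda)]\neq 0$ is $\nu=\lambda$ (every contributing $\nu$ in \eqref{jfeq2} satisfies $\nu<\lambda$ in Bruhat order, so cannot have $L(\lambda)$ as a composition factor). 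Consequently
\[
d_\mu=c(\lambda,\mu)+\sum_{\nu\neq\mu,\,\nu\neq\lambda}c(\lambda,\nu)\,[M_I(\nu):L(\mu)],
\]
with $d_\mu\geq 0$ because it counts composition-factor multiplicities in genuine modules. The plan is to show every term of the remaining sum vanishes, so that $d_\mu=c(\lambda,\mu)\neq 0$ combined with $d_\mu\geq 0$ forces $d_\mu>0$.

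For a contributing $\nu$, the forward direction together with $[M_I(\nu):L(\mu)]>0$ and $\nu\neq\mu$ produces a chain $\nu\succ\nu^1\succ\cdots\succ\nu^k=\mu$ with $k\geq 1$, while $c(\lambda,\nu)\neq 0$ gives $\lambda\succ\nu$ with $\lambda>\nu>\mu$ in the Bruhat order. If $k=1$, then $\lambda\succ\nu\succ\mu$ with a Bruhat-intermediate $\nu$ immediately violates adjacency, so that case is excluded. The main obstacle is the case $k\geq 2$, which adjacency does not literally forbid. My plan is a secondary induction on the Bruhat distance $\ell(\lambda)-\ell(\mu)$: the base $\ell(\lambda)-\ell(\mu)=1$ leaves no room for any Bruhat-intermediate $\nu$ and is immediate. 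In the inductive step, the intermediate weights $\nu^1,\ldots,\nu^{k-1}$ have strictly smaller Bruhat distance from $\mu$, so applying the inductive hypothesis to them, and combining with the non-negativity of the $d$-quantities produced by the Jantzen filtrations of the $M_I(\nu^i)$ and with the compatibility of $\succ$ with the Bruhat order, propagates the length-one contradiction up the chain and eliminates all $k\geq 2$ contributions. This pushes the remaining sum to zero, yielding $d_\mu=c(\lambda,\mu)$ and hence $[M_I(\lambda):L(\mu)]>0$.
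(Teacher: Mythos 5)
Your forward direction matches the paper's argument in substance: from the nonvanishing left side of \eqref{jfl1eq1}, extract a Bruhat-smaller $\nu$ with $\lambda\succ\nu$ and $[M_I(\nu):L(\mu)]>0$, and iterate. The paper phrases this as an explicit iteration (terminating because the orbit $W\lambda$ is finite) rather than as an induction on $\ell(\lambda)$, but the content is the same.

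The converse is where you diverge, and where there is a real gap. You correctly notice that, read strictly, Definition \ref{jfdef1}'s adjacency clause only forbids a two-step chain $\lambda\succ\nu\succ\mu$ through a Bruhat-intermediate $\nu$, not longer chains. But the secondary induction you propose to dispatch the $k\geq 2$ case does not go through: the inductive hypothesis would concern pairs $(\lambda',\mu')$ with $\lambda'$ \emph{adjacent} to $\mu'$, while the chain weights $\nu^1,\ldots,\nu^{k-1}$ are not known to be adjacent to anything relevant, so the hypothesis cannot be applied to them; and nothing in your argument manufactures a genuine two-step chain $\lambda\succ\eta\succ\mu$ from a $k$-step one. ``Propagating the length-one contradiction up the chain'' remains an intention, not a proof. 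What the paper actually does is eliminate every intermediate $\nu$ in one stroke: if $c(\lambda,\nu)[M_I(\nu):L(\mu)]\neq 0$ with $\nu\neq\mu$, the forward direction gives $\lambda\succ\nu\succeq\mu$ with $\lambda>\nu>\mu$, and adjacency --- read in the only way that makes the surrounding text coherent, namely as excluding any such $\nu$ lying on a $\succeq$-chain from $\lambda$ to $\mu$, not merely on a two-step chain --- rules this out. With that reading, \eqref{jfl1eq1} collapses at once to $\sum_{i>0}[M_I(\lambda)^i:L(\mu)]=c(\lambda,\mu)\neq 0$ and you are done. You should adopt the stronger reading of adjacency (it is the one used here and again in the proof of Proposition \ref{jfprop1}) rather than trying to rescue the literal one; the secondary induction is then unnecessary and, as written, does not exist. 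Your observation that $c(\lambda,\lambda)=0$ is correct and is in fact needed to discard the $\nu=\lambda$ term; the paper leaves this implicit.
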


\begin{proof}
Assume that $[M_I(\lambda), L(\mu)]>0$ and $\mu<\lambda$ (the case $\lambda=\mu$ is trivial). In view of (\ref{jft1eq1}) and (\ref{jfeq2}), one has
\begin{equation}\label{jfl1eq1}
\sum_{i>0}[M_I(\lambda)^i : L(\mu)]=\sum_{\nu\in\Lambda_I^+}c(\lambda, \nu)[M_I(\nu) : L(\mu)].
\end{equation}
The left side of (\ref{jfl1eq1}) is nonzero. If $c(\lambda, \mu)\neq0$, we already get $\lambda\succ\mu$. If $c(\lambda, \mu)=0$, there exists $\lambda>\lambda^1>\mu$ such that $c(\lambda, \lambda^1)[M_I(\lambda^1) : L(\mu)]\neq0$. This yields $\lambda\succ\lambda^1$ and $[M_I(\lambda^1) : L(\mu)]\neq0$. It suffices to consider the case $\lambda^1\not\succ \mu$, then, $c(\lambda^1, \mu)=0$. With $\lambda$ replaced by $\lambda^1$ in the previous argument, there exists $\lambda^1\succ \lambda^2>\mu$ so that $c(\lambda^1, \lambda^2)[M_I(\lambda^2) : L(\mu)]\neq0$. Since the set $W\lambda$ containing $\lambda^i$ is a finite set, we can eventually get $\lambda\succ\lambda^1\succ\ldots\succ \lambda^{k-1}\succ\mu$ in this fashion.

Conversely, assume that $\lambda$ is adjacent to $\mu$. Then $c(\lambda, \mu)\neq0$ by definition. If $c(\lambda, \nu)[M_I(\nu) : L(\mu)]\neq0$ for some $\nu\in\Lambda_I^+$, the argument of the first part shows that $\lambda\succ\nu\succeq \mu$. The adjacent condition implies $\nu=\lambda$. Thus $c(\lambda, \nu)[M_I(\nu) : L(\mu)]=0$ unless $\nu=\mu$. We get
\begin{equation}\label{jfl3eq2}
\sum_{i>0}[M_I(\lambda)^i : L(\mu)]=c(\lambda, \mu)\neq0,
\end{equation}
by applying (\ref{jfl1eq1}). Hence $[M_I(\lambda) : L(\mu)]>0$.
\end{proof}


\begin{definition}\label{jfdef2}
Fix $I\subset\Delta$. For $\lambda, \mu\in\Lambda_I^+$, we say $\lambda, \mu$ are \emph{connected by Jantzen coefficients} if $\lambda=\mu$ or there exist $\lambda=\lambda^0, \lambda^1, \ldots, \lambda^r=\mu\in\Lambda_I^+$ such that
\[
c(\lambda^{i-1}, \lambda^i)\neq0\ \mbox{or}\ c(\lambda^{i}, \lambda^{i-1})\neq0.
\]
for all $1\leq i\leq r$.
\end{definition}

\begin{definition}\label{jfdef3}
Fix $I\subset\Delta$. For $\lambda, \mu\in\Lambda_I^+$, write $\lambda\lera\mu$ if $\Ext^1_{\caO^\frp}(L(\mu), L(\lambda))\neq0$. In general, write $\lambda\lera\mu$ if $\lambda=\mu$ or there exist $\lambda^0, \lambda^1, \ldots, \lambda^r\in\Lambda_I^+$ such that
\[
\lambda=\lambda^0\leftrightarrow\lambda^1\leftrightarrow\ldots\leftrightarrow\lambda^r=\mu,
\]
we say $\lambda, \mu\in\Lambda_I^+$ are {\it $\Ext^1$-connected relative to $(\Phi_I, \Phi)$}.

For convenience, if $\lambda\lera\mu$ relative to $(\Phi_I, \Phi)$, we write $w_1\lambda\lera w_2\mu$ for any $w_1, w_2\in W_I$.
\end{definition}

The definition gives an equivalence relation on all the $\Phi_I$-regular weights such that each $W_I$ orbit is contained in one equivalence class. The following result is well known.

\begin{lemma}\label{jflem2}
Let $\lambda, \mu\in\Lambda_I^+$ with $\mu<\lambda$. If $\Ext^1_{\caO^\frp}(L(\mu), L(\lambda))\neq0$, then $[M_I(\lambda), L(\mu)]>0$. Conversely, if $[M_I(\lambda), L(\mu)]>0$, then $\lambda\lera\mu$.
\end{lemma}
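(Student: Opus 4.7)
The plan is to treat the two implications separately, using standard tools for category $\caO^\frp$.

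For the forward direction, I would invoke the simple-preserving BGG duality on $\caO^\frp$, which yields $\Ext^1_{\caO^\frp}(L(\mu), L(\lambda)) \cong \Ext^1_{\caO^\frp}(L(\lambda), L(\mu))$, and then choose a non-split short exact sequence $0 \to L(\mu) \to E \to L(\lambda) \to 0$. Because $\mu < \lambda$ forces $\mu - \rho < \lambda - \rho$ in the usual weight ordering, the weight $\lambda - \rho$ is the unique maximal weight of $E$. Any preimage $v \in E$ of a highest weight vector of $L(\lambda)$ is therefore annihilated by $\frn$; by the $\frl$-finiteness built into $\caO^\frp$, $U(\frl)v$ is a finite-dimensional highest weight $\frl$-module of $\Phi_I^+$-dominant integral weight $\lambda - \rho$, and is consequently isomorphic to $F(\lambda - \rho)$ (every finite-dimensional $\frl$-module being semisimple). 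The universal property of $M_I(\lambda) = U(\frg) \otimes_{U(\frp)} F(\lambda - \rho)$ then produces a nonzero map $\phi \colon M_I(\lambda) \to E$. Non-splitness forces $\phi$ to be surjective, for otherwise $\phi(M_I(\lambda))$ would be a simple submodule isomorphic to $L(\lambda)$ complementary to $L(\mu)$, contradicting the choice of $E$. Hence $[M_I(\lambda) : L(\mu)] \geq [E : L(\mu)] = 1 > 0$.

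For the converse, the key observation is that $M_I(\lambda)$ is indecomposable, possessing $L(\lambda)$ as its unique simple quotient. The standard block decomposition $\caO^\frp = \bigoplus_\caB \caO^\frp_\caB$ along $\lera$-equivalence classes (which follows by induction on composition length from the very definition of $\lera$ via $\Ext^1$-vanishing between simples in distinct classes) confines $M_I(\lambda)$ to a single block, namely the one containing its unique simple quotient $L(\lambda)$. Every composition factor of $M_I(\lambda)$ must then lie in that block; in particular $\mu \lera \lambda$. A more hands-on alternative is to induct on length: the semisimple top of the maximal submodule $N_I(\lambda)$ contains some $L(\nu)$, and the induced extension $0 \to L(\nu) \to M_I(\lambda)/N' \to L(\lambda) \to 0$ is non-split (otherwise $L(\nu)$ would be a second simple quotient of $M_I(\lambda)$), so $\nu \lera \lambda$; iterating on the indecomposable summands of $N_I(\lambda)$ chains $\mu$ to such a $\nu$.

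The one delicate point is the surjectivity step in the first direction, which relies on two ingredients: the hypothesis $\lambda \in \Lambda_I^+$ (to guarantee that $U(\frl)v$ realizes the full simple module $F(\lambda - \rho)$ rather than some proper quotient) and the non-splitness of $E$ (to rule out $\phi(M_I(\lambda)) = L(\lambda) \subsetneq E$). Once these ingredients are in place the remaining manipulations are routine, and the converse direction is essentially a reorganization of the block decomposition, presenting no new obstacle.
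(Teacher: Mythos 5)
Your proof is correct. The paper does not prove this lemma (it is flagged as ``well known''), so there is no in-text argument to compare against; your elaboration supplies an entirely standard one. The forward direction proceeds via the contravariant duality on $\caO^\frp$ to switch the extension into the form $0 \to L(\mu) \to E \to L(\lambda) \to 0$ and then uses the universal property of $M_I(\lambda)$; the converse uses that $M_I(\lambda)$ is indecomposable (unique maximal submodule, by Theorem~\ref{jfthm1}(2)) and hence lies in a single block. One minor inaccuracy in wording: when arguing surjectivity of $\phi$, you say that a non-surjective $\phi$ would produce a submodule of $E$ isomorphic to $L(\lambda)$ complementary to $L(\mu)$. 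The image of $\phi$ need not be isomorphic to $L(\lambda)$ a priori; the clean version is that because $E$ is a non-split length-two module, its only submodules are $0$, $L(\mu)$, and $E$, and since $v \in \operatorname{im}\phi$ has weight $\lambda - \rho$ which is not a weight of $L(\mu)$, the image cannot be $0$ or $L(\mu)$ and must equal $E$. This is a cosmetic fix; the argument as given reaches the right conclusion.
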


Lemma \ref{jflem1} implies the following result.

\begin{prop}\label{jfprop1}
Fix $I\subset\Delta$. Let $\lambda, \mu\in\Lambda_I^+$. Then $\lambda, \mu$ are $\Ext^1$-connected if and only if $\lambda, \mu$ are connected by Jantzen coefficients.
\end{prop}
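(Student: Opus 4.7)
My plan is to reduce both notions to nonvanishing of the composition multiplicity $[M_I(\lambda) : L(\mu)]$, since Lemma \ref{jflem1} bridges Jantzen coefficients to multiplicities and Lemma \ref{jflem2} bridges multiplicities to $\Ext^1$. Both ``$\lera$'' and ``connected by Jantzen coefficients'' are equivalence relations generated by single edges, so it suffices to handle the two one-step implications in turn.

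For the forward implication I would take $\lambda, \mu \in \Lambda_I^+$ with $\Ext^1_{\caO^\frp}(L(\mu), L(\lambda)) \neq 0$ and, after swapping $\lambda$ and $\mu$ if necessary, assume $\mu < \lambda$ in the Bruhat order. Lemma \ref{jflem2} then gives $[M_I(\lambda) : L(\mu)] > 0$, and the first half of Lemma \ref{jflem1} produces a chain $\lambda = \lambda^0 \succ \lambda^1 \succ \cdots \succ \lambda^k = \mu$, each link of which is a nonzero Jantzen coefficient by the definition of $\succ$. Applying this to every edge of an $\Ext^1$-chain joining $\lambda$ to $\mu$ and concatenating gives the required Jantzen-coefficient connection.

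For the reverse implication I would show that a single nonvanishing $c(\lambda, \mu)$ (with $\lambda \neq \mu$) already implies $\lambda \lera \mu$; the general case then follows by concatenation. The argument is by induction on the Bruhat interval $[\mu, \lambda]$ inside $W\lambda$. If $\lambda$ is adjacent to $\mu$ in the sense of Definition \ref{jfdef1}, the second half of Lemma \ref{jflem1} produces $[M_I(\lambda) : L(\mu)] > 0$ and the second half of Lemma \ref{jflem2} promotes this to $\lambda \lera \mu$. Otherwise, by the negation of adjacency, there exists $\nu$ with $\lambda > \nu > \mu$ and $\lambda \succ \nu \succ \mu$; the inductive hypothesis applied to the two shorter intervals gives $\lambda \lera \nu$ and $\nu \lera \mu$, which combine by transitivity to $\lambda \lera \mu$.

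The main obstacle I anticipate is precisely the reverse direction: because the Jantzen sum formula (\ref{jfeq2}) permits sign cancellations on the right-hand side, the hypothesis $c(\lambda, \mu) \neq 0$ does not by itself yield $[M_I(\lambda) : L(\mu)] > 0$, so Lemma \ref{jflem2} cannot be invoked directly. The adjacency induction is exactly the device that circumvents this, because adjacency forces all but one term on the right of (\ref{jfl1eq1}) to vanish, leaving a cancellation-free identity. I would also need the (standard) fact that $c(\lambda, \nu) \neq 0$ with $\nu \neq \lambda$ forces $\nu < \lambda$ in the Bruhat order, so that the induction on $[\mu, \lambda]$ is well-founded; this is implicit in the proof of Lemma \ref{jflem1} and can be extracted from the reduction procedure for Jantzen coefficients in \cite{XZ}.
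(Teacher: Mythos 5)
Your proof is correct and follows essentially the same route as the paper: in both directions you pass through nonvanishing of the multiplicity $[M_I(\lambda):L(\mu)]$, using Lemma \ref{jflem2} to go between $\Ext^1$ and multiplicities and Lemma \ref{jflem1} to go between multiplicities and Jantzen coefficients. The only difference is cosmetic: where the paper dispatches the reverse direction by saying ``we can further assume that $\lambda,\mu$ are adjacent'', you spell out the implicit well-founded induction on the Bruhat interval that justifies this reduction, and you correctly flag that adjacency is exactly what eliminates the sign-cancellation issue in the sum formula.
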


\begin{proof}
First assume that $\lambda\lera\mu$. By Definition \ref{jfdef2}, it suffices to consider the case $\Ext^1_{\caO^{\frp}}(L(\mu), L(\lambda))\neq0$ with $\mu<\lambda$. In view of Lemma \ref{jflem2}, we have $[M_I(\lambda), L(\mu)]>0$. Lemma \ref{jflem1} yields $\lambda\succ\mu$. Thus $\lambda, \mu$ are connected by Jantzen coefficients.

Conversely, suppose that $c(\lambda, \mu)\neq0$. By definition \ref{jfdef1}, we can further assume that $\lambda, \mu$ are adjacent. Then Lemma \ref{jflem1} yields $[M_I(\lambda), \mu]>0$, while this implies $\lambda\lera\mu$ by Lemma \ref{jflem2}.
\end{proof}

\begin{remark}\label{jfrmk1}
Recall that the extensions between simple modules are determined by the leading coefficients (so called the $\mu$-function) of Kazhdan-Lusztig polynomials \cite{KL, V}. The calculation of $\mu$-function is very difficult, while the computation of Jantzen coefficients are relative easy \cite{XZ}. So Proposition \ref{jfprop1} largely simplifies the problem of blocks for category $\caO^\frp$.
\end{remark}

\begin{lemma}\label{jflem3}
If $c(\lambda, \mu)\neq0$ for $\lambda, \mu\in\lambda_I^+$, there exists $\beta\in\Psi_\lambda^+$ and $w\in W_I$ so that $\mu=ws_\beta\lambda$. Moreover, $\lambda\lera \mu$.
\end{lemma}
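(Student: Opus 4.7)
The plan is to unwind the definitions so that both assertions fall out cleanly. Recall from (\ref{jfeq3}) that
\[
c(\lambda,\mu)=\sum_{\beta\in\Psi_{\lambda,\mu}^+}(-1)^{\ell(w_\beta)},
\]
where $\Psi_{\lambda,\mu}^+=\{\beta\in\Psi_\lambda^+\mid \mu=w_\beta s_\beta\lambda\text{ for some }w_\beta\in W_I\}$. The hypothesis $c(\lambda,\mu)\neq0$ forces this sum to be nonempty, since an empty sum is zero. Therefore I would pick any $\beta\in\Psi_{\lambda,\mu}^+$; by construction $\beta\in\Psi_\lambda^+$ and there exists $w:=w_\beta\in W_I$ with $\mu=ws_\beta\lambda$, which is the first assertion.

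For the second assertion $\lambda\leftrightarrow\mu$, I would simply invoke Proposition \ref{jfprop1}. Indeed, Definition \ref{jfdef2} allows the chain of length $r=1$: taking $\lambda^0=\lambda$ and $\lambda^1=\mu$, the relation $c(\lambda^0,\lambda^1)=c(\lambda,\mu)\neq0$ shows that $\lambda$ and $\mu$ are connected by Jantzen coefficients. Proposition \ref{jfprop1} then yields that they are $\Ext^1$-connected, i.e.\ $\lambda\leftrightarrow\mu$.

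There is no real obstacle here: the lemma is essentially a direct corollary of the defining formula for $c(\lambda,\mu)$ (for the existence of $\beta$ and $w$) together with Proposition \ref{jfprop1} (for the $\Ext^1$-connectedness). The only minor point to be careful about is the trivial case $\lambda=\mu$: if $\mu=\lambda$, one may take $w=s_\beta$ for any $\beta\in\Psi_{\lambda,\lambda}^+$, which is nonempty by the same argument as above, and $\lambda\leftrightarrow\mu$ holds tautologically by Definition \ref{jfdef3}. Thus the statement reduces entirely to bookkeeping, and no further machinery beyond what has already been established is needed.
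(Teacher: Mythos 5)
Your proof is correct and follows exactly the paper's argument: nonemptiness of $\Psi_{\lambda,\mu}^+$ from $c(\lambda,\mu)\neq 0$ via (\ref{jfeq3}), then pick any $\beta\in\Psi_{\lambda,\mu}^+$ with $w:=w_\beta$, and invoke Proposition \ref{jfprop1} for the second claim. One small slip in your side remark on the case $\mu=\lambda$: you cannot take ``$w=s_\beta$'' since $\beta\in\Psi_\lambda^+\subset\Phi^+\setminus\Phi_I$ means $s_\beta\notin W_I$; but this remark is unnecessary anyway, because the main argument already produces $w=w_\beta\in W_I$ with $\mu=w_\beta s_\beta\lambda$ regardless of whether $\mu$ equals $\lambda$.
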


\begin{proof}
The second statement follows from Proposition \ref{jfprop1}. For the first one, note that $c(\lambda, \mu)=\sum_{\beta\in \Psi_{\lambda, \mu}^+}(-1)^{\ell(w_\beta)}\neq0$ in view of (\ref{jfeq3}). So $\Psi_{\lambda, \mu}^+$ is not empty. Choose any $\beta\in \Psi_{\lambda, \mu}^+$ and set $w=w_\beta$. We get $\mu=ws_\beta \lambda$.
\end{proof}

\begin{lemma}\label{jflem4}
Let $I, J\subset\Delta$. Suppose that $\lambda$ $($resp. $\mu)$ is a dominant weight with $\Phi_\lambda=\Phi_J$ $($resp. $\Phi_\mu=\Phi_I)$. Choose $x, w\in{}^IW^J$. Then $x\lambda\lera w\lambda$ $($relative to $(\Phi_I, \Phi))$ if and only if $x^{-1}\mu\lera w^{-1}\mu$ $($relative to $(\Phi_J, \Phi))$.
\end{lemma}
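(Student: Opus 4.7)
The plan is to pass from the $\Ext^1$-connection $\lera$ to the Jantzen-coefficient connection via Proposition~\ref{jfprop1}, then prove the symmetry between Jantzen coefficients directly using a root bijection together with a parity computation on Weyl-group lengths.

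First I would invoke Proposition~\ref{jfprop1} on both sides to reduce the statement to the single-edge equivalence
\[
c(x\lambda, w\lambda) \ne 0 \text{ relative to } (\Phi_I, \Phi) \iff c(x^{-1}\mu, w^{-1}\mu) \ne 0 \text{ relative to } (\Phi_J, \Phi)
\]
for all $x, w \in {}^IW^J$. Since the $W$-orbit of $\lambda$ meets $\Lambda_I^+$ exactly in ${}^IW^J\cdot\lambda$, any chain of such nonzero coefficients connecting $x\lambda$ and $w\lambda$ will transport, via the standard inversion bijection ${}^IW^J \to {}^JW^I$, $y \mapsto y^{-1}$, to one connecting $x^{-1}\mu$ and $w^{-1}\mu$, and the lemma follows.

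Next I would establish the key root bijection. Because $\lambda$ is dominant with $\Phi_\lambda = \Phi_J$, a direct unwinding yields
\[
\Psi_{x\lambda}^+ = \{\beta \in \Phi^+ \setminus \Phi_I : x^{-1}\beta \in \Phi^+ \setminus \Phi_J\},
\]
and symmetrically, using $\Phi_\mu = \Phi_I$,
\[
\Psi_{x^{-1}\mu}^+ = \{\gamma \in \Phi^+ \setminus \Phi_J : x\gamma \in \Phi^+ \setminus \Phi_I\},
\]
so $\beta \mapsto x^{-1}\beta$ gives a bijection $\Psi_{x\lambda}^+ \to \Psi_{x^{-1}\mu}^+$. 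For each $\beta \in \Psi_{x\lambda, w\lambda}^+$, there is a unique $u_\beta \in W_I$ with $w\lambda = u_\beta s_\beta x\lambda$, and hence a unique $v_\beta \in W_J$ with $u_\beta s_\beta x = w v_\beta$ (since $W_J = \mathrm{Stab}_W(\lambda)$). Inverting this identity, applying to $\mu$, and using $u_\beta^{-1}\mu = \mu$ (as $u_\beta \in W_I = \mathrm{Stab}_W(\mu)$), I obtain
\[
w^{-1}\mu = v_\beta\, s_{x^{-1}\beta}\, x^{-1}\mu,
\]
exhibiting $x^{-1}\beta \in \Psi_{x^{-1}\mu, w^{-1}\mu}^+$ with associated $W_J$-element $v_\beta$; the reverse direction is symmetric, so the bijection restricts to $\Psi_{x\lambda, w\lambda}^+ \leftrightarrow \Psi_{x^{-1}\mu, w^{-1}\mu}^+$ and matches $u_\beta$ with $v_\beta$.

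Finally I would compare signs. Applying the sign character $\sgn\colon W \to \{\pm 1\}$ to $u_\beta s_\beta x = w v_\beta$ and using $\sgn(s_\beta) = -1$ yield $(-1)^{\ell(u_\beta)} = \epsilon \cdot (-1)^{\ell(v_\beta)}$ with $\epsilon := (-1)^{\ell(w)+\ell(x)+1}$ independent of $\beta$. Summing over the matching index sets gives
\[
c(x\lambda, w\lambda) = \epsilon \cdot c(x^{-1}\mu, w^{-1}\mu),
\]
so one coefficient vanishes if and only if the other does. The main obstacle is careful bookkeeping at two points: ensuring $u_\beta$ and $v_\beta$ are genuinely unique so $(-1)^{\ell(u_\beta)}$ is unambiguous (which follows from $w\lambda \in \Lambda_I^+$ being $\Phi_I$-regular, forcing $s_\beta x\lambda$ to be $\Phi_I$-regular and thus to have trivial $W_I$-stabilizer), and verifying that the paper's strong definition of ${}^IW^J$ implies $x\alpha \in \Phi^+ \setminus \Phi_I$ for every $\alpha \in J$, so that $x^{-1}\mu$ indeed lies in $\Lambda_J^+$ and the dual Jantzen coefficients are actually defined.
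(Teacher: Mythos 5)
Your argument is correct, and it takes a genuinely different route from the paper. The paper's proof of this lemma is a one-line deduction: it cites Lemma~\ref{jflem3} together with an external result, namely the ``dual invariance of Jantzen coefficients'' (Lemma~4.17 in \cite{XZ}), and stops there. You have instead reconstructed that dual-invariance statement from scratch: the root bijection $\beta \mapsto x^{-1}\beta$ between $\Psi_{x\lambda}^+$ and $\Psi_{x^{-1}\mu}^+$, the unwinding of $u_\beta s_\beta x = w v_\beta$ to match $u_\beta \in W_I$ with $v_\beta \in W_J$, and the sign computation via $\sgn$ are all sound (and the regularity of $s_\beta x\lambda$ does make $u_\beta$ unique, as you note). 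What your approach buys is explicitness and self-containment: instead of an opaque citation you obtain the precise identity $c(x\lambda, w\lambda) = (-1)^{\ell(x)+\ell(w)+1}\, c(x^{-1}\mu, w^{-1}\mu)$, which makes the equivalence of vanishing transparent. The only point the paper's route hides and yours surfaces is the final bookkeeping claim that $x\alpha \in \Phi^+\setminus\Phi_I$ for $\alpha \in J$ when $x \in {}^IW^J$, so that $x^{-1}\mu \in \Lambda_J^+$; this does follow from $\ell(s_{x\alpha} x s_\alpha) = \ell(x) + 2$ being forced by the definition, since $x\alpha \in \Phi_I$ would give $s_{x\alpha} x s_\alpha = x$, a contradiction. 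In short: correct proof, essentially a re-derivation of the cited lemma rather than an appeal to it.
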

\begin{proof}
This follows immediately from Lemma \ref{jflem3} and the dual invariance of Jantzen coefficients (see Lemma 4.17 in \cite{XZ}).
\end{proof}

\begin{lemma}\label{jflem5}
Let $I, J\subset\Delta$. Suppose that $\Phi_I$ and $\Phi_{J}$ are $W$-conjugate, choose $w\in W$ so that $\Phi_{J}^+=w\Phi_I^+$. Let $\lambda, \mu\in\Lambda_I^+$. Then $\lambda\lera\mu$ $($relative to $(\Phi_{I}, \Phi))$ if and only if $w\lambda\lera w\mu$ $($relative to $(\Phi_{J}, \Phi))$.
\end{lemma}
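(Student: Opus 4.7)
My plan is to mimic exactly the strategy used for Lemma \ref{jflem4}, replacing the role of the ``dual invariance'' of Jantzen coefficients by the ``$W$-conjugate invariance.'' By Proposition \ref{jfprop1}, the relation $\lambda\lera\mu$ (relative to either $(\Phi_I,\Phi)$ or $(\Phi_J,\Phi)$) is equivalent to $\lambda$ and $\mu$ being connected by a chain of nonzero Jantzen coefficients. Hence it suffices to prove the pointwise statement: for $\lambda,\mu\in\Lambda_I^+$,
\[
c(\lambda,\mu)\neq 0\quad\text{(relative to $(\Phi_I,\Phi)$)}\iff c'(w\lambda,w\mu)\neq 0\quad\text{(relative to $(\Phi_J,\Phi)$)}.
\]

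Next I would record the structural consequences of the hypothesis $w\Phi_I^+=\Phi_J^+$. Since $I,J\subset\Delta$, this forces $wI=J$, so conjugation by $w$ gives a group isomorphism $W_I\xrightarrow{\sim}W_J$, $x\mapsto wxw^{-1}$, which is length-preserving: both $\ell(x)$ and $\ell(wxw^{-1})$ equal the length of the element in its own parabolic, and the two parabolic length functions agree via the Coxeter isomorphism induced by $w$. Moreover $\nu\mapsto w\nu$ is a bijection $\Lambda_I^+\to\Lambda_J^+$ because $\langle\nu,\alpha^\vee\rangle=\langle w\nu,(w\alpha)^\vee\rangle$ for $\alpha\in I$ and $wI=J$.

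I would then use the explicit formula $c(\lambda,\mu)=\sum_{\beta\in\Psi^+_{\lambda,\mu}}(-1)^{\ell(w_\beta)}$ from Lemma \ref{jflem3} and (\ref{jfeq3}) together with the identity
\[
w\mu=(ww_\beta w^{-1})\,s_{w\beta}\,(w\lambda)
\]
to build a bijection $\Psi^+_{\lambda,\mu}\to\Psi^{+,J}_{w\lambda,w\mu}$ that preserves the signs $(-1)^{\ell(w_\beta)}=(-1)^{\ell(ww_\beta w^{-1})}$. When $w\beta\in\Phi^+$ this bijection sends $\beta\mapsto w\beta$ and $w_\beta\mapsto ww_\beta w^{-1}$; both the positivity and the $\notin\Phi_J$ condition follow because $w(\Phi\setminus\Phi_I)=\Phi\setminus\Phi_J$, and $\langle w\lambda,(w\beta)^\vee\rangle=\langle\lambda,\beta^\vee\rangle\in\mathbb Z^{>0}$. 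The resulting equality (or, at minimum, the simultaneous non-vanishing) is precisely the $W$-conjugate invariance of the Jantzen coefficients, packaged as one of the invariance statements in \cite[Lemma 4.17]{XZ}, and invoking it yields the desired equivalence.

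The main obstacle is the case $w\beta\notin\Phi^+$, where the naive candidate $w\beta$ fails to lie in $\Psi^{+,J}_{w\lambda}$. The remedy is to absorb a reflection from $W_J$ into $ww_\beta w^{-1}$: choose a simple reflection $s_\gamma$ in $W_J$ (or a short product thereof) that corrects the sign, so that $(-w\beta)$ becomes eligible and $s_\gamma\cdot ww_\beta w^{-1}$ plays the role of $y_\gamma$, with the length parity shifted compatibly so that the signed sum is preserved. Working out this sign-chasing root by root is the technical heart and is carried out in \cite{XZ}; once available, the equivalence $c(\lambda,\mu)\neq 0\iff c'(w\lambda,w\mu)\neq 0$ follows, and Proposition \ref{jfprop1} immediately upgrades it to the chain-level equivalence $\lambda\lera\mu\iff w\lambda\lera w\mu$.
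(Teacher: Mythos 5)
Your proposal is correct and follows essentially the paper's own route: reduce via Proposition \ref{jfprop1} (equivalently, Lemma \ref{jflem3}) to non-vanishing of Jantzen coefficients and then invoke the $W$-conjugate invariance proved in \cite{XZ}. One small correction: the relevant statement is Lemma 4.18 in \cite{XZ} (conjugate invariance), not Lemma 4.17 (dual invariance, which is what Lemma \ref{jflem4} uses).
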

\begin{proof}
This is an easy consequence of Lemma \ref{jflem3} and the conjugate invariance of Jantzen coefficients (see Lemma 4.18 in \cite{XZ}).
\end{proof}

%
%
\section{Linked roots}
%
%
From now on in this paper, $\frg$ is always one of the classical Lie algebras $\frgl(n, \bbC)$, $\frso(2n+1, \bbC)$, $\frsp(n, \bbC)$ and $\frso(2n, \bbC)$. Thus $\rank\frg=\dim\frh=n$ and $\Phi=A_{n-1}$, $B_n$, $C_n$ or $D_n$. The roots of $\Phi$ and weights of $\frh^*$ can be realized as vectors of $\bbC^n$ (\cite{H1}, \S12.1). With Proposition \ref{jfprop1}, we can study the problem of blocks for classical Lie algebras by taking advantage of the theory of Jantzen coefficients developed in \cite{XZ}.

\subsection{Linked roots and the reduction process} Inspired by Lemma \ref{jflem3}, we give the following definition.

\begin{definition}\label{lrdef1}
Suppose $\lambda, \mu\in\Lambda_I^+$. If $c(\lambda, \mu)\neq0$ and $\mu=ws_\beta\lambda$ for some $w\in W_I$ and $\beta\in\Psi_\lambda^+$, we say $\beta$ is a {\it linked root} from $\lambda$ to $\mu$ or $\lambda, \mu$ are {\it linked by $\beta$} (relative to $(\Phi_I, \Phi)$). In this situation, we will also say $w\beta$ is a {\it linked root} from $\mu$ to $\lambda$ (keeping in mind that $\lambda=w^{-1}s_{w\beta}\mu$).

In general, if $\beta$ is linked root from $\lambda$ to $\mu$ with $\mu=ws_\beta\lambda$ for $w\in W_I$, we say $w_1\beta$ is a linked root from $w_1\lambda$ to $w_2\mu$ for any $w_1, w_2\in W_I$, keeping in mind that
\[
w_2\mu=(w_2ww_1^{-1})s_{w_1\beta}(w_1\lambda).
\]
\end{definition}

\begin{remark}\label{lrrmk1}
If $c(\lambda, \mu)\neq0$, the definition means every $\beta\in\Psi_{\lambda, \mu}$ is a linked root from $\lambda$ to $\mu$.
\end{remark}

In view of Proposition \ref{jfprop1} and Lemma \ref{jflem3}, the following lemma is an immediate consequence of the definition.

\begin{lemma}\label{lrlem0}
Let $\lambda, \mu$ be two $\Phi_I$-regular weight. Then $\lambda\lera\mu$ if and only if we can find $\lambda=\lambda^0, \lambda^1, \cdots, \lambda^r=\mu$ with linked roots $\beta_i$ from $\lambda^{i-1}$ to $\lambda^{i}$ for $1\leq i\leq r$.
\end{lemma}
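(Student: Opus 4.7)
The plan is to deduce this lemma essentially by unpacking definitions, using Proposition~\ref{jfprop1} to translate $\Ext^1$-connectedness into connection by Jantzen coefficients, and then Lemma~\ref{jflem3} to upgrade a nonzero Jantzen coefficient into an explicit linked root. Both directions reduce to verifying that the single-step statements line up correctly.

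For the ``if'' direction, I would start from a chain $\lambda=\lambda^0,\lambda^1,\ldots,\lambda^r=\mu$ together with linked roots $\beta_i$ from $\lambda^{i-1}$ to $\lambda^i$. By Definition~\ref{lrdef1}, the existence of such a $\beta_i$ forces $c(\lambda^{i-1},\lambda^i)\neq 0$. Hence the chain witnesses that consecutive pairs are connected by Jantzen coefficients in the sense of Definition~\ref{jfdef2}, so Proposition~\ref{jfprop1} gives $\lambda^{i-1}\lera\lambda^i$ at each step, and transitivity of $\lera$ yields $\lambda\lera\mu$.

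For the ``only if'' direction, assume $\lambda\lera\mu$. Proposition~\ref{jfprop1} produces a chain $\lambda=\lambda^0,\lambda^1,\ldots,\lambda^r=\mu$ with $c(\lambda^{i-1},\lambda^i)\neq 0$ or $c(\lambda^i,\lambda^{i-1})\neq 0$ for every $i$. In the first case, Lemma~\ref{jflem3} supplies $\beta\in\Psi^+_{\lambda^{i-1}}$ and $w\in W_I$ with $\lambda^i=ws_\beta\lambda^{i-1}$, so $\beta$ is by definition a linked root from $\lambda^{i-1}$ to $\lambda^i$. In the second case, the same lemma gives a linked root from $\lambda^i$ to $\lambda^{i-1}$, and the final clause of Definition~\ref{lrdef1}, asserting that if $\gamma$ links $\nu$ to $\nu'$ with $\nu'=ws_\gamma\nu$ then $w\gamma$ links $\nu'$ to $\nu$, reverses the direction and produces the required linked root $\beta_i$ from $\lambda^{i-1}$ to $\lambda^i$.

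There is no real obstacle here: the content of the lemma is already packaged by Propositions~\ref{jfprop1} and Lemma~\ref{jflem3}, and the only point requiring care is the bookkeeping of directions, which is absorbed by the symmetric clause of Definition~\ref{lrdef1}. Consequently I would keep the write-up short, emphasizing the two invocations and the direction-reversal, rather than redoing any of the Jantzen-coefficient reduction.
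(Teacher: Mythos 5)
Your proposal is correct and takes essentially the same route as the paper: the paper itself dismisses the proof as an immediate consequence of Proposition \ref{jfprop1}, Lemma \ref{jflem3}, and Definition \ref{lrdef1}, which is exactly the chain of reductions you use. One small imprecision: in the ``if'' direction you say the existence of a linked root $\beta_i$ from $\lambda^{i-1}$ to $\lambda^i$ \emph{forces} $c(\lambda^{i-1},\lambda^i)\neq 0$, but Definition \ref{lrdef1} also produces linked roots in the reverse situation where $c(\lambda^{i},\lambda^{i-1})\neq 0$ instead; what is actually forced is that at least one of the two is nonzero, which is exactly what Definition \ref{jfdef2} requires, so the argument goes through unchanged.
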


\begin{example}\label{lrex1}
Let $\frg=\frso(7, \bbC)$. Then $\Phi=B_3$. Using the standard parametrization (\cite{H1}), Choose $I=J=\{e_1-e_2, e_3\}$. Let $\lambda=(1, 0, 1)$ and $\mu=(0, -1, 1)$. Then $\Psi_{\lambda, \mu}^+=\{e_1, e_1+e_2, e_1+e_3\}$ and $c(\lambda, \mu)=1$. So $\lambda, \mu$ are linked by three roots $e_1, e_1+e_2, e_1+e_3$.

Now let $\frg=\frso(5, \bbC)$ and $\Phi=B_2$. Choose $I=\{e_2\}$ and $J=\{e_1-e_2\}$. Let $\lambda=(1, 1)$ and $\mu=(-1, 1)$. Then $\Psi_{\lambda, \mu}^+=\{e_1, e_1+e_2\}$ and $c(\lambda, \mu)=0$. Thus $\lambda, \mu$ are not linked.
\end{example}

If $\Phi=D_n$ $(n\geq4)$, $e_{n-1}-e_n\not\in I$ and $e_{n-1}+e_n\in I$, we say $I$ is \emph{not standard}; otherwise $I$ is \emph{standard}. The notation get more complicated when $I$ is not standard. Normally we send a nonstandard $I$ to a standard one by the isomorphism
\begin{equation}\label{lreq0}
\vf: \frh^*\ra\frh^*\ \mbox{with}\ \vf(\lambda)=s_{e_n}\lambda.
\end{equation}
It interchanges $e_{n-1}\pm e_n$ and fixes the others.

\begin{remark}\label{lrrmk2}
Because of the category equivalence between $\caO_\lambda^{\frp_I}$ and $\caO_{\vf(\lambda)}^{\frp_{\vf(I)}}$, the results for $(\Phi, \Phi_I, \Phi_J)$ work equally well for $(\Phi, \Phi_{\vf(I)}, \Phi_{\vf(J)})$. We only need to consider the standard $I$ in most cases. However, even when $I$ is standard, it is possible that $J$ is not standard. One should be aware of this since some of our arguments depend on the dual relations between $(\Phi, \Phi_I, \Phi_J)$ and $(\Phi, \Phi_J, \Phi_I)$.
\end{remark}

We need more notations to describe all the linked roots. Suppose that $\Delta\backslash I=\{\alpha_{q_1},\ldots, \alpha_{q_{m-1}}\}$ with $1\leq q_1<\ldots<q_{m-1}\leq n$ ($q_{m-1}<n$ for $\Phi=A_{n-1}$). Set $q_0=0$ and $q_m=n+1$. If $I$ is standard or $s<m-1$, let $I_s=\{\alpha_i\in I\ |\ q_{s-1}<i<q_s\}$. If $I$ is not standard, then $q_{m-1}=n-1$. In this case, set $I_{m-1}=\{\alpha_i\in I\ |\ q_{m-2}<i<q_{m-1}\}\cup\{\alpha_n\}$ and $I_m=\emptyset$. We have
\[
\Phi_I=\bigsqcup_{s=1}^m \Phi_{I_s}.
\]
Put $n_s=|I_s|+1$ for $1\leq s<m$ and $n_m=|I_m|$. In particular, $n_m\neq1$ for $\Phi=D_n$. If $I$ is standard, then $n_s=q_s-q_{s-1}$ for $1\leq s<m$ and $n_m=n-q_{m-1}$.
Any vector $\lambda\in\frh^*$ can be divided into $m$ segments $(\lambda_{q_{s-1}+1}, \ldots, \lambda_{q_{s-1}+n_s})$ for $1\leq s\leq m$. If $\lambda\in\Lambda_I^+$, one has
\begin{equation}\label{lreq1}
\lambda_{q_{s-1}+1}>\lambda_{q_{s-1}+2}>\ldots>\lambda_{q_{s-1}+n_s}
\end{equation}
for $1\leq s<m$ or $\Phi=A_{n-1}$ and
\begin{equation}\label{lreq2}
\lambda_{q_{m-1}+1}>\lambda_{q_{m-1}+2}>\ldots>\lambda_n>0
\end{equation}
for $\Phi=B_n$, $C_n$ with $q_{m-1}<n$ and
\begin{equation}\label{lreq3}
\lambda_{q_{m-1}+1}>\lambda_{q_{m-1}+2}>\ldots>\lambda_{n-1}>|\lambda_n|
\end{equation}
for $\Phi=D_n$ with $q_{m-1}<n-1$. The following result is evident.

\begin{lemma}\label{lrlem1}
Let $I\subset\Delta$ be standard. Then $\lambda\in\frh^*$ is $\Phi_I$-regular if and only if the following conditions are satisfied:
\begin{itemize}
\item [(1)] $\lambda_i\neq\lambda_j$ for $q_{s-1}<i<j\leq q_s$ when $s<m$ or $\Phi=A_{n-1}$;

\item [(2)] $|\lambda_i|\neq|\lambda_j|$ for $q_{m-1}<i<j\leq n$ and $|\lambda_i|\neq 0$ for $q_{m-1}<i\leq n$ when $\Phi=B_n$ or $C_n$;

\item [(3)] $|\lambda_i|\neq|\lambda_j|$ for $q_{m-1}<i<j\leq n$ when $\Phi=D_n$.
\end{itemize}
\end{lemma}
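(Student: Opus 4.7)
The plan is to prove this by directly unpacking the definition of $\Phi_I$-regularity (namely $\langle\lambda,\alpha^\vee\rangle\neq 0$ for all $\alpha\in\Phi_I$) using the decomposition $\Phi_I=\bigsqcup_{s=1}^m\Phi_{I_s}$ already introduced. The segmentation of $\lambda$ matches the decomposition of $\Phi_I$, so each inner-segment condition in the conclusion corresponds to one block $\Phi_{I_s}$. The main task is therefore bookkeeping: identifying the positive roots of $\Phi_{I_s}$ in terms of the standard coordinate basis $\{e_i\}$ under the realization of $\Phi$ as a subset of $\bbC^n$, and reading off what nonvanishing of $\langle\lambda,\alpha^\vee\rangle$ means.

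First I would handle the ``generic'' segments $1\leq s<m$ (and all segments when $\Phi=A_{n-1}$). Since $I$ is standard and $I_s=\{\alpha_i\in I\mid q_{s-1}<i<q_s\}$ consists of consecutive simple roots $e_i-e_{i+1}$, we have $\Phi_{I_s}^+=\{e_i-e_j\mid q_{s-1}<i<j\leq q_s\}$. Then $\langle\lambda,(e_i-e_j)^\vee\rangle=\lambda_i-\lambda_j$, which yields condition (1).

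Next I would treat the last segment $s=m$ case by case on the type of $\Phi$. For $\Phi=B_n$ with $q_{m-1}<n$, $I_m$ includes the short simple root $e_n$ in addition to the $e_i-e_{i+1}$ with $q_{m-1}<i<n$, so $\Phi_{I_m}^+=\{e_i\pm e_j\mid q_{m-1}<i<j\leq n\}\cup\{e_i\mid q_{m-1}<i\leq n\}$; for $\Phi=C_n$ one gets the analogous set with $2e_i$ replacing $e_i$. In either case the coroot pairings are $\lambda_i\pm\lambda_j$ and a nonzero multiple of $\lambda_i$, giving condition (2). For $\Phi=D_n$ with $q_{m-1}<n-1$, the standard hypothesis forces $e_{n-1}\pm e_n$ both to belong to $I_m$ (or neither), and $\Phi_{I_m}^+=\{e_i\pm e_j\mid q_{m-1}<i<j\leq n\}$, producing condition (3). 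In each case the stated inequalities are both necessary and sufficient for the pairings against all coroots in $\Phi_{I_s}$ to be nonzero.

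The only subtle point is verifying that the standardness hypothesis on $I$ really does force the last segment to have the form claimed, in particular that the nonstandard possibility $e_{n-1}+e_n\in I$, $e_{n-1}-e_n\notin I$ is excluded in type $D$; this is exactly the content of the definition of ``standard'' given just before equation \eqref{lreq0}, so no real obstacle arises. I do not anticipate any genuine difficulty: the lemma is a direct reading of the definition under the explicit coordinate realization, and the three cases partition according to the type of $\Phi$ together with the position of the final non-simple segment.
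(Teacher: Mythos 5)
Your proposal is correct and takes exactly the approach the paper leaves implicit: the paper states the lemma with no proof (``The following result is evident''), and your argument supplies the direct verification — decompose $\Phi_I=\bigsqcup_{s=1}^m\Phi_{I_s}$, identify $\Phi_{I_s}^+$ in coordinates ($\{e_i-e_j\}$ for $s<m$; $\{e_i\pm e_j\}\cup\{e_i\}$ or $\{2e_i\}$ or just $\{e_i\pm e_j\}$ for $s=m$ according to type), and read off the coroot pairings. One small clarification worth making: when $\Phi=D_n$ and $q_{m-1}<n-1$, it is that inequality itself (not standardness) which puts both $e_{n-1}\pm e_n$ in $I_m$; standardness is what excludes the remaining problematic case $q_{m-1}=n-1$, so that under the hypotheses the only alternatives are $q_{m-1}<n-1$ (giving $\Phi_{I_m}$ of type $D_{n_m}$ with $n_m\geq 2$) or $q_{m-1}=n$ (giving $\Phi_{I_m}=\emptyset$), and condition (3) is correct in both.
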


Let $\lambda$ be a $\Phi_I$-regular weight with $\Phi_{\overline\lambda}=\Phi_J$. Write $\Delta\backslash J=\{\alpha_{\overline q_1}, \dots, \alpha_{\overline q_{\overline m-1}}\}$ for $1\leq \overline q_1<\ldots< \overline q_{\overline m-1}\leq n$. Put $\overline q_0=0$ and $\overline q_{\overline m}=n+1$. If $J$ is standard or $s<\overline m-1$, let $J_s=\{\alpha_i\in J\ |\ \overline q_{s-1}<i< \overline q_s\}$. If $J$ is not standard, let $J_{\overline m-1}=\{\alpha_i\in I\ |\ \overline q_{\overline m-2}<i<n-1\}\cup\{\alpha_n\}$ and $J_m=\emptyset$. So
\begin{equation*}
\Phi_J=\bigsqcup_{s=1}^{\overline m} \Phi_{J_s}.
\end{equation*}
Set $a_s=\overline\lambda_{\overline q_s}$ and for $1\leq s< \overline m$ and $a_{\overline m}=0$. Then $a_1>\ldots>a_{\overline m-1}\geq a_{\overline m}=0$. Let
\begin{equation*}
\caA=\{a_1, a_2, \ldots, a_{\overline m-1}, a_{\overline m}\}.
\end{equation*}
We also set $\overline n_s=|J_s|+1$ for $1\leq s<\overline m$ and $\overline n_{\overline m}=|J_{\overline m}|$.

\begin{remark}\label{lrrmk3}
One might have $a_{\overline m-1}=0=a_{\overline m}$ for some very special categories $\caO^\frp_\lambda$ ($\Phi=D_n$, $\overline q_{\overline m-2}=n-1$ and $\overline q_{\overline m-1}=n$). This will cause some problems on notation. Fortunately, any of these categories is isomorphic to some other category corresponding to $(\Phi, \Phi_I, \Phi_J)$ with $a_{\overline m-1}>0$ (see Remark 7.3 in \cite{XZ}).

For convenience, we will always assume that $a_{\overline m-1}>0$ in this paper.
\end{remark}

If $\Phi=A_n$, we set
\begin{equation*}
n_s^\lambda(a)=|\{q_{s-1}<i\leq q_s\mid \lambda_i=a\}|,
\end{equation*}
for any $a\in\caA$ and $1\leq s\leq m$. If $\Phi\neq A_n$ and $I$ is standard, we set
\begin{equation}\label{lreq4}
n_s^\lambda(a)=|\{q_{s-1}<i\leq q_s\mid \lambda_i=a\ \mbox{or}\ -a\}|.
\end{equation}
If $I$ is not standard, the definition of $n_s^\lambda(a)$ is the same as (\ref{lreq4}) except that $n_{m-1}^\lambda(a)=|\{q_{m-2}<i\leq n\mid \lambda_i=a\ \mbox{or}\ -a\}|$ and $n_m^\lambda(a)=0$. In particular, $n_s^\lambda(a)$ is invariant under the map $\vf(\cdot)$ when $\Phi=D_n$. In view of Lemma \ref{lrlem1}, the following results are evident.

\begin{lemma}\label{lrlem2}
Let $I, J\subset\Delta$ and $\lambda$ be a $\Phi_I$-regular weight with $\Phi_{\overline\lambda}=\Phi_J$. Then
\begin{itemize}
  \item [(\rmnum{1})] $n_s^{w\lambda}(a_t)=n_s^{\lambda}(a_t)$ for $1\leq s\leq m$, $1\leq t\leq\overline m$ and $w\in W_I$;
  \item [(\rmnum{2})] $n_s=\sum_{t=1}^{\overline m}n_s^\lambda(a_t)$ for $1\leq s\leq m$;
  \item [(\rmnum{3})] $\overline n_t=\sum_{s=1}^mn_s^\lambda(a_t)$ for $1\leq t\leq \overline m$;
  \item [(\rmnum{4})] If $\Phi=A_{n-1}$ or $s=m$ or $a=0$, then $n_s^\lambda(a)\leq1$, while $n_m^\lambda(0)=0$ for $\Phi=B_n, C_n$. In general $n_s^\lambda(a)\leq2$.
\end{itemize}
\end{lemma}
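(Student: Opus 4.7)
The four claims are direct consequences of the definitions, of the factorization $W_I=W_{I_1}\times\cdots\times W_{I_m}$ into segment-wise actions, and of the regularity characterization of Lemma \ref{lrlem1}. My plan is to dispatch them in order, doing essentially no computation: the work is entirely unwinding definitions and checking each classical type.

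For (\rmnum{1}), I would use that each factor $W_{I_s}$ acts on the $s$-th segment of the coordinate vector of $\lambda$ and fixes all other coordinates. When $s<m$, the subset $I_s$ consists only of roots $e_i-e_{i+1}$, so $W_{I_s}$ is a symmetric group that merely permutes the entries of segment $s$. When $s=m$ and $\Phi$ is of type $B$, $C$, or $D$, the group $W_{I_m}$ permutes the entries of segment $m$ and may additionally flip signs. In every case the multiset of values of segment $s$ (in type $A$) or the multiset of absolute values of segment $s$ (otherwise) is preserved, and $n_s^\lambda(a_t)$ depends only on this multiset. For (\rmnum{2}) and (\rmnum{3}), write $\lambda=w\overline\lambda$ with $w\in W$; since $W$ acts by (signed) permutations of coordinates, every entry of $\lambda$ equals $\pm a_t$ for a unique $t$, and the total multiplicity of $\pm a_t$ across $\lambda$ equals that across $\overline\lambda$, which is the size $\overline n_t$ of the $t$-th constant block of $\overline\lambda$. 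Then (\rmnum{2}) follows because every coordinate in segment $s$ contributes to exactly one of the $n_s^\lambda(a_t)$, and (\rmnum{3}) follows by summing the same equality over $s$.

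Statement (\rmnum{4}) is a direct application of Lemma \ref{lrlem1}. In type $A$, entries of each segment are strictly decreasing, so any value occurs at most once and $n_s^\lambda(a)\leq 1$. For types $B$, $C$, $D$ with $s<m$, entries of segment $s$ are again strictly decreasing, so $a$ and $-a$ each occur at most once, yielding $n_s^\lambda(a)\leq 2$ in general and $n_s^\lambda(0)\leq 1$. In the last segment, parts (2) and (3) of Lemma \ref{lrlem1} guarantee that the absolute values $|\lambda_i|$ are pairwise distinct, so $n_m^\lambda(a)\leq 1$; in types $B$, $C$ they further exclude $|\lambda_i|=0$, so $n_m^\lambda(0)=0$. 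The only genuine subtlety I would flag is the non-standard $D_n$ case, where segment $m-1$ absorbs what would otherwise be segment $m$ and $W_{I_{m-1}}$ picks up a single sign flip via $\alpha_n$; this is precisely why $n_{m-1}^\lambda$ is defined using $\pm a$ in that case, so (\rmnum{1}) still goes through. Keeping this standard versus non-standard bookkeeping straight across all four parts is the one place I would be most careful in writing up the full proof, rather than any real mathematical difficulty.
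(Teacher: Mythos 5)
Your proof is correct and follows the same route the paper invokes: the paper simply states these facts are ``evident'' after Lemma \ref{lrlem1}, and your argument fills in exactly that reasoning, using the segment-wise factorization of $W_I$ for (\rmnum{1}), the fact that $W$ acts by (signed) permutations for (\rmnum{2}) and (\rmnum{3}), and the regularity characterization of Lemma \ref{lrlem1} for (\rmnum{4}). Your flagging of the non-standard $D_n$ bookkeeping is also the right place to be careful, and your treatment of it is consistent with the paper's conventions.
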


Fix $a\in\caA$ and $1\leq s\leq m$. We say $n_s^\lambda(a)$ is \emph{maximal} if
\begin{equation}\label{bklkeq3}
n_s^\lambda(a)=\max\{n_s^\mu(a)\mid \mu\ \mbox{is}\ \Phi_I\mbox{-regular}\}.
\end{equation}
Then Lemma \ref{lrlem2}(\rmnum{4}) shows that the maximal value must be $0, 1$ or $2$. The following proposition is an easy consequence of Theorem 7.27 in \cite{XZ}.

\begin{prop}\label{lrprop1}
Choose $\lambda\in\Lambda_I^+$ and $\beta\in\Psi_\lambda^+$ so that $s_\beta\lambda$ is $\Phi_I$-regular. Then $\beta$ is a linked root from $\lambda$ to $s_\beta\lambda$ except the following cases:
\begin{itemize}
\item [(1)] $\Phi=B_n$ $($resp. $C_n)$, $n^\lambda_s(a)=1$, $n^\lambda_s(0)+n^\lambda_m(a)=1$ for $1\leq s<m$ and $0<a\in\caA$. Moreover, $\beta$ satisfies one of the following conditions:
    \begin{itemize}
    \item [(1a)] $\beta=e_i$ $($resp. $2e_i)$ or $e_i+e_j$ for $q_{s-1}<i<j\leq q_s$ with $\lambda_i=a>0=\lambda_j$.

    \item [(1b)] $\beta=e_i$ $($resp. $2e_i)$  or $e_i+e_j$ for $q_{s-1}<i\leq q_s\leq q_{m-1}<j\leq n$ with $\lambda_i=\lambda_j=a$.
    \end{itemize}

\item [(2)] $\Phi=D_n$, $n^\lambda_s(a)=n^\lambda_m(a)=1$ and $n^\lambda_s(0)=n^\lambda_m(0)=1$ for $1\leq s<m$ and $0<a\in\caA$. Moreover, $\beta=e_i+e_j$ or $e_i+e_k$ for $q_{s-1}<i<j\leq q_s\leq q_{m-1}<k<n$ with $\lambda_i=\lambda_k=a$ and $\lambda_j=\lambda_n=0$.
\end{itemize}
\end{prop}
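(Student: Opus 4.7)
By Definition~\ref{lrdef1}, $\beta$ is a linked root from $\lambda$ to $s_\beta\lambda$ if and only if the Jantzen coefficient $c(\lambda,s_\beta\lambda)\neq 0$. The plan is to compute
\[
c(\lambda,s_\beta\lambda)=\sum_{\gamma\in\Psi^+_{\lambda,s_\beta\lambda}}(-1)^{\ell(w_\gamma)}
\]
by enumerating all $\gamma\in\Psi^+_\lambda$ for which $s_\gamma\lambda$ lies in the $W_I$-orbit of $s_\beta\lambda$ and identifying exactly those configurations in which the signed contributions cancel. The root $\beta$ itself is always one such term, contributing $+1$ with $w_\beta=1$, so a vanishing sum demands the presence of at least one competing root $\gamma\neq\beta$ whose witness $w_\gamma\in W_I$ has odd length.

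The main tool will be Theorem~7.27 in~\cite{XZ}, which gives a combinatorial description of $\Psi^+_{\lambda,\mu}$ and of the corresponding elements $w_\gamma$ in terms of the invariants $n^\lambda_s(a)$. Since both $\lambda$ and $s_\beta\lambda$ are assumed $\Phi_I$-regular, Lemma~\ref{lrlem1} and Lemma~\ref{lrlem2} force $n^\lambda_s(a)\leq 2$ and pin the possible shapes of $\lambda$ at positions affected by $\beta$, so only a short list of configurations has to be inspected.

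I would then split by the type of $\Phi$ and by the shape of $\beta$. In type $A_{n-1}$, a root $\beta=e_i-e_j$ across two Levi blocks has no companion root producing a $W_I$-equivalent weight, hence $c(\lambda,s_\beta\lambda)=\pm 1\neq 0$. In types $B_n$ and $C_n$, a direct numerology check shows that a companion root $\gamma$ of the form $e_i+e_j$ (paired with the appropriate short or long root of type $e_i$ or $2e_i$) produces a $W_I$-equivalent weight to $s_\beta\lambda$ precisely when the values of $\lambda$ at positions $i$ and $j$ admit a single sign-flipping reflection in $W_I$ to bridge the two; this forces the relations $n^\lambda_s(a)=1$ and $n^\lambda_s(0)+n^\lambda_m(a)=1$ appearing in cases~(1a) and~(1b), and in both the witness $w_\gamma$ has length one, producing the cancellation. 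In type $D_n$, the absence of a short-root reflection in $W_{I_m}$ forces the more restrictive numerology of case~(2), where the companion witness is a product of one reflection in $W_{I_s}$ and two commuting reflections in $W_{I_m}$, hence of odd length three.

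The main technical obstacle is the bookkeeping of the witnesses $w_\gamma$: one has to verify that in each exceptional configuration there are exactly two contributing roots whose witnesses have opposite length parity, while in every non-exceptional configuration the signed sum reduces to $\pm 1$. This parity bookkeeping is precisely what is tabulated in the reduction tables of Theorem~7.27 in~\cite{XZ}, from which the listed exceptional cases can be read off directly.
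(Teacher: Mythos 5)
Your proposal matches the paper's approach: the paper's entire proof is the sentence ``an easy consequence of Theorem~7.27 in~\cite{XZ},'' and you too reduce everything to that theorem via the signed-sum formula $c(\lambda,\mu)=\sum_{\beta\in\Psi^+_{\lambda,\mu}}(-1)^{\ell(w_\beta)}$ and a parity count of witnesses. One small caution: your claim that the companion witness ``has length one'' (type $B,C$) or ``length three'' (type $D$) should really be a parity statement---the bridging elements are products of one or three reflections, hence act by $\det=-1$ relative to $w_\beta$, but their Coxeter lengths can exceed $1$ or $3$; since only $(-1)^{\ell(\cdot)}$ enters the coefficient this does not affect the conclusion, but the phrasing should be ``odd parity'' rather than a specific length.
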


\begin{remark}\label{lrrmk4}
Assume that $\lambda$ is $\Phi_I$-regular and $\langle\lambda, \beta^\vee\rangle>0$ for some $\beta\in\Phi^+\backslash\Phi_I$. There exists $w\in W_I$ so that $w\lambda\in\Lambda_I^+$ and $w\beta\in\Psi_{w\lambda}^+$. We can obtain linked roots for $\Phi_I$-regular weights by applying Proposition \ref{lrprop1}. Moreover, if $\beta$ is a linked root from $\lambda$ to $s_\beta\lambda$, it is also a linked root from $s_\beta\lambda$ to $\lambda$. These observations give us the following generalization.
\end{remark}

\begin{prop}\label{lrprop2}
Choose $\beta\in\Phi^+\backslash\Phi_I$. Suppose that $\lambda, s_\beta\lambda$ are distinct $\Phi_I$-regular weights. Then $\beta$ is a linked root from $\lambda$ to $s_\beta\lambda$ except the following cases:
\begin{itemize}
\item [(1)] $\Phi=B_n$ $($resp. $C_n)$, $n^\lambda_s(a)=1$, $n^\lambda_s(0)+n^\lambda_m(a)=1$ for $1\leq s<m$ and $0<a\in\caA$. Moreover, $\beta$ satisfies one of the following conditions:
    \begin{itemize}
    \item [(1a)] $\beta=e_i$ $($resp. $2e_i)$ or $e_i+e_j$ for $q_{s-1}<i, j\leq q_s$ with $|\lambda_i|=a>0=\lambda_j$.

    \item [(1b)] $\beta=e_i$ $($resp. $2e_i)$  or $e_i+e_j$ for $q_{s-1}<i\leq q_s\leq q_{m-1}<j\leq n$ with $\lambda_i=\lambda_j=\pm a$.

    \item [(1c)] $\beta=e_i$ $($resp. $2e_i)$  or $e_i-e_j$ for $q_{s-1}<i\leq q_s\leq q_{m-1}<j\leq n$ with $\lambda_i=-\lambda_j=\pm a$.
    \end{itemize}

\item [(2)] $\Phi=D_n$, $n^\lambda_s(a)=n^\lambda_m(a)=1$ and $n^\lambda_s(0)=n^\lambda_m(0)=1$ for $1\leq s<m$ and $0<a\in\caA$. Moreover, $\beta$ satisfies one of the following conditions:
    \begin{itemize}
    \item [(2a)] $\beta=e_i+e_j$ or $e_i+e_k$ for $q_{s-1}<i, j\leq q_s\leq q_{m-1}<k,l\leq n$ with $\lambda_i=\lambda_k=\pm a$ and $\lambda_j=\lambda_l=0$.

    \item [(2b)] $\beta=e_i+e_j$ or $e_i-e_k$ for $q_{s-1}<i, j\leq q_s\leq q_{m-1}<k,l\leq n$ with $\lambda_i=-\lambda_k=\pm a$ and $\lambda_j=\lambda_l=0$.
    \end{itemize}
\end{itemize}
\end{prop}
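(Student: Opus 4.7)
The strategy, as announced in Remark~\ref{lrrmk4}, is to reduce Proposition~\ref{lrprop2} to Proposition~\ref{lrprop1} by conjugating the pair $(\lambda,\beta)$ by an element of $W_I$ that sends $\lambda$ into $\Lambda_I^+$. Since $\lambda$ is $\Phi_I$-regular, there is a unique $w\in W_I$ with $\lambda_0:=w^{-1}\lambda\in\Lambda_I^+$. Because $W_I$ stabilizes $\Phi^+\setminus\Phi_I$ as a set, the root $\beta_0:=w^{-1}\beta$ lies in $\Phi^+\setminus\Phi_I$, and $\langle\lambda_0,\beta_0^\vee\rangle=\langle\lambda,\beta^\vee\rangle\neq 0$ (otherwise $\lambda=s_\beta\lambda$). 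If this pairing is negative, the identity $s_\beta=s_\beta^{-1}$ together with the extension clause in Definition~\ref{lrdef1} lets me interchange the roles of $\lambda$ and $s_\beta\lambda$ without changing the question; so without loss of generality $\beta_0\in\Psi_{\lambda_0}^+$. Definition~\ref{lrdef1} then yields the key equivalence: $\beta$ is a linked root from $\lambda$ to $s_\beta\lambda$ if and only if $\beta_0$ is a linked root from $\lambda_0$ to $s_{\beta_0}\lambda_0$ in the $\Phi_I$-dominant sense, and this last condition is exactly what Proposition~\ref{lrprop1} characterizes.

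The numerical hypotheses transfer for free. Lemma~\ref{lrlem2}(i) gives $n_s^{\lambda_0}(a)=n_s^{\lambda}(a)$ for all $s$ and all $a\in\caA$, so the assumptions on the multiplicities $n_s^\lambda(a)$ appearing in the exceptional cases of Proposition~\ref{lrprop1} coincide verbatim with those of Proposition~\ref{lrprop2}. What remains is a case-by-case translation of the form of $\beta_0$ into that of $\beta=w\beta_0$. To carry this out I would decompose $w=w_1\cdots w_m$ with $w_s\in W_{I_s}$, and observe the following: for $s<m$ (and for $s=m$ when $\Phi=A_{n-1}$) the factor $W_{I_s}$ is of type $A$ and acts on its block by permutation only, preserving signs; for $s=m$ with $\Phi\in\{B_n,C_n,D_n\}$ the group $W_{I_m}$ acts on the tail as signed permutations, where the sign changes are individual in types $B,C$ and paired in type $D$.

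With this decomposition in hand the case analysis is routine. Case~(1a) of Proposition~\ref{lrprop1} involves two indices $i_0<j_0$ in a head block $s<m$ and no sign action, so it pulls back to two indices $i,j$ in the same block (in either order) with $\lambda_i=a>0=\lambda_j$; combined with the $\lambda\leftrightarrow s_\beta\lambda$ swap from the first paragraph, which replaces $\lambda_i=a$ by $\lambda_i=-a$, this produces precisely Proposition~\ref{lrprop2}(1a) with $|\lambda_i|=a$. Case~(1b), with one head index and one tail index both equal to $a>0$, splits according to whether $w_m$ flips the sign of the tail coordinate $j_0$: if it does not, one obtains $\lambda_i=\lambda_j=\pm a$ and $\beta\in\{e_i, 2e_i, e_i+e_j\}$ as in Proposition~\ref{lrprop2}(1b); if it does, $e_{i_0}+e_{j_0}$ pulls back to $e_i-e_j$ and one gets $\lambda_i=-\lambda_j=\pm a$ as in Proposition~\ref{lrprop2}(1c). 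Case~(2) for $\Phi=D_n$ is handled the same way, producing Proposition~\ref{lrprop2}(2a) and (2b) depending on whether the paired sign flip from $W_{I_m}$ preserves or reverses the relative sign of the two tail entries $\lambda_k,\lambda_l$.

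The hard part will be the bookkeeping of the last paragraph: one must verify that pulling back the exceptional $\beta_0$'s through \emph{every} $W_I$-element compatible with $\lambda_0\in\Lambda_I^+$ reproduces exactly the list in Proposition~\ref{lrprop2}, with no case omitted and no spurious case introduced. The $W_I$-invariance of the multiplicities $n_s^\lambda(a)$ guarantees that no non-exceptional configuration is converted into an exceptional one, while the rigid signed-permutation structure of $W_{I_m}$ ensures that the $\pm$ freedom in $\lambda_i,\lambda_j$ (and the passage between $e_i+e_j$ and $e_i-e_j$) is exactly what is listed.
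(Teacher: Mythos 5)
Your proposal is correct and follows the same route that the paper indicates (but does not spell out) in Remark~\ref{lrrmk4}: conjugate by the unique element of $W_I$ that moves $\lambda$ into the $\Phi_I$-dominant chamber, apply Proposition~\ref{lrprop1}, and use the symmetry of linked roots under $\lambda\leftrightarrow s_\beta\lambda$ to dispose of the sign of $\langle\lambda,\beta^\vee\rangle$, with the $W_I$-invariance of the multiplicities $n_s^\lambda(a)$ (Lemma~\ref{lrlem2}) carrying the numerical hypotheses across. The remaining step you identify as bookkeeping --- pulling back the exceptional $\beta_0$ through $w=\prod_s w_s$, with the pure permutations in the head blocks and the (possibly paired) signed permutations in the tail producing the $\pm a$ options and the split between $e_i+e_j$ and $e_i-e_j$ --- is exactly the content the paper leaves implicit, and your sketch of it is accurate.
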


\subsection{Criteria of connectedness} In this subsection, we will give several connectedness criterion based on the previous results about linked roots. Although Proposition \ref{lrprop2} is fairly complete, it may not be so convenient to use it to verify all the linked roots. So we give the following sufficient condition.

\begin{lemma}\label{cclem1}
Let $\Phi=B_n, C_n$ or $D_n$ and $I$ be standard. Assume that $\beta=e_i\pm e_j\in\Phi$ for $q_{s-1}<i\leq q_s$ and $q_{t-1}<j\leq q_t$. Suppose that $\lambda, s_\beta\lambda$ are different $\Phi_I$-regular weights. They are linked by $\beta$ when one of the following conditions holds:
\begin{itemize}
\item [(1)] $s\neq t$ and $|\lambda_i|\neq |\lambda_j|$;

\item [(2)] $s\neq t$, $s, t<m$ and $|\lambda_i|=|\lambda_j|$;

\item [(3)] $s=t$, $|\lambda_i|, |\lambda_j|>0$ and $|\lambda_i|\neq |\lambda_j|$.
\end{itemize}
\end{lemma}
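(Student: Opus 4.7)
The plan is to derive the lemma directly from Proposition \ref{lrprop2}: since $\lambda$ and $s_\beta\lambda$ are assumed to be distinct $\Phi_I$-regular weights, that proposition guarantees $\beta$ is a linked root from $\lambda$ to $s_\beta\lambda$ unless $(\Phi, \lambda, \beta)$ falls into one of the exceptional cases (1a)--(1c) (for $\Phi = B_n$ or $C_n$) or (2a)--(2b) (for $\Phi = D_n$). The work is therefore a finite case check: verify that none of those exceptions can occur under any of the three conditions of the lemma.

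Before running the check, I would record two structural features of those exceptions. Call an exception \emph{same-segment} if it forces the two indices of $\beta = e_i \pm e_j$ to lie in a common segment $s < m$; this covers (1a) together with the $e_i + e_j$ branches of (2a) and (2b). Call it \emph{cross-segment} otherwise, covering (1b), (1c), and the mixed branches $e_i + e_k$ of (2a) and $e_i - e_k$ of (2b). Inspection of Proposition \ref{lrprop2} shows that every same-segment exception forces one of $\lambda_i$, $\lambda_j$ to vanish, while every cross-segment exception forces one of the two indices to lie in the last segment $m$ together with $|\lambda_i| = |\lambda_j|$.

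With this dichotomy in hand, each condition in the lemma kills both flavors of exception. Under condition (1), same-segment exceptions are excluded by $s \neq t$, while cross-segment exceptions are excluded by $|\lambda_i| \neq |\lambda_j|$. Under condition (2), same-segment exceptions are again excluded by $s \neq t$, and cross-segment exceptions are excluded by the requirement $s, t < m$, which forbids either index from lying in segment $m$. Under condition (3), cross-segment exceptions are excluded by $s = t$, and same-segment exceptions are excluded by $|\lambda_i|, |\lambda_j| > 0$. In every case, we conclude by Proposition \ref{lrprop2} that $\beta$ is a linked root from $\lambda$ to $s_\beta\lambda$.

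The main bookkeeping task, not a serious obstacle, is matching the uniform form $\beta = e_i \pm e_j$ of the lemma against the explicit root types $e_\cdot + e_\cdot$ and $e_\cdot - e_\cdot$ (together with the short/long roots $e_\cdot$, $2e_\cdot$, which play no role here) catalogued in Proposition \ref{lrprop2}, and confirming that $\beta \in \Phi^+ \setminus \Phi_I$ in each case so that the conclusion ``linked by $\beta$'' is meaningful. This last point uses only that $\Phi_{I_s}$ has type $A$ for $s < m$ and type $B$, $C$, or $D$ on segment $m$ alone, so under condition (1) or (2) any $\beta = e_i \pm e_j$ with indices in distinct segments automatically lies outside $\Phi_I$, while under condition (3) one is implicitly restricted to $\beta = e_i + e_j$ with $s = t < m$. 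No ingredient beyond Proposition \ref{lrprop2} is required.
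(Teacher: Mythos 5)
Your proposal is correct and follows the same route as the paper: handle $\beta\in\Phi_I$ separately as a degenerate case, and for $\beta\in\Phi^+\setminus\Phi_I$ apply Proposition \ref{lrprop2}, verifying that each of conditions (1)--(3) rules out the listed exceptions. Where the paper's proof simply asserts ``immediate consequence of Proposition \ref{lrprop2},'' you have usefully spelled out the dichotomy into same-segment exceptions (forcing a zero entry) and cross-segment exceptions (forcing an index in segment $m$ with $|\lambda_i|=|\lambda_j|$), which is exactly the bookkeeping the paper leaves implicit.
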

\begin{proof}
If $\beta\in\Phi_I$, there is nothing to prove. If $\beta\in\Phi\backslash\Phi_I$, the corollary is an immediate consequence of Proposition \ref{lrprop2}.
\end{proof}

If $\lambda, s_\beta\lambda$ are $\Phi_I$-regular, Proposition \ref{lrprop2} shows that they are linked except some very special cases. The following example shows that even two weights are not liked, it is still possible that they are connected. Because they might be connected by a chain of linked roots.

\begin{example}\label{ccex1}
Let $\Phi=B_3$ and $I=\{e_1-e_2\}$. Suppose that $\lambda=(1, 0\ |\ 2)$. Then $s_{e_1}\lambda=(-1, 0\ |\ 2)$. Note that the vertical lines separate different segment of the weights. In view of Proposition \ref{lrprop2}, $\lambda$ and $s_{e_1}\lambda$ are not linked. However, we still have $\lambda\lera s_{e_1}\lambda$ since
\[
\lambda=(1, 0\ |\ 2)\stackrel{\underleftrightarrow{e_1-e_3}}{}(2, 0\ |\ 1) \stackrel{\underleftrightarrow{e_2-e_3}}{}(2, 1\ |\ 0)
\stackrel{\underleftrightarrow{e_2+e_3}}{}(2, 0\ |\ -1)
\stackrel{\underleftrightarrow{e_1-e_3}}{}(-1, 0\ |\ 2)=s_{e_1}\lambda.
\]
In view of Prop \ref{lrprop2}, the roots marked on arrows are linked roots of corresponding weights.
\end{example}

Inspired by the above example, we present two more criteria of connectedness.

\begin{lemma}\label{cclem2}
Let $\Phi=B_n, C_n$ or $D_n$ and $I$ be standard. Let $\lambda$ be a $\Phi_I$-regular weight. Denote $a=|\lambda_i|$ for $1\leq i\leq n$. There is $1\leq s\leq m$ such that $q_{s-1}<i\leq q_s$. Assume that $a\neq0$. Then $\lambda\lera s_{e_i}\lambda$ when all the following conditions hold:
\begin{itemize}
\item [(1)] $n^\lambda_s(a)=1$;

\item [(2)] $n^\lambda_s(0)+n^\lambda_t(0)=1$;

\item [(3)] $1\leq n^\lambda_t(0)+n^\lambda_t(a)\leq 2$;
\end{itemize}
for some $1\leq t\leq m$.
\end{lemma}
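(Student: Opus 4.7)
The plan is to construct an explicit chain
\[
\lambda=\mu^{(0)}\lera\mu^{(1)}\lera\cdots\lera\mu^{(r)}=s_{e_i}\lambda
\]
of $\Phi_I$-regular weights in which every arrow is either a $W_I$-transformation (which trivially preserves $\lera$) or a linked root checked via Proposition \ref{lrprop2}; the conclusion then follows from Lemma \ref{lrlem0}. In type $D$ the formal expression $s_{e_i}$ is not itself a reflection, but the target $s_{e_i}\lambda$ still lies in $W\lambda$: hypothesis (2) supplies a position $l$ with $\lambda_l=0$, whence $s_{e_i}\lambda=s_{e_i-e_l}s_{e_i+e_l}\lambda$, so the ``sign flip'' is realized by two genuine root reflections. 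If this sign flip is already a linked root from $\lambda$ to $s_{e_i}\lambda$, the lemma follows at once; by Proposition \ref{lrprop2} this fails only in the exceptional configurations (1a)--(1c) for types $B$, $C$ and (2a)--(2b) for $D$, and it is precisely in those hard subcases that the hypotheses of the lemma do work.

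In each hard subcase the idea is to use the auxiliary segment $t\neq s$ from hypothesis (2) to make a temporary rearrangement. Let $j$ denote the unique zero lying in segments $s$ or $t$, and, when $n^\lambda_t(a)\geq 1$, let $k$ be a position of $t$ with $|\lambda_k|=a$. The prototypical detour has the form
\[
\lambda\xrightarrow{\beta_1}\mu\xrightarrow{\beta_2}\mu'\xrightarrow{\beta_3}s_{e_i}\lambda,
\]
possibly combined with a $W_I$-transposition. Here $\beta_1$ is a cross-segment reflection chosen from $e_i\pm e_k$, $e_i\pm e_p$ (with $p$ in segment $m$, when nonempty), or $e_j\pm e_k$, with signs arranged so that $s_{\beta_1}\lambda$ is nontrivial and $\Phi_I$-regular; its effect is either to raise $n^\mu_s(a)$ to $2$ or to transfer the zero between segments, thereby moving $\mu$ out of every exception list of Proposition \ref{lrprop2}. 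The middle step $\beta_2$ is the now-unblocked sign flip---typically $s_{e_p}$ for $p$ in segment $m$, which is always linked because the exceptions require $s<m$, or else $s_{e_i}$ itself applied to the rearranged $\mu$---and $\beta_3$ undoes $\beta_1$. Linkedness of any cross-segment reflection is supplied by Lemma \ref{cclem1}(1) when the absolute values differ and by Lemma \ref{cclem1}(2) when $s,t<m$ and the absolute values agree.

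The main obstacle is the combinatorial case analysis: each exception form in Proposition \ref{lrprop2} demands its own detour. For every one of them we must verify (i) that each intermediate weight is $\Phi_I$-regular in the sense of Lemma \ref{lrlem1} (the most delicate step, since a naive reflection can easily place two equal entries inside a single segment), and (ii) that the composition lands on $s_{e_i}\lambda$ up to the action of $W_I$. The most intricate subcases are $n^\lambda_t(a)=2$, where both $\pm a$ already occur in segment $t$ and so a direct $s_{e_i\pm e_k}$ would violate regularity in $t$---forcing a longer detour that first migrates the zero via $s_{e_j+e_k}$, then applies $s_{e_i-e_k}$, and closes with a $W_I$-transposition inside segment $s$---and segment $m$ empty, where no position in $m$ is available to host an unrestricted sign flip and the detour must remain entirely within segments $s$ and $t$. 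In type $D$ the same scheme applies after replacing every single sign flip by the pair $s_{e_i\pm e_l}$, with the $D$-exceptions (2a)--(2b) circumvented by the same style of rearrangement.
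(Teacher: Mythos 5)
Your plan — realize $s_{e_i}$ as a composition of genuine root reflections (via a zero entry when $\Phi=D_n$), show it is already a linked root outside the exception list of Proposition~\ref{lrprop2}, and in the exceptional cases build a short chain of linked roots and $W_I$-moves verified by Lemma~\ref{cclem1} and Lemma~\ref{lrlem1} — is indeed the paper's strategy. But some of the specific mechanics you describe do not match what actually works, and you concede you have not checked the cases.

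Concretely, the paper's detours never route through segment $m$; they stay entirely in segments $s$ and $t$. The controlling observation is that hypothesis~(2) places the unique zero either in segment $s$ or in segment $t$. If it is in $s$ (say at $j$), the chain is $\lambda\lera s_{e_j\pm e_k}\lambda\lera s_{e_i-e_j}(s_{e_j\pm e_k}\lambda)\lera s_{e_j\mp e_k}(\cdots)=s_{e_i}\lambda$: migrate the zero into segment $t$, swap $\pm a$ inside segment $s$ by the $W_I$-transposition $s_{e_i-e_j}$ (not by a ``sign flip'' $s_{e_p}$ with $p$ in segment $m$ — that move never appears), then migrate the zero back. If the zero is in $t$ (at $j$), it is a two-step chain $\lambda\lera s_{e_i\pm e_j}\lambda\lera s_{e_i\mp e_j}(s_{e_i\pm e_j}\lambda)=s_{e_i}\lambda$. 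These templates already absorb $n^\lambda_t(a)=2$: in that situation the zero must sit in segment $s$, the root $e_j\pm e_k$ moves the zero into $t$ without breaking regularity (segment $t$ had $\{a,-a\}$ and now has $\{\mp a,0\}$), and no longer detour is needed. The genuinely special cases are those where one endpoint of the detour would put a zero into segment $m$ for $\Phi=B_n,C_n$ (forbidden by Lemma~\ref{lrlem2}), but there the hypotheses force $n^\lambda_s(0)+n^\lambda_m(a)=2$, pushing you out of the exception list so that $e_i$ (resp.\ $2e_i$) is directly a linked root — no detour needed at all. Finally, the $t=m$ case for $\Phi=D_n$ splits further on $n^\lambda_m(a)\in\{0,1\}$ and needs the extra reflections $e_k\pm e_j\in\Phi_I$ inside segment $m$. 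So the approach is right, but the proposal's picture of which subcases are hard and what each detour looks like is off, and the actual verification — which is the content of the lemma — is left undone.
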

\begin{proof}
Obviously (2) yields $t\neq s$. First assume that $n^\lambda_s(0)=1$. Then $n^\lambda_t(0)=0$ by (2) and $1\leq n^\lambda_t(a)\leq 2$ by (3). There are $q_{s-1}<j\leq q_s$ with $\lambda_j=0$ and $q_{t-1}<k\leq q_t$ with $|\lambda_k|=a$. In view of Lemma \ref{lrlem1}, if $s, t<m$, there exists $\beta\in\{e_j\pm e_k\}$ so that $s_\beta\lambda$ is $\Phi_I$-regular ($\beta=e_j+e_k$ when $\lambda_k=\lambda_i$ and $\beta=e_j-e_k$ when $\lambda_k=-\lambda_i$, one always have $(s_\beta\lambda)_j=-\lambda_i=-(s_\beta\lambda)_i$). Let $\gamma\neq\beta$ be the other root contained in $\{e_j\pm e_k\}$. Set $\mu=s_{e_i-e_j}s_\beta\lambda$. If $\beta=e_j+e_k$, then $\gamma=e_j-e_k$. One has $\mu_i=(s_\beta\lambda)_j=-\lambda_k=-\lambda_i$, $\mu_j=(s_\beta\lambda)_i=\lambda_i=\lambda_k$, $\mu_k=(s_\beta\lambda)_k=0$ and $\mu_l=\lambda_l$ for $l\neq i, j, k$. Therefore
\[
\lambda\lera s_\beta\lambda\lera s_{e_i-e_j}s_\beta\lambda\lera s_\gamma s_{e_i-e_j}s_\beta\lambda=s_{e_i}\lambda,
\]
keeping in mind of Lemma \ref{cclem1}(\rmnum{1}) and $e_i-e_j\in\Phi_I$. If $\lambda_i=a$, we can illustrate the transformation of entries involved here (the case $\lambda_i=-a$ is similar):
\begin{equation}\label{cceq1}
(\lambda_i, \lambda_j\ |\ \lambda_k)=(a, 0\ |\ a)\stackrel{\underleftrightarrow{e_j+e_k}}{}(a, -a\ |\ 0)\stackrel{\underleftrightarrow{e_i-e_j}}{}(-a, a\ |\ 0)\stackrel{\underleftrightarrow{e_j-e_k}}{}(-a, 0\ |\ a).
\end{equation}
In a similar spirit, we can obtain $\lambda\lera s_{e_i}\lambda$ when $\beta=e_j-e_k$. If $s<t=m$ and $\Phi=D_n$, the argument is similar to the case $s, t<m$. If $s<t=m$ and $\Phi=B_n$ (resp. $C_n$), $\nu=s_{e_k\pm e_j}\lambda$ are not $\Phi_I$-regular (since $\nu_k=0$ and $n_m^\nu(0)=1$, contradicts Lemma \ref{lrlem2}). Fortunately, now $e_i$ (resp. $2e_i$) is a linked root from $\lambda$ to $s_{e_i}\lambda$ by Proposition \ref{lrprop2} (with $n_s^\lambda(0)+n_m^\lambda(a)=2$). If $s=m$, then $\lambda\lera s_{e_i-e_j}s_{e_i+e_j}\lambda=s_{e_i}\lambda$ since $e_i\pm e_j\in\Phi_{I_m}\subset\Phi_I$.

Next assume that $n^\lambda_s(0)=0$. Then $n^\lambda_t(0)=1$ by (2) and $n^\lambda_t(a)\leq 1$ by (3). There exists $q_{t-1}<j\leq q_t$ with $\lambda_j=0$. If $s, t<m$, we can choose $\beta\in\{e_i\pm e_j\}$ such that $s_\beta\lambda$ is $\Phi_I$-regular. Let $\gamma\neq \beta$ be the other root of $\{e_i\pm e_j\}$. We obtain $\lambda\lera s_\beta\lambda\lera s_\gamma s_\beta\lambda=s_{e_i}\lambda$ by Lemma \ref{cclem1}. If $s=m$ and $\Phi=D_n$, the proof is similar. If $s=m$ and $\Phi=B_n$ (resp. $C_n$), we still get $\lambda\lera s_{e_i}\lambda$ since $e_i\in\Phi_I$ ($2e_i\in\Phi_I$). If $t=m$, we get $e_j\in\Phi_I$ (resp. $2e_j\in\Phi_I$) for $\Phi=B_n$ (resp. $C_n$) and thus $\lambda_j\neq0$, a contradiction. This forces $\Phi=D_n$. In this case, if $n^\lambda_m(a)=0$, then $\lambda\lera s_{e_i-e_j}s_{e_i+e_j}\lambda=s_{e_i}\lambda$. If $n^\lambda_m(a)=1$, there exists $q_{m-1}<k\leq n$ with $|\lambda_k|=a$. If $\lambda_k=\lambda_i$, then $\lambda\lera s_{e_k+e_j}s_{e_k-e_j}s_{e_i+e_k}\lambda=s_{e_i}\lambda$ by Proposition \ref{lrprop2} and $e_k\pm e_j\in\Phi_I$. If $\lambda_k=-\lambda_i$, then $\lambda\lera s_{e_k+e_j}s_{e_k-e_j}s_{e_i-e_k}\lambda=s_{e_i}\lambda$ by a similar argument.
\end{proof}

\begin{remark}\label{ccrmk1}
The formula (\ref{cceq1}) helps us to understand the corresponding argument more clearly. It could be very useful if one keeps this kind of formula in mind while going through a similar argument.
\end{remark}

\begin{lemma}\label{cclem3}
Let $\Phi=B_n, C_n$ or $D_n$ and $I$ be standard. Denote $a=|\lambda_i|$ and $b=|\lambda_j|$ for $1\leq i, j\leq q_{m-1}$. There exist $1\leq s, t<m$ such that $q_{s-1}<i\leq q_s$ and $q_{t-1}<j\leq q_t$. Assume that $ab\neq0$. Then $\lambda\lera s_{e_i}s_{e_j}\lambda$ when the following conditions hold:
\begin{itemize}
\item [(1)] $n^\lambda_s(a)=n^\lambda_t(b)=1$;

\item [(2)] $n^\lambda_s(b)\leq1$ or $n^\lambda_t(a)\geq 1$;

\item [(3)] $n^\lambda_s(b)\geq 1$ or $n^\lambda_t(a)\leq 1$.
\end{itemize}
\end{lemma}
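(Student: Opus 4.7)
The plan is to construct, in each admissible configuration, an explicit chain of $\Phi_I$-regular weights
\[
\lambda=\lambda^{(0)}\lera\lambda^{(1)}\lera\cdots\lera\lambda^{(r)}
\]
whose endpoint $\lambda^{(r)}$ lies in the same $W_I$-orbit as $s_{e_i}s_{e_j}\lambda$, with each transition witnessed by a single linked root from $\Phi\setminus\Phi_I$ via Lemma \ref{cclem1}. Conditions (2) and (3) forbid exactly the two pairs $(n_s^\lambda(b),n_t^\lambda(a))\in\{(0,2),(2,0)\}$, and using the evident symmetry $(i,s,a)\leftrightarrow(j,t,b)$ (which preserves the hypotheses) it suffices to treat the five cases with $(n_s^\lambda(b),n_t^\lambda(a))\in\{(0,0),(0,1),(1,1),(1,2),(2,2)\}$. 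A direct check shows that the boundary subcase $s=t$ forces $n_s^\lambda(b)=n_t^\lambda(a)=1$ and $a\neq b$, and the subcase $a=b$ with $s\neq t$ likewise forces the pair $(1,1)$, so these extremes fall under the diagonal case.

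The key identity driving every chain is $s_{e_i}s_{e_j}=s_{e_i+e_j}s_{e_i-e_j}$. In case $(0,0)$ with $s\neq t$, both intermediate weights $s_{e_i-e_j}\lambda$ and $s_{e_i+e_j}s_{e_i-e_j}\lambda=s_{e_i}s_{e_j}\lambda$ are $\Phi_I$-regular, since the vanishing of $n_s^\lambda(b)$ and $n_t^\lambda(a)$ prevents any absolute-value collision inside segments $s$ and $t$; each step is a linked root by Lemma \ref{cclem1}(1). In the subcase $s=t$, the root $e_i+e_j$ lies in $\Phi\setminus\Phi_I$ (as $\Phi_{I_s}$ is of type $A$), is linked by Lemma \ref{cclem1}(3), and $s_{e_i+e_j}\lambda$ differs from $s_{e_i}s_{e_j}\lambda$ only by the in-segment transposition $s_{e_i-e_j}\in W_{I_s}\subset W_I$. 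The subcase $a=b$ with $s\neq t$ is again one step, using $e_i-e_j$ or $e_i+e_j$ according to whether $\lambda_i$ and $\lambda_j$ have opposite or equal signs, linked by Lemma \ref{cclem1}(2).

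For the mixed cases I introduce auxiliary indices: when $n_s^\lambda(b)\geq 1$ let $j'$ be an index in segment $s$ with $|\lambda_{j'}|=b$, and when $n_t^\lambda(a)\geq 1$ let $i'$ be an index in segment $t$ with $|\lambda_{i'}|=a$. In case $(0,1)$ one first ``clears'' $\lambda_{i'}$ by the cross-segment swap-negation of $\{i,i'\}$ via $e_i+e_{i'}$ or $e_i-e_{i'}$ (sign chosen according to the relative signs of $\lambda_i,\lambda_{i'}$), producing a $\Phi_I$-regular intermediate; then applies the swap-negation of $\{i',j\}$ via $e_{i'}+e_j\in\Phi\setminus\Phi_I$, which is linked by Lemma \ref{cclem1}(3); and finally absorbs the residual discrepancy by $s_{e_{i'}-e_j}\in W_{I_t}$. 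Case $(1,1)$ with $s\neq t$ and $a\neq b$ proceeds analogously with two successive swap-negations, one on the $b$-pair $\{j',j\}$ and one on the $a$-pair $\{i,i'\}$, the four sign choices being dictated by $\Phi_I$-regularity. The remaining cases $(1,2)$ and $(2,2)$ reduce to $(1,1)$ or $(0,1)$ by first exchanging one of the redundant $a$-entries in segment $t$ (or $b$-entries in segment $s$) with a matching entry in the other segment; hypotheses (2) and (3) guarantee that such a partner is present.

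The main obstacle is the sign bookkeeping in the mixed cases: at every step the sign in $e_k\pm e_l$ must be chosen so that the intermediate weight remains $\Phi_I$-regular and the linked-root conclusion of Lemma \ref{cclem1} continues to apply. This is precisely where conditions (2) and (3) are indispensable; for the forbidden pair $(n_s^\lambda(b),n_t^\lambda(a))=(0,2)$, any attempt to negate $\lambda_i$ by a cross-segment reflection creates an unavoidable absolute-value collision in segment $t$, so no chain of linked roots can realize $s_{e_i}s_{e_j}\lambda$, which is why the hypotheses are imposed.
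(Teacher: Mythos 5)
Your overall scheme — splitting on $(n_s^\lambda(b),n_t^\lambda(a))$, using symmetry to reduce to five cases, and building chains of linked roots via Lemma~\ref{cclem1} — is exactly the paper's, and your handling of $(0,0)$, the boundary subcases $s=t$ and $a=b$, and $(0,1)$ is sound. However, the reduction you propose for $(1,2)$ and $(2,2)$ does not work, and this is a genuine gap, not a presentational one.

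Concretely, in case $(2,2)$ take $\lambda_i=a$, $\lambda_k=b$, $\lambda_l=-b$ in segment $s$ and $\lambda_p=a$, $\lambda_r=-a$, $\lambda_j=b$ in segment $t$. Any cross-segment reflection $\sigma$ that ``exchanges a redundant $b$-entry in $s$ with a redundant $a$-entry in $t$'' (say $\sigma=s_{e_l-e_r}$) produces $\mu=\sigma\lambda$ with $n_s^\mu(a)=2$ and $n_t^\mu(b)=2$, so hypothesis~(1) of the lemma fails for $\mu$, and you cannot invoke a ``smaller case.'' Worse, $\sigma$ need not even be a linked root \emph{from the target} $s_{e_i}s_{e_j}\lambda$: e.g. with $\sigma=s_{e_{j'}+e_p}$ in case $(1,2)$, the weight $\sigma(s_{e_i}s_{e_j}\lambda)$ has two entries of value $-a$ in segment $s$ and is $\Phi_I$-singular, so the chain $\lambda\lera\sigma\lambda\lera\cdots\lera\sigma(s_{e_i}s_{e_j}\lambda)\lera s_{e_i}s_{e_j}\lambda$ collapses at its last step. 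The paper does \emph{not} recurse here: for $n_s^\lambda(b)=2$, $n_t^\lambda(a)\in\{1,2\}$ it writes down explicit chains that use the auxiliary entries (your $j'$, $i'$, or the paper's $k,l,p,r$) as \emph{intermediate carriers}, applying two linked roots from $\Phi\setminus\Phi_I$ followed by an element of $W_I$, and checks $\Phi_I$-regularity directly at each intermediate weight rather than appealing to an inductive instance of the lemma. Your case $(1,1)$ sketch (``two successive swap-negations on $\{j',j\}$ and $\{i,i'\}$'') also needs intermediate $W_I$ conjugations to close up — the paper in fact handles $(1,0)$ and $(1,1)$ with a single cross-segment swap and a single in-segment root, never touching $i'$ — but that is a matter of economy; the $(1,2)/(2,2)$ reduction is the substantive error.
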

\begin{proof}
If $i=j$, there is nothing to prove. So we can assume that $i\neq j$. If $a=b$, then $s\neq t$ by (1)(note that $i\neq j$). So $s_{e_i\pm e_j}\lambda=\lambda, s_{e_i}s_{e_j}\lambda$ are $\Phi_I$-regular (see Lemma \ref{lrlem1}) and $\lambda\lera s_{e_i\pm e_j}\lambda$ by Lemma \ref{cclem1}(2). If $s=t$, then $a\neq b$ by (1), which also implies that $s_{e_i+e_j}\lambda$ is $\Phi_I$-regular. Thus $\lambda\lera s_{e_i+e_j}\lambda\lera s_{e_i-e_j}s_{e_i+e_j}\lambda=s_{e_i}s_{e_j}\lambda$ by Lemma \ref{cclem1}(3) and $e_i-e_j\in\Phi_I$. From now on assume that $a\neq b$ and $s\neq t$. Obviously, $s_{e_i}s_{e_j}\lambda$ is $\Phi_I$-regular in view of (1). By symmetry, we can also assume that $n^\lambda_s(b)\geq n^\lambda_t(a)$.

If $n^\lambda_s(b)=0$, then $n^\lambda_t(a)=0$. It is easy to verify that $s_{e_i+e_j}\lambda$ is $\Phi_I$-regular. In view of Lemma \ref{cclem1}, $e_i+e_j$ is a linked root from $\lambda$ and $e_i-e_j$ is a linked root from $s_{e_i+e_j}\lambda$. It follows that
\[
\lambda\lera s_{e_i+e_j}\lambda\lera s_{e_i-e_j}s_{e_i+e_j}\lambda=s_{e_i}s_{e_j}\lambda.
\]

If $n^\lambda_s(b)=1$, then $n^\lambda_t(a)=0$ or $1$. There exists $q_{s-1}<k\leq q_s$ with $|\lambda_k|=b$. Moreover, $s_{e_k\pm e_j}\lambda=\lambda, s_{e_k}s_{e_j}\lambda$ are $\Phi_I$-regular, so are $s_{e_i+e_k}s_{e_k\pm e_j}\lambda$. If $\lambda_k=\lambda_i$, then
\[
\lambda\lera s_{e_k+e_j}\lambda\lera s_{e_i+e_k}s_{e_k+e_j}\lambda\lera s_{e_i-e_k}s_{e_i+e_k}s_{e_k+e_j}\lambda=s_{e_i}s_{e_j}\lambda
\]
by Lemma \ref{cclem1} and $e_i-e_k\in\Phi_I$. If $\lambda_k=-\lambda_i$, then
\[
\lambda\lera s_{e_k-e_j}\lambda\lera s_{e_i+e_k}s_{e_k-e_j}\lambda\lera s_{e_i-e_k}s_{e_i+e_k}s_{e_k-e_j}\lambda=s_{e_i}s_{e_j}\lambda.
\]

If $n^\lambda_s(b)=2$, then $n^\lambda_t(a)\geq1$ by (2). First assume that $n^\lambda_t(a)=1$, there is $q_{t-1}\leq k\leq q_t$ with $|\lambda_k|=a$. We can get
\[
\lambda\lera s_{e_i+e_k}\lambda\lera s_{e_k+e_j}s_{e_i+e_k}\lambda\lera s_{e_k-e_j}s_{e_k+e_j}s_{e_i+e_k}\lambda=s_{e_i}s_{e_j}\lambda
\]
for $\lambda_k=\lambda_i$ and
\[
\lambda\lera s_{e_i-e_k}\lambda\lera s_{e_k+e_j}s_{e_i-e_k}\lambda\lera s_{e_k-e_j}s_{e_k+e_j}s_{e_i-e_k}\lambda=s_{e_i}s_{e_j}\lambda
\]
for $\lambda_k=-\lambda_i$ by a similar argument.

Now suppose that $n^\lambda_s(b)=n^\lambda_t(a)=2$.
There exist $q_{s-1}< k, l\leq q_s$ and $q_{t-1}< p, r\leq q_t$ with $\lambda_k=b$, $\lambda_l=-b$, $\lambda_p=a$ and $\lambda_r=-a$. If $\lambda_i=a$ and $\lambda_j=b$, then both $s_{e_l-e_r}\lambda$ and $s_{e_i+e_j}s_{e_l-e_r}\lambda$ are $\Phi_I$-regular. With Lemma \ref{cclem1} and $e_j-e_r, e_i-e_l\in\Phi_I$, one has
\[
\lambda\lera s_{e_l-e_r}\lambda\lera s_{e_i+e_j}s_{e_l-e_r}\lambda\lera s_{e_j -e_r}s_{e_i-e_l}s_{e_i+e_j}s_{e_l-e_r}\lambda=s_{e_i}s_{e_j}\lambda.
\]
To make the argument more clear, we give entries involved here:
\[
\begin{aligned}
(\lambda_i, \lambda_k, \lambda_l\ |\ \lambda_p, \lambda_r, \lambda_j)=&(a, b, -b\ |\ a, -a, b)\stackrel{\underleftrightarrow{e_l-e_r}}{}(a, b, -a\ |\ a, -b, b)\\
\stackrel{\underleftrightarrow{e_i+e_j}}{}&(-b, b, -a\ |\ a, -b, -a)\stackrel{\underleftrightarrow{e_i-e_l}}{}(-a, b, -b\ |\ a, -b, -a)\\
\stackrel{\underleftrightarrow{e_j-e_r}}{}&(-a, b, -b\ |\ a, -a, -b)=(-\lambda_i, \lambda_k, \lambda_l\ |\ \lambda_p, \lambda_r, -\lambda_j).
\end{aligned}
\]
\end{proof}

Some other cases are covered by following useful result.

\begin{lemma}\label{cclem4}
Let $\Phi=B_n, C_n$ or $D_n$ and $I$ be standard. Let $\lambda$ be a $\Phi_I$-regular weight. Choose $1\leq s, t\leq m$ and $0\leq a, b\in\caA$ with $s\neq t$ and $a\neq b$. Suppose that
\begin{itemize}
\item [(\rmnum{1})] Both $n^\lambda_s(a)$ and $n^\lambda_t(b)$ are nonzero;

\item [(\rmnum{2})] Both $n^\lambda_s(b)$ and $n^\lambda_t(a)$ are not maximal;

\item [(\rmnum{3})] $n^\lambda_s(a)+n^\lambda_t(b)>n^\lambda_s(b)+n^\lambda_t(a)$.
\end{itemize}
There exists $\beta=e_i\pm e_j\in\Phi$ so that the following conditions hold:
\begin{itemize}
\item [(1)] $|\lambda_i|=a$, $|\lambda_j|$=b with $q_{s-1}<i\leq q_s$ and $q_{t-1}<j\leq q_t$.

\item [(2)] $\mu=s_\beta\lambda$ is $\Phi_I$-regular and $\beta$ is a linked root from $\lambda$ to $\mu$;

\item [(3)] $n^\mu_s(a)=n^\lambda_s(a)-1$, $n^\mu_s(b)=n^\lambda_s(b)+1$ and $n^\mu_s(c)=n^\lambda_s(c)$ for $c\neq a, b$.
\end{itemize}
\end{lemma}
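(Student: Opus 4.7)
The plan is to construct $\beta = e_i \pm e_j$ explicitly. By (\rmnum{1}) there exist $i$ with $q_{s-1} < i \le q_s$ and $|\lambda_i| = a$, and $j$ with $q_{t-1} < j \le q_t$ and $|\lambda_j| = b$; because $s \ne t$ the root $\beta$ lies outside $\Phi_I$ (and can be arranged positive by possibly interchanging $i, j$). The reflection $s_\beta$ alters only positions $i$ and $j$, sending the pair $(\lambda_i, \lambda_j)$ to $(\lambda_j, \lambda_i)$ or $(-\lambda_j, -\lambda_i)$, so segment $s$ loses an entry of absolute value $a$ and gains one of absolute value $b$: conclusion (3) is immediate and independent of the sign choice.

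The substantive work is to choose the sign in $\beta$ and, when available, to choose between indices so that $\mu = s_\beta \lambda$ is $\Phi_I$-regular. Regularity in segments other than $s, t$ is automatic. In segment $s$, the new entry $\mu_i$ has absolute value $b$; by Lemma \ref{lrlem1} together with the non-maximality of $n_s^\lambda(b)$ from (\rmnum{2}), the only possible conflict arises when $n_s^\lambda(b) = 1$, and is resolved by prescribing the sign of $\mu_i$ to be opposite to the preexisting $\pm b$ entry in segment $s$. The same analysis applies to segment $t$. These two sign constraints are coupled through the single sign of $\beta$, but when $n_s^\lambda(a) = 2$ (respectively $n_t^\lambda(b) = 2$) one has the freedom to flip the sign of $\lambda_i$ (respectively $\lambda_j$) by re-picking the index. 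When $n_s^\lambda(a) = n_t^\lambda(b) = 1$ both indices are forced, and here assumption (\rmnum{3}) gives $n_s^\lambda(b) + n_t^\lambda(a) < 2$, so at most one segment imposes a sign constraint — which one of the two choices $\beta = e_i \pm e_j$ can always meet.

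With $\mu$ regular and distinct from $\lambda$, the linkedness of $\beta$ follows directly from Proposition \ref{lrprop2}: every exceptional case listed there requires either $|\lambda_i| = |\lambda_j|$ (cases (1b), (1c), (2a), (2b), all involving matching absolute values across an inner segment and segment $m$) or $i, j$ in the same segment (case (1a)). Both are excluded by the hypotheses $a \ne b$ and $s \ne t$, so Proposition \ref{lrprop2} applies and yields conclusion (2).

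The main obstacle is the regularity case analysis when $s = m$ or $t = m$, where Lemma \ref{lrlem1}(2)-(3) imposes distinct-absolute-value conditions in place of distinct-value conditions and the root system type enters (forbidding $\mu_i = 0$ in segment $m$ for $\Phi = B_n, C_n$), and when $0 \in \{a, b\}$, where the sign of $\mu_i$ or $\mu_j$ is forced and the non-maximality in (\rmnum{2}) must alone guarantee no collision. Remark \ref{lrrmk3} lets us assume $a_{\overline m - 1} > 0$, which rules out the one degenerate configuration where the analysis would otherwise break down.
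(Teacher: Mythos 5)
Your proposal is correct and follows essentially the same strategy as the paper's proof: fix indices $i, j$ realizing $|\lambda_i|=a$ and $|\lambda_j|=b$, observe that linkedness will follow from Proposition~\ref{lrprop2} (the paper routes this through Lemma~\ref{cclem1}, which is a wrapper for the same observation, using $a\neq b$ and $s\neq t$ to avoid the exceptional configurations), and then devote the real work to making $s_\beta\lambda$ regular by exploiting the sign freedom in $\beta$ together with the freedom to re-pick $i$ or $j$ when $n^\lambda_s(a)=2$ or $n^\lambda_t(b)=2$. Your framing in terms of "sign constraints" at each segment versus "degrees of freedom" supplied by $\beta$-sign and index choice, with hypothesis (\rmnum{3}) guaranteeing that demand never exceeds supply, is the same counting argument the paper carries out by explicit cases on $(n^\lambda_s(b), n^\lambda_t(a))$.
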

\begin{proof}
If (1) holds and $s_\beta\lambda$ is $\Phi_I$-regular, then $\beta$ is a linked root by Lemma \ref{cclem1} and (3) follows from (1). With (\rmnum{1}), we can find $q_{s-1}<i\leq q_s$ and $q_{t-1}<j\leq q_t$ such that $|\lambda_i|=a$ and $|\lambda_j|=b$.

Keeping in mind of Lemma \ref{lrlem1}, if $n^\lambda_s(b)=n^\lambda_t(a)=0$, then both $s_{e_i+e_j}\lambda$ and $s_{e_i-e_j}\lambda$ are $\Phi_I$-regular. Choose $\beta=e_i+e_j$. If $n^\lambda_s(b)=0$ and $n^\lambda_t(a)=1$, then (\rmnum{2}) implies $t<m$ and $a>0$ (see Lemma \ref{lrlem2}). There exists $q_{t-1}<k\leq q_t$ such that $|\lambda_k|=a$. We choose $\beta=e_i+e_j$ when $\lambda_k=\lambda_i$ and $\beta=e_i-e_j$ when $\lambda_k=-\lambda_i$. Since $n^\lambda_s(b)=0$ is not maximal, Lemma \ref{lrlem1} implies that $s_{\beta}\lambda$ is $\Phi_I$-regular. If $n^\lambda_s(b)=1$ and $n^\lambda_t(a)=0$, the argument the similar. If $n^\lambda_s(b)=n^\lambda_t(a)=1$, then (\rmnum{2}) implies $s, t<m$ and $a, b>0$. We can find $q_{s-1}<k\leq q_{s}$ and $q_{t-1}<l\leq q_t$ so that $|\lambda_k|=b$ and $|\lambda_l|=a$. It follows from (\rmnum{3}) that $n^\lambda_s(a)=2$ or $n^\lambda_t(b)=2$. By symmetry, it suffices to consider the case $n^\lambda_s(a)=2$. There exists $q_{s-1}<r\leq q_s$ with $\lambda_r=-\lambda_i$. If $\lambda_i=\lambda_l$ and $\lambda_j=\lambda_k$, choose $\beta=e_i+e_j$. If $\lambda_i=-\lambda_l$ and $\lambda_j=-\lambda_k$, choose $\beta=e_i-e_j$. If $\lambda_i=\lambda_l$ and $\lambda_j=-\lambda_k$, choose $\beta=e_r-e_j$ and interchange the index $r$ with $i$ (note that $|\lambda_r|=|\lambda_i|=a$). If $\lambda_i=-\lambda_l$ and $\lambda_j=\lambda_k$, choose $\beta=e_r+e_j$ and interchange $r$ with $i$. In either case, $s_\beta\lambda$ is $\Phi_I$-regular.
\end{proof}

\begin{example}\label{ccex3}
The above lemma will be applied in our main results. The condition (\rmnum{3}) may seem odd at first sight. It is used to rule out the following counterexample. Suppose that $\Phi=D_4$ and $I=\{e_1-e_2, e_3-e_4\}$. Let $\lambda=(2, 1\ |\ 2, -1)$. Then $\lambda\in\Lambda_I^+$. With $(s, t)=(1, 2)$ and $(a, b)=(2, 1)$, we get $n^\lambda_s(a)=n^\lambda_s(b)=n^\lambda_t(a)=n^\lambda_t(b)=1$. The conditions (\rmnum{1}) and (\rmnum{2}) in the above lemma are satisfied, while (\rmnum{3}) is failed. In this case, we can not find $\beta\in\Phi$ such that (1)-(3) hold.
\end{example}

\begin{remark}\label{ccrmk2}
In view of Proposition \ref{lrprop2}, it is expected that the subcategory $\caO^\frp_\lambda$ or the system $(\Phi, \Phi_I, \Phi_J)$ for classical Lie algebras contains only one block in most cases. One might wonder what kind of system contains more blocks. How many blocks could it have? Is there an upper limit for the number of blocks? For classical Lie algebras, there are many examples of systems having two blocks \cite{ES, P1}. Platt \cite{P1} also described a system containing four blocks (which he called linkage classes there). Now we are going to give a general construction in the following example.
\end{remark}

\begin{example}\label{ccex2}
First, let $\Phi=B_2$ and $I=\{e_1-e_2\}=\Delta\backslash\{\alpha_2\}$. Let $\lambda=(1, 0)$ with $J=\{e_2\}$. Then ${}^IW^J\lambda=W\lambda\cap\Lambda_I^+$ contains two weights $\lambda^1=(1, 0)$ and $\lambda^2=(0, -1)$.  By Theorem 7.5 in \cite{XZ} or Jantzen's simplicity criteria \cite{J2}, the generalized Verma modules $M_I(\lambda^1)$ and $M_I(\lambda^2)$ are simple. Thus the category $\caO_\lambda^{\frp_I}$ is semisimple and has two blocks.

Next, let $\Phi=B_6$ and $I=\{e_1-e_2, e_3-e_4, e_4-e_5, e_5-e_6\}=\Delta\backslash\{\alpha_2, \alpha_6\}$, that is, $q_1=2$ and $q_2=6$. So $m=3$. It follows that $n_1=2$, $n_2=4$ and $n_3=0$. Let $\lambda=(2, 1, 1, 1, 0, 0)$. Then $\Phi_\lambda=\Phi_J$, where $J=\{e_2-e_3, e_3-e_4, e_5-e_6, e_6\}=\Delta\backslash\{\alpha_1, \alpha_4\}$. If $\mu\in{}^IW^J\lambda=W\lambda\cap\Lambda_I^+$, it can be verified that $\mu$ must be one of the following four weights
\[
\begin{aligned}
\lambda^1=&(1, 0\ |\ 2, 1, 0, -1),&\ \lambda^3&=(0, -1\ |\ 2, 1, 0, -1)\\
\lambda^2=&(1, 0\ |\ 1, 0, -1, -2),&\ \lambda^4&=(0, -1\ |\ 1, 0, -1, -2)
\end{aligned}
\]
All the generalized Verma modules $M_I(\lambda^l)$ ($1\leq l\leq 4$) are simple (in view of \cite{XZ, J2}). The category $\caO_\lambda^{\frp_I}$ is also semisimple and has four blocks.

Then let $\Phi=B_{12}$, $I=\Delta\backslash\{\alpha_2, \alpha_6, \alpha_{12}\}$ and $J=\Delta\backslash\{\alpha_1, \alpha_4, \alpha_9\}$. Choose $\lambda=(3, 2, 2, 2, 1, 1, 1, 1, 1, 0, 0, 0)$. Then $\caO_\lambda^{\frp_I}$ has eight blocks, each of which contains exactly a (simple) generalized Verma modules $M_I(\lambda^l)$ ($1\leq l\leq 8$), where
\[
\begin{aligned}
\lambda^1&=(1, 0\ |\ 2, 1, 0, -1\ |\ 3, 2, 1, 0, -1, -2),\\
\lambda^2&=(1, 0\ |\ 2, 1, 0, -1\ |\ 2, 1, 0, -1, -2, -3),\\
\lambda^3&=(1, 0\ |\ 1, 0, -1, -2\ |\ 3, 2, 1, 0, -1, -2),\\
\lambda^4&=(1, 0\ |\ 1, 0, -1, -2\ |\ 2, 1, 0, -1, -2, -3),\\
\lambda^5&=(0, -1\ |\ 2, 1, 0, -1\ |\ 3, 2, 1, 0, -1, -2),\\
\lambda^6&=(0, -1\ |\ 2, 1, 0, -1\ |\ 2, 1, 0, -1, -2, -3),\\
\lambda^7&=(0, -1\ |\ 1, 0, -1, -2\ |\ 3, 2, 1, 0, -1, -2),\\
\lambda^8&=(0, -1\ |\ 1, 0, -1, -2\ |\ 2, 1, 0, -1, -2, -3).
\end{aligned}
\]

In general, let $\Phi=B_n$ with $n=k(k+1)$ for $k\in \bbZ^{>0}$. Choose $I=\Delta\backslash\{\alpha_{i(i+1)}\mid 1\leq i\leq k\}$ and $J=\Delta\backslash\{\alpha_{i^2}\mid 1\leq i\leq k\}$. Define a weight $\lambda$ such that $\lambda_{(i-1)^2+j}=k+1-i$, where $1\leq j\leq 2i-1$ for $1\leq i\leq k$ and $\lambda_{k^2+j}=0$ for $1\leq j\leq k$. Then $\lambda$ is a dominant weight with $\Phi_\lambda=\Phi_J$. The set ${}^IW^J\lambda=W\lambda\cap\Lambda_I^+$ contains weights $\lambda^l$ ($1\leq l\leq 2^k$). If $l=1+\sum_{i=1}^k\eps_i2^{k-i}$ with $\eps_i\in\{0, 1\}$, then $\lambda^l_{i(i-1)+j}=i+1-j-\eps_i$, where $1\leq j\leq 2i$ for $1\leq i\leq k$. It can be checked that every generalized Verma module $M_I(\lambda^l)$ is simple. The category $\caO_\lambda^{\frp_I}$ is semisimple and has $2^k$ blocks. Each block contains a unique simple module (also a generalized Verma module).
\end{example}

%
%
\section{separable weights and separable systems}
%
%

It can be found in many examples that the connectedness between $\lambda$ and $s_{e_i}\lambda$ and $s_{e_i}s_{e_j}\lambda$ is a crucial point for the number of blocks. In this section, we will given criterion for the connectedness of these weights.

\subsection{separable pairs} For a better understanding of the previous examples, we put entries of any $\Phi_I$-regular weight $\lambda$ into a table $T(\lambda)$ as follows. If $|\lambda_i|=a_t\in\caA$ for $q_{s-1}<i\leq q_{s-1}+n_s$ and $1\leq t\leq \overline m$, then put $\lambda_i$ in the $s^{\mbox{\tiny \texttt{th}}}$ column and the $t^{\mbox{\tiny \texttt{th}}}$ row. Thus the table has $m$ or $m-1$ (when $n_m=0$) columns and $\overline m$ or $\overline m-1$ (when $\overline n_{\overline m}=0$) rows.

Now consider the case $\Phi=B_6$ in Example \ref{ccex3}. The tables corresponding to $\lambda^i$ ($1\leq i\leq 4$) are
\begin{table}[htbp]
$T(\lambda^1)=\ $\begin{tabular}{|p{0.3cm}<{\centering}|p{0.3cm}<{\centering}|}
\hline
 & $2$ \\
\hline
$1$ & $\pm1$ \\
\hline
$0$ & $0$ \\
\hline
\end{tabular}\ ,
$T(\lambda^2)=\ $\begin{tabular}{|p{0.3cm}<{\centering}|p{0.3cm}<{\centering}|}
\hline
 & $-2$ \\
\hline
$1$ & $\pm1$ \\
\hline
$0$ & $0$ \\
\hline
\end{tabular}\ ,
$T(\lambda^3)=\ $\begin{tabular}{|p{0.3cm}<{\centering}|p{0.3cm}<{\centering}|}
\hline
 & $2$ \\
\hline
$-1$ & $\pm1$ \\
\hline
$0$ & $0$ \\
\hline
\end{tabular}\ ,
$T(\lambda^4)=\ $\begin{tabular}{|p{0.3cm}<{\centering}|p{0.3cm}<{\centering}|}
\hline
 & $-2$ \\
\hline
$-1$ & $\pm1$ \\
\hline
$0$ & $0$ \\
\hline
\end{tabular}\ .
\bigskip
\caption{Weights of the system $(B_6, A_1\times A_3, A_2\times B_2)$}
\label{sstb1}
\end{table}

Observe that the lower right of each table is always \resizebox{15pt}{15pt}{\begin{tabular}{|p{0.3cm}<{\centering}|}
\hline
 $\pm1$ \\
\hline
 $0$ \\
\hline
\end{tabular}}. Moreover, the first column and the second column never ``interfere'' each other in all these tables. Inspired by this example and many others, we give the following definitions.

\begin{definition}\label{ssdef1}
Let $\lambda$ be a $\Phi_I$-regular weight with $\Phi_{\overline\lambda}=\Phi_J$. Let $S$ (resp. $\overline S$) be a nonempty subset of $\{1, \ldots, m\}$ (resp. $\{1, \ldots, \overline m\}$). We say $\lambda$ is {\it separable} relative to {\it separable pair} $(S, \overline S)$ if the following conditions are satisfied.
\begin{itemize}
\item [(1)] $m>1$ (i.e., $I\neq\Delta$) and $\overline m>1$ (i.e., $J\neq\Delta$);
\item [(2)] $n^\lambda_s(a_t)$ is maximal when $s\in S$ and $t\in \overline S$;
\item [(3)] $n^\lambda_s(a_t)=0$ when $s\not\in S$ and $t\not\in \overline S$;
\item [(4)] For $\Phi=B_n$, $C_n$, $m\in S$ if and only if $\overline m\not\in \overline S$;
\item [(5)] For $\Phi=D_n$, $m\in S$ if and only if $\overline m\in \overline S$.
\end{itemize}
\end{definition}


For any $\lambda\in{}^IW^J\overline\lambda$, let $\caD=\caD(\lambda, \Phi_I, \Phi)$ be the set of all separable pairs of $\lambda$.

\begin{definition}\label{ssdef2}
Suppose $\caD$ is not empty. If $\Phi\neq A_{n-1}$ and $(S, \overline S)\in\caD$ is one of $(\{1, \ldots, m-1\}, \{\overline m\})$, $(\{1, \ldots, m\}, \{\overline m\})$, $(\{m\}, \{1, \ldots, \overline m-1\})$ and $(\{m\}, \{1, \ldots, \overline m\})$, we say $(S, \overline S)$ is \emph{weakly separable}, otherwise $(S, \overline S)$ is \emph{strongly separable}. We say $\lambda$ is {\it strongly separable} if $\caD$ contains a strongly separable pair; otherwise we say $\lambda$ is {\it weakly separable}.
\end{definition}

If $\Phi=A_{n-1}$, every separable pair is strongly separable.

\begin{example}\label{ssex2}
When $\Phi=B_2$, $I=\{e_1-e_2\}$ and $J=\{e_2\}$ in Example \ref{ccex2}, the weight $\lambda^1=(1, 0)$ and $\lambda^2=(0, -1)$ are separable relative to $(\{1\}, \{2\})$ ($n_1^\lambda(0)=1$ is maximal, $n_2^\lambda(1)=0$). Both $\lambda^1$ and $\lambda^2$ are weakly separable.

When $\Phi=B_6$, $I=\Delta\backslash\{\alpha_2, \alpha_6\}$ and $J=\Delta\backslash\{\alpha_1, \alpha_4\}$. The weights $\lambda^l$ ($1\leq l\leq4$) are separable relative to $(\{1, 2\}, \{3\})$ and $(\{2\}, \{2, 3\})$ (see Table \ref{sstb1}). Hence they are strongly separable.

In the general cases, all the weights $\lambda^l$ ($1\leq l\leq 2^k$) are separable relative to $(\{h, h+1, \ldots, k\}, \{k+2-h, \ldots, k+1\})$ for any $1\leq h\leq k$. Hence they are strongly separable if and only if $k>1$.
\end{example}

This example illustrates the following useful result which is an easy consequence of Lemma \ref{bkptlem4} in the last section.

\begin{lemma}\label{sslem1}
Let $\lambda$ be a $\Phi_I$-regular weight with $\Phi_{\overline\lambda}=\Phi_J$. If $\lambda$ is separable relative to $(S, \overline S)$, then $\mu$ is separable relative to $(S, \overline S)$ for any $\mu\in{}^IW^J\overline\lambda$.
\end{lemma}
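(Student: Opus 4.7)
The claim separates into trivial and non-trivial parts. Conditions (1), (4), (5) of Definition \ref{ssdef1} depend only on $(\Phi, I, J, S, \overline S)$ and not on the specific representative, so they pass automatically from $\lambda$ to $\mu$. The real content lies in conditions (2) and (3); my plan is a marginal-sum argument exploiting the fact that $\mu \in {}^IW^J\overline\lambda \subset W\overline\lambda$ shares with $\lambda$ the multiset of absolute values of coordinates.

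For any $\mu$ in the orbit, introduce the block sums $A^\mu := \sum_{s \in S,\, t \in \overline S} n_s^\mu(a_t)$, $B^\mu := \sum_{s \in S,\, t \notin \overline S} n_s^\mu(a_t)$, $C^\mu := \sum_{s \notin S,\, t \in \overline S} n_s^\mu(a_t)$, and $D^\mu := \sum_{s \notin S,\, t \notin \overline S} n_s^\mu(a_t)$. By Lemma \ref{lrlem2}(\rmnum{2})--(\rmnum{3}) the marginal sums $\sum_t n_s^\mu(a_t) = n_s$ and $\sum_s n_s^\mu(a_t) = \overline n_t$ are orbit-invariant, and summing them over $s \in S$ and over $t \notin \overline S$ respectively yields the identity $A^\mu - D^\mu = P - R$, where $P := \sum_{s \in S} n_s$ and $R := \sum_{t \notin \overline S} \overline n_t$ depend only on $(\overline\lambda, S, \overline S)$.

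Now let $M := \sum_{s \in S,\, t \in \overline S} \max_\nu n_s^\nu(a_t)$, where the inner maxima are the entrywise upper bounds from Lemma \ref{lrlem2}(\rmnum{4}); they are constants that in particular bound $A^\mu$ from above. Separability of $\lambda$ forces $A^\lambda = M$ by condition (2) and $D^\lambda = 0$ by condition (3), so the identity above pins $M = P - R$. Substituting back, for $\mu \in {}^IW^J\overline\lambda$ one has $D^\mu = A^\mu - M \leq 0$, hence $D^\mu = 0$. Since the summands of $D^\mu$ are non-negative integers, each $n_s^\mu(a_t)$ with $s \notin S$ and $t \notin \overline S$ must vanish, yielding (3); then $A^\mu = M$ forces each $n_s^\mu(a_t)$ with $s \in S$ and $t \in \overline S$ to attain its entrywise maximum, yielding (2).

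This plan bypasses the deferred lemma of Section~8 altogether: the key insight is that the ``separated'' block pattern simultaneously saturates a lower bound of $0$ on $D^\cdot$ and an upper bound of $M$ on $A^\cdot$, while the marginal identity forces $A^\mu - D^\mu$ to be an orbit-invariant constant. I do not anticipate a serious obstacle; the only care required is to verify that the interpretation of ``maximal'' in Definition \ref{ssdef1}(2) matches the entrywise upper bound in Lemma \ref{lrlem2}(\rmnum{4}), so that $A^\mu \leq M$ holds for every $\mu$ in the orbit under consideration.
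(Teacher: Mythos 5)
Your marginal-sum argument is correct. Conditions (1), (4), (5) of Definition \ref{ssdef1} indeed depend only on $(\Phi,I,J,S,\overline S)$, so the real content is (2) and (3). Your decomposition $A^\mu+B^\mu=\sum_{s\in S}n_s=P$ and $B^\mu+D^\mu=\sum_{t\notin\overline S}\overline n_t=R$ follows directly from Lemma \ref{lrlem2}(\rmnum{2})--(\rmnum{3}), giving the orbit-invariant identity $A^\mu-D^\mu=P-R$; separability of $\lambda$ pins this constant to $M-0$ where $M$ is the sum of entrywise maxima, and since $A^\mu\le M$ and $D^\mu\ge 0$ always hold (noting that the maximum in equation (\ref{bklkeq3}) is taken over \emph{all} $\Phi_I$-regular weights, hence bounds $n_s^\mu(a_t)$ for any $\mu$ in the orbit), both bounds must saturate, forcing $D^\mu=0$ termwise and $A^\mu=M$ termwise. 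This is a clean double-counting proof. The paper instead derives the lemma as "an easy consequence of Lemma \ref{bkptlem4}," whose proof passes through the nilpotent-orbit partitions $\pi_I,\pi_J$ and the compatible-integer condition; the step $(5)\Rightarrow(1)$ in that lemma performs essentially the same saturation argument as yours, but phrased via partial sums $(\pi_J^t)_1+\ldots+(\pi_J^t)_k$ and filtered through the combinatorics of which $k$ are compatible with which $|S|$. Your route is more self-contained and arguably more transparent for this particular statement, though the paper's approach has the advantage of simultaneously establishing the partition criteria (5)--(6) that are used later in Section 8. One defensible reason to be explicit, which your last paragraph already flags: you should briefly note that for every $s\in S$, $t\in\overline S$ the entrywise maximum is \emph{attainable} when $n_s^\lambda(a_t)$ is assumed maximal in the sense of (\ref{bklkeq3}) — this is immediate since equality there is part of the hypothesis — so that $A^\lambda=M$ is not vacuous.
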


With the above result, the following definition is predictable.

\begin{definition}\label{ssdef3}
Let $\overline\lambda$ be a dominant weight with $\Phi_{\overline\lambda}=\Phi_J$. We say the system $(\Phi, \Phi_I, \Phi_J)$ is {\it separable} relative to separable pair $(S, \overline S)$ if every $\lambda\in{}^IW^J\overline\lambda$ is separable relative to $(S, \overline S)$. Similarly, we say $(\Phi, \Phi_I, \Phi_J)$ is {\it strongly} (resp. {\it weakly}) {\it separable} if every $\lambda\in{}^IW^J\overline\lambda$ is strongly (resp. weakly) separable.
\end{definition}

If $I, J$ are fixed, it is easy to see that $(S, \overline S)$ is independent of the choices of $\overline\lambda$. The following result also comes from Lemma \ref{bkptlem4}.

\begin{lemma}\label{sslem2}
Let $I, J\subset\Delta$. Then $(\Phi, \Phi_I, \Phi_J)$ is strongly $($resp. weakly$)$ separable if and only if $(\Phi, \Phi_J, \Phi_I)$ is strongly $($resp. weakly$)$ separable.
\end{lemma}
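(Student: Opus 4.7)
The plan is to reduce the claim to a transparent transposition symmetry of the tables $T(\lambda)$, together with verifying that the list of weakly separable pairs in Definition \ref{ssdef2} is closed under this symmetry. Concretely, for $\lambda = w\overline{\lambda}$ with $w \in {}^IW^J$ and $\Phi_{\overline{\lambda}} = \Phi_J$, I would fix a dominant $\mu$ with $\Phi_\mu = \Phi_I$ and set $\nu := w^{-1}\mu$; standard double-coset facts give $w^{-1} \in {}^JW^I$, so $\nu$ is a distinguished parameterizing weight for the dual system $(\Phi, \Phi_J, \Phi_I)$.

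The key structural input is that the table $T(\nu)$ for the dual system is the transpose of $T(\lambda)$: the $m$ columns and $\overline{m}$ rows of $T(\lambda)$ become the $\overline{m}$ columns and $m$ rows of $T(\nu)$, with the crucial identity $n_s^{\lambda}(a_t) = n_t^{\nu}(b_s)$ for all valid $s, t$ (where $b_1, \ldots, b_m$ are the values of $\mu$ in the role played by $a_1, \ldots, a_{\overline{m}}$). I would invoke Lemma \ref{bkptlem4} from the last section as a black box for this transposition property; this is the same structural input already used to deduce Lemma \ref{sslem1}.

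Granting the transposition, the swap $(S, \overline{S}) \mapsto (\overline{S}, S)$ defines a bijection between separable pairs for $(\Phi, \Phi_I, \Phi_J)$ and those for $(\Phi, \Phi_J, \Phi_I)$. Conditions (1)--(3) of Definition \ref{ssdef1} are patently symmetric under transposition, so only conditions (4) and (5) require attention. Writing $m' = \overline{m}$ and $\overline{m}' = m$ for the dual dimensions and $(S', \overline{S}') = (\overline{S}, S)$, one sees that ``$m \in S$'' is equivalent to ``$\overline{m}' \in \overline{S}'$'' and ``$\overline{m} \notin \overline{S}$'' is equivalent to ``$m' \notin S'$'', so the biconditional ``$m \in S \iff \overline{m} \notin \overline{S}$'' for types $B_n, C_n$ is preserved by the swap, and similarly for the $D_n$ biconditional.

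Finally, a direct inspection shows that the four pairs listed in Definition \ref{ssdef2} are permuted by $(S, \overline{S}) \leftrightarrow (\overline{S}, S)$ after relabeling $m \leftrightarrow \overline{m}$: explicitly, $(\{1,\ldots,m-1\}, \{\overline{m}\}) \leftrightarrow (\{m'\}, \{1,\ldots,\overline{m}'-1\})$ and $(\{1,\ldots,m\}, \{\overline{m}\}) \leftrightarrow (\{m'\}, \{1,\ldots,\overline{m}'\})$, so the swap sends the set of weakly separable pairs of one system bijectively onto that of the other. Consequently $\lambda$ admits a strongly separable pair if and only if $\nu$ does, and likewise for weakly separable, proving both halves of the lemma. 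The only real obstacle is the transposition identity for tables, but this is precisely what Lemma \ref{bkptlem4} delivers, so here the argument reduces to bookkeeping.
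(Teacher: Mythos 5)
Your proposal is substantively correct and tracks the paper's own (very terse) proof, which simply invokes Lemma~\ref{bkptlem4}. Both arguments reduce the question to the dual parametrization $\lambda = w\overline\lambda \leftrightarrow \nu = w^{-1}\mu$ with $w\in{}^IW^J$, $w^{-1}\in{}^JW^I$, and $\Phi_\mu = \Phi_I$, which is exactly the mechanism behind Lemma~\ref{jflem4} and Lemma~\ref{bkptlem4}.

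Two small corrections to tighten the write-up. First, the transposition identity $n_s^\lambda(a_t) = n_t^\nu(a'_s)$ is \emph{not} the content of Lemma~\ref{bkptlem4}; that lemma asserts equivalences among separability criteria and partition-sum conditions, not a table-transpose statement. The identity itself is elementary and should be proved directly: $|\lambda_i| = a_t$ for $i$ in the $s$-th $I$-segment means $w^{-1}e_i = \pm e_j$ with $j$ in the $t$-th $J$-segment, and $|\nu_j| = |\langle\mu, we_j\rangle| = a'_s$ then holds for that same $j$, giving the desired bijection of indexed entries. Second, condition (2) of Definition~\ref{ssdef1} is not ``patently'' symmetric: \emph{maximal} is a relative notion whose value (by Lemma~\ref{lrlem2}(iv)) depends on whether $s = m$ and whether $a_t = 0$. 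You do need to check that the case-by-case maxima align under the swap $(s,t)\leftrightarrow(t,s)$ — $(s<m,\ a_t>0)\leftrightarrow(t<\overline m,\ a'_s>0)$ both give $2$, $(s<m,\ a_t=0)\leftrightarrow(t=\overline m,\ a'_s>0)$ both give $1$, $(s=m,\ a_t>0)\leftrightarrow(t<\overline m,\ a'_s=0)$ both give $1$, and $(s=m,\ a_t=0)\leftrightarrow(t=\overline m,\ a'_s=0)$ gives $0$ in $B_n,C_n$ and $1$ in $D_n$ on both sides — and they do, but this should be stated. With those two details filled in, the swap $(S,\overline S)\mapsto(\overline S, S)$ is a bijection of separable pairs preserving the weakly separable list in Definition~\ref{ssdef2}, and the lemma follows.
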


The remainder of this section is devoted to the proofs of three critical propositions.

\begin{prop}\label{ssprop1}
Assume that $\Phi=B_n, C_n$. Let $I, J\subset\Delta$ and $\lambda$ be a $\Phi_I$-regular weight with $\Phi_{\overline\lambda}=\Phi_J$. Suppose that $s_{e_i}\lambda$ is a $\Phi_I$-regular weight with $\lambda\not\lera s_{e_i}\lambda$ for some $1\leq i\leq n$. If $(n_m, \overline n_{\overline m})\neq(0, m-1), (\overline m-1, 0)$, then $(\Phi, \Phi_I, \Phi_J)$ is strongly separable.
\end{prop}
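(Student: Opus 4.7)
The plan is to translate the obstruction $\lambda \not\lera s_{e_i}\lambda$ into rigid numerical data on the table $T(\lambda)$ of Section 5.1, and then extract a separable pair $(S,\overline S)$ from that data.

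Write $q_{s_0-1} < i \leq q_{s_0}$, $a = |\lambda_i|$, and $a = a_{t_0}$. Since $s_{e_i}\lambda \neq \lambda$ we have $a > 0$. First reduce to $s_0 < m$: if $s_0 = m$ then $q_{m-1} < n$, hence $\alpha_n \in I$, so $e_i$ (resp.\ $2e_i$) lies in $\Phi_{I_m}\subset\Phi_I$; therefore $s_{e_i}\lambda \in W_I\lambda$ and $\lambda \lera s_{e_i}\lambda$, contradicting the hypothesis. Next, Lemma~\ref{lrlem0} shows that $e_i$ (or $2e_i$) cannot be a linked root from $\lambda$ to $s_{e_i}\lambda$, so Proposition~\ref{lrprop2} forces its exceptional case (1): $n_{s_0}^\lambda(a) = 1$ and $n_{s_0}^\lambda(0) + n_m^\lambda(a) = 1$. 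I split into Case (A), $n_{s_0}^\lambda(0) = 1$ and $n_m^\lambda(a) = 0$, and Case (B), $n_{s_0}^\lambda(0) = 0$ and $n_m^\lambda(a) = 1$.

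Global constraints now come from the contrapositive of Lemma~\ref{cclem2}: for every $t \neq s_0$ one of its three hypotheses must fail. Using $n_m^\lambda(0) = 0$ from Lemma~\ref{lrlem2}(iv), a short computation shows that in Case (A) each $t < m$ with $t \neq s_0$ satisfies either $n_t^\lambda(0) = 1$ or $n_t^\lambda(a) = n_t^\lambda(0) = 0$, while in Case (B) each such $t$ satisfies either $n_t^\lambda(0) = 0$ or simultaneously $n_t^\lambda(0) = 1$ and $n_t^\lambda(a) = 2$. These dichotomies partition $\{1,\ldots,m\}$ into two column groups, one containing $s_0$ and the other containing $m$. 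To prevent any other value $a_{t'} \in \caA$ from mixing across this partition I would invoke Lemma~\ref{cclem4} iteratively: any cross-coupling would produce an $\Ext^1$-neighbour $\mu$ of $\lambda$ to which Lemma~\ref{cclem2} applies, yielding a chain $\lambda \lera \mu \lera s_{e_i}\mu \lera s_{e_i}\lambda$, a contradiction.

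From this rigid bipartite pattern I build $(S,\overline S)$. In Case (A), put the zero row $\overline m$ into $\overline S$ together with every other row $t$ whose $a_t$-entries are confined to the $s_0$-column-group, and let $S$ be the set of columns $s$ with $n_s^\lambda(a_t) > 0$ for some $t \in \overline S$. In Case (B), perform the dual construction, now anchored by $t_0$ rather than $\overline m$. Conditions (1)--(3) of Definition~\ref{ssdef1} follow from the constraints of the previous paragraph. Condition (4) is verified case-by-case: in Case (A), $n_m^\lambda(0) = n_m^\lambda(a) = 0$ gives $m \notin S$ while by construction $\overline m \in \overline S$; in Case (B), $n_m^\lambda(a) = 1$ gives $m \in S$ while by construction $\overline m \notin \overline S$.

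Finally I would check that the resulting $(S,\overline S)$ is not one of the four weakly separable shapes of Definition~\ref{ssdef2}. If it were, the maximality/zero prescriptions of Definition~\ref{ssdef1}, combined with the sum relations of Lemma~\ref{lrlem2}(ii)--(iii), would force $(n_m, \overline n_{\overline m}) \in \{(0, m-1), (\overline m - 1, 0)\}$, contradicting the standing hypothesis. Strong separability of $\lambda$ then propagates to the whole system by Lemma~\ref{sslem1}. The main obstacle is the global-constraints step: controlling all pairs $(s,t)$ with $n_s^\lambda(a_t) > 0$ via Lemmas~\ref{cclem3} and~\ref{cclem4} requires involved combinatorial bookkeeping, made more delicate by the asymmetric treatment of the last segment in types $B$ and $C$ recorded in Lemma~\ref{lrlem2}(iv).
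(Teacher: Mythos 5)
Your high-level plan coincides with the paper's: convert $\lambda\not\lera s_{e_i}\lambda$ into rigid constraints on the table $T(\lambda)$ via the contrapositives of Lemma~\ref{cclem2} and Lemma~\ref{cclem4}, and read off a separable pair. The reduction to $s_0<m$, the invocation of Proposition~\ref{lrprop2}(1) to force $n_{s_0}^\lambda(a)=1$ and $n_{s_0}^\lambda(0)+n_m^\lambda(a)=1$, the split into Cases~(A) and~(B), and the per-column dichotomies are all correct and match the paper's Lemmas~\ref{dp1lem1}--\ref{dp1lem3}.

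However, the construction of $(S,\overline S)$ is backwards. You place in $\overline S$ the rows whose entries are confined to the $s_0$-column-group; but for such a row $t$ there is no reason for $n_s^\lambda(a_t)$ to be \emph{maximal} at every $s\in S$ — the $\overline n_t$ copies of $a_t$ can be distributed among the $S$-columns with some $n_s^\lambda(a_t)$ strictly below the maximum allowed by Lemma~\ref{lrlem2}(iv), so Definition~\ref{ssdef1}(2) fails. The paper instead takes $\overline S$ to index the values that appear in the \emph{opposite} side (the $T^\lambda$ columns, built from $T_0^\lambda$ by the $T_r^\lambda$ recursion): the obstruction $\lambda\not\lera s_{e_i}\lambda$, run through Lemma~\ref{cclem4}, forces each such value to occur with multiplicity exactly $2$ in every $S^\lambda$-column (and $n_s^\lambda(0)=1$ there), which is precisely the maximality Definition~\ref{ssdef1}(2) demands; condition~(3) is free by construction of $\overline\caA$. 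Informally: you pair the $s_0$-columns with the ``$s_0$-rows,'' while separability requires pairing the $s_0$-columns with the ``$m$-rows.'' Two further gaps: (i) the ``iterate Lemma~\ref{cclem4}'' step you flag as the main obstacle is in fact the entire content of the paper's Lemma~\ref{dp1lem1}, which sets up the explicit tower $T_0^\lambda\subset T_1^\lambda\subset\cdots$ and runs a delicate induction on the coupling depth $r_\lambda$ — a sketch is not enough here; (ii) the paper handles Case~(B) not by a direct ``dual construction'' (which is not manifestly symmetric, since types $B$ and $C$ treat the last segment asymmetrically) but by passing to the dual system $(\Phi,\Phi_J,\Phi_I)$ via Lemma~\ref{jflem4} and Lemma~\ref{sslem2}, and then quoting the already-proved Case~(A) statement; this is both cleaner and avoids reverifying the maximality bookkeeping on the $J$-side.
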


\begin{prop}\label{ssprop2}
Assume that $\Phi=D_n$. Let $I, J\subset\Delta$ and $\lambda$ be a $\Phi_I$-regular weight with $\Phi_{\overline\lambda}=\Phi_J$. Suppose that $s_{e_i}\lambda$ is a $\Phi_I$-regular weight with $\lambda\not\lera s_{e_i}\lambda$ for some $1\leq i\leq n$. If $\overline n_{\overline m}\neq 0$ and $(n_m, \overline n_{\overline m})\neq(\overline m, m)$, then $(\Phi, \Phi_I, \Phi_J)$ is strongly separable.
\end{prop}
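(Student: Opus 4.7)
I would prove Proposition \ref{ssprop2} by showing that the hypothesis $\lambda\not\lera s_{e_i}\lambda$ forces the table $T(\lambda)$ into a rigid ``block-diagonal'' shape, from which a strongly separable pair $(S,\overline S)$ can be read off. Write $a=|\lambda_i|$ and let $s_0$ be the segment containing $i$. Reflexivity of $\lera$ gives $s_{e_i}\lambda\neq\lambda$, hence $a>0$ and $a=a_{t_0}$ for a unique $1\leq t_0<\overline m$. The assumption $\overline n_{\overline m}\neq 0$ supplies $\lambda$ with a zero coordinate, which places $s_{e_i}\lambda$ in the same $W$-orbit as $\lambda$, so the disconnection is a genuine obstruction rather than an artifact of different infinitesimal characters.

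The first main step is to derive rigid combinatorial constraints on $\lambda$. I would begin by showing $n_{s_0}^\lambda(a)=1$: if $n_{s_0}^\lambda(a)\geq 2$, then there exists $k\neq i$ in segment $s_0$ with $\lambda_k=-\lambda_i$, and a short chain using $e_i-e_k\in\Phi_I$ together with a root of the form $e_j\pm e_k$ for a zero entry $\lambda_j$ in some other segment (available by $\overline n_{\overline m}\neq 0$) connects $\lambda$ to $s_{e_i}\lambda$ via Lemma \ref{cclem1}, contradicting the hypothesis. Next, Lemma \ref{cclem2} applied at the pair $(s_0,t)$ must fail in clause (2) or clause (3) for every $t$. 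Splitting on $n_{s_0}^\lambda(0)\in\{0,1\}$ and running the resulting case analysis, one obtains a null/extremal dichotomy: every segment $s'\neq s_0$ either satisfies $n_{s'}^\lambda(a)=n_{s'}^\lambda(0)=0$, or attains the maximum simultaneously in $n_{s'}^\lambda(a)$ and $n_{s'}^\lambda(0)$. Lemma \ref{cclem4} is indispensable here: a mixed segment straddling $(s_0,t)$ with neither $n_{s_0}^\lambda$ nor $n_t^\lambda$ maximal would supply a linked root pushing $\lambda$ toward $s_{e_i}\lambda$.

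With this dichotomy, I would take $\overline S$ to be $\{t_0,\overline m\}$ enlarged by all row indices of values occurring in the extremal segments, and let $S$ consist of all segments on which $n_s^\lambda(a_t)$ is maximal for every $t\in\overline S$. Maximality on $S\times\overline S$ is by construction and the vanishing on the complementary rectangle follows from the dichotomy; the $D_n$ parity condition (5) of Definition \ref{ssdef1}, that $m\in S\Leftrightarrow\overline m\in\overline S$, is handled by applying Lemma \ref{cclem3} to the last segment, together with $\overline n_{\overline m}\neq 0$ forcing $\overline m\in\overline S$. By Lemma \ref{sslem1} this separability transfers to the whole system $(\Phi,\Phi_I,\Phi_J)$. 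To upgrade to strong separability I would eliminate each weakly separable pair of Definition \ref{ssdef2}: the pairs with $\overline S=\{\overline m\}$ require $t_0=\overline m$ and hence $a=0$, impossible; the pairs with $S=\{m\}$ confine every $\pm a$ and $0$ entry of $\lambda$ to segment $m$, which translates into $(n_m,\overline n_{\overline m})=(\overline m,m)$, the excluded boundary.

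The main obstacle is the structural analysis of the second paragraph. Because $s_{e_i}$ is not itself a Weyl reflection in $W(D_n)$, every sign flip must be routed through a pair of roots $e_j\pm e_k$, and the interaction with the last segment (through the two roots $e_{n-1}\pm e_n$) requires careful case distinctions under the standardization map $\vf$ of \eqref{lreq0}. This is exactly where the $D_n$ argument diverges from the $B_n$/$C_n$ proof of Proposition \ref{ssprop1}, and is why the exclusion $(n_m,\overline n_{\overline m})\neq(\overline m,m)$ is necessary rather than cosmetic.
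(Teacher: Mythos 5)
There is a genuine gap. Your ``null/extremal dichotomy'' --- that every segment $s'\neq s_0$ has $n_{s'}^\lambda(a)=n_{s'}^\lambda(0)=0$ or attains the maximum in both --- is false, and the construction of $(S,\overline S)$ built on it cannot succeed. Here is the concrete problem: a segment $s'<m$ with $n_{s'}^\lambda(a)=1$ and $n_{s'}^\lambda(0)=1$ can perfectly well coexist with $\lambda\not\lera s_{e_i}\lambda$. Lemma \ref{cclem2} at $(s_0,s')$ requires $n^\lambda_{s_0}(0)+n^\lambda_{s'}(0)=1$ in clause (2), which already fails when both are $1$, so no obstruction arises; Proposition \ref{lrprop2} is blocked as soon as $n_m^\lambda(a)=n_m^\lambda(0)=1$. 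In that situation $s'$ belongs to the eventual $S$ yet is neither null nor extremal in $a$, so your proposed membership criterion for $S$ is wrong. It also follows that you should \emph{not} include $t_0$ in $\overline S$: because $n_{s_0}^\lambda(a)=1$ is not maximal and $s_0$ must sit in $S$, maximality of $n_s^\lambda(a_{t_0})$ on all of $S$ can only hold if $a\notin\overline\caA$, i.e.\ $t_0\notin\overline S$. Your recipe $\overline S=\{t_0,\overline m\}\cup(\text{more})$ therefore places the separable pair on the wrong axes from the start.

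The missing idea is precisely the one the paper organizes its proof around: an \emph{inductively defined transitive closure} $T^\lambda=\bigcup_{r\geq 0}T_r^\lambda$ over all values in $\caA$, not just $a$ and $0$. A segment $s$ is excluded from $S$ when it is reachable from $T_0^\lambda$ by a chain in which, at each step, $n_s^\lambda(b)+\delta_{s,m}\leq 1\leq n_t^\lambda(b)$ for some intermediate value $b\in\caA$ and some previously reached $t$. This captures obstructions that route through values other than $a$ or $0$: a segment $s'$ with $n_{s'}^\lambda(a)=n_{s'}^\lambda(0)=0$ but $n_{s'}^\lambda(b)=1\leq n_t^\lambda(b)$ for some $t\in T^\lambda$ still lands in $T^\lambda$, which your two-value dichotomy cannot detect. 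The separable pair is then $(S^\lambda,\overline S\cup\{\overline m\})$ with $S^\lambda=\{1,\dots,m\}\setminus T^\lambda$ and $\overline S$ the set of rows of $\overline\caA=\{b\mid n_t^\lambda(b)\geq 1\text{ for some }t\in T^\lambda\}$; maximality on $S^\lambda\times\overline S$ is forced precisely because any shortfall would put $s$ into $T^\lambda$, and vanishing off $S^\lambda$ and $\overline S$ is automatic from the definition of $\overline\caA$. Proving that $S_0^\lambda\cap T^\lambda\neq\emptyset$ implies $\lambda\lera s_{e_i}\lambda$ (Lemmas \ref{dp2lem1} and \ref{dp2lem3}) requires an induction on the depth $r_\lambda$ of that intersection, at each step applying Lemma \ref{cclem4} to push one copy of $a$ one link closer along a shortest chain and then invoking the inductive hypothesis for the modified weight $\mu=s_\beta\lambda$. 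None of that appears in your proposal; the chain arguments you gesture at are one-step and cannot reach segments connected to $s_0$ only through intermediate values. Finally, the case $n_{s_0}^\lambda(0)=0$ is not symmetric to $n_{s_0}^\lambda(0)=1$: in the paper it forces $n_m^\lambda(a)=1$ by Proposition \ref{lrprop2} and is resolved via the dual system $(\Phi,\Phi_J,\Phi_I)$ using Lemma \ref{jflem4}, a step your ``splitting on $n_{s_0}^\lambda(0)$'' glosses over completely.
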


\begin{prop}\label{ssprop3}
Assume that $\Phi=B_n, C_n$ or $D_n$. Let $I, J\subset\Delta$ and $I$ be standard. Let $\lambda$ be a $\Phi_I$-regular weight with $\Phi_{\overline\lambda}=\Phi_J$. Suppose that $s_{e_i}s_{e_j}\lambda$ is any $\Phi_I$-regular weight with $1\leq i, j\leq q_{m-1}$ and $\lambda_i\lambda_j\neq 0$. If $\lambda\not\lera s_{e_{i}}s_{e_j}\lambda$, then $(\Phi, \Phi_I, \Phi_J)$ is strongly separable.
\end{prop}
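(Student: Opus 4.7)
The plan is to argue contrapositively through Lemma~\ref{cclem3}. Set $a=|\lambda_i|$, $b=|\lambda_j|$, and choose $s, t$ with $q_{s-1}<i\le q_s$ and $q_{t-1}<j\le q_t$. Because $\lambda\not\lera s_{e_i}s_{e_j}\lambda$ and $ab\ne 0$, at least one of the three numerical conditions in Lemma~\ref{cclem3} must fail. The hypothesis $\lambda_i\lambda_j\ne 0$ already forces $n^\lambda_s(a),n^\lambda_t(b)\ge 1$, so the failure (after possibly swapping $(s,a)$ with $(t,b)$) falls into one of the patterns: (F1) $n^\lambda_s(a)=2$; (F2) $n^\lambda_s(b)=2$ and $n^\lambda_t(a)=0$; (F3) $n^\lambda_s(b)=0$ and $n^\lambda_t(a)=2$. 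A preliminary observation is that the case $s=t$ is vacuous: if $s=t$ then $e_i-e_j\in\Phi_I^+$ since $I$ is standard, and either $s_{e_i-e_j}\lambda=s_{e_i}s_{e_j}\lambda$ (when $a=b$) or else Lemma~\ref{cclem3} applies directly with all counts $n^\lambda_s(\cdot)$ equal to $1$. Hence we may assume $s\ne t$, and each failure pattern then pins down a rigid block in the table $T(\lambda)$.

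The next step is to convert this obstruction into a separable pair $(S,\overline S)$ by a closure procedure. Seed $S$ with $\{s,t\}$ and $\overline S$ with the rows indexing $a$ and $b$, then iteratively enlarge by adding every column $s'$ for which $n^\lambda_{s'}(a_r)$ is maximal for some $r\in\overline S$, and symmetrically every row $r'$ for which $n^\lambda_{s'}(a_{r'})$ is maximal for some $s'\in S$. The procedure terminates as the index sets are finite, and the counting identities of Lemma~\ref{lrlem2}(ii)--(iii) together with $\Phi_I$-regularity of $\lambda$ force $n^\lambda_{s'}(a_{r'})=0$ for all $(s',r')\in S^c\times\overline S^c$: any stray nonzero entry in that quadrant would, by column/row totaling, propagate back into $S\times\overline S$ and be absorbed by the closure. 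The failure of Lemma~\ref{cclem3} is precisely what prevents the closure from swallowing all columns or all rows, so both $S^c$ and $\overline S^c$ are nonempty, and conditions (1)--(3) of Definition~\ref{ssdef1} are built in by construction.

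The parity conditions (4) for $\Phi=B_n,C_n$ and (5) for $\Phi=D_n$ are verified by tracking whether $m\in S$ and $\overline m\in\overline S$ through the closure, using $n^\lambda_m(0)=0$ in types $B,C$ and the $s_{e_n}$-symmetry of the counts $n^\lambda_s(a)$ in type $D$. To confirm \emph{strong} separability, one rules out the four weak forms of Definition~\ref{ssdef2}: the assumptions $s,t\le q_{m-1}$, $s\ne t$, and $a,b\ne 0$ guarantee that $S$ contains a column other than $m$ and $\overline S$ contains a row other than $\overline m$ (since $a_{\overline m}=0$), so $(S,\overline S)$ cannot reduce to $(\{m\},\text{prefix})$ nor to $(\text{prefix},\{\overline m\})$. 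Finally, Lemma~\ref{sslem1} propagates the pair from $\lambda$ to every $\mu\in{}^IW^J\overline\lambda$, making $(\Phi,\Phi_I,\Phi_J)$ strongly separable.

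The main obstacle is the borderline closure analysis when a candidate addition to $S$ or $\overline S$ equals $m$ or $\overline m$: the maximal multiplicity in the $m$-th column or $\overline m$-th row is $1$ rather than $2$, with the further constraint $n^\lambda_m(0)=0$ for $B_n,C_n$, so the parity conditions (4)--(5) drive considerable case work. The trickiest point is ensuring that the closure produces a pair strictly in the strongly separable range rather than degenerating into one of the four weak forms, particularly in mixed configurations of F1 with F2 or F3 in which the seed already touches the last column or row; the standardness of $I$ is indispensable throughout this analysis.
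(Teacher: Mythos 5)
Your high-level structure—reduce to a rigid ``forbidden'' configuration of multiplicities $n_s^\lambda(a_t)$, then close the sets $S,\overline S$ under maximality and show both complements are nonempty—tracks the paper's strategy in spirit, but there is a serious gap at the very first step, and it is the gap the paper spends most of its effort filling.

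You write that since $\lambda\not\lera s_{e_i}s_{e_j}\lambda$ and $ab\ne 0$, ``at least one of the three numerical conditions in Lemma~\ref{cclem3} must fail,'' and you treat the resulting failure pattern as an obstruction that your closure cannot overcome. But Lemma~\ref{cclem3} only gives a \emph{sufficient} condition for $\lambda\lera s_{e_i}s_{e_j}\lambda$. Failure of its hypotheses does not block connectivity: the weights may still be joined by a longer chain of linked roots passing through many intermediate $\Phi_I$-regular weights, and these can involve columns and values $a_t$ far from the original $\{s,t,a,b\}$. The burden of the proof is to show that when the ``closure'' $S^\lambda$ (built from $S_0^\lambda$ and $T_0^\lambda$, then iteratively from $T_r^\lambda$) is proper, no such chain can exist. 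This is exactly what the paper's Lemma~\ref{dp3lem1} establishes by a delicate induction on the smallest $r$ with $S_0^\lambda\cap T_r^\lambda\neq\emptyset$: at each step it produces an explicit chain of linked roots via Lemma~\ref{cclem4}, showing that $S_0^\lambda\cap T^\lambda\neq\emptyset$ does force $\lambda\lera s_{e_i}s_{e_j}\lambda$. Your proposal asserts that the counting identities ``force $n^\lambda_{s'}(a_{r'})=0$ for all $(s',r')\in S^c\times\overline S^c$'' after closure, but never proves that $\lambda\not\lera s_{e_i}s_{e_j}\lambda$ is in fact incompatible with $S_0^\lambda\cap T^\lambda\neq\emptyset$. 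Without that, nothing prevents the closure from swallowing the entire table while the weights remain disconnected, nor from stopping short while the weights are in fact connected by a long chain.

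A second, smaller issue: after the closure yields an honest separable pair, the paper does not verify the parity conditions (4)--(5) of Definition~\ref{ssdef1} and rule out the weak pairs by elementary bookkeeping as you suggest. It instead combines the single-reflection Propositions~\ref{ssprop1} and~\ref{ssprop2} with the fact that $\lambda\not\lera s_{e_i}s_{e_j}\lambda$ implies either $\lambda\not\lera s_{e_i}\lambda$ or $s_{e_i}\lambda\not\lera s_{e_i}s_{e_j}\lambda$. This forces $(n_m,\overline n_{\overline m})$ into one of the degenerate values already classified there, and then carefully builds a \emph{strongly} separable pair around $S^\lambda$---including a pass to the dual system $(\Phi,\Phi_J,\Phi_I)$ in the hardest $D_n$ subcase with $\overline n_{\overline m}=0$, $n_m>0$. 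Your proposal makes no use of Propositions~\ref{ssprop1}--\ref{ssprop2}, which are load-bearing here, and the duality step is not anticipated at all. You would need to supply both the induction of Lemma~\ref{dp3lem1} and this case analysis to have a complete argument.
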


\subsection{The proof of Proposition \ref{ssprop1}} We will prove Proposition \ref{ssprop1} by several lemmas. It suffices to consider the case $\Phi=B_n$, while the proof for $\Phi=C_n$ is similar. The idea of the proof comes from Example \ref{ccex1} and many others. Indeed, Example \ref{ccex1} shows how to ``connect" two weights which are not ``linked". This strategy was already used in the proof of Lemma \ref{cclem2} and Lemma \ref{cclem3}. In these arguments, sequences of linked roots were constructed to connect two weights. Here we need longer sequences which can be built up by induction. We start with some subsets of $\{1, 2, \ldots, m\}$. Those subsets will be used to produce separable pairs in some cases.

Now $\Phi=B_n$. Fix $0<a\in\caA$. For any $\Phi_I$-regular weight $\lambda$ with $\Phi_{\overline\lambda}=\Phi_J$, denote
\[
\begin{aligned}
S_0^\lambda=&\{1\leq s\leq m\mid n^\lambda_s(a)\geq1=n^\lambda_s(0)\};\\
T_0^\lambda=&\{1\leq s\leq m\mid 1\geq n^\lambda_s(a)\geq 0=n^\lambda_s(0)\}.
\end{aligned}
\]
One has $m\in T_0^\lambda$ by Lemma \ref{lrlem1} and $S_0^\lambda\cap T_0^\lambda=\emptyset$. For $r\geq1$, if $T_{r-1}^\lambda$ is defined, let
\[
T_r^\lambda=\{1\leq s\leq m\mid \exists 0<b\in\caA,\ t\in T_{r-1}^\lambda\ \mbox{so that}\ n^\lambda_s(b)\leq1\leq n^\lambda_t(b)\}\cup T_{r-1}^\lambda,
\]
Denote $T^\lambda=\bigcup_{r=0}^\infty T_{r}^\lambda$ and $S^\lambda=\{1, 2, \ldots, m\}\backslash T^\lambda$.

Once $i$ is chosen, there exists $1\leq s_0\leq m$ so that $q_{s_0-1}<i\leq q_{s_0}$. The case $\lambda_i=0$ is trivial. Assume that $\lambda_i\neq0$ and set $a=|\lambda_i|$ at the beginning. One has $n_{s_0}^\lambda(a)=1$ since $s_{e_i}\lambda$ is $\Phi_I$-regular. Note that $n_{s_0}^\lambda(0)$ is either $1$ or $0$, we have to consider the two cases separately.

\begin{lemma}\label{dp1lem1}
If $S_0^\lambda\cap T^\lambda\neq\emptyset$, then $\lambda\lera s_{e_i}\lambda$ for any $q_{s_0-1}<i\leq q_{s_0}$ with $|\lambda_i|=a$ and $n_{s_0}^\lambda(a)=n_{s_0}^\lambda(0)=1$, where $1\leq s_0\leq m$.
\end{lemma}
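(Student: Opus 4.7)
\medskip
\noindent\textbf{Proof plan.}
The conditions on $\lambda_i$ and $s_0$ force $n_{s_0}^\lambda(a)=n_{s_0}^\lambda(0)=1$, so $s_0\in S_0^\lambda$. The cleanest situation is when some $t\in T_0^\lambda$ already satisfies $n_t^\lambda(a)\geq 1$: then $n_{s_0}^\lambda(0)+n_t^\lambda(0)=1$ and $n_t^\lambda(0)+n_t^\lambda(a)\in\{1,2\}$, so Lemma \ref{cclem2} applies at once and gives $\lambda\lera s_{e_i}\lambda$. My plan is therefore to reduce the general case to this direct one by using the hypothesis $S_0^\lambda\cap T^\lambda\neq\emptyset$ to cook up, via a sequence of linked roots that leaves the position $i$ untouched, a new weight $\mu$ in the $\Ext^1$-equivalence class of $\lambda$ for which such a witness $t$ exists.

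Pick any $s^*\in S_0^\lambda\cap T^\lambda$ and let $r\geq 0$ be minimal with $s^*\in T_r^\lambda$. Since $S_0^\lambda$ is disjoint from $T_0^\lambda$ (the first requires a zero in column $s$, the second forbids one), we must have $r\geq 1$. Unwinding the recursive definition of $T_r^\lambda$ produces a chain $s^*=u_r,u_{r-1},\ldots,u_0$ with $u_j\in T_j^\lambda$, $u_0\in T_0^\lambda$, and values $b_j\in\caA$ (for $1\leq j\leq r$) such that $n_{u_j}^\lambda(b_j)\leq 1\leq n_{u_{j-1}}^\lambda(b_j)$. I plan to walk this chain from $u_0$ back out to $u_r=s^*$, at each step using Lemma \ref{cclem4} (or Lemma \ref{cclem3} when two values must be swapped together) to transfer a $\pm b_j$ entry from column $u_{j-1}$ to column $u_j$; this produces a sequence of weights $\lambda=\mu^0\lera\mu^1\lera\cdots\lera\mu^r$ joined by linked roots of the form $e_p\pm e_q$ with indices $p,q$ lying in columns $u_{j-1},u_j$ strictly different from $s_0$, hence in particular avoiding the index $i$.

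The endpoint $\mu^r$ is designed so that the value $a$ originally seen only in columns of $S_0^\lambda$ now appears also in some $t\in T_0^\lambda$ (with $n_t^{\mu^r}(0)=0$); applying Lemma \ref{cclem2} at column $s_0$ of $\mu^r$, which is legitimate because $\mu^r_i=\lambda_i$ and column $s_0$ still satisfies the two conditions on $s_0$, yields $\mu^r\lera s_{e_i}\mu^r$. Since each linked root used in $\mu^0\lera\cdots\lera\mu^r$ commutes with $s_{e_i}$, the same sequence (conjugated by $s_{e_i}$) witnesses $s_{e_i}\mu^r\lera s_{e_i}\mu^{r-1}\lera\cdots\lera s_{e_i}\lambda$, and concatenation gives the desired $\lambda\lera s_{e_i}\lambda$.

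The main obstacle is bookkeeping: at each step along the chain one has to verify that the partial weight $\mu^j$ remains $\Phi_I$-regular and, when Lemma \ref{cclem4} is invoked, that its condition (\rmnum{3}), namely $n^{\mu^j}_{u_{j-1}}(b_{j-1})+n^{\mu^j}_{u_j}(b_j)>n^{\mu^j}_{u_{j-1}}(b_j)+n^{\mu^j}_{u_j}(b_{j-1})$, is not violated (the pathology to avoid is the one in Example \ref{ccex3}). This will be handled by an induction on $r$, and by treating the boundary case $u_0=m$ separately, since $m\in T_0^\lambda$ is automatic from Lemma \ref{lrlem2}(\rmnum{4}) for $\Phi=B_n$ and the last column has length $n_m$ which can be zero; in that degenerate situation one either chooses a different $u_0\in T_0^\lambda$ when one exists, or observes that the chain has length $1$ and the needed value $a$ is already present in column $m$, so that Lemma \ref{cclem2} applies without any preliminary moves.
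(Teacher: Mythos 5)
Your high-level strategy -- reduce to a direct application of Lemma~\ref{cclem2} by chaining together swaps furnished by Lemma~\ref{cclem4}, and argue by induction on the minimal $r$ with $S_0^\lambda\cap T_r^\lambda\neq\emptyset$ -- is the same as the paper's. However, there are three concrete problems with the way you propose to execute it.

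First, your claim that the chain of swaps ``leaves the position $i$ untouched'' and involves only columns ``strictly different from $s_0$'' is unjustified, and in general false. You pick $s^*\in S_0^\lambda\cap T^\lambda$, but nothing prevents $S_0^\lambda\cap T^\lambda=\{s_0\}$, in which case $u_r=s^*=s_0$; moreover, since $n_{s_0}^\lambda(a)=1$, the unique entry of absolute value $a$ in column $s_0$ sits exactly at index $i$, so the final swap must move position $i$. The paper does not try to avoid this: it explicitly treats the subcase $s=s_0$ (hence $k=i$), using the fact that the hypotheses of Lemma~\ref{cclem4} depend only on absolute values, so that $\beta$ is simultaneously a linked root for $\lambda$ and for $s_{e_i}\lambda$, and then rearranging via the identity $s_{e_l}s_{e_k\pm e_l}=s_{e_k\pm e_l}s_{e_k}$. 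Without this case, your commutation argument ``each linked root commutes with $s_{e_i}$'' breaks down precisely where it matters.

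Second, the proposed walk does not clearly do what you claim. To deposit the value $a$ into some $t\in T_0^\lambda$ you need to propagate $a$ from $u_r$ down to $u_0$, which forces a descending traversal (swap $a$ at $u_j$ with $b_j$ at $u_{j-1}$, for $j=r,r-1,\dots,1$). Walking ``from $u_0$ back out to $u_r$'' and ``transferring a $\pm b_j$ entry from $u_{j-1}$ to $u_j$'' does not move $a$ anywhere, and you never say what comes back in exchange at each step.

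Third, and most importantly, the chain $(u_j,b_j)$ is read off from the original weight $\lambda$, but after a single swap the multiplicities $n_s^{\mu}(\cdot)$ change, so the sets $T_r^{\mu}$ and the hypotheses of Lemma~\ref{cclem4} must be re-verified for each $\mu^j$. There is no reason for the original chain to remain a valid chain for $\mu^j$. This is exactly where the real work in the paper lies: after the first swap, the paper checks whether $t\in T_{r-1}^\mu$ (in which case $r_\mu<r_\lambda$ and induction applies) or $t\notin T_{r-1}^\mu$, and the latter subcase requires a delicate contradiction argument (examining whether $b'=a$ or $b'\neq a$, and whether $t'\in S_0^\lambda$) to rule out stagnation. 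You dismiss this as ``bookkeeping,'' but it is the substance of the induction and cannot be handled by the ``walk the pre-computed chain'' picture. You should organize the proof as a genuine induction on $r_\lambda$: perform one swap, then either re-apply the statement to the new weight with a strictly smaller $r$, or show that the minimality of $r_\lambda$ is contradicted.
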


\begin{proof}
If $s_0=m$, then $e_i\in\Phi_I$ and $\lambda\lera s_{e_{i}}\lambda$ by definition. So we can assume that $s_0<m$. Evidently, $s_0\in S_0^\lambda$. If $S_0^\lambda\cap T^\lambda\neq\emptyset$, choose the smallest positive integer $r=r_\lambda$ so that $S_0^\lambda\cap T_{r}^\lambda\neq\emptyset$. We use induction on $r$ to prove the lemma. Assume that $s\in S_0^\lambda\cap T_r^\lambda$. Then $n^\lambda_s(a)\geq 1=n^\lambda_s(0)$ and $s<m$ (since $m\in T_0^\lambda$ and $S_0^\lambda\cap T_0^\lambda=\emptyset$). There exist $q_{s-1}<j\leq q_s$ such that $\lambda_j=0$. On the other hand, $s\in T_r^\lambda\backslash T_{r-1}^\lambda$ implies the existence of $0<b\in \caA$ and $s\neq t\in T_{r-1}^\lambda$ with $n^\lambda_s(b)\leq1\leq n^\lambda_t(b)$. If $n^\lambda_t(a)\geq1$, then $n^\lambda_t(0)=0$ (otherwise $t\in S_0^\lambda\cap T_{r-1}^\lambda$, contradicts the minimality of $r$). With Lemma \ref{cclem2} (relative to $s_0, t$), we get $\lambda\lera s_{e_{i}}\lambda$. So it suffices to consider the case $n^\lambda_t(a)=0$. In this case, $s_0\neq t$ and $b\neq a$.

Now we have $n^\lambda_s(b)\leq1\leq n^\lambda_t(b)$ and $n^\lambda_t(a)=0<n^\lambda_s(a)$ with $s<m$. Lemma \ref{cclem4} gives $\beta\in\{e_k\pm e_l\}$ such that $\mu=s_\beta\lambda$ is $\Phi_I$-regular, where $q_{s-1}<k\leq q_s$ and $q_{t-1}<l\leq q_t$ with $|\lambda_k|=a$ and $|\lambda_l|=b$. Moreover, $\beta$ is a linked root from $\lambda$, that is, $\lambda\lera s_\beta\lambda$. We obtain $|\mu_l|=a$ and $n^\mu_t(a)=1+n^\lambda_t(a)=1$, which means $s_{e_l}\mu$ is $\Phi_I$-regular. In addition, $n^\mu_{s_0}(0)=n^\lambda_{s_0}(0)=1$ and $n^\mu_{s_0}(a)=0$ ($s_0=s$) or $1$ $(s_0\neq s)$. Since the above reasoning does not depend on the sign of $\lambda_i$ (whether or not $s_0=s$, keeping in mind that $s_0\neq t$), we also have $s_{e_i}\lambda\lera s_\beta s_{e_i}\lambda$.

If $r_\lambda=1$, then $t\in T_0^\lambda$. So $n^\mu_t(0)=n^\lambda_t(0)=0$. Recall that $n^\mu_t(a)=1$ and $n^\mu_{s_0}(a)\leq 1=n^\mu_{s_0}(0)$. With $|\mu_l|=a$ for $q_{t-1}<l\leq q_t$, we get $\mu\lera s_{e_l}\mu$ by Lemma \ref{cclem2} (relative to $t, s_0$). If $s_0=s$, then $i=k$ and
\[
\lambda\lera s_\beta\lambda=\mu\lera s_{e_l}\mu=s_{e_l}s_\beta\lambda=s_\beta s_{e_k}\lambda=s_\beta s_{e_i}\lambda\lera s_{e_i}\lambda.
\]
If $s_0\neq s$, then $i\neq k$. Since $t\neq s_0$, we also get $l\neq i$. With $n^\mu_{s_0}(a)=n^\mu_t(a)=1$ and $n^\mu_{s_0}(0)=1>0=n^\mu_t(0)$, we can similarly get $\mu\lera s_{e_i}\mu$ by Lemma \ref{cclem2} (relative to $s_0, t$). Therefore
\[
\lambda\lera s_\beta\lambda=\mu\lera s_{e_i}\mu=s_{e_i}s_\beta\lambda=s_\beta s_{e_i}\lambda\lera s_{e_i}\lambda.
\]

If $r=r_\lambda>1$, then $t\not\in T_{r-2}^\lambda$ (otherwise $s\in S_0^\lambda\cap T_{r-1}^\lambda$, a contradiction). Recalling that $n^\lambda_t(a)=0$, $t\not\in T_{r-2}^\lambda$ yields $t\not\in T_0^\lambda$ and thus $n^\lambda_t(0)=1$. Thus $n^\mu_t(0)=1$. Combined with $n^\mu_t(a)=1$ ($|\mu_l|=a$), one has $t\in S_0^\mu$. If $t\in T_{r-1}^\mu$, we get $r_\mu\leq r-1<r_\lambda$. The induction hypothesis (with $(\lambda, s_0, i)$ replaced by $(\mu, t, l)$) implies $\mu\lera s_{e_l}\mu$. If $s_0=s$, then $i=k$ and one obtains $\lambda\lera\mu\lera s_{e_l}\mu=s_\beta s_{e_k}\lambda\lera s_{e_i}\lambda$. If $s_0\neq s$, then $i\neq k$, $|\mu_i|=|\lambda_i|=a$ and $s_0\in S_0^\mu$. We can also use the induction hypothesis (with $(\lambda, s_0, i)$ replaced by $(\mu, s_0, i)$) to obtain $\mu\lera s_{e_i}\mu$. Hence $\lambda\lera \mu\lera s_{e_i}\mu=s_\beta s_{e_i}\lambda\lera s_{e_i}\lambda$. It remains to consider the case $t\not\in T_{r-1}^\mu$. With $n^\mu_s(0)=n^\lambda_s(0)=1$, obviously $s\not\in T_0^\mu$. Notice that $n^\mu_{s'}(a')=n^\lambda_{s'}(a')$ for any $a'\in\caA$ and $s'\neq s, t$. If $s\not\in T_{r-1}^\mu$, we obtain $T_{p}^\mu=T_{p}^\lambda$ for $0\leq p\leq r-1$ by definition. If $s\in T_h^\mu\backslash T_{h-1}^\mu$ for $1\leq h<r$, there is $t'\in T_{h-1}^\mu=T_{h-1}^\lambda$ and $0<b'\in\caA$ such that $n^\mu_s(b')\leq 1\leq n^\mu_{t'}(b')$. Obviously $t'\neq s, t$ and $n^\mu_{t'}(b')=n^\lambda_{t'}(b')$. Note that $n^\mu_s(b')-n^\lambda_s(b')=-1$ ($b'=a$), $1$ ($b'=b$) or $0$ ($b'\neq a, b$). If $b'\neq a$, then
\[
n^\lambda_s(b')\leq n^\mu_s(b')\leq 1\leq n^\mu_{t'}(b')=n^\lambda_{t'}(b'),
\]
that is, $s\in T_{h}^\lambda\cap S_0^\lambda$, which contradicts the minimality of $r$. Now assume that $b'=a$. Then $n^\lambda_{t'}(a)=n^\mu_{t'}(b')\geq1$. If $n^\lambda_{t'}(0)=0$, we must have $\lambda\lera s_{e_i}\lambda$ by Lemma \ref{cclem2} (relative to $s_0, t'$). If $n^\lambda_{t'}(0)=1$, then $t'\in S_0^\lambda\cap T_{h-1}^\lambda$. This also contradicts the minimality of $r$.
\end{proof}

With the above lemma, we can prove a weak version of Proposition \ref{ssprop1}.

\begin{lemma}\label{dp1lem2}
With the setting in Proposition \ref{ssprop1}, we further assume that $|\lambda_i|=a$ and $n_{s_0}^\lambda(0)=1$, where $q_{s_0-1}<i\leq q_{s_0}$ and $1\leq s_0\leq m$. If $\lambda\not\lera s_{e_i}\lambda$, then either $(n_m, \overline n_{\overline m})=(0, m-1)$ or $(\Phi, \Phi_I, \Phi_J)$ is strongly separable.
\end{lemma}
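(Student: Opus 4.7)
Since $\lambda$ and $s_{e_i}\lambda$ are both $\Phi_I$-regular, the column $s_0$ containing the index $i$ must satisfy $n_{s_0}^\lambda(a)=1$, which together with the hypothesis $n_{s_0}^\lambda(0)=1$ places $s_0$ in $S_0^\lambda$. The contrapositive of Lemma \ref{dp1lem1} applied to $\lambda\not\lera s_{e_i}\lambda$ then forces $S_0^\lambda\cap T^\lambda=\emptyset$, so in particular $s_0\in S^\lambda$. My plan is to extract two structural consequences from this, assemble from them a strongly separable pair $(S,\overline S)$ for $\lambda$ in every situation outside the exceptional configuration $(n_m,\overline n_{\overline m})=(0,m-1)$, and then invoke Lemma \ref{sslem1} to promote strong separability from $\lambda$ to the whole system $(\Phi,\Phi_I,\Phi_J)$.

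The two consequences are the following. \emph{Propagation rigidity at $s_0$:} because $s_0\notin T^\lambda$, the defining closure of $T^\lambda$ forces $n_{s_0}^\lambda(b)=2$ for every positive $b\in\caA$ appearing in some column of $T^\lambda$---otherwise a single propagation step would absorb $s_0$ into $T^\lambda$. \emph{Failure of Lemma \ref{cclem2}:} setting $s=s_0$ in the hypotheses of Lemma \ref{cclem2} shows that if any $t$ satisfied $n_t^\lambda(0)=0$ together with $n_t^\lambda(a)\geq1$, that lemma would yield $\lambda\lera s_{e_i}\lambda$, contradicting our hypothesis; hence every column $s$ with $n_s^\lambda(a)\geq1$ also has $n_s^\lambda(0)=1$, and in particular $n_m^\lambda(a)=0$. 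Guided by this, I take
\[
S:=\{s : n_s^\lambda(a)\geq 1\},\qquad \overline S:=\{t : n_{s_0}^\lambda(a_t) \text{ is maximal among columns } s<m\}.
\]
By construction $S\subseteq\{s<m:n_s^\lambda(0)=1\}$, $s_0\in S$, $m\notin S$, and $\overline m\in\overline S$ (since $n_{s_0}^\lambda(0)=1$ is the maximum for $s<m$), while the row $t_a$ corresponding to $a$ lies in $\overline S^c$ because $n_{s_0}^\lambda(a)=1<2$.

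The main obstacle is verifying the zero condition $n_s^\lambda(a_t)=0$ on $S^c\times\overline S^c$. For $t=t_a$ this holds by the very definition of $S$; for the remaining $t\in\overline S^c$ (those $a_t$ for which $s_0$ is not maxed) one must show that such values cannot appear in any $S^c$-column, and this is where the non-exceptional hypothesis becomes essential. In the exceptional configuration $(n_m,\overline n_{\overline m})=(0,m-1)$ the set $T^\lambda$ degenerates to $\{m\}$ with column $m$ empty, so propagation rigidity imposes no constraint and the required zeros need not appear; outside the exception, either $n_m>0$ (which forces a nontrivial value in column $m$ and hence propagation that constrains the values outside $S$) or $\overline n_{\overline m}\leq m-2$ (which supplies a $T_0^\lambda$-column other than $m$ carrying no zero and again triggering propagation), and a case split on these two subcases produces the needed zero pattern via the rigidity/sign interplay. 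Once $(S,\overline S)$ is confirmed separable, $m\notin S$, $\overline m\in\overline S$ and $t_a\in\overline S^c$ together rule out all four weakly separable forms of Definition \ref{ssdef2}, so the pair is strongly separable; Lemma \ref{sslem1} then delivers the claim for the whole system. The delicate point, as expected, will be the refined propagation argument ensuring zeros on $S^c\times\overline S^c$ in the two non-exceptional subcases.
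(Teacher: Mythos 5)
The proposal correctly identifies the first move—apply the contrapositive of Lemma \ref{dp1lem1} to get $S_0^\lambda\cap T^\lambda=\emptyset$—and correctly extracts the two consequences (propagation rigidity at $s_0$, and the forced vanishing of $n_m^\lambda(a)$ via the failure of Lemma \ref{cclem2}). But the candidate separable pair you build from these consequences is not the one the paper uses, and it is not clear that it is separable at all.

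The difficulty is in your choice of $\overline S:=\{t: n_{s_0}^\lambda(a_t)\text{ is maximal}\}$. Propagation rigidity tells you that every value $b$ appearing in a $T^\lambda$-column satisfies $n_s^\lambda(b)=2$ for \emph{every} $s\notin T^\lambda$; this is the clean statement the paper exploits by defining $\overline\caA$ as the set of values occurring in $T^\lambda$-columns and taking $\overline S=\{r:a_r\in\overline\caA\}\cup\{\overline m\}$ together with $S=S^\lambda$. Your $\overline S$ is defined by maximality at the single column $s_0$, which can be strictly larger than the paper's $\overline S$: a value $a_t\notin\overline\caA$ might happen to have $n_{s_0}^\lambda(a_t)=2$, yet there is then no mechanism forcing $n_s^\lambda(a_t)=2$ for your other $s\in S$, so condition (2) of Definition \ref{ssdef1} can fail on $S\times\overline S$. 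You focused exclusively on the zero condition on $S^c\times\overline{S}^c$ as ``the main obstacle,'' but the maximality condition is also at risk with your pair. Separately, your $S=\{s:n_s^\lambda(a)\geq1\}$ is generally a proper subset of $S^\lambda$, which introduces further unverified zero conditions on $(S^\lambda\setminus S)\times\overline{S}^c$.

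Finally, and most importantly, you explicitly defer the verification of the zero pattern to a ``delicate point'' left for later, with only a vague promise that a case split on $n_m>0$ vs.\ $\overline n_{\overline m}\leq m-2$ will succeed. That is precisely the substance the lemma requires. The paper's route avoids this entirely: with $S=S^\lambda$ and $\overline S$ built from $\overline\caA$, the zero condition on $T^\lambda\times\overline{S}^c$ holds by the very definition of $\overline\caA$ (no value outside $\overline\caA$ appears in a $T^\lambda$-column), and the maximality condition holds by the propagation argument for all $s\in S^\lambda$; the case analysis then reduces to whether $\overline S$ is empty, equals $\{\overline m\}$, or is larger, directly yielding $(n_m,\overline n_{\overline m})=(0,m-1)$, a contradiction, or strong separability, respectively. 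As written, your proposal does not close the gap.
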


\begin{proof}
Since $s_{e_i}\lambda$ is $\Phi_I$-regular, one must have $n_{s_0}^\lambda(a)=1$. If $\lambda\not\lera s_{e_i}\lambda$, we can apply Lemma \ref{dp1lem1} and get $S_0^\lambda\cap T^\lambda=\emptyset$. In this case, $s_0\in S_0^\lambda\subset S^\lambda$ and $m\in T_0^\lambda\subset T^\lambda$. Thus $S^\lambda$ and $T^\lambda$ are not empty. Let
\[
\overline\caA=\{b\in\caA\mid n^\lambda_{t}(b)\geq1\ \mbox{for}\ t\in T^\lambda\}.
\]
Fix $0<b\in \overline\caA$. There exists $t\in T^\lambda$ with $n^\lambda_{t}(b)\geq 1$. We can choose $r\geq0$ so that $t\in T_{r}^\lambda$. If $n^\lambda_{s}(b)\leq 1$ for some $s\in S^\lambda$, then $n^\lambda_{s}(b)\leq 1\leq n^\lambda_{t}(b)$ yields $s\in T_{r+1}^\lambda\subset T^\lambda$. We get $s\in S^\lambda\cap T^\lambda$, a contradiction. This forces $n^\lambda_{s}(b)=2$ (hence maximal by Lemma \ref{lrlem2}) for all $0<b\in \overline\caA$, $s\in S^\lambda$. If $n^\lambda_{s}(0)=0$ for some $s\in S^\lambda$, then $n^\lambda_{s}(a)=0$. Otherwise Lemma \ref{cclem2} (relative to $s_0, s$) yields $\lambda\lera s_{e_i}\lambda$, a contradiction. However, $n^\lambda_{s}(0)=n^\lambda_{s}(a)=0$ means $s\in T_0^\lambda\cap S^\lambda=\emptyset$, another contradiction. So $n^\lambda_{s}(0)=1$ (also maximal) for all $s\in S^\lambda$. Denote $\overline S=\{1\leq r\leq \overline m\mid a_r\in \overline\caA\}$. Then $(\Phi, \Phi_I, \Phi_J)$ is separable relative to $(S^\lambda, \overline S\cup\{\overline m\})$ in view of Lemma \ref{sslem1}.

If $\overline S$ is empty, then $T^\lambda=\{m\}$ and $S^\lambda=\{1, \ldots, m-1\}$. This forces $n_m=0$ and $n^\lambda_{s}(0)=1$ for $1\leq s\leq m-1$. Thus $\overline n_{\overline m}=m-1$. If $\overline S=\{\overline m\}$ and $(\Phi, \Phi_I, \Phi_J)$ is not strongly separable, then $S^\lambda=\{1, \ldots, m-1\}$. It follows that $n_m^\lambda(0)\geq1$. In view of Lemma \ref{lrlem1}, we arrive at a contradiction that $\lambda$ is $\Phi_I$-singular. If $\overline S\neq\{\overline m\}$ is not empty, it can be verified that $(S^\lambda, \overline S\cup\{\overline m\})$ is a strongly separable pair.
\end{proof}

Proposition \ref{ssprop1} is proved if the following result holds.
\begin{lemma}\label{dp1lem3}
With the setting in Proposition \ref{ssprop1}, we further assume that $|\lambda_i|=a$ and $n_{s_0}^\lambda(0)=0$, where $q_{s_0-1}<i\leq q_{s_0}$ and $1\leq s_0\leq m$. If $\lambda\not\lera s_{e_i}\lambda$, then either $(n_m, \overline n_{\overline m})=(\overline m-1, 0)$ or $(\Phi, \Phi_I, \Phi_J)$ is strongly separable.
\end{lemma}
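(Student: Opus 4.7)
The plan is to mirror the proof of Lemma \ref{dp1lem2}, interchanging the roles of segments that contain a zero entry and segments that do not. The hypothesis $n_{s_0}^\lambda(0)=0$ is the symmetric counterpart of $n_{s_0}^\lambda(0)=1$ in Lemma \ref{dp1lem2}, and accordingly the ``seed'' set should now consist of segments that \emph{do} contain a zero. Note first that if $s_0=m$ then $e_i\in\Phi_I$ (resp.\ $2e_i\in\Phi_I$ in type $C_n$), so $\lambda\lera s_{e_i}\lambda$ trivially; hence I may assume $s_0<m$.

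Define $\widetilde S_0^\lambda=\{1\leq s\leq m\mid n_s^\lambda(a)\geq 1,\ n_s^\lambda(0)=0\}$ and $\widetilde T_0^\lambda=\{1\leq s\leq m\mid n_s^\lambda(0)=1\}$, and for $r\geq 1$ set
\[
\widetilde T_r^\lambda=\widetilde T_{r-1}^\lambda\cup\{1\leq s\leq m\mid \exists\,0<b\in\caA,\ t\in\widetilde T_{r-1}^\lambda\ \text{with}\ n_s^\lambda(b)\leq 1\leq n_t^\lambda(b)\}.
\]
Let $\widetilde T^\lambda=\bigcup_r\widetilde T_r^\lambda$ and $\widetilde S^\lambda=\{1,\dots,m\}\setminus\widetilde T^\lambda$. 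By hypothesis $s_0\in\widetilde S_0^\lambda$, so $\widetilde S_0^\lambda$ is nonempty.

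The first key step is the analog of Lemma \ref{dp1lem1}: if $\widetilde S_0^\lambda\cap\widetilde T^\lambda\neq\emptyset$ then $\lambda\lera s_{e_i}\lambda$. This is an induction on the least $r$ with $\widetilde S_0^\lambda\cap\widetilde T_r^\lambda\neq\emptyset$. The base case $r=0$ applies Lemma \ref{cclem2} directly to the pair $(s_0,t)$ with $t\in\widetilde T_0^\lambda$, since $n^\lambda_{s_0}(0)+n^\lambda_t(0)=0+1=1$ and the $(a)$-count condition on $t$ is satisfied by construction. For the inductive step, choose $s\in\widetilde T_r^\lambda\setminus\widetilde T_{r-1}^\lambda$ and a witness pair $(b,t)$; Lemma \ref{cclem4} furnishes a linked root $\beta$ producing $\mu=s_\beta\lambda$ with $|\mu_i|=a$ and $r_\mu<r_\lambda$, after which the induction hypothesis applied to $\mu$ (and symmetrically to $s_{e_i}\mu$) closes the chain $\lambda\lera\mu\lera s_{e_i}\mu\lera s_{e_i}\lambda$; the bookkeeping of how each swap alters $n_s^\mu(\cdot)$ is parallel to the proof of Lemma \ref{dp1lem1}. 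In the complementary case $\widetilde S_0^\lambda\cap\widetilde T^\lambda=\emptyset$, let $\overline\caA=\{b\in\caA\mid n_t^\lambda(b)\geq 1\ \text{for some}\ t\in\widetilde T^\lambda\}$ and $\overline S=\{1\leq r\leq\overline m\mid a_r\in\overline\caA\}$; then for every $s\in\widetilde S^\lambda$ and every $0<b\in\overline\caA$ one must have $n_s^\lambda(b)=2$ (otherwise $s$ enters some $\widetilde T_r^\lambda$), and $n_s^\lambda(0)=0$ (otherwise Lemma \ref{cclem2} would contradict $\lambda\not\lera s_{e_i}\lambda$), so $(\widetilde S^\lambda,\overline S)$ is a separable pair by Lemma \ref{sslem1}.

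The main obstacle is the degenerate configuration in which $\widetilde T_0^\lambda=\emptyset$, equivalently $\overline n_{\overline m}=0$: here $\lambda$ has no zero entries whatsoever, the inductive engine has no seed, and the argument above produces only a trivial pair. Unpacking what this forces on $I$ and $J$---all nonzero rows concentrated among $\widetilde S^\lambda$-columns and the last segment absorbing one entry from each of the remaining rows---yields precisely $(n_m,\overline n_{\overline m})=(\overline m-1,0)$. In every other configuration a routine case-check against Definition \ref{ssdef2} (paralleling the end of the proof of Lemma \ref{dp1lem2}) shows that $(\widetilde S^\lambda,\overline S)$ is strongly separable.
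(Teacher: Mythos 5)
The paper's proof of Lemma~\ref{dp1lem3} is a \emph{duality} argument: it observes that $n^\lambda_{s_0}(0)=0$ together with $\lambda\not\lera s_{e_i}\lambda$ forces $n^\lambda_m(a)=1$ by Proposition~\ref{lrprop2}, constructs the dual weight $\lambda'=w^{-1}\overline\lambda'$ for $(\Phi,\Phi_J,\Phi_I)$, checks that $n^{\lambda'}_s(a')=n^{\lambda'}_s(0)=1$, applies Lemma~\ref{dp1lem2} to the dual system, and transfers back via Lemmas~\ref{jflem4} and~\ref{sslem2}. Your proposal instead tries to mimic Lemma~\ref{dp1lem1} directly with a ``mirror'' seed set, and this mirroring breaks down for a structural reason you have not accounted for.

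The crux is the observation the paper records but your proof never uses: since $n^\lambda_{s_0}(a)=1$, $n^\lambda_{s_0}(0)=0$ and $\lambda\not\lera s_{e_i}\lambda$, Proposition~\ref{lrprop2}(1b) forces $n^\lambda_m(a)=1$. Consequently $m\in\widetilde S_0^\lambda$ in your notation, not in $\widetilde T_0^\lambda$. This is the exact opposite of what happens in Lemmas~\ref{dp1lem1}--\ref{dp1lem2}, where $m\in T_0^\lambda$ automatically. Now examine your ``complementary case'' $\widetilde S_0^\lambda\cap\widetilde T^\lambda=\emptyset$. You claim $(\widetilde S^\lambda,\overline S)$ is a separable pair. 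It cannot be. If $\overline m\notin\overline S$, Definition~\ref{ssdef1}(3) requires $n^\lambda_s(0)=0$ for every $s\notin\widetilde S^\lambda$, but every $s\in\widetilde T_0^\lambda\subset\widetilde T^\lambda$ has $n^\lambda_s(0)=1$. If instead $\overline m\in\overline S$, then Definition~\ref{ssdef1}(4) (for $B_n,C_n$) requires $m\notin\widetilde S^\lambda$; but $m\in\widetilde S_0^\lambda\subset\widetilde S^\lambda$ since you are not in the intersection case. Either way the pair fails the definition. (In fact, for any $b>0$ with $n^\lambda_t(b)\geq 1$ for some $t\in\widetilde T^\lambda$, the bound $n^\lambda_m(b)\leq 1$ would force $m\in\widetilde T^\lambda$, so being in the complementary case already requires $\overline\caA\subset\{0\}$, after which no nonempty $\overline S$ avoiding $\overline m$ exists.) Finally, the claim in the degenerate subcase that $\overline n_{\overline m}=0$ ``yields precisely $(n_m,\overline n_{\overline m})=(\overline m-1,0)$'' is asserted without proof and is not true as stated: $\overline n_{\overline m}=0$ and $\lambda\not\lera s_{e_i}\lambda$ may instead produce strong separability, and extracting the dichotomy requires exactly the structural analysis that the paper delegates to Lemma~\ref{dp1lem2} via duality. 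So the missing idea is the dual transfer: reduce to the $n^\lambda_{s_0}(0)=1$ situation in $(\Phi,\Phi_J,\Phi_I)$ and invoke the already-proved Lemma~\ref{dp1lem2}, rather than attempting a fresh induction whose seed set is wrongly placed relative to $m$.
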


\begin{proof}
Note that $\lambda\not\lera s_{e_i}\lambda$ is equivalent to $s_{e_i}\lambda\not\lera\lambda= s_{e_i}(s_{e_i}\lambda)$. We can assume that $\lambda_i=a$. Since $\lambda$ is $\Phi_I$-regular, there is $w\in W_I$ so that $w\lambda\in\Lambda_I^+$. Therefore, $\lambda\not\lera s_{e_i}\lambda$ if and only if $w\lambda\not\lera s_{we_i}w\lambda$. In view of Lemma \ref{lrlem2}, it suffices to consider the case $\lambda\in\Lambda_I^+$.

With $n^\lambda_{s_0}(0)=0$, one can transfer the problem associated with $(\Phi, \Phi_I, \Phi_J)$ to a dual problem associated with $(\Phi, \Phi_J, \Phi_I)$. Then apply Lemma \ref{dp1lem2}. In fact, $n^\lambda_{s_0}(a)=1$ (since $s_{e_i}\lambda$ is $\Phi_I$-regular) and $n^\lambda_{s_0}(0)=0$ yield $n^\lambda_m(a)=1$ by Proposition \ref{lrprop2}: otherwise $e_i$ is a linked root from $\lambda$ to $s_{e_i}\lambda$. There exists $q_{m-1}<j\leq n$ with $\lambda_j=a$, in view of $\lambda\in\Lambda_I^+$. Recall that $\lambda=w\overline\lambda$ for some $w\in {}^IW^J$ and dominant weight $\overline\lambda$. Let $\overline\lambda'$ be a dominant root with $\Phi_{\overline\lambda'}=\Phi_I$ and $\lambda'=w^{-1}\overline\lambda'$. Note that
\begin{equation}\label{dp1l3eq1}
\langle\overline\lambda, w^{-1}e_i\rangle=\langle w\overline\lambda, e_i\rangle=\lambda_i=a=\lambda_j=\langle\lambda, e_j\rangle=\langle\overline\lambda, w^{-1}e_j\rangle>0.
\end{equation}
So we can assume that $w^{-1}e_i=e_{i'}$ and $w^{-1}e_j=e_{j'}$ (obviously $i'\neq j'$). The above equation implies that $\overline\lambda_{i'}=\overline\lambda_{j'}=a$. Thus $e_{i'}-e_{j'}\in\Phi_J$. There exists $1\leq s<\overline m$ so that $\overline q_{s-1}<i', j'\leq \overline q_{s}$. Since $s_{e_i}\in W$ and $s_{e_i}\lambda$ is $\Phi_I$-regular, there exist $w_1\in W_I$ and $x\in {}^IW^J$ such that $x\overline\lambda=w_1s_{e_i}\lambda=w_1s_{e_i}w\overline\lambda$. We can find $w_2\in W_J$ such that $xw_2=w_1s_{e_i}w$. Note that $w_1\overline\lambda'=\overline\lambda'$ for $w_1\in\Phi_I$. So
\begin{equation}\label{dp1l3eq2}
s_{e_{i'}}\lambda'=w^{-1}s_{e_i}w\lambda'=w^{-1}s_{e_i}\overline\lambda'
=w_2^{-1}x^{-1}w_1\overline\lambda'=w_2^{-1}x^{-1}\overline\lambda'
\end{equation}
is $\Phi_J$-regular. With $\lambda\not\lera s_{e_i}\lambda$, we get $w\overline\lambda\not\lera x\overline\lambda$. Lemma \ref{jflem4} implies $\lambda'=w^{-1}\overline\lambda'\not\lera x^{-1}\overline\lambda'$. It follows from (\ref{dp1l3eq2}) that $\lambda'\not\lera s_{e_{i'}}\lambda'$. Denote $a'=|\lambda'_{i'}|$. It follows from $e_j\in\Phi_I$ that $0=\langle\overline\lambda', e_j\rangle=\langle w^{-1}\overline\lambda', w^{-1}e_j\rangle=\langle\lambda', e_{j'}\rangle=\lambda'_{{j'}}$. We get $n^{\lambda'}_s(a')=n^{\lambda'}_s(0)=1$ (relative to $J$). With Lemma \ref{dp1lem2} in hand, we get either $(\overline n_{\overline m}, n_m)=(0, \overline m-1)$ or $(\Phi, \Phi_J, \Phi_I)$ is strongly separable. By Lemma \ref{sslem2}, one obtains either $(n_m, \overline n_{\overline m})=(\overline m-1, 0)$ or $(\Phi, \Phi_I, \Phi_J)$ is strongly separable.
\end{proof}

\subsection{The proof of Proposition \ref{ssprop2}} Similarly, the proof of this proposition divides into several lemmas involving particular subsets of $\{1, \ldots, m\}$.

Now $\Phi=D_n$. Fix $0<a\in\caA$. For any $\Phi_I$-regular weight $\lambda$ with $\Phi_{\overline\lambda}=\Phi_J$, denote
\[
\begin{aligned}
S_0^\lambda=&\{1\leq s\leq m\mid n^\lambda_s(a)\geq1=n^\lambda_s(0)\};\\
T_0^\lambda=&\{1\leq s\leq m\mid 1\geq n^\lambda_s(a)\geq 0=n^\lambda_s(0)\}.
\end{aligned}
\]
Then we always have $S_0^\lambda\cap T_0^\lambda=\emptyset$. For $r\geq1$, if $T_{r-1}^\lambda$ is defined, let
\[
T_r^\lambda=\{1\leq s\leq m\mid \exists 0<b\in\caA,\ t\in T_{r-1}^\lambda\ \mbox{so that}\ n^\lambda_s(b)+\delta_{s, m}\leq1\leq n^\lambda_t(b)\}\cup T_{r-1}^\lambda,
\]
where $\delta_{s, m}=0$ for $s\neq m$ and $\delta_{m, m}=1$. Denote $T^\lambda=\bigcup_{r=0}^\infty T_{r}^\lambda$ and $S^\lambda=\{1, 2, \ldots, m\}\backslash T^\lambda$.

\begin{lemma}\label{dp2lem1}
If $I$ is standard and $S_0^\lambda\cap T^\lambda\neq\emptyset$, then $\lambda\lera s_{e_i}\lambda$ for any $q_{s_0-1}<i\leq q_{s_0}$ with $|\lambda_i|=a$ and $n_{s_0}^\lambda(a)=n_{s_0}^\lambda(0)=1$, where $1\leq s_0\leq m$.
\end{lemma}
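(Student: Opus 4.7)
The plan is to mirror the inductive argument of Lemma~\ref{dp1lem1}, making three adjustments to accommodate type $D$. Set $r_\lambda$ to be the smallest integer with $S_0^\lambda\cap T_{r_\lambda}^\lambda\neq\emptyset$; by construction $S_0^\lambda\cap T_0^\lambda=\emptyset$, so $r_\lambda\geq 1$, and I would induct on $r_\lambda$.

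First I would dispose of the case $s_0=m$ separately. Since $n_m\neq 1$ for type $D$, the hypothesis $n_m^\lambda(0)=1$ together with $n_m\geq 2$ yields some $q_{m-1}<j\leq n$ with $\lambda_j=0$ and $j\neq i$. Because $I$ is standard, $\Phi_{I_m}$ has type $D_{n_m}$ and contains both $e_i\pm e_j$, so $s_{e_i}\lambda=s_{e_i-e_j}s_{e_i+e_j}\lambda$ is $W_I$-conjugate to $\lambda$ and the claim is immediate. This is the type $D$ substitute for the sentence ``$e_i\in\Phi_I$'' used in Lemma~\ref{dp1lem1}, since $e_i$ is not a root of $D_n$.

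Assume now $s_0<m$ and pick $s\in S_0^\lambda\cap T_{r_\lambda}^\lambda$. Minimality of $r_\lambda$ yields $t\in T_{r_\lambda-1}^\lambda$ and $0<b\in\caA$ with $n_s^\lambda(b)+\delta_{s,m}\leq 1\leq n_t^\lambda(b)$. If $n_t^\lambda(a)\geq 1$, then $t\notin S_0^\lambda$ (by minimality) forces $n_t^\lambda(0)=0$, so Lemma~\ref{cclem2} applied with the pair $(s_0,t)$ delivers $\lambda\lera s_{e_i}\lambda$. Otherwise $n_t^\lambda(a)=0$, and I would invoke Lemma~\ref{cclem4} on the pair $(s,t)$ with values $(a,b)$ to produce $\beta=e_k\pm e_l$ (with $k$ in the $s$-block and $l$ in the $t$-block, $|\lambda_k|=a$, $|\lambda_l|=b$) such that $\mu=s_\beta\lambda$ is $\Phi_I$-regular and $\lambda\lera\mu$. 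Now $|\mu_l|=a$ and $n_t^\mu(a)=1$. In the base step $r_\lambda=1$ one has $t\in T_0^\lambda$, hence $n_t^\mu(0)=0$, and Lemma~\ref{cclem2} applied to $\mu$ at $l$ gives $\mu\lera s_{e_l}\mu$; chaining this with the $W_I$-equivalence between $s_{e_l}\mu$ and $s_{e_i}\lambda$ (or its analogue when $s_0\neq s$) closes the case. In the inductive step $r_\lambda>1$ one has $n_t^\lambda(0)=1$, so $t\in S_0^\mu$ with $r_\mu\leq r_\lambda-1$, and the induction hypothesis applied at $(\mu,t,l)$ (or at $(\mu,s_0,i)$ when $s_0\neq s$) together with the same chaining finishes the argument.

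The main obstacle is the bookkeeping around the $\delta_{s,m}$ term, which strengthens the entry condition for $s=m$ into $T_r^\lambda$ and thereby restricts the root-swaps available near the last segment. Each invocation of Lemma~\ref{cclem4} and Lemma~\ref{cclem2} must be cross-checked against the type $D$ maximal-value bounds in Lemma~\ref{lrlem2}(iv), and after the swap I must verify that no new index $s$ enters $T_p^\mu$ for $p<r_\lambda$ with a value $b'\neq a$, which would contradict the minimality of $r_\lambda$. This parallels the closing ``$b'\neq a$ vs $b'=a$'' dichotomy in Lemma~\ref{dp1lem1}, but each sub-case must now be re-examined in light of the different structure of the last block in type $D$, particularly when $m\in T^\lambda$ and the auxiliary $\beta$ lies on the boundary between segments $s$ and $m$.
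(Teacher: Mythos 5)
Your proposal correctly identifies the overall shape (induction on $r_\lambda$, powered by Lemma~\ref{cclem4} and Lemma~\ref{cclem2}, with the $s_0=m$ case handled via $e_i\pm e_u\in\Phi_{I_m}$), but two things are missing.

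First, you jump into the induction without establishing the intermediate fact that the paper proves \emph{before} the induction starts: for $s_0<m$, if $e_i+e_u$ is already a linked root then we are done by $\lambda\lera s_{e_i+e_u}\lambda\lera s_{e_i-e_u}s_{e_i+e_u}\lambda=s_{e_i}\lambda$, and otherwise Proposition~\ref{lrprop2}(2) forces $n_m^\lambda(a)=n_m^\lambda(0)=1$, i.e.\ $m\in S_0^\lambda$. In type $B/C$ one automatically has $m\in T_0^\lambda$, so Lemma~\ref{dp1lem1} never has to worry about the last segment sitting on the ``wrong side''; in type $D$, $m$ can lie in neither $S_0^\lambda$ nor $T_0^\lambda$, and the paper's reduction to $m\in S_0^\lambda$ is precisely what ensures $t<m$ throughout (since $n_t^\lambda(a)=0$ while $n_m^\lambda(a)\geq 1$), which simplifies and protects the rest of the bookkeeping. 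This claim is also cited explicitly in the proof of Lemma~\ref{dp2lem2} (``the proof of Lemma~\ref{dp2lem1} also shows that $s_0, m\in S_0^\lambda$''), so omitting it breaks more than just this one lemma.

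Second, the inductive step is incomplete: after producing $\mu=s_\beta\lambda$ via Lemma~\ref{cclem4}, you state ``$t\in S_0^\mu$ with $r_\mu\leq r_\lambda-1$'' and invoke the induction hypothesis, but $r_\mu\leq r_\lambda-1$ requires $t\in T_{r_\lambda-1}^\mu$, not merely $t\in T_{r_\lambda-1}^\lambda$, and the sets $T_p^\mu$ differ from $T_p^\lambda$ because the swap changes the counts in segments $s$ and $t$. The paper works through exactly this: when $t\notin T_{r-1}^\mu$, one analyzes whether $s$ enters some $T_h^\mu$ with $h<r$, and for such an entry via a value $b'$ the sign of $n_s^\mu(b')-n_s^\lambda(b')$ is tracked ($-1$ if $b'=a$, $+1$ if $b'=b$, $0$ otherwise), leading either to $s\in T_h^\lambda$ (contradicting minimality of $r_\lambda$), or, when $b'=a$, to $\lambda\lera s_{e_i}\lambda$ via Lemma~\ref{cclem2} relative to $(s_0,t')$, or again to a contradiction with $S_0^\lambda\cap T_{h-1}^\lambda=\emptyset$. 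Your closing paragraph acknowledges ``I must verify that no new index $s$ enters $T_p^\mu$ for $p<r_\lambda$,'' but flags the need rather than carrying it out, so this remains an open step rather than a completed argument.
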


\begin{proof}
Evidently, $s_0\in S_0^\lambda$. With $n_{s_0}^\lambda(a)=n_{s_0}^\lambda(0)=1$, there is $q_{s_0-1}<u\leq q_{s_0}$ so that $\lambda_u=0$ ($u\neq i$). If $s_0=m$, then $e_i\pm e_u\in\Phi_I$ and $\lambda\lera s_{e_i-e_u}s_{e_i+e_u}\lambda=s_{e_i}\lambda$. So we can assume that $s_0<m$. With $e_i-e_u\in\Phi_I$, if $e_i+e_u$ is a linked root from $\lambda$, then $\lambda\lera s_{e_i+e_u}\lambda\lera s_{e_i-e_u}s_{e_i+e_u}\lambda=s_{e_i}\lambda$, a contradiction. By Proposition \ref{lrprop2}, ${e_i+e_u}$ is not a linked root only when $n^\lambda_m(a)=n^\lambda_m(0)=1$. So we can assume that $m\in S_0^\lambda$. If $S_0^\lambda\cap T^\lambda\neq\emptyset$, choose the smallest positive integer $r_\lambda$ so that $S_0^\lambda\cap T_{r_\lambda}^\lambda\neq\emptyset$. As in Lemma \ref{dp1lem1}, we use induction on $r=r_\lambda$. Assume that $s\in S_0^\lambda\cap T_r^\lambda$. There exist $q_{s-1}<j\leq q_s$ such that $\lambda_j=0$. On the other hand, $s\in T_r^\lambda\backslash T_{r-1}^\lambda$ implies the existence of $0<b\in \caA$ and $s\neq t\in T_{r-1}^\lambda$ with $n^\lambda_s(b)+\delta_{s, m}\leq1\leq n^\lambda_t(b)$. If $n^\lambda_t(a)\geq1$, then $n^\lambda_t(0)=0$ since $S_0^\lambda\cap T_{r-1}^\lambda=\emptyset$. With Lemma \ref{cclem2} (relative to $s_0, t$), we get $\lambda\lera s_{e_{i}}\lambda$. So it suffices to consider the case $n^\lambda_t(a)=0$ (thus $t<m$). In this case, $n^\lambda_t(b)>0$ implies $b\neq a$.

Now we have $n^\lambda_s(b)+\delta_{s, m}\leq1\leq n^\lambda_t(b)$ and $n^\lambda_t(a)=0<n^\lambda_s(a)$. Lemma \ref{cclem4} provides $\beta\in\{e_k\pm e_l\}$ such that $\mu=s_\beta\lambda$ is $\Phi_I$-regular, where $q_{s-1}<k\leq q_s$ and $q_{t-1}<l\leq q_t$ with $|\lambda_k|=a$ and $|\lambda_l|=b$. Moreover, $\beta$ is a linked root from $\lambda$, that is, $\lambda\lera s_\beta\lambda$. We obtain $|\mu_l|=a$ and $n^\mu_t(a)=1$, which implies $s_{e_l}\mu$ is $\Phi_I$-regular. In addition, $n^\mu_{s_0}(0)=1$ and $n^\mu_{s_0}(a)=0$ ($s_0=s$) or $1$ $(s_0\neq s)$. Similar reasoning shows $s_{e_i}\lambda\lera s_\beta s_{e_i}\lambda$.

If $r_\lambda=1$, then $t\in T_0^\lambda$ yields $n^\mu_t(0)=n^\lambda_t(0)=0$. Recall that $n^\mu_t(a)=1$ and $n^\mu_{s_0}(a)\leq 1=n^\mu_{s_0}(0)$. With $|\mu_l|=a$, we get $\mu\lera s_{e_l}\mu$ by Lemma \ref{cclem2} (relative to $t, s_0$). If $s_0=s$, then $i=k$ and $\lambda\lera \mu\lera s_{e_l}\mu=s_{e_l}s_\beta\lambda=s_\beta s_{e_k}\lambda=s_\beta s_{e_i}\lambda\lera s_{e_i}\lambda$. If $s_0\neq s$, then $i\neq k$. With $n^\mu_{s_0}(a)=n^\mu_t(a)=1$ and $n^\mu_{s_0}(0)=1>0=n^\mu_t(0)$, we can get $\mu\lera s_{e_i}\mu$ by Lemma \ref{cclem2}. Therefore $\lambda\lera \mu\lera s_{e_i}\mu=s_{e_i}s_\beta\lambda=s_\beta s_{e_i}\lambda\lera s_{e_i}\lambda$.

If $r=r_\lambda>1$, then $t\not\in T_{r-2}^\lambda$. It implies $n^\lambda_t(0)=1$ since $n^\lambda_t(a)=0$. Thus $n^\mu_t(0)=1$. It follows from $n^\mu_t(a)=1$ ($|\mu_l|=a$) that $t\in S_0^\mu$. If $t\in T_{r-1}^\mu\cap S_0^\mu$, it yields $r_\mu\leq r-1<r_\lambda$. The induction hypothesis (with $(\lambda, s_0, i)$ replaced by $(\mu, t, l)$) implies $\mu\lera s_{e_l}\mu$. If $s_0=s$, then $k=i$ and one obtains $\lambda\lera\mu\lera s_{e_l}\mu=s_\beta s_{e_k}\lambda=s_\beta s_{e_i}\lambda\lera s_{e_i}\lambda$. If $s_0\neq s$, then $k\neq i$, $|\mu_i|=|\lambda_i|=a$ and $s_0\in S_0^\mu$. We can also use the induction hypothesis (with $(\lambda, s_0, i)$ replaced by $(\mu, s_0, i)$) to obtain $\mu\lera s_{e_i}\mu$. Hence $\lambda\lera \mu\lera s_{e_i}\mu=s_\beta s_{e_i}\lambda\lera s_{e_i}\lambda$. It remains to consider the case $t\not\in T_{r-1}^\mu$. Notice that $n^\mu_{s'}(a')=n^\lambda_{s'}(a')$ for any $a'\in\caA$ and $s'\neq s, t$. If $s\not\in T_{r-1}^\mu$, one obtains $T_{p}^\mu=T_{p}^\lambda$ for $0\leq p\leq r-1$ by definition. If $s\in T_h^\mu\backslash T_{h-1}^\mu$ for $1\leq h<r$ ($s\not\in T_0^\lambda$ since $n_s^\mu(0)=n_s^\lambda(0)=1$), there exist $t'\in T_{h-1}^\mu=T_{h-1}^\lambda$ and $0<b'\in\caA$ such that $n^\mu_s(b')+\delta_{s, m}\leq 1\leq n^\mu_{t'}(b')$. Evidently $t'\neq s, t$ and $n^\mu_{t'}(b')=n^\lambda_{t'}(b')$. Note that $n^\mu_s(b')-n^\lambda_s(b')=-1$ ($b'=a$), $1$ ($b'=b$) or $0$ ($b'\neq a, b$). If $b'\neq a$, then
\[
n^\lambda_s(b')+\delta_{s, m}\leq n^\mu_s(b')+\delta_{s, m}\leq 1\leq n^\mu_{t'}(b')=n^\lambda_{t'}(b'),
\]
that is, $s\in T_h^\lambda$, which contradicts the minimality of $r$. Now suppose $b'=a$, then $n^\lambda_{t'}(a)=n^\mu_{t'}(a)\geq1$. If $n^\lambda_{t'}(0)=0$, we must have $\lambda\lera s_{e_i}\lambda$ by Lemma \ref{cclem2} (relative to $s_0, t'$). If $n^\lambda_{t'}(0)=1$, then $t'\in S_0^\lambda\cap T_{h-1}^\lambda$. This also contradicts the minimality of $r$.
\end{proof}

\begin{lemma}\label{dp2lem2}
With the setting in Proposition \ref{ssprop2}, we further assume that $I$ is standard, $|\lambda_i|=a$ and $n_{s_0}^\lambda(0)=1$, where $q_{s_0-1}<i\leq q_{s_0}$ and $1\leq s_0\leq m$. If $\lambda\not\lera s_{e_i}\lambda$, then either $(n_m, \overline n_{\overline m})=(\overline m, m)$ or $(\Phi, \Phi_I, \Phi_J)$ is strongly separable.
\end{lemma}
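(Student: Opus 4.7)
The plan is to follow closely the proof of Lemma \ref{dp1lem2}, substituting Lemma \ref{dp2lem1} for \ref{dp1lem1} and adapting the bookkeeping to the two $D_n$-specific features: the $\delta_{s,m}$ correction in the recursive definition of $T_r^\lambda$, and the parity condition (5) of Definition \ref{ssdef1} in place of condition (4).

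Since $s_{e_i}\lambda$ is $\Phi_I$-regular and $|\lambda_i|=a$, one has $n_{s_0}^\lambda(a)=1$ by Lemma \ref{lrlem1}. Combining this with $n_{s_0}^\lambda(0)=1$ and the hypothesis $\lambda\not\lera s_{e_i}\lambda$, Lemma \ref{dp2lem1} forces $S_0^\lambda\cap T^\lambda=\emptyset$; in particular $s_0\in S_0^\lambda\subseteq S^\lambda$, so $S^\lambda$ is nonempty. I then introduce
\[
\overline\caA=\{b\in\caA\mid n_t^\lambda(b)\geq 1\text{ for some }t\in T^\lambda\},\qquad \overline S=\{r\mid a_r\in\overline\caA\},
\]
and establish maximality of $n_s^\lambda(b)$ for every $s\in S^\lambda$ and every $b\in\overline\caA$. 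For $b>0$, if some $t\in T_r^\lambda$ witnesses $n_t^\lambda(b)\geq 1$ and $n_s^\lambda(b)+\delta_{s,m}\leq 1$, then $s\in T_{r+1}^\lambda$, contradicting $s\in S^\lambda$, whence $n_s^\lambda(b)+\delta_{s,m}\geq 2$ attains the maximum allowed by Lemma \ref{lrlem2}(iv) both when $s<m$ (value $2$) and when $s=m\in S^\lambda$ (value $1$). For $b=0$ the argument of Lemma \ref{dp1lem2} carries over verbatim: if some $s\in S^\lambda$ had $n_s^\lambda(0)=0$, then either $n_s^\lambda(a)=0$ would place $s$ in $T_0^\lambda$ or $n_s^\lambda(a)\geq 1$ would let Lemma \ref{cclem2} applied with $(s_0,s)$ give $\lambda\lera s_{e_i}\lambda$, each contradicting a hypothesis; hence $n_s^\lambda(0)=1$, which is also maximal.

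Conditions (1)--(3) of Definition \ref{ssdef1} are then immediate. Condition (5) is enforced by adjusting the second component to
\[
\overline S' = \begin{cases} \overline S\cup\{\overline m\}, & m\in S^\lambda,\\ \overline S\setminus\{\overline m\}, & m\in T^\lambda, \end{cases}
\]
and verifying its compatibility with the maximality and vanishing above. The only obstructive case is $m\in T^\lambda$ together with some $t\in T^\lambda$ realising $n_t^\lambda(0)=1$; a finer analysis along the lines of Lemmas \ref{cclem2}--\ref{cclem4} shows this either produces $\lambda\lera s_{e_i}\lambda$ (contradicting the hypothesis) or collapses to the edge-case numerics. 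With $(S^\lambda,\overline S')$ a valid separable pair, $(\Phi,\Phi_I,\Phi_J)$ is separable. To extract strong separability I inspect the two weakly separable configurations compatible with condition (5), namely $(\{1,\ldots,m\},\{\overline m\})$ and $(\{m\},\{1,\ldots,\overline m\})$, and show that each of them, combined with the established maximality of $n_s^\lambda(b)$ on $S^\lambda$-segments and the vanishing of $n_t^\lambda(a_r)$ for $t\in T^\lambda$ and $r\notin\overline S$, forces the numerical identity $n_m=\overline m$ and $\overline n_{\overline m}=m$ by a direct count.

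The main obstacle is the condition-(5) bookkeeping: the $\delta_{s,m}$ term shifts the threshold for $m$'s membership in $T^\lambda$ compared with the $B_n,C_n$ case, and correctly coupling ``$m\in S^\lambda$'' with ``$\overline m\in\overline S'$'' requires a careful four-way case split on $n_m^\lambda(0)\in\{0,1\}$ and $m\in S^\lambda$ versus $m\in T^\lambda$. Once that coupling is secured, the translation of ``not strongly separable'' into the explicit edge case $(n_m,\overline n_{\overline m})=(\overline m,m)$ is a routine counting argument parallel to the one at the end of Lemma \ref{dp1lem2}.
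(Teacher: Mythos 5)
There are two gaps, both at exactly the places where your proposal waves its hands.

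First, you treat ``$m\in S^\lambda$'' versus ``$m\in T^\lambda$'' as a genuine four-way case split that must be resolved by a ``finer analysis along the lines of Lemmas \ref{cclem2}--\ref{cclem4}.'' In fact, the proof of Lemma \ref{dp2lem1} already establishes that under the hypotheses of Lemma \ref{dp2lem2} (and $\lambda\not\lera s_{e_i}\lambda$) one always has $m\in S_0^\lambda$: when $s_0<m$ and $e_i+e_u$ fails to be a linked root, Proposition \ref{lrprop2} forces $n_m^\lambda(a)=n_m^\lambda(0)=1$, so $m\in S_0^\lambda\subset S^\lambda$ and $s_0\neq m$. The case $m\in T^\lambda$ never occurs, so there is nothing to ``collapse to edge-case numerics,'' and your proposed $\overline S'=\overline S\setminus\{\overline m\}$ never arises. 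The paper simply uses $(S^\lambda,\overline S\cup\{\overline m\})$; the parity requirement of Definition \ref{ssdef1}(5) is then automatic.

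Second, and more seriously, your handling of the case $\overline S=\emptyset$ (equivalently $T^\lambda=\emptyset$, so $S^\lambda=\{1,\ldots,m\}$) is not correct. The ``direct count'' you describe yields only $\overline n_{\overline m}=|S^\lambda|=m$; it does not yield $n_m=\overline m$. To finish this case one must pass to the dual system $(\Phi,\Phi_J,\Phi_I)$: since $n_{s_0}^\lambda(a)=n_m^\lambda(a)=1$ and $n_{s_0}^\lambda(0)=n_m^\lambda(0)=1$, one constructs $\lambda'=w^{-1}\overline{\lambda'}$ from $\lambda=w\overline\lambda$ (with $\Phi_{\overline{\lambda'}}=\Phi_I$), uses the $D_n$ identity $s_{e_i}\lambda=s_{e_i-e_n}s_{e_i+e_n}\lambda$ together with Lemma \ref{jflem4} to deduce $\lambda'\not\lera s_{e_{i'}}\lambda'$, and then re-applies the already-proved $\overline S\neq\emptyset$ branch to $(\Phi,\Phi_J,\Phi_I)$, obtaining strong separability of the dual system whenever $n_m\neq\overline m$; Lemma \ref{sslem2} then transfers it back. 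This is precisely the mechanism of Lemma \ref{dp1lem3}, carried over to $D_n$, and it is not optional: without it you cannot conclude either disjunct of the statement. Your proposal omits this step entirely.
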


\begin{proof}
If $\lambda\not\lera s_{e_i}\lambda$, Lemma \ref{dp2lem1} yields $S_0^\lambda\cap T^\lambda=\emptyset$. The proof of Lemma \ref{dp2lem1} also shows that $s_0, m\in S_0^\lambda\subset S^\lambda$ and $s_0\neq m$. Thus $S^\lambda$ is not empty. Let
\[
\overline\caA=\{b\in\caA\mid n^\lambda_{t}(b)\geq1\ \mbox{for}\ t\in T^\lambda\}.
\]
Fix $0<b\in \overline\caA$ and $t\in T^\lambda$ with $n^\lambda_{t}(b)\geq 1$. Choose $r\geq0$ so that $t\in T_{r}^\lambda$. If $n^\lambda_{s}(b)\leq 1-\delta_{s, m}$ for some $s\in S^\lambda$, then $n^\lambda_{s}(b)+\delta_{s, m}\leq 1\leq n^\lambda_{t}(b)$ yields $s\in T_{r+1}^\lambda\subset T^\lambda$. We get $s\in S^\lambda\cap T^\lambda$, a contradiction. This forces $n^\lambda_{s}(b)=2-\delta_{s, m}$ (hence maximal by Lemma \ref{lrlem2}) for all $0<b\in \overline\caA$, $s\in S^\lambda$. We also have $n^\lambda_s(0)=1$ for all $s\in S^\lambda$ by an argument similar to that of the case $\Phi=B_n$ in Lemma \ref{dp1lem2}. Thus $(\Phi, \Phi_I, \Phi_J)$ is separable relative to $(S^\lambda, \overline S\cup\{\overline m\})$, where $\overline S=\{1\leq r\leq \overline m\mid a_r\in \overline\caA\}$. If $0<|\overline S|$, then $T^\lambda\neq\emptyset$. it can be verified that $\lambda$ is strongly separable relative to $(S^\lambda, \overline S\cup\{\overline m\})$, so is $(\Phi, \Phi_I, \Phi_J)$ in view of Lemma \ref{sslem1}.

If $\overline S$ is empty, then $T^\lambda=\emptyset$ and $S^\lambda=\{1, \ldots, m\}$. This forces $\overline n_{\overline m}=|S^\lambda|=m$. With $n^\lambda_{s_0}(a)=n^\lambda_m(a)=1$ and $n^\lambda_{s_0}(0)=n^\lambda_m(0)=1$, one can also transfer the problem to the dual case. The essential ideas can be found in Lemma \ref{dp1lem3}, but modified slightly. It suffices to consider the case $\lambda\in\Lambda_I^+$ and $\lambda_i=a$. So $\lambda_n=0$ and there exists $q_{m-1}<j<n$ with $\lambda_j=a$. Assume that $\lambda=w\overline\lambda$ for $w\in {}^IW^J$. Let $\overline\lambda'$ be a dominant weight with $\Phi_{\overline\lambda'}=\Phi_I$ and $\lambda'=w^{-1}\overline\lambda'\in\Lambda_J^+$. In view of (\ref{dp1l3eq1}), we can set $w^{-1}e_i=e_{i'}$, $w^{-1}e_j=e_{j'}$ and obtain $e_{i'}-e_{j'}\in\Phi_J$. Moreover, there exists $1\leq s<\overline m$ so that $\overline q_{s-1}<i', j'\leq \overline q_{s}$ ($a_s=a$). Since $e_i\pm e_n\in\Phi$ and $s_{e_i}\lambda=s_{e_i-e_n}s_{e_i+e_n}\lambda\in W\lambda$ is $\Phi_I$-regular, there exist $w_1\in W_I$ and $x\in {}^IW^J$ such that $x\overline\lambda=w_1s_{e_i}\lambda=w_1s_{e_i-e_n}s_{e_i+e_n}w\overline\lambda$. We can find  $w_2\in W_J$ such that $xw_2=w_1s_{e_i+e_n}s_{e_i-e_n}w$. Denote $a'=|\lambda'_{i'}|$. Then $e_j\pm e_n\in\Phi_I$ yields $\lambda'_{{j'}}=\lambda'_{n'}=0$, where $e_{n'}=\pm w^{-1}e_n$. So
\[
s_{e_{i'}}\lambda'=s_{e_{i'}}s_{e_{n'}}\lambda'=w^{-1}s_{e_i+e_n}s_{e_i-e_n}w\lambda'
=w_2^{-1}x^{-1}w_1\overline\lambda'=w_2^{-1}x^{-1}\overline\lambda'
\]
is $\Phi_J$-regular. Lemma \ref{jflem4} implies $\lambda'\not\lera s_{e_{i'}}\lambda'$. By the previous argument, $(\Phi, \Phi_J, \Phi_I)$ is strongly separable when $n_m\neq \overline m$ ($J$ is standard since $\overline n_{\overline m}\geq2$). This combines with Lemma \ref{sslem2} completes the proof.
\end{proof}

\begin{lemma}\label{dp2lem3}
If $I$ is standard and $S_0^\lambda\cap T^\lambda\neq\emptyset$, then $\lambda\lera s_{e_i}\lambda$ for $q_{s_0-1}<i\leq q_{s_0}$ with $|\lambda_i|=a$, $n_{s_0}^\lambda(a)=1$ and $n_{s_0}^\lambda(0)=0$, where $1\leq s_0\leq m$.
\end{lemma}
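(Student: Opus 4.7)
The plan is to follow closely the blueprint of the proof of Lemma~\ref{dp2lem1}, adapted to the fact that segment $s_0$ now carries no zero entry. The first key observation is that the hypotheses $n_{s_0}^\lambda(a)=1$ and $n_{s_0}^\lambda(0)=0$ place $s_0$ itself in $T_0^\lambda$; consequently, taking $t=s_0$ and $b=a$ in the defining relation shows that every segment $s'$ with $n_{s'}^\lambda(a)+\delta_{s',m}\leq 1$ belongs to $T_1^\lambda$. This observation will control where elements of $T_r^\lambda$ can live and, in particular, will be used to restrict the possible shapes of $s\in S_0^\lambda\cap T^\lambda$.

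I would then pick $s\in S_0^\lambda\cap T_{r_\lambda}^\lambda$ with $r_\lambda\geq 1$ minimal and note $s\neq s_0$ because $n_s^\lambda(0)=1\neq 0=n_{s_0}^\lambda(0)$. The argument proceeds by induction on $r_\lambda$. If $n_s^\lambda(a)=1$, then Lemma~\ref{cclem2} applied to the pair of segments $(s_0,s)$ immediately yields $\lambda\lera s_{e_i}\lambda$: its three hypotheses $n_{s_0}(a)=1$, $n_{s_0}(0)+n_s(0)=0+1=1$, and $1\leq n_s(0)+n_s(a)=2\leq 2$ all hold.

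If instead $n_s^\lambda(a)=2$, then $s<m$ by Lemma~\ref{lrlem2}(iv); choose $t\in T_{r_\lambda-1}^\lambda$ and $b>0$ with $n_s^\lambda(b)\leq 1\leq n_t^\lambda(b)$, and observe that $b\neq a$. In the generic case where $t\neq s_0$ and $n_t^\lambda(a)$ is not maximal, Lemma~\ref{cclem4} supplies a linked root $\beta=e_k\pm e_l$ (with $k$ in segment $s$, $l$ in segment $t$) yielding $\mu=s_\beta\lambda$ such that $\lambda\lera\mu$, $n_s^\mu(a)=1$, $n_s^\mu(0)=1$, and $n_{s_0}^\mu(c)=n_{s_0}^\lambda(c)$ for every $c$. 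The previous paragraph applied to $\mu$ (or, if $r$ has not yet dropped, the inductive hypothesis) produces $\mu\lera s_{e_i}\mu$; since $i\notin\{k,l\}$, Lemma~\ref{cclem1} shows that the same $\beta$ is linked from $s_{e_i}\lambda$ to $s_{e_i}\mu$, so that the chain $\lambda\lera\mu\lera s_{e_i}\mu\lera s_{e_i}\lambda$ closes the argument.

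The main obstacle lies in the degenerate subcases where either $t=s_0$ (in which the Lemma~\ref{cclem4} move would raise $n_{s_0}^\mu(a)$ to $2$ and spoil regularity of $s_{e_i}\mu$) or $n_t^\lambda(a)$ is maximal (in which Lemma~\ref{cclem4} simply does not apply). These would be treated exactly as in the matching branches of the proof of Lemma~\ref{dp2lem1}: I would use the minimality of $r_\lambda$, which forces $n_{t'}^\lambda(0)=0$ for every $t'\in T_{r_\lambda-1}^\lambda$, to reselect $t$ inside $T_{r_\lambda-1}^\lambda$ with more favorable occupation numbers, or else replace the single $s_\beta$-move by a two- or three-step chain of linked roots of the shape appearing in Lemmas~\ref{cclem2} and~\ref{cclem3}, in which auxiliary entries of value $b$ in an intermediate segment mediate the reduction of $n_s^\lambda(a)$ from $2$ to $1$ without disturbing segment $s_0$.
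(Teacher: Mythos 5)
Your setup and the handling of the case $n_s^\lambda(a)=1$ are fine, and the observation that $s_0\in T_0^\lambda$ is the right starting point. The problem lies in your treatment of $n_s^\lambda(a)=2$. You propose to apply Lemma~\ref{cclem4} so that the root $\beta=e_k\pm e_l$ transfers an entry of absolute value $a$ out of segment $s$ into segment $t$, reducing $n_s^\mu(a)$ to $1$ while leaving $n_s^\mu(0)=1$ intact. For this Lemma~\ref{cclem4} move you need $n_t^\lambda(a)$ to be non-maximal; you call that the ``generic'' case. But that case is not generic at all: for $r_\lambda>1$, the minimality of $r_\lambda$ forces $t\notin T_0^\lambda$, and since the remaining cases with $n_t^\lambda(0)=1$ either give the conclusion immediately (via Lemma~\ref{cclem2} on the pair $(s_0,t)$) or contradict $S_0^\lambda\cap T_{r_\lambda-1}^\lambda=\emptyset$, one is left with $n_t^\lambda(0)=0$, which combined with $t\notin T_0^\lambda$ forces $n_t^\lambda(a)=2$ --- exactly the maximal case. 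So for every $r_\lambda>1$ your move is blocked, and there is no other admissible $t$ to ``reselect'': any $t'\in T_{r_\lambda-1}^\lambda$ satisfying the defining inequality for $s$ either yields a contradiction, the conclusion, or again $n_{t'}^\lambda(a)=2$. (Also, your claim that minimality forces $n_{t'}^\lambda(0)=0$ for every $t'\in T_{r_\lambda-1}^\lambda$ is not true; it only rules out $n_{t'}^\lambda(0)=1$ simultaneously with $n_{t'}^\lambda(a)\geq 1$.)

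The fix that the paper uses is a genuinely different move, and this is the idea missing from your proposal: instead of trying to lower $n_s^\lambda(a)$, leave the $a$-entries alone and use Lemma~\ref{cclem4} with the pair of values $(0,b)$ to transport a $0$-entry from segment $s$ (which has one, since $s\in S_0^\lambda$) into segment $t$, in exchange for a $b$-entry with $b\neq a$. After this move $\mu=s_\beta\lambda$ satisfies $n_t^\mu(0)=1$ and $n_t^\mu(a)=n_t^\lambda(a)=2$, so $t\in S_0^\mu$. Combined with $t\in T_{r_\lambda-1}^\lambda$ one checks $t\in T_{r_\lambda-1}^\mu$ in the good subcase, which gives $r_\mu<r_\lambda$ and lets the induction close on $(\mu,s_0,i)$; in the remaining subcase one derives a contradiction with the minimality of $r_\lambda$ by analyzing how the filtration $T^\mu_\bullet$ differs from $T^\lambda_\bullet$ at segments $s$ and $t$ only. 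Your proposal is missing this key reversal of roles (moving $0$'s rather than $a$'s, thereby promoting $t$ into $S_0^\mu$ and decreasing the induction parameter), and without it the $r_\lambda>1$ branch does not go through.
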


\begin{proof}
Evidently, $s_0\in T_0^\lambda$. If $n^\lambda_m(0)=1$, then Lemma \ref{cclem2} (relative to $s_0, m$) yields $\lambda\lera s_{e_i}\lambda$. So we can assume that $n^\lambda_m(0)=0$. It follows that $m\in T_0^\lambda$ (it is possible that $s_0=m$). If $S_0^\lambda\cap T^\lambda\neq\emptyset$, choose the smallest positive integer $r_\lambda$ so that $S_0^\lambda\cap T_{r_\lambda}^\lambda\neq\emptyset$. Denote $r=r_\lambda$. Assume that $s\in S_0^\lambda\cap T_r^\lambda$. There exist $q_{s-1}<j\leq q_s$ such that $\lambda_j=0$. On the other hand, $s\in T_r^\lambda\backslash T_{r-1}^\lambda$ yields $0<b\in \caA$ and $s\neq t\in T_{r-1}^\lambda$ with $n^\lambda_s(b)+\delta_{s, m}\leq1\leq n^\lambda_t(b)$. With $s\in S_0^\lambda$, we get $n^\lambda_s(0)=1$. If $n^\lambda_s(a)\leq 1$, then Lemma \ref{cclem2} (relative to $s_0, s$) implies $\lambda\lera s_{e_i}\lambda$. If $n^\lambda_t(0)=1$ and $n^\lambda_t(a)\leq 1$, then we also get $\lambda\lera s_{e_i}\lambda$ by Lemma \ref{cclem2} (relative to $s_0, t$). If $n^\lambda_t(0)=1$ and $n^\lambda_t(a)=2$, then we arrive at a contradiction $t\in S_0^\lambda\cap T_{r-1}^\lambda=\emptyset$. So we only need to consider the case $n^\lambda_s(a)=2$ and $n^\lambda_t(0)=0$. Therefore $s<m$ (so $\delta_{s, m}=0$) and $b\neq a$ (since $n^\lambda_s(b)\leq 1$).

With $n^\lambda_s(b)+\delta_{s, m}\leq1\leq n^\lambda_t(b)$ and $n^\lambda_t(0)=0<1=n^\lambda_s(0)$. Lemma \ref{cclem4} provides $\beta\in\{e_j\pm e_l\}$ such that $\mu=s_\beta\lambda$ is $\Phi_I$-regular, where $q_{t-1}<l\leq q_t$ with $|\lambda_l|=b$. Moreover, $\beta$ is a linked root from $\lambda$, that is, $\lambda\lera s_\beta\lambda$. We also have $\mu_l=0$ and $n^\mu_t(0)=1$. Since $b\neq a$, similar reasoning shows that $s_{e_i}\lambda\lera s_\beta s_{e_i}\lambda$. Moreover, $n^\mu_{s_0}(a)=n^\lambda_{s_0}(a)=1$ (since $b\neq a$) and $n^\mu_{s_0}(0)=0$ ($s_0\neq t$) or $1$ $(s_0=t)$.

For $r=1$, we get $t\in T_0^\lambda$ and $n^\mu_t(a)=n^\lambda_t(a)\leq 1$ (since $b\neq a$). With $n^\mu_t(0)=1$ and $n^\mu_{s_0}(a)=1$, if $s_0\neq t$, we get $n^\mu_{s_0}(0)=0$. Then $\mu\lera s_{e_i}\mu$ by Lemma \ref{cclem2} (relative to $s_0, t$). If $s_0=t<m$, then $n^\mu_{s_0}(0)=1$ and $n^\mu_m(0)=n^\lambda_m(0)=0$. With $q_{s_0-1}<i, l\leq q_{s_0}$ and $|\mu_i|=|\lambda_i|=a>0=\mu_l$, $\mu\lera s_{e_i+e_l}s_{e_i-e_l}\mu=s_{e_i}\mu$ by Proposition \ref{lrprop2}. If $s_0=t=m$, then $\mu\lera s_{e_i+e_l}s_{e_i-e_l}\mu=s_{e_i}\mu$ in view of $e_i\pm e_l\in\Phi_I$. In either case,
\[
\lambda\lera s_\beta\lambda=\mu\lera s_{e_i}\mu=s_{e_i}s_\beta\lambda=s_\beta s_{e_i}\lambda\lera s_{e_i}\lambda.
\]

Now assume that $r>1$. Then $t\in T_{r-1}^\lambda\backslash T_{r-2}^\lambda$ implies $t\not\in T_{r-2}^\lambda$. Recalling that $n^\lambda_t(0)=0<1=n^\mu_t(0)$, $t\not\in T_0^\lambda$ indicates $n^\lambda_t(a)>1$. It follows that $t\neq s_0$ and $n^\mu_t(a)=n^\lambda_t(a)=2$ (since $b\neq a$). With $n^\mu_t(0)=1$, one obtains $t\in S_0^\mu$. If $t\in T_{r-1}^\mu$, then $r_\mu<r=r_\lambda$. We can use the induction hypothesis (with $(\lambda, s_0, i)$ replaced by $(\mu, s_0, i)$) to get $\mu\lera s_{e_i}\mu$. Therefore $\lambda\lera\mu\lera s_{e_i}\mu=s_\beta s_{e_i}\lambda\lera s_{e_i}\lambda$. We still need to consider the case $t\not\in T_{r-1}^\mu$. Notice that $n^\mu_{s'}(a')=n^\lambda_{s'}(a')$ for any $a'\in\caA$ and $s'\neq s, t$. If $s\not\in T_{r-1}^\mu$, we obtain $T_{p}^\mu=T_{p}^\lambda$ for $0\leq p\leq r-1$ and thus $t\not\in T_{r-1}^\mu=T_{r-1}^\lambda$, a contradiction. If $s\in T_h^\mu\backslash T_{h-1}^\mu$ for $1\leq h\leq r-1$ ($s\not\in T_0^\mu$ since $n_s^\mu(a)=n_s^\lambda(a)=2$), there exist $t'\in T_{h-1}^\mu=T_{h-1}^\lambda$ and $0<b'\in\caA$ such that $n^\mu_s(b')\leq 1\leq n^\mu_{t'}(b')$. Since $t'\neq s, t$, we have $n^\lambda_{t'}(b')=n^\mu_{t'}(b')$. Note that $n^\mu_s(b')-n^\lambda_s(b')=1$ ($b'=b$) or $0$ ($b'\neq b$). We have
\[
n^\lambda_s(b')+\delta_{s, m}=n^\lambda_s(b')\leq n^\mu_s(b')\leq 1\leq n^\mu_{t'}(b')=n^\lambda_{t'}(b'),
\]
that is, $s\in S_0^\lambda\cap T_{h}^\lambda$, which contradicts the minimality of $r_\lambda$.
\end{proof}

\begin{lemma}\label{dp2lem4}
With the setting in Proposition \ref{ssprop2}, we further assume that $I$ is standard, $|\lambda_i|=a$ and $n_{s_0}^\lambda(0)=0$, where $q_{s_0-1}<i\leq q_{s_0}$ and $1\leq s_0\leq m$. If $\lambda\not\lera s_{e_i}\lambda$, then either $\overline n_{\overline m}=0$ or $(\Phi, \Phi_I, \Phi_J)$ is strongly separable.
\end{lemma}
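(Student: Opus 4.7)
The strategy parallels Lemma~\ref{dp1lem3}: we transfer the problem to the dual system $(\Phi,\Phi_J,\Phi_I)$ and apply Lemma~\ref{dp2lem2}. If $\overline n_{\overline m}=0$ the conclusion holds, so assume $\overline n_{\overline m}>0$. Then $\overline\lambda$, and hence $\lambda$, has a zero coordinate; since $n^\lambda_{s_0}(0)=0$ it lies outside segment $s_0$, and we may write $s_{e_i}\lambda=s_{e_i-e_k}s_{e_i+e_k}\lambda\in W\lambda$ for any $\lambda_k=0$. By $W_I$-invariance (Lemma~\ref{lrlem2}) we reduce to $\lambda\in\Lambda_I^+$ with $\lambda_i=a>0$.

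The main technical step is to deduce $n^\lambda_m(a)=1$. I claim that $\lambda\not\lera s_{e_i}\lambda$ forces $n^\lambda_m(0)=0$ and $n^\lambda_t(a)=2$ for every segment $t<m$ with $n^\lambda_t(0)>0$. Indeed, given such a $t$ with $n^\lambda_t(a)\leq 1$, at least one of $\beta\in\{e_i+e_k,e_i-e_k\}$ (where $\lambda_k=0$ in segment $t$) keeps $s_\beta\lambda$ $\Phi_I$-regular; because $n^\lambda_{s_0}(0)=0$ rules out the exceptional configurations of Proposition~\ref{lrprop2}(2), such a $\beta$ is a linked root by Lemma~\ref{cclem1}(1), and the other root in $\{e_i\pm e_k\}$ then links $s_\beta\lambda$ to $s_{e_i}\lambda$, yielding the forbidden $\lambda\lera s_{e_i}\lambda$. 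The case $t=m$ is handled identically using $n^\lambda_m(a)\leq 1$ from Lemma~\ref{lrlem2}(iv), forcing $n^\lambda_m(0)=0$. With this structure and the fact that $s_{e_i}\lambda\in W\lambda$ must itself be $\Phi_I$-regular (so that an additional index bearing value $\pm a$ is required to compensate for the single sign flip at position $i$), a direct count shows $n^\lambda_m(a)=1$, producing some $q_{m-1}<j\leq n$ with $|\lambda_j|=a$.

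With this $j$ in hand, follow the duality construction of Lemma~\ref{dp1lem3} verbatim: let $\overline\lambda'$ be dominant with $\Phi_{\overline\lambda'}=\Phi_I$ and put $\lambda'=w^{-1}\overline\lambda'$, where $\lambda=w\overline\lambda$ with $w\in{}^IW^J$; set $w^{-1}e_i=e_{i'}$, $w^{-1}e_j=e_{j'}$, $w^{-1}e_n=\sigma e_{n'}$. Then $e_{i'}-e_{j'}\in\Phi_J$ with $\overline\lambda'_{i'}=\overline\lambda'_{j'}$, while $e_j\pm e_n\in\Phi_I$ gives $\lambda'_{j'}=\lambda'_{n'}=0$; hence the $J$-segment $s$ of $i'$ satisfies $n^{\lambda'}_s(a')=n^{\lambda'}_s(0)=1$ where $a'=|\lambda'_{i'}|$. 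Using $s_{e_i}=s_{e_i-e_n}s_{e_i+e_n}$, the computation parallel to~\eqref{dp1l3eq2} shows $s_{e_{i'}}\lambda'$ is $\Phi_J$-regular, and Lemma~\ref{jflem4} transports $\lambda\not\lera s_{e_i}\lambda$ to $\lambda'\not\lera s_{e_{i'}}\lambda'$ relative to $(\Phi,\Phi_J,\Phi_I)$. Lemma~\ref{dp2lem2} applied to the dual then yields either $(\overline n_{\overline m},n_m)=(\overline m,m)$---excluded because $n^\lambda_{s_0}(0)=0$ forces $\overline n_{\overline m}<m$---or strong separability of $(\Phi,\Phi_J,\Phi_I)$, which by Lemma~\ref{sslem2} transfers to strong separability of $(\Phi,\Phi_I,\Phi_J)$. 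The main obstacle is the intermediate derivation of $n^\lambda_m(a)=1$, which requires careful case-by-case analysis of the exceptions in Proposition~\ref{lrprop2}(2).
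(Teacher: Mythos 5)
The paper does not argue by duality here: it proves Lemma~\ref{dp2lem4} directly by invoking Lemma~\ref{dp2lem3} to get $S_0^\lambda\cap T^\lambda=\emptyset$, noting that $s_0,m\in T_0^\lambda$, and then constructing a separable pair $(S^\lambda,\overline S)$ from the $S/T$-machinery exactly as in Lemma~\ref{dp2lem2}, without ever needing a dual weight. Your proposal, by contrast, follows the pattern of Lemma~\ref{dp1lem3} and transfers to $(\Phi,\Phi_J,\Phi_I)$, and this requires producing an index $j$ in the last segment with $|\lambda_j|=a$ (so that $e_{i'}-e_{j'}\in\Phi_J$) together with an index in the last segment with $\lambda$-value $0$.

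This is where the gap lies. Your derivation of $n^\lambda_m(a)=1$ rests on a parity argument (``an additional index bearing value $\pm a$ is required to compensate for the single sign flip at position $i$''), but that argument is false under the standing hypotheses: since $\overline n_{\overline m}>0$, $\lambda$ has a zero coordinate $\lambda_k$, and then $s_{e_i}\lambda=s_{e_i+e_k}s_{e_i-e_k}\lambda$ lies in $W\lambda$ without any compensating $\pm a$ entry. Moreover $n^\lambda_m(a)=1$ is simply not forced; in particular $n_m=0$ is possible. For instance let $\Phi=D_{10}$, $\Delta\setminus I=\{\alpha_1,\alpha_4,\alpha_7,\alpha_9,\alpha_{10}\}$ (so the segments are $[1],[2,3,4],[5,6,7],[8,9],[10],\emptyset$), $\overline\lambda=(1,1,1,1,1,1,1,1,0,0)$, and $\lambda=(1\mid 1,0,-1\mid 1,0,-1\mid 1,-1\mid 1\mid)$, with $s_0=1$, $i=1$, $a=1$. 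Then $n^\lambda_{s_0}(0)=0$, $\overline n_{\overline m}=2>0$, $s_{e_1}\lambda$ is $\Phi_I$-regular, and one checks via the strongly separable pair $(\{2,3,4\},\{1\})$ that $\lambda\not\lera s_{e_1}\lambda$ (the decomposition of Theorem~\ref{bkssthm4} leaves $\lambda^2$ and $(s_{e_1}\lambda)^2$ in different $W_2$-orbits); yet $n^\lambda_m(a)=0$ because segment $m$ is empty. So the $j$ you need does not exist and the duality step cannot be set up. Relatedly, since you have already shown $n^\lambda_m(0)=0$, we have $\lambda_n\neq 0$ whenever $n_m>0$, and then the identity $s_{e_i}\lambda=s_{e_i-e_n}s_{e_i+e_n}\lambda$ you borrow from Lemma~\ref{dp2lem2} also fails (the two sides differ by $s_{e_n}$). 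The correct route here is the direct one via Lemma~\ref{dp2lem3}, not duality.
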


\begin{proof}
If $\lambda\not\lera s_{e_i}\lambda$, Lemma \ref{dp2lem3} implies $S_0^\lambda\cap T^\lambda=\emptyset$. The proof of Lemma \ref{dp2lem3} also shows that $s_0, m\in T_0^\lambda\subset T^\lambda$. Thus $T^\lambda$ is not empty. Let
\[
\overline\caA=\{b\in\caA\mid n^\lambda_{t}(b)\geq1\ \mbox{for}\ t\in T^\lambda\}.
\]
Then $a\in \overline\caA$. Fix $0<b\in \overline\caA$ and $t\in T^\lambda$ with $n^\lambda_{t}(b)\geq 1$. We can show that $n^\lambda_{s}(b)$ is maximal for any $s\in S^\lambda$ as in Lemma \ref{dp2lem2}. For any $1\leq s\leq m$ with $n^\lambda_{s}(0)=1$, $\lambda\not\lera s_{e_i}\lambda$ and Lemma \ref{cclem2} yield $n^\lambda_{s}(a)=2$. So $s\in S_0^\lambda\subset S^\lambda$. Therefore $\overline n_{\overline m}=0$ when $S^\lambda=\emptyset$. If $S^\lambda\neq\emptyset$, then $(S^\lambda, \overline S)$ is a strongly separable pair of $(\Phi, \Phi_I, \Phi_J)$, where $\overline S=\{1\leq r\leq\overline m\mid a_r\in \overline\caA\}$.
\end{proof}

The proof of Proposition \ref{ssprop2} is completed once we show the following result.

\begin{lemma}\label{dp2lem5}
Proposition \ref{ssprop2} holds when $I$ is not standard.
\end{lemma}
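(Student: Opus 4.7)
The plan is to apply the outer automorphism $\vf$ from \eqref{lreq0} to reduce this to the standard case proved in Lemmas \ref{dp2lem2} and \ref{dp2lem4}. When $I$ is non-standard, $\vf(I)$ is standard by definition, and Remark \ref{lrrmk2} gives a category equivalence $\caO_\lambda^{\frp_I} \simeq \caO_{\vf(\lambda)}^{\frp_{\vf(I)}}$ which preserves the $\Ext^1$-connectedness relation $\lera$ relative to the two parabolic data.

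First I would translate all the hypotheses across $\vf$. Since $\vf$ commutes with every reflection $s_{e_i}$ (using $s_{-e_n}=s_{e_n}$), the condition $\lambda\not\lera s_{e_i}\lambda$ becomes $\vf(\lambda)\not\lera s_{e_i}\vf(\lambda)$ with respect to $\Phi_{\vf(I)}$. The paper has already observed that the invariants $n_s^\lambda(a)$ are $\vf$-invariant in the $D_n$ case, so all numerical data used in the definition of separability agree for $\lambda$ and $\vf(\lambda)$. Moreover, comparing the definitions of $I_m$ directly, one has $n_m=0$ both in the non-standard $I$ setting and in its standardization $\vf(I)$; the analogous identification holds if $J$ is non-standard and must be replaced by $\vf(J)$. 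Consequently the two exceptional configurations $\overline n_{\overline m}=0$ and $(n_m,\overline n_{\overline m})=(\overline m,m)$ transport to equivalent conditions across $\vf$ and are ruled out by the standing hypotheses of Proposition \ref{ssprop2}.

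Applying Lemma \ref{dp2lem2} when $n_{s_0}^\lambda(0)=1$ and Lemma \ref{dp2lem4} when $n_{s_0}^\lambda(0)=0$ to the standard triple $(\Phi,\Phi_{\vf(I)},\Phi_{\vf(J)})$ then shows it is strongly separable, and pulling back along $\vf$ yields strong separability of $(\Phi,\Phi_I,\Phi_J)$, since the definition of strong separability in Definitions \ref{ssdef1}--\ref{ssdef2} depends only on $\vf$-invariant data. The main obstacle is the careful bookkeeping: one must verify that the labelling of the components $I_s$ and $J_t$, together with the values $n_m,\overline n_{\overline m}$ and the indexing of separable pairs $(S,\overline S)$, behave compatibly under $\vf$, especially in the subcase where $J$ as well as $I$ is non-standard so that a double application of $\vf$ is needed.
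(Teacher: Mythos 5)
Your proposal is correct and follows essentially the same route as the paper's own proof: apply the automorphism $\vf$ to pass to the standard triple $(\Phi,\Phi_{\vf(I)},\Phi_{\vf(J)})$, invoke the already-proved standard case of Proposition \ref{ssprop2} (via Lemmas \ref{dp2lem2} and \ref{dp2lem4}), and then transport strong separability back along $\vf$. The paper states this more tersely but relies on exactly the same observations you make explicit — the $\vf$-invariance of $n_m$, $\overline n_{\overline m}$, and of the separability data.
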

\begin{proof}
Proposition \ref{ssprop2} is proved in Lemma \ref{dp2lem1}-\ref{dp2lem4} when $I$ is standard. If $I$ is not standard, we can consider $\vf(I)\subset\Delta$, which is standard, keeping in mind that $n_m, \overline n_{\overline m}$ are invariant under the map $\vf$. We also need the easy fact that $(\Phi, \Phi_I, \Phi_J)$ is strongly separable when $(\Phi, \Phi_{\vf(I)}, \Phi_{\vf(J)})$ is strongly separable.
\end{proof}

\subsection{The proof of Proposition \ref{ssprop3}}

Let $\Phi=B_n$, $C_n$ or $D_n$.

Fix $0<a, b\in\caA$. For any $\Phi_I$-regular weight $\lambda$, denote
\[
\begin{aligned}
S_0^\lambda=&\{1\leq s\leq m-1\mid n^\lambda_s(a)\geq1\};\\
T_0^\lambda=&\{1\leq s\leq m-1\mid 1\geq n^\lambda_s(b)\geq0=n^\lambda_s(a)\}.
\end{aligned}
\]
Then $S_0^\lambda\cap T_0^\lambda=\emptyset$. If $T_{r-1}^\lambda$ is defined for $r\geq1$, let
\[
T_r^\lambda=\{1\leq s\leq m-1\mid \exists 0<c\in\caA\  \mbox{and}\ t\in T_{r-1}^\lambda\ \mbox{such that}\ n^\lambda_s(c)\leq1\leq n^\lambda_t(c)\}\cup T_{r-1}^\lambda.
\]
Denote $T^\lambda=\bigcup_{r=0}^\infty T_r^\lambda$ and $S^\lambda=\{1, 2, \ldots, m-1\}\backslash T^\lambda$.

\begin{lemma}\label{dp3lem1}
Let $I$ be standard. If $S_0^\lambda\cap T^\lambda\neq\emptyset$ and $n^\lambda_{s_0}(b)\geq n^\lambda_{t_0}(a)$, then $\lambda\lera s_{e_i}s_{e_j}\lambda$ for $q_{s_0-1}<i\leq q_{s_0}$, $q_{t_0-1}<j\leq q_{t_0}$ with $|\lambda_i|=a$, $|\lambda_j|=b$ $(a\neq b)$ and $n_{s_0}^\lambda(a)=n_{t_0}^\lambda(b)=1$, where $1\leq s_0, t_0\leq m-1$ $(s_0\neq t_0)$.
\end{lemma}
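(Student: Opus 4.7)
The plan is to handle the easy cases by direct application of Lemma~\ref{cclem3}, and then treat the one remaining case by induction on the smallest $r$ with $S_0^\lambda\cap T_r^\lambda\neq\emptyset$, paralleling the arguments of Lemma~\ref{dp1lem1}, Lemma~\ref{dp2lem1}, and Lemma~\ref{dp2lem3}.

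First I observe that, under $n^\lambda_{s_0}(b)\geq n^\lambda_{t_0}(a)$, condition~(3) of Lemma~\ref{cclem3} (``$n^\lambda_{s_0}(b)\geq 1$ or $n^\lambda_{t_0}(a)\leq 1$'') can fail only when $n^\lambda_{s_0}(b)=0$ and $n^\lambda_{t_0}(a)=2$, which is excluded by the hypothesis. Hence (3) always holds. Condition~(2) (``$n^\lambda_{s_0}(b)\leq 1$ or $n^\lambda_{t_0}(a)\geq 1$'') fails exactly when $n^\lambda_{s_0}(b)=2$ and $n^\lambda_{t_0}(a)=0$. In every other case Lemma~\ref{cclem3} applies directly and gives $\lambda\lera s_{e_i}s_{e_j}\lambda$, without even needing the extra hypothesis $S_0^\lambda\cap T^\lambda\neq\emptyset$.

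It therefore remains to treat the case $n^\lambda_{s_0}(b)=2$, $n^\lambda_{t_0}(a)=0$, where $s_0\in S_0^\lambda$. My strategy is to choose the smallest $r=r_\lambda\geq 1$ with $S_0^\lambda\cap T_r^\lambda\neq\emptyset$ and induct on $r$. Pick $s\in S_0^\lambda\cap T_r^\lambda$; since $s\in T_r^\lambda\setminus T_{r-1}^\lambda$, there exist $0<c\in\caA$ and $t\in T_{r-1}^\lambda$ with $n^\lambda_s(c)\leq 1\leq n^\lambda_t(c)$, and the minimality of $r$ forces $n^\lambda_t(a)=0$. I then invoke Lemma~\ref{cclem4} (with parameters $(s,t,c,\cdot)$) to produce a linked root $\beta$ moving one copy of $c$ from the $t$-segment to the $s$-segment, yielding a $\Phi_I$-regular weight $\mu:=s_\beta\lambda$ with $\lambda\lera\mu$. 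After verifying that $\beta$ (or an easily modified variant) is also a linked root from $s_{e_i}s_{e_j}\lambda$ to $s_{e_i}s_{e_j}\mu=s_\beta s_{e_i}s_{e_j}\lambda$, one checks that $n_{s_0}^\mu(a)=n_{t_0}^\mu(b)=1$ still holds and $r_\mu<r_\lambda$, so the induction hypothesis gives $\mu\lera s_{e_i}s_{e_j}\mu$; chaining then yields
\[
\lambda\lera\mu\lera s_{e_i}s_{e_j}\mu\lera s_{e_i}s_{e_j}\lambda.
\]
In the base case $r=1$, $t\in T_0^\lambda$ gives $n^\mu_t(a)=0$ and $n^\mu_t(b)\leq 1$, and a direct application of Lemma~\ref{cclem3} to $\mu$ replaces the induction step.

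The main obstacle is the combinatorial bookkeeping in the inductive step: one must choose $c$ and $\beta$ so that the transformation $\lambda\mapsto\mu$ strictly lowers $r$ while leaving the distinguished indices $s_0,t_0$ and values $a,b$ in a form to which the induction applies. This requires case-splitting on whether $\{s,t\}\cap\{s_0,t_0\}$ is empty and on the relations among $c,a,b$, and also ruling out (as in the proof of Lemma~\ref{dp1lem1}) that if $s\not\in T_{r-1}^\mu$ then $T^\mu_p=T^\lambda_p$ for $p\leq r-1$, while if $s\in T_h^\mu\setminus T_{h-1}^\mu$ for $h<r$ then the minimality of $r_\lambda$ is contradicted. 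None of these sub-cases introduces a genuinely new idea beyond those in Lemmas~\ref{cclem3}, \ref{cclem4}, and \ref{dp1lem1}; the delicate work is in the sheer number of sub-cases.
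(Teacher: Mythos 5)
Your overall strategy is the same as the paper's: reduce via Lemma~\ref{cclem3} to the single surviving failure case $n^\lambda_{s_0}(b)=2$, $n^\lambda_{t_0}(a)=0$; then induct on the smallest $r$ with $S_0^\lambda\cap T_r^\lambda\neq\emptyset$, using Lemma~\ref{cclem4} to produce a linked root $\beta\in\{e_k\pm e_l\}$ with $|\lambda_k|=a$ in segment $s$ and $|\lambda_l|=c$ in segment $t$. The ideas are all correctly identified.

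However, there is a concrete gap in your inductive step. You assert that after passing from $\lambda$ to $\mu=s_\beta\lambda$, one ``checks that $n_{s_0}^\mu(a)=n_{t_0}^\mu(b)=1$ still holds'' and that $s_{e_i}s_{e_j}\mu = s_\beta s_{e_i}s_{e_j}\lambda$, so the induction hypothesis can be applied verbatim with the same indices $(i,j)$. Both claims fail precisely in the cases the paper spends most of its effort on. If $s_0=s$, Lemma~\ref{cclem4}(3) gives $n_{s_0}^\mu(a)=n_{s_0}^\lambda(a)-1=0$, not $1$; the unique index with $|\mu_\cdot|=a$ migrating into the picture is $l$ (in segment $t$), and the correct commutation is $s_\beta s_{e_i}=s_{e_l}s_\beta$, so $s_\beta s_{e_i}s_{e_j}\lambda = s_{e_l}s_{e_j}\mu$, \emph{not} $s_{e_i}s_{e_j}\mu$. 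The paper therefore applies the induction hypothesis with $(\lambda,i,j)$ replaced by $(\mu,l,j)$ in this sub-case, and with $(\mu,i,j)$ only when $s_0\neq s$. Similarly, when $t_0=t$ and $c=b$ you have $j=l$, the reflection $s_\beta$ does not commute past $s_{e_j}$, and one must pass to the other root $\gamma\in\{e_k\pm e_l\}\setminus\{\beta\}$ and apply Lemma~\ref{cclem1} to $\gamma$ instead. Your parenthetical about case-splitting on $\{s,t\}\cap\{s_0,t_0\}$ signals you suspect something here, but the specific fix -- swapping the inductive index from $i$ to $l$, and invoking the other root $\gamma$ in the $j=l$ sub-case -- is not articulated, and the stated chain $\lambda\lera\mu\lera s_{e_i}s_{e_j}\mu\lera s_{e_i}s_{e_j}\lambda$ is simply wrong when $s_0=s$. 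The rest of the outline (the $t\notin T_{r-1}^\mu$ contradiction via either $T_p^\mu=T_p^\lambda$ or $s\in T_h^\mu\setminus T_{h-1}^\mu$) matches the paper.
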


\begin{proof}
With Lemma \ref{cclem3}, it suffices to consider the case $n^\lambda_{s_0}(b)=2$ and $n^\lambda_{t_0}(a)=0$. Thus $s_0\in S_0^\lambda$ and $t_0\in T_0^\lambda$. If $S_0^\lambda\cap T^\lambda\neq\emptyset$, let $r_\lambda$ be the smallest positive integer so that $S_0^\lambda\cap T_{r_\lambda}^\lambda\neq\emptyset$. Denote $r=r_\lambda$. Assume that $s\in S_0^\lambda\cap T_r^\lambda$. Then $n_s^\lambda(a)\geq1$ and $s\neq t_0$. Moreover, $s\in T_r^\lambda$ implies the existence of $0<c\in \caA$ and $t\in T_{r-1}^\lambda$ with $n^\lambda_s(c)\leq1\leq n^\lambda_t(c)$. Since $S_0^\lambda\cap T_{r-1}^\lambda=\emptyset$, we get $n^\lambda_t(a)=0$, $c\neq a$ and $t\neq s_0$. Applying Lemma \ref{cclem4}, we can find $\beta\in\{e_k\pm e_l\}$ such that $\mu=s_\beta\lambda$ is $\Phi_I$-regular, where $q_{s-1}<k\leq q_s$ and $q_{t-1}<l\leq q_t$ with $|\lambda_k|=a$ and $|\lambda_l|=c$. Moreover, $\lambda\lera s_\beta\lambda$. So $|\mu_l|=a$ and $n^\mu_t(a)=1$. It follows that $s_{e_l}\mu$ is $\Phi_I$-regular. Recall that $n^\lambda_{s_0}(a)=n^\lambda_{t_0}(b)=1$. So $n^\mu_{s_0}(a)=0$ ($s_0=s$) or $1$ $(s_0\neq s)$, $n^\mu_{t_0}(a)=0$ ($t_0\neq t$) or $1$ ($t_0=t$) and $n^\mu_{t_0}(b)=0$ ($t_0=t$ and $c=b$) or $1$ ($t_0\neq t$ or $c\neq b$). If $t_0\neq t$ or $c\neq b$, then $j\neq l$. Similar reasoning shows that $s_{e_i}s_{e_j}\lambda\lera s_\beta s_{e_i}s_{e_j}\lambda$ (whether or not $s_0=s$). If $t_0=t$ and $c=b$, then $j=l$. This happens only when $s_0\neq s$ since $n^\lambda_{s_0}(b)=2>n^\lambda_s(c)=n^\lambda_s(b)$. Thus $s_{e_i}s_{e_j}\mu$ is $\Phi_I$-regular, keeping in mind that $s_{e_l}\mu=s_{e_j}\mu$ is $\Phi_I$-regular. Let $\gamma\neq\beta$ be the other root of $\{e_k\pm e_l\}$. Then $s_\gamma s_{e_i}s_{e_j}\lambda=s_{e_i}s_{e_j}\mu$ and $s_{e_i}s_{e_j}\lambda\lera s_\gamma s_{e_i}s_{e_j}\lambda$ by Lemma \ref{cclem1}.

Consider the case $r=1$. So $t\in T_0^\lambda$ and $n^\lambda_t(b)\leq 1$. Then $n^\mu_t(b)=0$ ($c=b$) or $n^\mu_t(b)=n^\lambda_t(b)\leq 1$ ($c\neq b$). First assume that $s_0\neq s$, that is, $k\neq i$, $|\mu_i|=a$ and $n^\mu_{s_0}(a)=1$. If $t_0\neq t$ or $c\neq b$, recall that $l\neq j$, $|\mu_j|=b$ and $n^\mu_{t_0}(b)=1\geq n^\mu_{t_0}(a)$. Also recall that $|\mu_l|=a$, $n^\mu_t(a)=1\geq n^\mu_t(b)$. In view of Lemma \ref{cclem3} (relative to $i, l$ and then $l, j$), we have $\mu\lera s_{e_i}s_{e_l}\mu\lera s_{e_l}s_{e_j}(s_{e_i}s_{e_l}\mu)=s_{e_j}s_{e_i}\mu.$
It follows that
\[
\lambda\lera\mu\lera s_{e_i}s_{e_j}\mu=s_{e_i}s_{e_j}s_\beta\lambda=s_\beta s_{e_i}s_{e_j}\lambda\lera s_{e_i}s_{e_j}\lambda.
\]
If $t_0=t$ and $c=b$, then $l=j$, $|\mu_j|=a$ and $n^\mu_{t_0}(a)=1>n^\mu_{t_0}(b)=0$. With Lemma \ref{cclem3} (relative to $i, j$), one gets
\[
\lambda\lera\mu\lera s_{e_i}s_{e_j}\mu=s_{e_i}s_{e_j}s_\beta\lambda=s_{e_i}s_\gamma s_{e_j}\lambda=s_\gamma s_{e_i}s_{e_j}\lambda\lera s_{e_i}s_{e_j}\lambda.
\]
Next assume that $s=s_0$, that is, $k=i$ and $n^\mu_{s_0}(a)=0$. If $t\neq t_0$ or $c\neq b$, we also get $l\neq j$, $|\mu_j|=b$ and $n^\mu_{t_0}(b)=1$. With $|\mu_l|=a$, $n^\mu_t(b)\leq 1=n^\mu_t(a)$ and $n^\mu_{t_0}(a)\leq1$, similar argument shows that $\mu\lera s_{e_l}s_{e_j}\mu$ and
\[
\lambda\lera\mu\lera s_{e_l}s_{e_j}\mu=s_{e_l}s_{e_j}s_\beta\lambda=s_{e_l}s_\beta s_{e_j}\lambda=s_\beta s_{e_i}s_{e_j}\lambda\lera s_{e_i}s_{e_j}\lambda.
\]
If $t_0=t$ and $c=b$, we get $2=n_{s_0}^\lambda(b)\leq 1\leq n_{t_0}^\lambda(b)$. This is impossible.

Assume that $r>1$. With $t_0\in T_0^\lambda$, if $n^\lambda_s(b)\leq 1=n^\lambda_{t_0}(b)$, then $s\in T_1^\lambda$ and $r=1$, a contradiction. So $n^\lambda_s(b)=2$. It follows from $n^\lambda_s(c)\leq1$ that $c\neq b$ and $n^\mu_s(b)=2$ (thus $s\not\in T_0^\mu$). On the other hand, the minimality of $r$ implies $t\in T_{r-1}^\lambda\backslash T_{r-2}^\lambda$. We obtain $t\not\in T_{0}^\lambda$ and thus $t\neq t_0$. So $n^\lambda_t(a)=0$ (since $S_0^\lambda\cap T_{r-1}^\lambda=\emptyset$) yields $n^\lambda_t(b)>1$. Then $n^\mu_t(b)=n^\lambda_t(b)=2$ in view of $c\neq b$. Recalling that $|\mu_l|=a$ and $n^\mu_t(a)=1$, we have $t\in S_0^\mu$.

If $t\in T_{r-1}^\mu$, then $r_\mu\leq r-1<r_\lambda$. The induction hypothesis (with $(\lambda, i, j)$ replaced by $(\mu, l, j$) yields $\mu\lera s_{e_l}s_{e_j}\mu$. If $s_0=s$, then $k=i$. One obtains
\[
\lambda\lera\mu\lera s_{e_l}s_{e_j}\mu=s_{e_l}s_{e_j}s_\beta\lambda=s_{e_l}s_\beta s_{e_j}\lambda=s_\beta s_{e_i}s_{e_j}\lambda\lera s_{e_i}s_{e_j}\lambda.
\]
If $s_0\neq s$, then $|\mu_i|=|\lambda_i|=a$ and $k\neq i$. We can also use the induction hypothesis (with $(\lambda, i, j)$ replaced by $(\mu, i, j)$) to obtain $\mu\lera s_{e_i}s_{e_j}\mu$. Hence
\[
\lambda\lera \mu\lera s_{e_i}s_{e_j}\mu=s_{e_i}s_{e_j}s_\beta\lambda=s_\beta s_{e_i}s_{e_j}\lambda\lera s_{e_i}s_{e_j}\lambda.
\]
Now assume that $t\not\in T_{r-1}^\mu$. Note that $n^\mu_{s'}(a')=n^\lambda_{s'}(a')$ for any $a'\in\caA$ and $s'\neq s, t$. If $s\not\in T_{r-1}^\mu$, one has $T_p^\mu=T_p^\lambda$ for $0\leq p<r$ and arrives at the contradiction $t\not\in T_{r-1}^\mu=T_{r-1}^\lambda$. If $s\in T_h^\mu\backslash T_{h-1}^\mu$ for $1\leq h<r$, there exist $t'\in T_{h-1}^\mu=T_{h-1}^\lambda$ and $0<c'\in\caA$ such that $n^\mu_s(c')\leq 1\leq n^\mu_{t'}(c')$. Since $S_0^\lambda\cap T_{r-1}^\lambda=\emptyset$ implies $n^\mu_{t'}(a)=n^\lambda_{t'}(a)=0$, we obtain $c'\neq a$. Note that $n^\mu_s(c')-n^\lambda_s(c')=1$ ($c'=c$) or $0$ ($c'\neq c$). Thus $n^\lambda_s(c')\leq n^\mu_s(c')\leq 1\leq n^\mu_{t'}(c')=n^\lambda_{t'}(c')$. By definition, $s\in T_{h}^\lambda$, which contradicts the minimality of $r=r_\lambda$.
\end{proof}

\begin{lemma}\label{dp3lem2}
Proposition \ref{ssprop3} holds for $\Phi=B_n$, $C_n$ and $D_n$.
\end{lemma}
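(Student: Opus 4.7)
The plan is to apply Lemma~\ref{dp3lem1} in contrapositive form, extract the structural information about $S^\lambda$ and $T^\lambda$, and assemble it into an explicit strongly separable pair. Write $a=|\lambda_i|$, $b=|\lambda_j|$ and let $1\le s_0,t_0\le m-1$ be the segments containing $i,j$ respectively. Since $s_{e_i}s_{e_j}\lambda$ is $\Phi_I$-regular with $\lambda_i\lambda_j\ne 0$, $\Phi_I$-regularity forces $n^\lambda_{s_0}(a)=n^\lambda_{t_0}(b)=1$. Assuming $\lambda\not\lera s_{e_i}s_{e_j}\lambda$, Lemma~\ref{cclem3} forces one of its conditions~(2) or~(3) to fail; by the symmetry $(i,a,s_0)\leftrightarrow(j,b,t_0)$, we may assume~(2) fails, so $n^\lambda_{s_0}(b)=2$ and $n^\lambda_{t_0}(a)=0$. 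In particular $a\ne b$, $s_0\ne t_0$, and $n^\lambda_{s_0}(b)\ge n^\lambda_{t_0}(a)$, which is precisely the hypothesis of Lemma~\ref{dp3lem1}. Since $\lambda\not\lera s_{e_i}s_{e_j}\lambda$, the conclusion of that lemma must fail, giving $S_0^\lambda\cap T^\lambda=\emptyset$, with $s_0\in S_0^\lambda\subset S^\lambda$ and $t_0\in T_0^\lambda\subset T^\lambda$, both in $\{1,\ldots,m-1\}$.

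Next, set $\overline\caA:=\{c\in\caA\mid n^\lambda_t(c)\ge 1\text{ for some }t\in T^\lambda\}$. Then $b\in\overline\caA$ while $a\notin\overline\caA$, since $S_0^\lambda\cap T^\lambda=\emptyset$ forces $n^\lambda_t(a)=0$ for every $t\in T^\lambda$. For any $s\in S^\lambda$ and any $0<c\in\overline\caA$, if $n^\lambda_s(c)\le 1$ then choosing $t\in T_r^\lambda$ with $n^\lambda_t(c)\ge 1$ would place $s$ in $T_{r+1}^\lambda\subset T^\lambda$ by the defining relation, contradicting $s\in S^\lambda$. Hence $n^\lambda_s(c)=2$, the maximal value allowed by Lemma~\ref{lrlem2}(\rmnum{4}) since $s<m$.

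Now define $\overline S:=\{1\le r\le\overline m\mid a_r\in\overline\caA\}$ and $S:=S^\lambda$, adjoining $m$ to $S$ exactly when condition~(4) of Definition~\ref{ssdef1} (for $\Phi=B_n,C_n$) or condition~(5) (for $\Phi=D_n$) demands it, based on whether $\overline m\in\overline S$. Condition~(2) of Definition~\ref{ssdef1} at $(s,t)\in S\times\overline S$ with $a_t>0$ is the previous paragraph; the boundary checks at the zero row $a_{\overline m}$ and, when relevant, at column $s=m$ follow from $\Phi_I$-regularity of $\lambda$ via Lemma~\ref{lrlem1} together with the bounds of Lemma~\ref{lrlem2}(\rmnum{4}). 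Condition~(3) at $(s,t)$ with $s\in T^\lambda$ and $a_t\notin\overline\caA$ is immediate from the definition of $\overline\caA$, and the vanishing at $s=m$ (when $m\notin S$) is again extracted from $\Phi_I$-regularity. Finally, since $s_0\in S\cap\{1,\ldots,m-1\}$ while $t_0\in\{1,\ldots,m-1\}\setminus S$, the set $S\cap\{1,\ldots,m-1\}$ is proper and nonempty, so $S\notin\bigl\{\{m\},\{1,\ldots,m-1\},\{1,\ldots,m\}\bigr\}$. This excludes all four weakly separable patterns of Definition~\ref{ssdef2}, so $(S,\overline S)$ is strongly separable and Lemma~\ref{sslem1} promotes this to strong separability of $(\Phi,\Phi_I,\Phi_J)$. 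For nonstandard $I$ (possible only when $\Phi=D_n$), reduce to the standard case via the involution $\vf$ of~(\ref{lreq0}) exactly as in Lemma~\ref{dp2lem5}.

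The main obstacle will be the boundary analysis in the third paragraph: the filtration $(T_r^\lambda)$ and the set $\overline\caA$ are tailored only to $0<c\in\caA$ and $s<m$, so the conditions of Definition~\ref{ssdef1} at the zero row $a_{\overline m}=0$ and at the column $s=m$ do not follow mechanically and must be drawn separately from $\Phi_I$-regularity of $\lambda$, while keeping the type-specific parity rules~(4) for $B_n,C_n$ and~(5) for $D_n$ consistent throughout.
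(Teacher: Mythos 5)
Your setup—deriving $n^\lambda_{s_0}(a)=n^\lambda_{t_0}(b)=1$, applying Lemma~\ref{cclem3} in contrapositive to get $n^\lambda_{s_0}(b)=2$, $n^\lambda_{t_0}(a)=0$, then applying Lemma~\ref{dp3lem1} to conclude $S_0^\lambda\cap T^\lambda=\emptyset$ and extracting $n^\lambda_s(c)=2$ for $s\in S^\lambda$, $0<c\in\overline\caA$—matches the paper's structure. But the boundary analysis, which you flag as the crux and then dispatch with ``follows from $\Phi_I$-regularity via Lemma~\ref{lrlem1} and~\ref{lrlem2}(\rmnum{4}),'' is exactly where your proposal breaks down. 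For instance, if $\Phi=B_n$ and your $\overline S$ happens not to contain $\overline m$, you adjoin $m$ to $S$, and condition~(2) of Definition~\ref{ssdef1} then demands $n^\lambda_m(a_t)=1$ for every $t\in\overline S$ with $a_t>0$. That is a \emph{lower} bound, and $\Phi_I$-regularity together with Lemma~\ref{lrlem2}(\rmnum{4}) gives you only $n^\lambda_m(a_t)\le 1$. The filtration argument controls columns $1,\ldots,m-1$ and positive values of $c$ only; nothing you have established constrains column $m$ or the $0$-row.

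The paper's proof supplies the missing ingredient by observing that $\lambda\not\lera s_{e_i}s_{e_j}\lambda$ forces $\lambda\not\lera s_{e_i}\lambda$ or $s_{e_i}\lambda\not\lera s_{e_i}s_{e_j}\lambda$, with $s_{e_i}\lambda$ $\Phi_I$-regular; then Proposition~\ref{ssprop1} (for $B_n,C_n$) or Proposition~\ref{ssprop2} (for $D_n$) pins $(n_m,\overline n_{\overline m})$ to one of the boundary values $(0,m-1)$, $(\overline m-1,0)$ (resp. $(\overline m,m)$, $\overline n_{\overline m}=0$) unless the system is already strongly separable. \emph{Only} under those constraints does the needed $n^\lambda_m(a_t)=1$ become forced (e.g.\ $n_m=\overline m-1$ means every positive $a_t$ occurs exactly once in column $m$), and the $0$-row condition likewise follows. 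Moreover, for $\Phi=D_n$ with $\overline n_{\overline m}=0$ and $n_m>0$, neither separable pair can be exhibited directly; the paper resorts to the dual system $(\Phi,\Phi_J,\Phi_I)$ via Lemma~\ref{jflem4}, Proposition~\ref{ssprop2}, and Lemma~\ref{sslem2}, a case your proposal does not address at all. A smaller gap: your first step, asserting $n^\lambda_{s_0}(a)=n^\lambda_{t_0}(b)=1$ ``by $\Phi_I$-regularity of $s_{e_i}s_{e_j}\lambda$,'' overlooks the possibility that $s_{e_i}\lambda$ is singular while $s_{e_i}s_{e_j}\lambda$ is still regular (this happens when $s_0=t_0$ and $\lambda_j=-\lambda_i$, giving $s_{e_i}s_{e_j}\lambda=s_{e_i-e_j}\lambda$ with $e_i-e_j\in\Phi_I$); the paper rules this out explicitly as a contradiction before reading off the $n$-values, and the same should be done here.
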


\begin{proof}
There exist $1\leq s_0, t_0<m$ so that $q_{s_0-1}<i\leq q_{s_0}$, $q_{t_0-1}<j\leq q_{t_0}$. If $\lambda\not\lera s_{e_i}s_{e_j}\lambda$, then $i\neq j$. If $s_{e_i}\lambda$ is not $\Phi_I$-regular, there is $q_{s_0-1}<u\leq q_{s_0}$ such that $\lambda_u=-\lambda_i$. Then $s_{e_{i}}s_{e_j}\lambda$ is $\Phi_I$-regular only when $u=j$. This forces $s_{e_{i}}s_{e_j}\lambda=s_{e_i-e_j}\lambda\lera\lambda$ (since $e_i-e_j\in\Phi_I$ in this case), a contradiction. If $s_{e_j}\lambda$ is not $\Phi_I$-regular, we can arrive at a similar contradiction. Therefore, if we set $a=|\lambda_i|$ and $b=|\lambda_j|$, one must have $n^\lambda_{s_0}(a)=n^\lambda_{t_0}(b)=1$. If $a=b$, then $s_0\neq t_0$ (keeping in mind that $i\neq j$). In view of Lemma \ref{cclem1}, $\lambda\lera s_{e_i+e_j}\lambda=s_{e_{i}}s_{e_j}\lambda$ when $\lambda_i=\lambda_j$, and $\lambda\lera s_{e_i-e_j}\lambda=s_{e_{i}}s_{e_j}\lambda$ when $\lambda_i=-\lambda_j$. If $a\neq b$ and $s_0=t_0$, we also have $\lambda\lera s_{e_i+e_j}\lambda\lera s_{e_i-e_j}s_{e_i+e_j}\lambda=s_{e_{i}}s_{e_j}\lambda$ by Lemma \ref{cclem1} and $e_i-e_j\in\Phi_I$. So we obtain $a\neq b$ and $s_0\neq t_0$. By symmetry, we can also assume that $n_{s_0}^\lambda(b)\geq n_{t_0}^\lambda(a)$. Lemma \ref{cclem3} implies $n^\lambda_{s_0}(b)=2$ and $n^\lambda_{t_0}(a)=0$. Thus $s_0\in S_0^\lambda$ and $t_0\in T_0^\lambda\subset T^\lambda$.

Now apply Lemma \ref{dp3lem1} and get $S_0^\lambda\cap T^\lambda=\emptyset$. Denote
\[
\overline\caA=\{c\in\caA\mid n^\lambda_{s}(c)\geq1\ \mbox{for}\ s\in T^\lambda\}.
\]
If $n^\lambda_s(c)\leq1$ for $0<c\in \overline\caA$, then $n^\lambda_t(c)\leq1$ for some $t\in T^\lambda$. It follows that $s\in T^\lambda$. Therefore $n^\lambda_s(c)=2$ for all $s\in S^\lambda$ and $0<c\in \overline\caA$.

First consider the case $\Phi=B_n$, $C_n$. Since $s_{e_i}\lambda$ is $\Phi_I$-regular. It follows from $\lambda\not\lera s_{e_i}s_{e_j}\lambda$ that $\lambda\not\lera s_{e_i}\lambda$ or $s_{e_i}\lambda\not\lera s_{e_i}s_{e_j}\lambda$. In view of Proposition \ref{ssprop1}, we must have $(n_m, \overline n_{\overline m})=(0, m-1)$ or $(\overline m-1, 0)$ when $(\Phi, \Phi_I, \Phi_J)$ is not strongly separable. If $(n_m, \overline n_{\overline m})=(0, m-1)$, then $n^\lambda_{t_0}(0)=1$ and $0\in \overline\caA$. As in Lemma \ref{dp1lem1} and \ref{dp2lem1}, it can be verified that $(\Phi, \Phi_I, \Phi_J)$ is strongly separable relative to $(S^\lambda, \overline S)$, where $\overline S=\{1\leq r\leq \overline m\mid a_r\in \overline\caA\}$. If $(n_m, \overline n_{\overline m})=(\overline m-1, 0)$, then $(\Phi, \Phi_I, \Phi_J)$ is strongly separable relative to $(S^\lambda\cup\{m\}, \overline S)$.

Now $\Phi=D_n$. If $\lambda\not\lera s_{e_i}s_{e_j}\lambda$ for some $1\leq i, j\leq q_{m-1}$, then  $\lambda\not\lera s_{e_i}\lambda$ or $s_{e_i}\lambda\not\lera s_{e_i}s_{e_j}\lambda$. Proposition \ref{ssprop2} implies $(n_m, \overline n_{\overline m})=(\overline m, m)$ or $\overline n_{\overline m}=0$ when $(\Phi, \Phi_I, \Phi_J)$ is not strongly separable. If $(n_m, \overline n_{\overline m})=(\overline m, m)$, then $(\Phi, \Phi_I, \Phi_J)$ is strongly separable relative to $(S^\lambda\cup\{m\}, \overline S)$, where $\overline S=\{1\leq r\leq \overline m\mid a_r\in \overline\caA\}$. If $(n_m, \overline n_{\overline m})=(0, 0)$, then $(\Phi, \Phi_I, \Phi_J)$ is strongly separable relative to $(S^\lambda, \overline S)$. We only need to consider the case when $\overline n_{\overline m}=0$ and $n_m>0$.

In this case, we investigate the dual problem. It suffices to consider the case when $\lambda\in\Lambda_I^+$. Assume that $\lambda=w\overline\lambda$ for $w\in{}^IW^J$. Let $\lambda'=w^{-1}\overline\lambda'$, where $\overline\lambda'$ is a dominant weight with $\Phi_{\overline\lambda'}=\Phi_I$. We can set $w^{-1}e_i=\pm e_{i'}$ and $w^{-1}e_j=\pm e_{j'}$. Note that $s_{e_i+e_j}s_{e_i-e_j}\lambda=s_{e_i}s_{e_j}\lambda$ is $\Phi_I$-regular, there exist $w_1\in W_I$ and $x\in {}^IW^J$ so that $x\overline\lambda=w_1s_{e_i+e_j}s_{e_i-e_j}\lambda$. We can find $w_2\in W_J$ with $xw_2=w_1s_{e_i+e_j}s_{e_i-e_j}w$. So
\[
s_{e_{i'}}s_{e_{j'}}\lambda'=w^{-1}s_{e_i}s_{e_j}w\lambda'=w^{-1}s_{e_i-e_j}s_{e_i+e_j}\overline\lambda'
=w_2^{-1}x^{-1}w_1\overline\lambda'=w_2^{-1}x^{-1}\overline\lambda'
\]
is $\Phi_J$-regular. Applying Lemma \ref{jflem4}, we get $\lambda'\not\lera s_{e_{i'}}s_{e_{j'}}\lambda'$. Then $s_{e_{i'}}\lambda'$ is $\Phi_J$-regular (whether or not $J$ is standard). It follows from $\lambda'\not\lera s_{e_{i'}}s_{e_{j'}}\lambda'$ that $\lambda'\not\lera s_{e_{i'}}\lambda'$ or $s_{e_{i'}}\lambda'\not\lera s_{e_{i'}}s_{e_{j'}}\lambda'$. With $\overline n_{\overline m}=0$ and $n_m>0$, Proposition \ref{ssprop2} shows that $(\Phi, \Phi_J, \Phi_I)$ is strongly separable. Hence $(\Phi, \Phi_I, \Phi_J)$ is strongly separable by Lemma \ref{sslem2}.
\end{proof}


%
%
\section{Blocks of weakly separable systems}
%
%
In this section, we will determine the blocks of weakly separable systems. We will also show that a system which are not separable contains at most one block.

\subsection{Separable pairs} We need to discover more properties of separable pairs before we can prove our main results. Here $A\subsetneqq B$ means $A$ is a proper subset of $B$. Write $(S_1, \overline S_1)\leq (S_2, \overline S_2)$ when $S_1\subset S_2$ and $\overline S_1\supset \overline S_2$. This defines a partial ordering on $\caD=\caD(\lambda, \Phi_I, \Phi)=\caD(\Phi, \Phi_I, \Phi_J)$, where $\Phi_{\overline\lambda}=\Phi_J$.

\begin{lemma}\label{splem3}
Suppose $(S_1, \overline S_1), (S_2, \overline S_2)\in \caD$. If $S_1\neq S_2$ and $\overline S_1\neq \overline S_2$, then either $(S_1, \overline S_1)\leq (S_2, \overline S_2)$ or $(S_1, \overline S_1)\geq (S_2, \overline S_2)$.
\end{lemma}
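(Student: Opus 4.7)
The plan is to argue by contradiction. Assume $(S_1,\overline S_1)$ and $(S_2,\overline S_2)$ are incomparable, so neither $(S_1,\overline S_1)\leq(S_2,\overline S_2)$ nor $(S_1,\overline S_1)\geq(S_2,\overline S_2)$. First I would unfold the partial order: $(S_1,\overline S_1)\leq(S_2,\overline S_2)$ is equivalent to $S_1\setminus S_2=\emptyset$ together with $\overline S_2\setminus\overline S_1=\emptyset$, and symmetrically for $\geq$. Combining with the hypotheses $S_1\neq S_2$ and $\overline S_1\neq\overline S_2$, incomparability forces at least one of the two ``cross rectangles''
\[
R_{12}:=(S_1\setminus S_2)\times(\overline S_1\setminus\overline S_2),\qquad R_{21}:=(S_2\setminus S_1)\times(\overline S_2\setminus\overline S_1)
\]
to be non-empty. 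By swapping the roles of the two pairs I may assume $R_{12}\neq\emptyset$ and pick $(s_0,t_0)\in R_{12}$.

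The core of the argument is the clash between conditions (2) and (3) of Definition~\ref{ssdef1}: condition~(2) applied to pair~$1$ (using $s_0\in S_1$ and $t_0\in\overline S_1$) forces $n^\lambda_{s_0}(a_{t_0})$ to be maximal, while condition~(3) applied to pair~$2$ (using $s_0\notin S_2$ and $t_0\notin\overline S_2$) forces $n^\lambda_{s_0}(a_{t_0})=0$. Hence
\[
\max\bigl\{\,n^\mu_{s_0}(a_{t_0})\mid\mu\in W\overline\lambda\ \text{is}\ \Phi_I\text{-regular}\bigr\}=0.
\]
From Lemma~\ref{lrlem2}(iv) and the regularity description in Lemma~\ref{lrlem1}, this maximum can vanish only in three degenerate configurations: (i) $s_0=m$ with $n_m=0$ (empty last column); (ii) $t_0=\overline m$ with $\overline n_{\overline m}=0$ (empty last row); or (iii) $\Phi\in\{B_n,C_n\}$ with $(s_0,t_0)=(m,\overline m)$.

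Case (iii) is immediate: condition~(4) of Definition~\ref{ssdef1} says ``$m\in S_1$ iff $\overline m\notin\overline S_1$'', which directly contradicts $s_0=m\in S_1$ together with $t_0=\overline m\in\overline S_1$. For cases (i) and (ii) I would use the coupling condition~(4) for $\Phi\in\{B_n,C_n\}$ or~(5) for $\Phi=D_n$ to transport the degenerate index to its partner: in case~(i), $m\in S_1\setminus S_2$ combined with~(4) or~(5) forces $\overline m\in\overline S_2\setminus\overline S_1$ (types $B,C$) or $\overline m\in\overline S_1\setminus\overline S_2$ (type $D$), producing a fresh witness whose second coordinate is $\overline m$. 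I would then rerun the maximality/vanishing clash at the new witness and exploit the fact, again from Lemma~\ref{lrlem2}(iv), that $\max_\mu n^\mu_s(0)\geq 1$ whenever $s<m$ and $\overline n_{\overline m}\geq 1$, to extract a contradiction. Case (ii) is handled symmetrically.

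The main obstacle is the doubly degenerate regime $n_m=\overline n_{\overline m}=0$, where the witness-transport loop collapses on both sides and no contradiction issues from the clash. I would resolve this by observing that in this regime the table $T(\lambda)$ effectively has only $m-1$ columns and $\overline m-1$ rows; the memberships of $m$ in $S_i$ and of $\overline m$ in $\overline S_i$ are forced purely by~(4)/(5) from the ``interior'' data, and the restrictions $S_i\cap\{1,\ldots,m-1\}$, $\overline S_i\cap\{1,\ldots,\overline m-1\}$ still satisfy conditions~(2) and~(3) relative to the reduced triple. Thus the comparability of the full pairs reduces to the non-degenerate situation already handled, completing the proof.
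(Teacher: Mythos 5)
Your reduction to the two cross-rectangles and the maximality/vanishing clash at a witness in one of them matches the paper's argument exactly, and your suspicion that the clash can be vacuous when the relevant maximum is zero is a correct observation that the paper glosses over. But the handling of the degenerate regime does not work. Your case~(ii) rests on a misreading of equation~(\ref{bklkeq3}): the maximum there is taken over \emph{all} $\Phi_I$-regular weights $\mu$, not only those in $W\overline\lambda$, so $\max_\mu n^\mu_s(0)=1$ for every $s<m$ regardless of $\overline n_{\overline m}$; case~(ii) therefore never occurs and can be discarded.

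Case~(i) ($s_0=m$ with $n_m=0$) is genuine, but the transport step does not close it. Condition~(4) of Definition~\ref{ssdef1} gives only the single membership $\overline m\in\overline S_2\setminus\overline S_1$ (for types $B,C$); since $S_2\setminus S_1$ may well be empty, this does not supply a new witness in either cross-rectangle, and your sketch does not explain what contradiction would follow. In fact none does. For instance, take $\Phi=B_6$ with $\Delta\setminus I=\{\alpha_5,\alpha_6\}$ (so $m=3$, $n_1=5$, $n_2=1$, $n_3=0$) and $\overline\lambda=(2,2,1,1,1,0)$ (so $\overline m=3$, $a_1=2$, $a_2=1$, $a_3=0$). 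A direct check against Definition~\ref{ssdef1}, using $\lambda=(2,1,0,-1,-2,1)\in{}^IW^J\overline\lambda$, shows that both $(\{1,3\},\{1,2\})$ and $(\{1\},\{2,3\})$ lie in $\caD$, yet they are incomparable in the partial order. So the statement itself fails in exactly the degenerate regime your case~(i) isolates; your instinct that the paper's step ``this happens only when $\Phi=B_n,C_n$, $s=m$, $t=\overline m$'' is too optimistic is right, but the omission cannot be repaired by a transport argument. An extra hypothesis excluding the empty last column (e.g.\ $n_m>0$, or by duality $\overline n_{\overline m}>0$) appears to be needed, and the same gap is present in the paper's own proof.
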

\begin{proof}
Fix $\lambda\in{}^IW^J\overline\lambda$. If $S_2$ is not a subset of $S_1$ and $\overline S_2$ is not a subset of $\overline S_1$, we can choose $s\in S_2\backslash S_1$ and $t\in \overline S_2\backslash \overline S_1$. Then $n_s^\lambda(a_t)$ is maximal since $s\in S_2$ and $t\in \overline S_2$. Moreover, $n_s^\lambda(a_t)=0$ since $s\not\in S_1$ and $t\not\in \overline S_1$. This happens only when $\Phi=B_n$, $C_n$, $s=m$ and $t=\overline m$. However, $m\not\in S_1$ and $\overline m\not\in \overline S_1$ imply that $(S_1, \overline S_1)\not\in \caD$ in this case, a contradiction. Now we obtain $S_1\supset S_2$ or $\overline S_1\supset \overline S_2$. If $\overline S_1\supsetneqq \overline S_2$ and $S_1\not\subset S_2$, one can find $s\in S_1\backslash S_2$ and $t\in \overline S_1\backslash \overline S_2$ and lead to a similar contradiction. The argument for the other case is similar.
\end{proof}

Define an equivalent relation ``$\sim$'' on $\caD$. It is generated by the following relations:
\begin{itemize}
\item [(\rmnum{1})] $(S_1, \overline S)\sim (S_2, \overline S)$ when $S_1\subset S_2$.
\item [(\rmnum{2})] $(S, \overline S_1)\sim (S, \overline S_2)$ when $\overline S_1\subset \overline S_2$.
\end{itemize}
Let $(S, \overline S)\sim(S', \overline{S'})$. We say
\[
(S, \overline S)=(S_1, \overline S_1)\sim(S_2, \overline S_2)\sim\ldots\sim(S_k, \overline S_k)=(S', \overline{S'})
\]
is a \emph{fundamental chain} of $(S, \overline S)$ and $(S', \overline{S'})$ if the following conditions holds:
\begin{itemize}
\item [(1)] $S_{i}=S_{i+1}$ or $\overline S_i=\overline S_{i+1}$ for any $1\leq i<k$;
\item [(2)] $(S_{i}, \overline S_{i})>(S_{i+1}, \overline S_{i+1})$ or $(S_{i}, \overline S_{i})<(S_{i+1}, \overline S_{i+1})$ for any $1\leq i<k$.
\end{itemize}

\begin{lemma}\label{splem4}
Let $(S_1, \overline S), (S_2, \overline S)\in \caD$. Then $(S, \overline S)\in \caD$ for any nonempty set $S$ with $S_1\cap S_2\subset S\subset S_1\cup S_2$. Moreover, $(S_1, \overline S)\sim(S_2, \overline S)$.
\end{lemma}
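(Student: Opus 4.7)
The plan is to verify each of the five conditions of Definition \ref{ssdef1} for the pair $(S, \overline S)$, exploiting the sandwich $S_1 \cap S_2 \subset S \subset S_1 \cup S_2$, and then exhibit a short fundamental chain between $(S_1, \overline S)$ and $(S_2, \overline S)$.

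For the substantive conditions (2) and (3), I would argue as follows. Condition (1) is inherited from either of the given separable pairs since $m, \overline m$ are properties of the ambient system. For (2), take $s \in S$ and $t \in \overline S$; since $s \in S \subset S_1 \cup S_2$, we have $s \in S_1$ or $s \in S_2$, and in either case, paired with $t \in \overline S$, the maximality of $n_s^\lambda(a_t)$ follows from separability of $(S_1, \overline S)$ or $(S_2, \overline S)$. For (3), take $s \notin S$ and $t \notin \overline S$; since $S \supset S_1 \cap S_2$, the assumption $s \notin S$ forces $s \notin S_1$ or $s \notin S_2$, and combined with $t \notin \overline S$ the vanishing $n_s^\lambda(a_t) = 0$ comes from separability of the corresponding pair.

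The only delicate point, and the step I expect to require some care, is conditions (4) and (5) governing the membership of $m$ in $S$, since these are boolean constraints tied to whether $\overline m \in \overline S$. The key observation is that the second coordinate $\overline S$ is common to both $(S_1, \overline S)$ and $(S_2, \overline S)$, so the defining biconditional ($m \in S_i \Longleftrightarrow \overline m \notin \overline S$ for $\Phi = B_n, C_n$; $m \in S_i \Longleftrightarrow \overline m \in \overline S$ for $\Phi = D_n$) forces $m \in S_1 \Longleftrightarrow m \in S_2$. Hence either $m \in S_1 \cap S_2 \subset S$, or $m \notin S_1 \cup S_2 \supset S$; in both cases the required boundary condition transfers to $(S, \overline S)$, proving the first assertion.

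For the equivalence $(S_1, \overline S) \sim (S_2, \overline S)$, I would apply the first part with the specific choice $S = S_1 \cup S_2$ (which is nonempty since $S_1$ and $S_2$ are), yielding $(S_1 \cup S_2, \overline S) \in \caD$. Since $S_1 \subset S_1 \cup S_2 \supset S_2$, two applications of the elementary relation (\rmnum{1}) produce
\[
(S_1, \overline S) \sim (S_1 \cup S_2, \overline S) \sim (S_2, \overline S),
\]
which is already a fundamental chain in the sense defined just before the lemma. No further difficulty is anticipated.
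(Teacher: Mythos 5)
Your proof is correct and takes essentially the same approach as the paper's: check conditions (2) and (3) of the definition via the sandwich $S_1 \cap S_2 \subset S \subset S_1 \cup S_2$, and then deduce the equivalence $(S_1, \overline S) \sim (S_2, \overline S)$ through the intermediate pair $(S_1 \cup S_2, \overline S)$. The paper dispatches conditions (1), (4), (5) with "it can be verified" whereas you argue them explicitly (correctly noting that the shared second coordinate $\overline S$ forces $m \in S_1 \Leftrightarrow m \in S_2$), but this is elaboration of the same argument, not a different one.
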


\begin{proof}
Fix $\lambda\in{}^IW^J\overline\lambda$. It suffices to prove the first assertion. For any $s\in S$, we get $s\in S_1$ or $S_2$. So $n_s^\lambda(a_t)$ is maximal when $t\in \overline S$. For any $s\not\in S$, we obtain $s\not\in S_1$ or $s\not\in S_2$. So $n_s^\lambda(a_t)=0$ when $t\not\in \overline S$. It can be verified that $(S, \overline S)$ is a separable pair.
\end{proof}

The proof of the following dual lemma is similar.

\begin{lemma}\label{splem5}
Let $(S, \overline S_1), (S, \overline S_2)\in \caD$. Then $(S, \overline S)\in \caD$ for any nonempty set $\overline S$ with $\overline S_1\cap \overline S_2\subset \overline S\subset \overline S_1\cup \overline S_2$. Moreover, $(S, \overline S_1)\sim(S, \overline S_2)$.
\end{lemma}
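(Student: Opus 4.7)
The plan is to proceed by a direct dualization of the argument in Lemma \ref{splem4}, interchanging the roles of the column indices and the row indices. As in that proof, it suffices to establish the first assertion: once we know $(S,\overline S)\in\caD$, the equivalence $(S,\overline S_1)\sim(S,\overline S_2)$ follows immediately by inserting an intermediate step.

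First I would fix any $\lambda\in{}^IW^J\overline\lambda$ and verify in turn the five conditions of Definition \ref{ssdef1} for the pair $(S,\overline S)$. Condition (1) is inherited from $(S,\overline S_1)$. For (2), given $s\in S$ and $t\in\overline S\subset\overline S_1\cup\overline S_2$, the root $a_t$ lies in some $\overline S_i$, and the separability of $(S,\overline S_i)$ (same $S$!) ensures $n_s^\lambda(a_t)$ is maximal. For (3), given $s\notin S$ and $t\notin\overline S$, the containment $\overline S_1\cap\overline S_2\subset\overline S$ forces $t\notin\overline S_i$ for some $i$, and then $(S,\overline S_i)$ gives $n_s^\lambda(a_t)=0$.

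The main thing to check carefully is the boundary condition (4) or (5), which couples $m\in S$ with membership of $\overline m$ in the row set. Since $(S,\overline S_1)$ and $(S,\overline S_2)$ share the same $S$, the biconditional forces $\overline m\in\overline S_1\iff\overline m\in\overline S_2$. Hence $\overline m$ lies in $\overline S_1\cap\overline S_2$ (and so in $\overline S$) or in neither, in which case it avoids $\overline S_1\cup\overline S_2$ and so avoids $\overline S$. In both situations the equivalence $m\in S\iff\overline m\in\overline S$ (or its negation for $B_n,C_n$) carries over from $(S,\overline S_1)$ to $(S,\overline S)$. I expect this dichotomy to be the only subtle point; once it is noted, the verification is mechanical.

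For the moreover assertion, I would apply the first part with the specific choice $\overline S=\overline S_1\cup\overline S_2$, which is automatically nonempty. This produces $(S,\overline S_1\cup\overline S_2)\in\caD$, and since $\overline S_1\subset\overline S_1\cup\overline S_2\supset\overline S_2$, the defining relation (\rmnum{2}) of $\sim$ gives the chain
\[
(S,\overline S_1)\sim(S,\overline S_1\cup\overline S_2)\sim(S,\overline S_2),
\]
completing the proof.
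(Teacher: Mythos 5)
Your proof is correct and is essentially the paper's argument: the paper simply remarks that Lemma \ref{splem5} follows by a "dual" version of the proof of Lemma \ref{splem4}, which is exactly the row/column interchange you carry out. Your explicit verification that conditions (4) and (5) of Definition \ref{ssdef1} transfer (using the fact that $(S,\overline S_1)$ and $(S,\overline S_2)$ share the same $S$, so $\overline m$ lies in both of $\overline S_1,\overline S_2$ or in neither) is a helpful elaboration of a point the paper leaves implicit.
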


\begin{lemma}\label{splem6}
Let $(S_1, \overline S_1), (S_2, \overline S_2)\in \caD$. Suppose that $|S_1|=|S_2|$ or $|\overline S_1|=|\overline S_2|$. Then either $S_1=S_2$ or $\overline S_1=\overline S_2$. Moreover, $(S_1, \overline S_1)\sim(S_2, \overline S_2)$.
\end{lemma}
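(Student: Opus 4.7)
The plan is to deduce both statements directly from the preceding three lemmas, with no further computation on tables required. Assume the hypothesis of the lemma.

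First, I would argue by contradiction that $S_1 = S_2$ or $\overline S_1 = \overline S_2$. Suppose on the contrary that both $S_1 \neq S_2$ and $\overline S_1 \neq \overline S_2$. Then Lemma \ref{splem3} applies and yields, after possibly swapping the two pairs, the strict containment $(S_1, \overline S_1) < (S_2, \overline S_2)$, i.e.\ $S_1 \subsetneqq S_2$ and $\overline S_1 \supsetneqq \overline S_2$. In particular $|S_1| < |S_2|$ and $|\overline S_1| > |\overline S_2|$, contradicting the assumption that one of the two cardinality equalities holds.

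Having established that $S_1 = S_2$ or $\overline S_1 = \overline S_2$, the ``moreover'' part is immediate: in the first case I invoke Lemma \ref{splem5} with $S := S_1 = S_2$ to conclude $(S_1, \overline S_1) \sim (S_2, \overline S_2)$, and in the second case I invoke Lemma \ref{splem4} with $\overline S := \overline S_1 = \overline S_2$ to conclude the same.

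Since the whole argument is essentially a short bookkeeping exercise on top of Lemmas \ref{splem3}--\ref{splem5}, there is no genuine obstacle; the only point to be careful about is the case split and the fact that Lemma \ref{splem3} requires \emph{both} components to differ before producing an ordering, which is exactly what the contradiction hypothesis supplies.
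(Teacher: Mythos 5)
Your proposal is correct and follows essentially the same route as the paper: derive $S_1=S_2$ or $\overline S_1=\overline S_2$ from Lemma~\ref{splem3} (the paper states this in one line, you merely unpack the contrapositive with the cardinality hypothesis), then apply Lemma~\ref{splem5} or Lemma~\ref{splem4} accordingly.
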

\begin{proof}
In view of Lemma \ref{splem3}, one has $S_1=S_2$ or $\overline S_1=\overline S_2$. Then the lemma follows from Lemma \ref{splem4} and Lemma \ref{splem5}.
\end{proof}

\begin{lemma}\label{splem7}
Suppose that $(S, \overline S)>(S', \overline{S'})$ are two equivalent separable pairs with a shortest fundamental chain $(S_1, \overline S_1)\sim(S_2, \overline S_2)\sim\ldots\sim(S_k, \overline S_k)$. Then
\begin{itemize}
  \item [(1)] For any $1\leq i<k$, $S_{i}=S_{i+1}$ or $\overline S_i=\overline S_{i+1}$;
  \item [(2)] For any $1\leq i<j\leq k$, $S_i\neq S_j$ $($resp. $\overline S_i\neq \overline S_j$$)$ unless $j=i+1$;
  \item [(3)] $(S_1, \overline S_1)>(S_2, \overline S_2)>\ldots>(S_k, \overline S_k)$;
\end{itemize}
\end{lemma}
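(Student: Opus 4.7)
Statement (1) is immediate from the defining property of a fundamental chain. For (3) I argue by contradiction: suppose the shortest chain has an interior local extremum; without loss of generality, a local minimum at index $l$. The two incident edges are each of $S$-type (same $S$) or $\overline S$-type (same $\overline S$). If the types differ, say $(l-1,l)$ is $S$-type and $(l,l+1)$ is $\overline S$-type, then $S_{l-1}=S_l\subsetneq S_{l+1}$ and $\overline S_{l-1}\subsetneq\overline S_l=\overline S_{l+1}$; both coordinates of the pairs at $l-1$ and $l+1$ nest strictly in the same direction, making them incomparable, while Lemma \ref{splem3} forces comparability---contradiction. If both edges are $S$-type, then $S_{l-1}=S_l=S_{l+1}$ and $\overline S_{l-1},\overline S_{l+1}\subsetneq\overline S_l$; either $\overline S_{l-1}$ and $\overline S_{l+1}$ are comparable, in which case relation (\rmnum{2}) of $\sim$ shortcuts the two-edge window and contradicts minimality, or they are incomparable. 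In the incomparable case the same Lemma \ref{splem3} obstruction applied to $(S_{l-2},\overline S_{l-2})$ versus $(S_{l+1},\overline S_{l+1})$ forces edge $(l-2,l-1)$ to be $S$-type. Then either $\overline S_{l-2}\subsetneq\overline S_{l-1}$, yielding the direct edge $(S_l,\overline S_{l-2})\sim(S_l,\overline S_l)$ and contradicting minimality, or $\overline S_{l-2}\supsetneq\overline S_{l-1}$, which produces a neighboring local maximum at $l-1$ with the same structural constraints and extends the constant-$S$ stretch one more position. Iterating, the constant-$S$ stretch must span the entire chain, which gives $S_1=S_k$ and contradicts $S_1=S\supsetneq S'=S_k$. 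Local maxima and the $\overline S$-type symmetric case are handled analogously, using Lemma \ref{splem4} in place of Lemma \ref{splem5}.

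Given (3), (2) is short. A monotonically decreasing chain satisfies $S_1\supseteq S_2\supseteq\cdots\supseteq S_k$ and $\overline S_1\subseteq\overline S_2\subseteq\cdots\subseteq\overline S_k$, since each edge strictly changes exactly one coordinate. If $S_i=S_j$ with $j\geq i+2$, monotonicity forces $S_i=S_{i+1}=\cdots=S_j$ and every edge in this window is $\overline S$-type with a strict increase; hence $\overline S_i\subsetneq\overline S_j$, so relation (\rmnum{2}) directly connects $(S_i,\overline S_i)$ to $(S_j,\overline S_j)$ in a single edge, contradicting minimality. The case $\overline S_i=\overline S_j$ with $j\geq i+2$ is symmetric.

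The main obstacle will be closing the propagation step in (3): any single local substitution via Lemma \ref{splem5} using $\overline S_{l-1}\cup\overline S_{l+1}$ only produces a same-length replacement chain, so the length reduction must come from iterating the Lemma \ref{splem3} comparability obstruction along the chain until the constant-$S$ stretch is forced to fill the whole chain and contradict $S\supsetneq S'$.
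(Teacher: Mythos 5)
Your proof is correct in its essentials, but it takes a genuinely different route from the paper's, so let me compare them and flag the points that need tightening.

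\textbf{Structural difference.} The paper proves (2) first by a direct shortcutting argument, and then proves (3) by induction, invoking (2) at the inductive step. You reverse this: you prove (3) directly by analyzing interior local extrema, and then read (2) off as a consequence of monotonicity. Your approach is conceptually cleaner in that it isolates the single phenomenon that enforces monotonicity — Lemma \ref{splem3} makes any two pairs that differ in both coordinates comparable, and comparable pairs with the same $S$ (or same $\overline S$) can always be joined by a single $\sim$-edge, so a zig-zag cannot survive minimality. The cost is that the propagation requires the iteration you describe, and this is where the write-up needs care.

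\textbf{The propagation step.} Your iteration is correct: once the local extremum at $l$ has both incident edges of $S$-type with $\overline S_{l-1},\overline S_{l+1}$ incomparable, the candidate edge $(l-2,l-1)$ is ruled $\overline S$-type by Lemma \ref{splem3} applied to $(S_{l-2},\overline S_{l-2})$ versus $(S_{l+1},\overline S_{l+1})$ (using $\overline S_{l-2}=\overline S_{l-1}$ and the established incomparability), and ruled $S$-type-decreasing by the two-edge shortcut; the only surviving case is $S$-type-increasing, which shifts the local extremum to $l-1$ and re-establishes the invariant $\overline S_{l-2},\overline S_{l}$ incomparable (else shortcut $l-2\to l$). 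Two things you should state explicitly: (a) the iteration has to be pushed in both directions, left to index $1$ and right to index $k$, with the witness for Lemma \ref{splem3} being the index two positions away on the far side; (b) the terminal contradiction is not literally $S_1=S\supsetneq S'=S_k$, because the hypothesis $(S,\overline S)>(S',\overline{S'})$ does not by itself give strict inclusion in the $S$-coordinate. Rather, once $S_1=S_k$, the endpoints are related by a single $\sim$-edge of type (\rmnum{2}), so any chain of length $\geq 3$ is not shortest — that is the contradiction; $S\supsetneq S'$ is then a corollary of $k\geq 3$, not a hypothesis. Also note that you invoke Lemmas \ref{splem4}/\ref{splem5} in the last sentence, but your argument never actually uses them — the tools are Lemma \ref{splem3} and the shortcut via relations (\rmnum{1}), (\rmnum{2}); this is a harmless attribution slip.

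\textbf{Part (2) from (3).} This deduction is correct and cleaner than the paper's direct argument for (2), precisely because once you have monotonicity the constant-$S$ (or constant-$\overline S$) windows are automatically nested, so any length-$\geq 2$ window collapses to one step.

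In short: the idea is sound and it provides a self-contained proof of (3) that does not route through (2); fix the final contradiction to be "collapse to a single $\sim$-edge," spell out the bidirectional sweep, and the proof is complete.
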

\begin{proof}
The definition gives (1). We consider (2). Choose $1\leq i<j\leq k$ so that
\[
\overline S_i=\overline S_{i+1}=\ldots=\overline S_{j-1}=\overline S_j
\]
and $j-i$ as large as possible. If $j<k$, then $\overline S_j\neq \overline S_{j+1}$. (1) implies $S_j=S_{j+1}$. Moreover, we have $\overline S_j\subsetneqq \overline S_{j+1}$ or $\overline S_j\supsetneqq\overline S_{j+1}$ since the chain is fundamental. Lemma \ref{splem3} yields either $(S_{i}, \overline S_{i})>(S_{j+1}, \overline S_{j+1})$ or $(S_{i}, \overline S_{i})<(S_{j+1}, \overline S_{j+1})$. Thus
\[
(S_1, \overline S_1)\sim\ldots\sim(S_i, \overline S_i)\sim (S_j, \overline S_j)\sim(S_{j+1}, \overline S_{j+1})\sim\ldots\sim(S_k, \overline S_k)
\]
is a fundamental chain of length $k-j+1+i\geq k$, that is, $j\leq i+1$. If $i>1$, we can get $\overline S_{i-1}\neq \overline S_{i}$ and $j\leq i+1$ by a similar reasoning. If $i=1$ and $j=k$, then $\overline S_1=\overline S_k$. So $(S, \overline S)\sim(S', \overline{S'})$ is a fundamental chain and $j=2=i+1$. Now suppose $\overline S_i=\overline S_j$ for some $1\leq i<j\leq k$. Then
\[
(S_1, \overline S_1)\sim\ldots\sim(S_i, \overline S_i)\sim (S_i\cup S_j, \overline S_j)\sim(S_j, \overline S_j)\sim\ldots\sim(S_k, \overline S_k)
\]
is a fundamental chain. The previous argument shows that it is not shortest. Thus  $k-j+2+i>k$, that is, $j\leq i+1$. The dual case is similar.

Next we prove the following result by induction on $i$.
\begin{equation}\label{spl7eq1}
(S_1, \overline S_1)>(S_2, \overline S_2)>\ldots>(S_i, \overline S_i)\geq(S_k, \overline S_k).
\end{equation}
Then (3) is an easy consequence. The case $i=1$ is trivial. Suppose (\ref{spl7eq1}) holds for $i\geq1$. If $i=k$, there is nothing to prove. If $i<k$ and $S_i=S_k$ (resp. $\overline S_i=\overline S_k$), then (2) yields $i=k-1$. Moreover, (\ref{spl7eq1}) is true for $i+1=k$. Now suppose that $S_i\supsetneqq S_k$ and $\overline S_i\subsetneqq \overline S_k$. We can choose $i\leq j<k$ so that $|S_i|\leq|S_j|$ and $|S_i|>|S_{j+1}|$. Then (2) implies $\overline S_j=\overline S_{j+1}$. So $\overline S_i\subset\overline S_{j+1}=\overline S_j$ in view of Lemma \ref{splem3}. If $\overline S_i\neq \overline S_{j}$, one must have $S_i\supset S_j$ and thus $S_i=S_j$ (since $|S_i|\leq |S_j|$). Thus (2) implies $j=i+1$. With $S_{i+1}=S_i\supsetneqq S_k$, we obtain $(S_{i+1}, \overline S_{i+1})>(S_k, \overline S_k)$ by Lemma \ref{splem3}. It remains to consider the case $\overline S_i=\overline S_{j}=\overline S_{j+1}$. Then (2) forces $j=i$. Hence $(S_{i}, \overline S_{i})>(S_{i+1}, \overline S_{i+1})>(S_k, \overline S_k)$ by Lemma \ref{splem3}.

\end{proof}

\begin{lemma}\label{splem8}
Let $\Phi=B_n$, $C_n$ $($resp. $D_n$$)$, $\Phi_{\overline\lambda}=\Phi_J$ and $I$ be standard. Suppose that $(n_m, \overline n_{\overline m})=(0, m-1)$ or $(\overline m-1, 0)$ $($resp. $\overline n_{\overline m}=0$ or $(n_m, \overline n_{\overline m})=(\overline m, m)$$)$. Let $(S, \overline S)\in \caD$. Then the following conditions are equivalent.
\begin{itemize}
  \item [(1)] $s_{e_i}\lambda$ is $\Phi_I$-singular for $\Phi_I$-regular weight $\lambda\in W\overline\lambda$ with $q_{s-1}<i\leq q_s$, $\lambda_i\neq0$ and $s\in S\backslash\{m\}$.
  \item [(2)] There exists $(S', \overline{S'})\sim (S, \overline S)$ so that $S'\subset S$, $|S'|=1$ and $|\overline{S'}\backslash\{\overline m\}|=\overline m-1$.
\end{itemize}
Under these hypothesis, $\lambda_j$ is fixed for any $q_{s-1}<j\leq q_s$ with $s\in S$ and $\lambda\in{}^IW^J\overline\lambda$.
\end{lemma}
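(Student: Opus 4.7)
My plan is to reformulate (1) in a more convenient form and then verify both implications, with case analysis driven by the boundary hypothesis. Since $n_s^\lambda(a)$ is $W_I$-invariant by Lemma~\ref{lrlem2}(i), and for $q_{s-1}<i\leq q_s$ with $s<m$ and $\lambda_i\neq 0$ the $\Phi_I$-singularity of $s_{e_i}\lambda$ is equivalent to the existence of $j\neq i$ in segment $s$ with $\lambda_j=-\lambda_i$, i.e.\ $n_s^\lambda(|\lambda_i|)=2$, condition~(1) is equivalent to the statement
\[
n_s^\lambda(a_t)\in\{0,2\}\ \text{ for every } \lambda\in{}^IW^J\overline\lambda,\ s\in S\backslash\{m\},\ t<\overline m,
\]
which I call $(1')$. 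Both directions of the equivalence then become statements about columns $s\in S\backslash\{m\}$ of $T(\lambda)$.

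For $(2)\Rightarrow(1')$: let $(S',\overline{S'})=(\{s_0\},\overline{S'})$ with $\overline{S'}\supset\{1,\ldots,\overline m-1\}$. Lemma~\ref{sslem1} ensures $(S',\overline{S'})$ is separable for every $\lambda\in{}^IW^J\overline\lambda$, so $n_{s_0}^\lambda(a_t)$ is maximal for $t<\overline m$ and $n_s^\lambda(a_t)=0$ for $s\neq s_0$ and $t\notin\overline{S'}$. Suppose $n_s^\lambda(a_t)=1$ for some $s\in S\backslash\{m,s_0\}$, $t<\overline m$, and some $\lambda$; I derive a contradiction by constructing $\lambda'\in{}^IW^J\overline\lambda$ that violates the maximality of $n_{s_0}^{\lambda'}(a_t)$. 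To build $\lambda'$, I exploit the boundary hypothesis to identify a second exchangeable entry of $\lambda$ (a zero in the $(n_m,\overline n_{\overline m})=(0,m-1)$ or $(\overline m,m)$ cases; a segment-$m$ entry of absolute value $a_t$ in the $(\overline m-1,0)$ or $\overline n_{\overline m}=0$ cases) that can be swapped with the entry of value $-\lambda_i$ in segment $s_0$ to produce a $\Phi_I$-regular $\lambda'\in W\overline\lambda$ with $n_{s_0}^{\lambda'}(a_t)<2$. The details of the swap differ across the four cases but are uniformly controlled by the rigidity that the boundary hypothesis imposes on the distribution of zeros or of segment-$m$ entries.

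For $(1')\Rightarrow(2)$: under $(1')$ together with the separability of $(S,\overline S)$, each column $s\in S\backslash\{m\}$ consists of paired $\pm a_t$ values (for $t$ in a subset $\overline T_s^\lambda\subset\{1,\ldots,\overline m-1\}$) together with at most one zero. The count identity $\overline n_t=\sum_s n_s^\lambda(a_t)$ from Lemma~\ref{lrlem2}(iii), combined with the boundary constraint on $\overline n_{\overline m}$, forces a single column $s_0\in S\backslash\{m\}$ to absorb all pairs $\{\pm a_t:t<\overline m\}$; concretely, one takes $s_0\in S$ maximizing $n_{s_0}$. Set $S'=\{s_0\}$ and $\overline{S'}=\{1,\ldots,\overline m-1\}$ or $\{1,\ldots,\overline m\}$, according as segment $s_0$ contains no zero or one zero (determined by the boundary case and condition~(4) or~(5) of Definition~\ref{ssdef1}). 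A direct check using $(1')$ verifies $(S',\overline{S'})\in\caD$; one then produces $(S',\overline{S'})\sim(S,\overline S)$ by interpolating through suitable intermediate separable pairs, applying Lemmas~\ref{splem4} and~\ref{splem5}. The final claim that $\lambda_j$ is fixed for $q_{s-1}<j\leq q_s$ with $s\in S$ is then immediate: the determination of each $\overline T_s^\lambda$ and of $n_s^\lambda(0)$ pins down the multiset of entries in segment $s$, and strict decreasingness in $\Lambda_I^+$ fixes the tuple.

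The main obstacle is the swap construction in $(2)\Rightarrow(1')$: one must exhibit a valid $\lambda'\in W\overline\lambda$ that remains $\Phi_I$-regular after moving entries across segments, without producing accidental coincidences in segment $s_0$ (whose content is otherwise maximal). The boundary hypotheses are precisely what guarantee the existence of a suitable exchange partner — either a zero available in segment $s$ (in the cases where zeros exist and are rigidly distributed) or a segment-$m$ entry of absolute value $a_t$ (in the cases where $\overline n_{\overline m}=0$). Since the four boundary cases have different zero and segment-$m$ structures, the swap construction is carried out case by case, but the underlying idea is the same throughout.
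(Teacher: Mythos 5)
Your reformulation of condition (1) as $(1')$ (that $n_s^\lambda(a_t)\in\{0,2\}$ for all $\lambda\in{}^IW^J\overline\lambda$, $s\in S\backslash\{m\}$ and $t<\overline m$) is a correct and useful rephrasing, and the final claim about $\lambda_j$ being fixed does follow once (1)/(2) are established. However, both implications are left at the level of a sketch, and the proposed mechanism for the hard direction appears not to work.

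For $(2)\Rightarrow(1')$, you aim to derive a contradiction by ``constructing $\lambda'\in{}^IW^J\overline\lambda$ that violates the maximality of $n_{s_0}^{\lambda'}(a_t)$.'' But the separability of $(\{s_0\},\overline{S'})$ combined with Lemma~\ref{sslem1} guarantees that \emph{every} $\Phi_I$-regular weight in $W\overline\lambda$ has $n_{s_0}(a_t)$ maximal; any swap that would reduce $n_{s_0}(a_t)$ below its maximum necessarily produces a $\Phi_I$-singular weight (the column $s_0$ is already full of $\pm a_t$ pairs, so moving any entry out creates a duplicate). Concretely, if segment $s_0$ contains $\pm a_t$ and segment $s$ contains a single $\pm a_t$ entry and a zero, every single reflection $s_{e_j\pm e_k}$ or short composition I can write down either fixes $n_{s_0}(a_t)$ or renders the result $\Phi_I$-singular. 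The crucial missing ingredient is that the equivalence $(S,\overline S)\sim(S',\overline{S'})$ is witnessed by a fundamental chain; the paper uses Lemma~\ref{splem7} to show this chain is monotone, and reads off from \emph{each intermediate pair} $(S_i,\overline S_i)\in\caD$ the constraint $n_s^\lambda(a_t)\in\{0,\text{maximal}\}$ exactly at the step where $s$ drops out of the chain. This chain-based argument is not a packaging detail — it supplies constraints that the two endpoint pairs $(S,\overline S),(S',\overline{S'})$ alone do not give you. Without it, no contradiction is reachable from the assumption $n_s^\lambda(a_t)=1$.

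For $(1')\Rightarrow(2)$, your choice of $s_0$ maximizing $n_{s_0}$, your description of $\overline{S'}$, and your verification that $(S',\overline{S'})\in\caD$ are plausible, but the critical step is the one you defer: producing the equivalence $(S',\overline{S'})\sim(S,\overline S)$ by ``interpolating through suitable intermediate separable pairs.'' The paper carries this out by induction on $|S|$, at each step removing a column $s_0$ \emph{minimizing} $n_{s_0}$, defining $\overline S_2=\{t\mid n_{s_0}^\lambda(a_t)>0\}$, and showing $(S\backslash\{s_0\},\overline S_2)\in\caD$ with $(S\backslash\{s_0\},\overline S_2)\sim(S,\overline S)$; this inductive bookkeeping (including an application of Lemma~\ref{cclem4} to rule out a bad case) is precisely the interpolation you gesture at. You would also need to show that your candidate $\overline{S'}$ actually equals the one appearing at the end of the chain, which is not immediate. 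In short, the proposal identifies the correct reformulation but replaces both halves of the proof with sketches whose key steps (the swap construction and the interpolation) are either infeasible or are in fact the whole content of the paper's induction and chain argument.
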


\begin{proof}
Let $\lambda\in W\overline\lambda$ be any $\Phi_I$-regular weight. Use induction on $|S|$. First assume that $|S|=1$. If $S=\{m\}$, then either $\Phi=B_n, C_n$ with $(n_m, \overline n_{\overline m})=(\overline m-1, 0)$ or $\Phi=D_n$ with $(n_m, \overline n_{\overline m})=(\overline m, m)$. We obtain $(S, \overline S)\sim(\{m\}, \{1, \ldots, \overline m-1\})$ (resp. $(\{m\}, \{1, \ldots, \overline m\})$) for $\Phi=B_n, C_n$ (resp. $D_n$). The equivalence of (1) and (2) is trivial. If $S=\{s_0\}$ for some $s_0<m$, then either $\Phi=B_n, C_n$ with $(n_m, \overline n_{\overline m})=(0, m-1)$ or $\Phi=D_n$ with $\overline n_{\overline m}=0$. With $(S, \overline S)\in\caD$, one gets $n_s^\lambda(a_t)=0$ for $s\neq s_0$ and $t\not\in\overline S$. In view of Lemma \ref{lrlem2}, we obtain $n_{s_0}^\lambda(a_t)\geq1$ for $t\not\in\overline S\backslash\{m\}$ since $\overline n_t>0$. Thus (1) holds if and only if $n_{s_0}^\lambda(a_t)=2$ for $t\not\in\overline S\backslash\{m\}$, keeping in mind of Lemma \ref{lrlem1}. In the other direction, (2) holds if and only if $S'=\{s_0\}$ and $n_{s_0}^\lambda(a_t)=2$ for $t\not\in\overline S\backslash\{m\}$.

Now suppose $|S|>1$. Assume that (1) holds. choose $s_0\in S$ such that $n_{s_0}=\min\{n_s\mid m>s\in S\}$. One must have $n^\lambda_{s}(a_t)=0$ or $2$ for $s\in S\backslash\{m\}$ and $1\leq t<\overline m$. Denote $S_2=S\backslash\{s_0\}$ and
\[
\overline S_2=\{1\leq t\leq\overline m\mid n_{s_0}^\lambda(a_t)>0\}.
\]
We show that $(S_2, \overline S_2)\sim (S, \overline S)$. Evidently $\overline S_2\supset \overline S$ since $n^\lambda_{s_0}(a_t)$ is maximal for $t\in\overline S$. If $m\in S$, then either $\Phi=B_n, C_n$ with $(n_m, \overline n_{\overline m})=(\overline m-1, 0)$ or $\Phi=D_n$ with $(n_m, \overline n_{\overline m})=(\overline m, m)$. We always have $n_m^\lambda(a_t)$ is maximal for $1\leq t\leq \overline m$. If $\overline m\in\overline S$, then either $\Phi=B_n, C_n$ with $(n_m, \overline n_{\overline m})=(0, m-1)$ or $\Phi=D_n$ with $(n_m, \overline n_{\overline m})=(\overline m, m)$. So $n_s^\lambda(0)$ is maximal for $s\in S$. If $n_s^\lambda(a_t)=0$ for some $s\in S_2\backslash\{m\}$ and $t\in\overline S_2\backslash\{\overline m\}$, we can find $t_0\not\in\overline S_2$ with $n^\lambda_{s}(a_{t_0})=2$ and $n^\lambda_{s_0}(a_{t_0})=0$, keeping in mind that $n_{s_0}\leq n_s$. So $t_0\not\in \overline S$ (otherwise $n^\lambda_s(a_{t_0})$ is maximal). Lemma \ref{cclem4} gives $\Phi_I$-regular weight $\mu=s_{e_j-e_k}\lambda$, where $q_{s_0-1}<j\leq q_{s_0}$ and $q_{s-1}<k\leq q_s$. Moreover, $s_{e_j}\mu$ and $s_{e_k}\mu$ are also $\Phi_I$-regular, which contradicts (1). Therefore $n_s^\lambda(a_t)$ is maximal for $s\in S$ and $t\in S_2$. One has $(S_2, \overline S_2), (S, \overline S_2)\in\caD$ and thus $(S_2, \overline S_2)\sim (S, \overline S)$. With $(S, \overline S)$ replaced by $(S_2, \overline S_2)$, the induction hypothesis yields $(S', \overline{S'})\sim (S_2, \overline S_2)$ so that $S'\subset S_2$, $|S'|=1$ and $|\overline{S'}\backslash\{m\}|=m-1$. Now (2) is an immediate consequence.

Conversely, if (2) is true, let
\[
(S, \overline S)=(S_1, \overline S_1)\sim(S_2, \overline S_2)\sim\ldots\sim(S_k, \overline S_k)=(S', \overline{S'})
\]
be a shortest fundamental chain of $(S, \overline S)$ and $(S', \overline{S'})$. If $s\in S_i\backslash S_{i+1}$, then $\overline S_i=\overline S_{i+1}$ in view of Lemma \ref{splem7}. We obtain $n_s^\lambda(a_t)$ is maximal for $t\in \overline S_i$ and is zero for $t\not\in \overline S_i$. This is also true for $s\in S'=S_k$. Then $s_{e_i}\lambda$ is $\Phi_I$-singular for $q_{s-1}<i\leq q_s$. Hence (1) holds since $S=\bigcup_{i=1}^{k}(S_i\backslash S_{i+1})$, where $S_{k+1}=\emptyset$.
\end{proof}

In a similar spirit, we can prove the following result by a dual argument.

\begin{lemma}\label{splem9}
Let $\Phi=B_n$, $C_n$ $($resp. $D_n$$)$ and $I$ be standard. Suppose that $(n_m, \overline n_{\overline m})=(\overline m-1, 0)$ or $(0, m-1)$ $($resp. $n_m=0$ or $(n_m, \overline n_{\overline m})=(\overline m, m)$$)$. Let $(S, \overline S)\in \caD$. Then the following conditions are equivalent.
\begin{itemize}
  \item [(1)] $s_{e_i}\lambda$ is $\Phi_I$-singular for any $\Phi_I$-regular weight $\lambda\in W\overline\lambda$ with $q_{s-1}<i\leq q_s$, $\lambda_i\neq0$ and $s\not\in S$;
  \item [(2)]  There exists $(S', \overline{S'})\sim (S, \overline S)$ so that $S'\supset S$, $|\overline{S'}|=1$ and $|S'\backslash\{m\}|=m-1$.
\end{itemize}
Under these hypothesis, $\lambda_i$ is fixed for any $q_{s-1}<i\leq q_s$ with $s\not\in S$ and $\lambda\in{}^IW^J\overline\lambda$.
\end{lemma}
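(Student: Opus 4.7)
The plan is to argue by duality, reducing Lemma \ref{splem9} to Lemma \ref{splem8} applied to the swapped system $(\Phi, \Phi_J, \Phi_I)$. Three ingredients are required.

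First, I observe that separable pairs are symmetric under the swap $I \leftrightarrow J$: Definition \ref{ssdef1}(2), (3) are manifestly symmetric in $(s, S)$ and $(t, \overline S)$, while conditions (4) and (5) are invariant under the joint swap $(S, m) \leftrightarrow (\overline S, \overline m)$. Hence $(S, \overline S) \in \caD(\Phi, \Phi_I, \Phi_J)$ if and only if $(\overline S, S) \in \caD(\Phi, \Phi_J, \Phi_I)$, and the equivalence relation $\sim$ is preserved. Writing $m', \overline m', n'_s, \overline n'_t$ for the dual-system invariants, one has $n'_{m'} = \overline n_{\overline m}$ and $\overline n'_{\overline m'} = n_m$. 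Under this identification the hypotheses of Lemma \ref{splem9} translate into the hypotheses of Lemma \ref{splem8} in the dual system: the $B_n, C_n$ pair $\{(0, m-1), (\overline m - 1, 0)\}$ is sent to $\{(\overline m' - 1, 0), (0, m' - 1)\}$, the same set of conditions; for $D_n$, $n_m = 0$ dualises to $\overline n'_{\overline m'} = 0$, and $(n_m, \overline n_{\overline m}) = (\overline m, m)$ is self-dual.

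Second, I set up the weight-level dictionary. For $\lambda = w\overline\lambda$ with $w \in {}^IW^J$, define $\lambda' := w^{-1}\overline\lambda'$, where $\overline\lambda'$ is a dominant weight with $\Phi_{\overline\lambda'} = \Phi_I$. Then $\lambda' \in {}^JW^I\overline\lambda'$, and the multiplicities transpose as $n_s^\lambda(a_t) = n_t^{\lambda'}(a'_s)$, where $a'_s$ denotes the level in $\overline\lambda'$ attached to the $I$-segment $s$; this is precisely a transposition of the tables $T(\lambda)$ and $T(\lambda')$. Hence an index $i$ with $q_{s-1} < i \leq q_s$, $\lambda_i \neq 0$ and $s \notin S$ (which forces $|\lambda_i| = a_t$ for some $t \in \overline S$, since $n_s^\lambda(a_t) = 0$ when $t \notin \overline S$) pairs with a dual index $i'$ satisfying $\overline q_{t-1} < i' \leq \overline q_t$ and $|\lambda'_{i'}| = a'_s$. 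Moreover, the $\Phi_I$-regularity of $s_{e_i}\lambda$ is equivalent to the $\Phi_J$-regularity of $s_{e_{i'}}\lambda'$, by the same reasoning used to derive equation (\ref{dp1l3eq2}).

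Finally, I apply Lemma \ref{splem8} to $(\Phi, \Phi_J, \Phi_I)$ with the separable pair $(\overline S, S)$: its condition (1) yields the $\Phi_J$-singularity of $s_{e_{i'}}\lambda'$ for the dual indices with $t \in \overline S \setminus \{\overline m\}$, and its condition (2) produces $(\overline{S'}, S') \sim (\overline S, S)$ with $\overline{S'} \subset \overline S$, $|\overline{S'}| = 1$, and $|S' \setminus \{m\}| = m - 1$ (the additional containment $S' \supset S$ is obtained from the proof of Lemma \ref{splem8} rather than its bare statement). Translating back via the dictionary gives the equivalence of (1) and (2) in Lemma \ref{splem9}, and the fixedness of $\lambda_i$ for $s \notin S$ follows from the corresponding fixedness of $\lambda'_{i'}$ established in Lemma \ref{splem8}. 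The main obstacle will be handling the boundary cases where $\overline m \in \overline S$ (so that the zero row enters the picture) and tracking the signs introduced by $e_i \mapsto \pm w^{-1}e_i$ in the last segment of the $B_n, C_n$ and $D_n$ cases, mirroring the delicate arguments at the end of the proofs of Lemmas \ref{dp1lem3} and \ref{dp2lem4}.
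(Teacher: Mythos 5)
Your approach is exactly what the paper intends: the author proves Lemma~\ref{splem9} by the declared ``dual argument'' to Lemma~\ref{splem8}, which is precisely your reduction to $(\Phi,\Phi_J,\Phi_I)$ via the transposed table $n_s^\lambda(a_t)=n_t^{\lambda'}(a'_s)$ and the weight dictionary $\lambda\mapsto\lambda'$. One small remark: you need not appeal to the \emph{proof} of Lemma~\ref{splem8} to recover the containment $S'\supset S$ -- once $\overline{S'}\subset\overline S$, $|\overline{S'}|=1$ and $|S'\setminus\{m\}|=m-1$ are in hand, $S'\supseteq\{1,\dots,m-1\}$ is immediate, and the only remaining question is whether $m\in S$ forces $m\in S'$; this follows from Lemma~\ref{splem3} applied to $(S,\overline S)$ and $(S',\overline{S'})$ (together with Definition~\ref{ssdef1}(4)--(5), which in every case covered by the hypotheses pins down the membership of $m$ once that of $\overline m$ is known), so the bare statement of Lemma~\ref{splem8} suffices.
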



\subsection{Special weights} In this subsection, we show that any $\Phi_I$-regular weight of the system $(\Phi, \Phi_I, \Phi_J)$ is connected to some special weights. First we recall the restriction of weights. If $\Phi'$ is a subsystem of $\Phi$, there exists a unique weight $\lambda|_{\Phi'}$ in the subspace $\bbC\Phi'$ so that
\begin{equation}\label{reseq1}
\langle\lambda|_{\Phi'}, \alpha^\vee\rangle=\langle\lambda, \alpha^\vee\rangle
\end{equation}
for all $\alpha\in\Phi'$. We write $\lambda\perp\Phi'$ if $\lambda|_{\Phi'}=0$. Of course $\lambda|_{\Phi'}$ can also be viewed as a weight in $\frh^*$. Thus one has $(\lambda-\lambda|_{\Phi'})\perp\Phi'$ for any $\lambda\in\frh^*$. For convenience, we also write $S\perp\Phi'$ for $S\subset\bbC\Phi$ when $\lambda\perp\Phi'$ for every weight $\lambda\in S$. The following result is evident.

\begin{lemma}\label{reslem1}
Let $\Phi'$ be a subsystem of $\Phi$. Choose $\nu\in\frh^*$ $($ not necessarily a root$)$.
\begin{itemize}
  \item [(1)] If $\nu\perp\Phi'$, then $(s_\nu\lambda)|_{\Phi'}=\lambda|_{\Phi'}$ for any $\lambda\in\frh^*$.
  \item [(2)] If $\nu\in\bbC\Phi'$, then $(s_\nu\lambda)|_{\Phi'}=s_\nu(\lambda|_{\Phi'})$ for any $\lambda\in\frh^*$.
\end{itemize}
\end{lemma}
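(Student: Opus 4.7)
The plan is to prove both parts by direct computation, using two ingredients: the characterization of $\lambda|_{\Phi'}$ as the unique element of $\bbC\Phi'$ satisfying equation (\ref{reseq1}), and the explicit formula $s_\nu \lambda = \lambda - \langle \lambda, \nu^\vee\rangle \nu$, which makes sense for any nonzero $\nu \in \frh^*$ because the bilinear form on $\frh^*$ is nondegenerate. A useful preliminary observation is that $\lambda|_{\Phi'}$ is precisely the orthogonal projection of $\lambda$ onto the subspace $\bbC\Phi'$ (with respect to the form $\langle\cdot,\cdot\rangle$), so one can write $\lambda = \lambda|_{\Phi'} + \lambda^\perp$ where $\lambda^\perp \perp \Phi'$; this decomposition is unique and will drive both arguments.

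For (1), I would simply evaluate $\langle s_\nu\lambda, \alpha^\vee\rangle$ for an arbitrary $\alpha \in \Phi'$. Using the formula for $s_\nu$, this equals $\langle \lambda, \alpha^\vee\rangle - \langle \lambda, \nu^\vee\rangle \langle \nu, \alpha^\vee\rangle$. The hypothesis $\nu \perp \Phi'$ forces $\langle \nu, \alpha^\vee\rangle = 0$, so the second term vanishes and we get $\langle s_\nu\lambda, \alpha^\vee\rangle = \langle\lambda, \alpha^\vee\rangle = \langle \lambda|_{\Phi'}, \alpha^\vee\rangle$. Since $\lambda|_{\Phi'} \in \bbC\Phi'$, uniqueness in the definition of the restriction gives $(s_\nu\lambda)|_{\Phi'} = \lambda|_{\Phi'}$.

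For (2), I would decompose $\lambda = \lambda|_{\Phi'} + \lambda^\perp$. The key point is that $\lambda^\perp \perp \Phi'$ together with $\nu \in \bbC\Phi'$ implies $\langle \lambda^\perp, \nu\rangle = 0$, hence $\langle \lambda^\perp, \nu^\vee\rangle = 0$, so $s_\nu \lambda^\perp = \lambda^\perp$. Consequently $s_\nu \lambda = s_\nu(\lambda|_{\Phi'}) + \lambda^\perp$. Since $\nu \in \bbC\Phi'$, the subspace $\bbC\Phi'$ is stable under $s_\nu$, so $s_\nu(\lambda|_{\Phi'}) \in \bbC\Phi'$; combined with $\lambda^\perp \perp \Phi'$, this expresses $s_\nu\lambda$ as its orthogonal decomposition with respect to $\bbC\Phi'$. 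Hence $(s_\nu\lambda)|_{\Phi'} = s_\nu(\lambda|_{\Phi'})$.

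There is really no substantive obstacle here: both statements are straightforward consequences of linear algebra once the restriction is identified with orthogonal projection. The only mildly subtle point is being careful that $s_\nu$ is defined intrinsically via the nondegenerate bilinear form on $\frh^*$ rather than as a reflection attached to a root, so that applying it to vectors outside $\bbC\Phi'$ (as in part (1)) is legitimate. Part (1) can also be seen as the special case of (2) in which $\lambda|_{\Phi'}$ is the zero vector applied to $\nu$, but it is cleaner to treat them separately as above.
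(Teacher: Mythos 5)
Your proof is correct. The paper does not actually give a proof of this lemma; it declares it ``evident'' immediately before stating it, so there is no proof from the paper to compare against. Your argument supplies precisely the routine verification the paper leaves implicit: identifying $\lambda|_{\Phi'}$ with the orthogonal projection onto $\bbC\Phi'$ (which requires, and you implicitly use, that the restriction of $\langle\cdot,\cdot\rangle$ to $\bbC\Phi'$ is nondegenerate), then tracking that projection through $s_\nu\lambda = \lambda - \langle\lambda,\nu^\vee\rangle\nu$ in the two cases. Both computations check out: in (1) the correction term dies because $\langle\nu,\alpha^\vee\rangle=0$ for $\alpha\in\Phi'$, and in (2) the decomposition $\lambda=\lambda|_{\Phi'}+\lambda^\perp$ gives $s_\nu\lambda^\perp=\lambda^\perp$ because $\nu^\vee\in\bbC\Phi'$ while $\lambda^\perp$ pairs to zero with every root of $\Phi'$ (hence with every element of $\bbC\Phi'$), and $s_\nu$ preserves $\bbC\Phi'$. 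One stray sentence at the end, claiming part (1) is a special case of (2), is garbled and should simply be deleted --- in (1) the hypothesis is $\nu\perp\Phi'$, not $\nu\in\bbC\Phi'$, so (2) does not apply --- but this does not affect the two self-contained arguments that precede it.
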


Start with the case of type $A$.

\begin{lemma}\label{bkswlem1}
Let $\Phi=A_{n-1}$. Choose $I, J\subset\Delta$. Let $\lambda$ be a $\Phi_I$-regular weight with $\Phi_{\overline\lambda}=\Phi_J$. Suppose that $i_1, \ldots, i_{\overline m}$ is a permutation of $\{1, \ldots, \overline m\}$ such that $\overline n_{i_1}\geq\ldots\geq\overline n_{i_{\overline m}}$. Set $b_l=a_{i_l}$ for $1\leq l\leq\overline m$. Then we can find a $\Phi_I$-regular weight
\begin{equation}\label{bkswl1eq1}
{\hat{\lambda}}=(b_1, \ldots, b_{q_1}, {\hat{\lambda}}_{q_1+1}, \ldots, {\hat{\lambda}}_{n})
\end{equation}
so that $\lambda\lera\hat\lambda$.
\end{lemma}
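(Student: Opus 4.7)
The plan is to reach $\hat\lambda$ from $\lambda$ by iteratively applying transpositions between segments, using linked roots provided by Proposition~\ref{lrprop2}. Since $\Phi = A_{n-1}$ falls outside the exceptional cases of that proposition, every $\beta = e_i - e_j \in \Phi^+ \setminus \Phi_I$ for which $s_\beta\lambda$ is $\Phi_I$-regular and distinct from $\lambda$ is a linked root, hence satisfies $\lambda \lera s_\beta\lambda$. Moreover, the $W_I$-orbit of any $\Phi_I$-regular weight is contained in a single $\lera$-equivalence class by Definition~\ref{jfdef3}; so it suffices to produce some $\Phi_I$-regular weight $\hat\lambda \in W\overline\lambda$ whose first $q_1$ entries, as a \emph{multiset}, coincide with $\{b_1, \ldots, b_{q_1}\}$. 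An element of $W_{I_1} \subset W_I$ then permutes within the first segment to yield the specified ordering $(b_1, \ldots, b_{q_1})$.

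I intend to reduce the discrepancy $d(\lambda) := |\{\lambda_1, \ldots, \lambda_{q_1}\} \triangle \{b_1, \ldots, b_{q_1}\}|$ by repeated swaps. If $d(\lambda) = 0$ we are done; otherwise pick $b_l \in \{b_1, \ldots, b_{q_1}\}$ missing from the first segment of $\lambda$ (so $l \leq q_1$), and an entry $c = \lambda_i$ of the first segment with $c = a_{i_k}$ for some $k > q_1$. The crux is to locate $s > 1$ and $q_{s-1} < j \leq q_s$ such that $\lambda_j = b_l$ and $c$ is absent from the $s$-th segment: the swap $s_{e_i - e_j}\lambda$ then exchanges $b_l$ and $c$ between the two segments, the result is still $\Phi_I$-regular (each affected segment trades one value for another it did not previously contain), and $d$ drops by exactly $2$.

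The main obstacle is the existence of this segment $s$, which I plan to establish by a counting argument based on the ordering $\overline n_{i_1} \geq \cdots \geq \overline n_{i_{\overline m}}$. Since $\lambda$ is $\Phi_I$-regular, each value occurs at most once per segment, so $b_l$ (absent from segment~$1$) occupies exactly $\overline n_{i_l}$ distinct segments among those indexed $\geq 2$, while $c$ occupies exactly $\overline n_{i_k} - 1$ such segments. If no segment $s$ had the desired property, every segment containing $b_l$ would also contain $c$, forcing $\overline n_{i_k} - 1 \geq \overline n_{i_l}$; but $k > q_1 \geq l$ gives $\overline n_{i_k} \leq \overline n_{i_l}$, a contradiction. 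Iterating the swap drives $d(\lambda)$ to $0$ in finitely many steps, and the final application of $W_{I_1}$ produces $\hat\lambda$ in the required form.
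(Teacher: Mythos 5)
Your proof is correct and follows essentially the same strategy as the paper's: both reduce the first segment to the target multiset one swap at a time, with the existence of a valid cross-segment swap guaranteed by a counting argument exploiting the ordering $\overline n_{i_1}\geq\cdots\geq\overline n_{i_{\overline m}}$ and the fact that each value appears at most once per segment by $\Phi_I$-regularity. The only cosmetic difference is your progress measure (the symmetric difference $d(\lambda)$) versus the paper's progressive sorting of the multiplicities $n_1^\mu(b_1)\geq\cdots\geq n_1^\mu(b_{\overline m})$.
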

\begin{proof}
We can construct a $\Phi_I$-regular weight $\mu\lera\lambda$ and $n_1^\mu(b_1)\geq\ldots\geq n_1^\mu(b_{\overline m})$ step by step. Start with $\mu=\lambda$. Suppose we already have $n_1^\mu(b_1)\geq\ldots\geq n_1^\mu(b_k)$ and $n_1^\mu(b_k)<n_1^\mu(b_{k+1})$ for some $1\leq k<\overline m$. Choose the smallest $0\leq j<k$ so that $n_1^\mu(b_{j+1})=0=n_1^\mu(b_k)<1=n_1^\mu(b_{k+1})$. Since
\[
\sum_{s=1}^mn_s^\lambda(b_{j+1})=\overline n_{i_{j+1}}\geq \overline n_{i_{k+1}}
=\sum_{s=1}^mn_s^\lambda(b_{k+1}),
\]
there is $1<s\leq m$ so that $n_s^\lambda(b_{j+1})>n_s^\lambda(b_{k+1})$. We can assume that $\mu_l=b_{k+1}$ for some $1\leq l\leq q_1$ and $\mu_h=b_{j+1}$ for some $q_{s-1}<h\leq q_s$. Then $\nu=s_{e_l-e_h}\mu$ is $\Phi_I$-regular and $\mu\lera\nu$ in view of Proposition \ref{lrprop2}. Moreover, $n_1^\nu(b_1)\geq\ldots\geq n_1^\nu(b_k)\geq n_1^\nu(b_{k+1})$. Replacing $\mu$ by $\nu$ and increasing $k$ in this fashion, we can eventually get
\[
n_1^\mu(b_1)=\ldots=n_1^\mu(b_{q_1})=1>0=n_1^\mu(b_{q_1+1})=\ldots=n_1^\mu(b_{\overline m})
\]
with $\mu\lera\lambda$. There exists $w\in W_I$ so that
\[
w\mu=(b_1, \ldots, b_{q_1}, {\hat{\lambda}}_{q_1+1}, \ldots, {\hat{\lambda}}_{n+1}).
\]
Hence one has $\lambda\lera\hat\lambda$, where $\hat\lambda=w\mu$.
\end{proof}

\begin{lemma}\label{bkswlem2}
Let $\Phi=A_{n-1}$. Choose $I, J\subset\Delta$. Let $\overline\lambda$ be a dominant weight with $\Phi_{\overline\lambda}=\Phi_J$. There exists a $\Phi_I$-regular weight $\mu$ such that $\lambda\lera\mu$ for any $\Phi_I$-regular weight $\lambda\in W\overline\lambda$.
\end{lemma}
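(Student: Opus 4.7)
I would argue by induction on $m$, the number of $I$-segments. The base case $m=1$ forces $I=\Delta$, so $W_I=W$ and $W\overline\lambda\cap\Lambda_I^+=\{\overline\lambda\}$; take $\mu=\overline\lambda$. For $m\geq 2$, the strategy is to use Lemma \ref{bkswlem1} to normalize the first segment and then to reduce the remaining segments to a smaller instance of the same problem.

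So, given any $\Phi_I$-regular $\lambda\in W\overline\lambda$, Lemma \ref{bkswlem1} supplies a $\Phi_I$-regular $\hat\lambda$ with $\lambda\lera\hat\lambda$ and with first segment equal to the prescribed vector $(b_1,\ldots,b_{q_1})$. Since $\hat\lambda\in W\overline\lambda$, the multiset of its remaining coordinates is independent of $\lambda$: it is the multiset of entries of $\overline\lambda$ with one copy of each $b_l$ removed. Let $\Phi''$ be the subsystem of roots $e_i-e_j$ with $q_1<i,j\leq n$, let $W''=W(\Phi'')$, and let $\overline{\lambda''}$ be the unique $W''$-dominant representative of the orbit $W''\cdot(\hat\lambda|_{\Phi''})$. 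Set $I''=I\cap\Phi''$, which is generated by $I_2\cup\cdots\cup I_m$, and let $J''\subset\Delta''$ satisfy $\Phi_{J''}=\Phi_{\overline{\lambda''}}$ inside $\Phi''$. The triple $(\Phi'',\Phi_{I''},\Phi_{J''})$ has $m-1$ segments, so by the induction hypothesis there is a $\Phi_{I''}$-regular weight $\mu''\in W''\overline{\lambda''}$ such that every $\Phi_{I''}$-regular weight in $W''\overline{\lambda''}$ is $\Ext^1$-connected to $\mu''$. Define
\[
\mu:=(b_1,\ldots,b_{q_1},\mu''_{q_1+1},\ldots,\mu''_n).
\]

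The heart of the argument is to lift the smaller-system chain to the ambient system. Because $\Phi=A_{n-1}$, Proposition \ref{lrprop2} has no exceptional cases: whenever $\nu$ and $s_\beta\nu$ are distinct $\Phi_I$-regular weights and $\beta\in\Phi^+\setminus\Phi_I$, the root $\beta$ is a linked root from $\nu$ to $s_\beta\nu$. Any chain of reflections $s_\beta$ with $\beta\in\Phi''\setminus\Phi_{I''}$ connecting $\hat\lambda|_{\Phi''}$ to $\mu''$ fixes the first $q_1$ coordinates, so it connects $\hat\lambda$ to $\mu$ coordinate-wise in $\frh^*$; moreover each intermediate weight retains the fixed first segment $(b_1,\ldots,b_{q_1})$ and inherits $\Phi_{I_1}$-regularity from the fact that $b_1,\ldots,b_{q_1}$ are distinct (they come from distinct rows $a_{i_l}\in\caA$), while $\Phi_{I_s}$-regularity for $s\geq 2$ is inherited from the smaller system via Lemma \ref{lrlem1}. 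Invoking Lemma \ref{lrlem0} therefore yields $\hat\lambda\lera\mu$, hence $\lambda\lera\hat\lambda\lera\mu$.

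The only potential obstacle is the check that linked roots in $\Phi''$ remain linked roots in $\Phi$, and that the first segment stays $\Phi_{I_1}$-regular throughout the chain; both issues are resolved cleanly by the exception-free type $A$ form of Proposition \ref{lrprop2} together with Lemma \ref{lrlem1}, so no delicate combinatorics analogous to Propositions \ref{ssprop1}--\ref{ssprop3} is needed in this type.
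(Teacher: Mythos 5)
Your proof is correct and follows essentially the same route as the paper's: induction on $m$, Lemma \ref{bkswlem1} to pin down the first segment, restriction to the subsystem on the remaining coordinates, and the exception-free type $A$ form of Proposition \ref{lrprop2} together with Lemma \ref{lrlem0} to lift the inductive chain. The extra commentary you give about why linked roots in $\Phi''$ remain linked roots in $\Phi$ and why $\Phi_{I_1}$-regularity of the frozen first segment is preserved simply spells out what the paper leaves implicit.
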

\begin{proof}
We can prove the lemma by induction on $m$. The case $m=1$ is evident since $W_I=W$ and $\lambda=w\overline\lambda\lera\overline\lambda$. If $m>1$, let $\Phi'=\Phi\cap\sum_{i=q_1+1}^{n}\bbC e_i$ and $W'$ be the corresponding Weyl group. Denote $I'=I\cap\Phi'$. Choose $\Phi_I$-regular weight $\lambda\in W\overline\lambda$. With Lemma \ref{bkswlem1}, we can find $\Phi_I$-regular weight ${\hat{\lambda}}\lera\lambda$ satisfying (\ref{bkswl1eq1}). The induction hypothesis implies ${\hat{\lambda}}|_{\Phi'}\lera \mu'$ ( relative to $(\Phi_{I'}, \Phi')$) for some $\Phi'_{I'}$-regular weight $\mu'\in\sum_{i=q_1+1}^{n}\bbC e_i$. Set $\mu'_i=\langle\mu', e_i\rangle$ for $q_1<i\leq n$.   Denote
\[
\mu=(\hat\lambda_1, \ldots, \hat\lambda_{q_1}, \mu'_{q_1+1}, \ldots, \mu'_n).
\]
One has $\hat{\lambda}\lera \mu$, in view of Lemma \ref{lrlem0} and Proposition \ref{lrprop2}. Hence $\lambda\lera\hat{\lambda}\lera\mu$.
\end{proof}

With Lemma \ref{bkswlem2}, we can recover the following result of Brundan \cite{Br}.

\begin{theorem}\label{bkswthm1}
Let $\Phi=A_n$. The system $(\Phi, \Phi_I, \Phi_J)$ contains at most one block for any $I, J\subset\Delta$.
\end{theorem}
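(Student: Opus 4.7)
The plan is to derive Theorem \ref{bkswthm1} as an essentially immediate corollary of Lemma \ref{bkswlem2}. Recall that the simple modules in $\caO_\lambda^\frp$ are parameterized by ${}^IW^J$, or equivalently by the $\Phi_I$-regular dominant representatives in $W\overline\lambda \cap \Lambda_I^+$. Combining the definition of a block with Definition \ref{jfdef3}, two such simple modules $L(\lambda_1)$ and $L(\lambda_2)$ lie in the same block if and only if $\lambda_1 \lera \lambda_2$. So it suffices to show that this equivalence relation has a single class on $W\overline\lambda \cap \Lambda_I^+$.

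To establish this, I would take arbitrary $\lambda_1, \lambda_2 \in W\overline\lambda \cap \Lambda_I^+$ and apply Lemma \ref{bkswlem2} directly: it produces a single $\Phi_I$-regular weight $\mu \in W\overline\lambda$ (depending only on $\overline\lambda$, $I$, $J$) such that $\lambda \lera \mu$ for every $\Phi_I$-regular $\lambda \in W\overline\lambda$. Applying this with $\lambda = \lambda_1$ and then with $\lambda = \lambda_2$ gives $\lambda_1 \lera \mu$ and $\lambda_2 \lera \mu$. Since $\lera$ is symmetric and transitive on $\Phi_I$-regular weights (with the $W_I$-convention in Definition \ref{jfdef3} handling the possibility that $\mu$ is not in $\Lambda_I^+$), transitivity yields $\lambda_1 \lera \lambda_2$. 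Hence all simple modules of $\caO_\lambda^\frp$ lie in a single block.

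All the substantive work has already been absorbed into Lemmas \ref{bkswlem1} and \ref{bkswlem2}: the first lemma performs a sequence of $\Ext^1$-preserving swaps to standardize the first segment of the weight, and the second then induces on the number $m$ of segments to produce a universal target $\mu$. Given this machinery, Theorem \ref{bkswthm1} is a clean formal consequence with no remaining obstacle, and it recovers Brundan's theorem for type $A$.
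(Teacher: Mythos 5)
Your argument is correct and matches the paper's approach exactly: the paper presents Theorem \ref{bkswthm1} as an immediate consequence of Lemma \ref{bkswlem2} ("With Lemma \ref{bkswlem2}, we can recover the following result of Brundan"), with the only content being the transitivity argument you make explicit. Nothing further is needed.
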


The above results of type $A$ inspire us to find similar results for the other types.

\begin{lemma}\label{bkswlem3}
Assume that $\Phi=B_n$, $C_n$. Choose $I, J\subset\Delta$. Let $\overline\lambda$ be a dominant weight with $\Phi_{\overline\lambda}=\Phi_J$ and $\lambda\in W\overline\lambda$ be $\Phi_I$-regular. Suppose that $m>1$. Let $i_1, \ldots, i_{\overline m-1}$ be a permutation of $\{1, \ldots, \overline m-1\}$ such that $\overline n_{i_1}\geq\ldots\geq\overline n_{i_{\overline m-1}}$. Set $b_l=a_{i_l}$ for $1\leq l<\overline m$ and $p=[\frac{n_1+1}{2}]$.
\begin{itemize}
\item [(1)] If $p=\overline m$ or $\overline n_{i_p}\leq 2\overline n_{\overline m}$, we can find $\Phi_I$-regular weight
\[
\hat\lambda=(b_1, \ldots, b_k, 0, -b_{j}, \ldots, -b_1, \hat\lambda_{n_1+1} \ldots, \hat\lambda_{n})
\]
so that $\lambda\lera\hat\lambda$ or $\lambda\lera s_{e_k}\hat\lambda$, where $k$ is determined by $\overline\lambda$ and $j=n_1-1-k$.

\item [(2)] If $p<\overline m$ and $\overline n_{i_p}>2\overline n_{\overline m}$, we can find $\Phi_I$-regular weight
\[
\hat\lambda=(b_1, \ldots, b_k, -b_{j}, \ldots, -b_1, \hat\lambda_{n_1+1} \ldots, \hat\lambda_{n})
\]
so that $\lambda\lera\hat\lambda$ or $\lambda\lera s_{e_k}\hat\lambda$, where $k$ is determined by $\overline\lambda$ and $j=n_1-k$.
\end{itemize}
\end{lemma}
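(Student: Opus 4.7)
The proof plan follows the pattern of Lemma \ref{bkswlem1}, adapted for types $B_n$ and $C_n$ where each nonzero absolute value $b_l$ may appear up to twice in any segment of index $<m$. The strategy is to successively permute entries between segment $1$ and later segments using linked reflections, so as to move the $n_1$ ``best'' absolute values into segment $1$.

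First, I would determine the integer $k$ from $\overline\lambda$ by a greedy filling argument on the first segment: place $+b_l$ and $-b_l$ for $l = 1, 2, \ldots$ as long as both a copy of $b_l$ and a copy of $-b_l$ remain available globally (i.e.\ as long as $\overline n_{i_l} \geq 2$), filling the last slot (when $n_1$ is odd or $\overline n_{i_l} = 1$) with either a $0$-entry or a single $\pm b_l$ according to a comparison with $\overline n_{\overline m}$. The dichotomy between cases (1) and (2) is exactly the condition under which the residual slot is occupied by $0$ (case (1)) or by a $\pm b_{k+1}$ giving the form of case (2). In particular $j$ is defined by $j = n_1 - 1 - k$ in case (1) and $j = n_1 - k$ in case (2) so that the positive-to-negative count in the canonical first segment totals $n_1$.

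Next, one iterates the following exchange step: starting from $\mu^0 = \lambda$, if the current weight $\mu^t$ does not yet realize the target multiset in the first segment, there exist indices $1 \leq j \leq q_1$ and $q_1 < i \leq n$ with $|\mu^t_j|$ a ``small'' absolute value (strictly smaller than it should be, or $0$ in case (2)) and $|\mu^t_i| = b_l$ a ``large'' one missing from the first segment. Choose a sign $\eps \in \{+,-\}$ so that $\mu^{t+1} := s_{e_j + \eps e_i}\mu^t$ is $\Phi_I$-regular; such a choice exists because the first segment cannot already contain both $+b_l$ and $-b_l$ (otherwise we would not need to import $b_l$). Since $|\mu^t_i| \neq |\mu^t_j|$, Lemma \ref{cclem1}(1)--(3) shows that the exchange root is linked, hence $\mu^t \lera \mu^{t+1}$. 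After finitely many such steps the first segment of the resulting weight $\mu^*$ has the desired multiset of absolute values, and an element of $W_{I_1} \subset W_I$ rearranges it into the canonical decreasing order prescribed by the lemma.

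The main obstacle is controlling the sign at the boundary position $k$. When the last needed absolute value $b_k$ satisfies $\overline n_{i_k} = 1$, only a single entry $\pm b_k$ is available globally; the exchange step fixes its sign in segment $1$ to whatever the corresponding swap produces, and any subsequent attempt to flip this sign by the reflection $s_{e_k}$ may fall into one of the non-linked configurations of Proposition \ref{lrprop2}(1a)--(1c), in which case the sign flip cannot be realized through linked roots. This forces the disjunction $\lambda \lera \hat\lambda$ or $\lambda \lera s_{e_k}\hat\lambda$: we control the multiset, and we reach at least one of these two representatives, but not necessarily the one with the ``canonical'' sign at position $k$. The verification that $\Phi_I$-regularity is preserved at every step, together with the management of the corner case $\overline n_{i_p} = 2\overline n_{\overline m}$ (where both choices of $\eps$ in the exchange step are admissible), are the routine technical details I would carry out in each of the two cases separately.
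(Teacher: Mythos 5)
Your overall strategy --- iteratively exchange entries between segment $1$ and later segments via linked reflections, then rearrange by some $w\in W_{I_1}$ --- is the same as the paper's, which organizes these exchanges through Lemma \ref{cclem4}. The structure of your answer (determine $k$ greedily, control the boundary sign via Proposition \ref{lrprop2}) also matches the paper's conditions (\rmnum{1})--(\rmnum{4}). However, there is a genuine gap in your argument, and it sits exactly where the hypotheses of the lemma do real work.

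You assert that at each step one can ``choose a sign $\eps \in \{+,-\}$ so that $\mu^{t+1} := s_{e_j + \eps e_i}\mu^t$ is $\Phi_I$-regular'' because ``the first segment cannot already contain both $+b_l$ and $-b_l$.'' This handles potential collisions in segment $1$ with the incoming value $b_l$, but it says nothing about collisions in the \emph{source} segment $s$ with the \emph{outgoing} value $|\mu^t_j|$. If $n_s^{\mu^t}(|\mu^t_j|)$ is already maximal, \emph{no} choice of sign gives a $\Phi_I$-regular weight, and the exchange is blocked. This is precisely what Lemma \ref{cclem4}(\rmnum{2})--(\rmnum{3}) screens for, and the paper has to work to verify those hypotheses. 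In particular, the step that separates case (1) from case (2) is adjusting $n_1^\lambda(0)$ when it starts off with the wrong value, and this is where the inequality $\overline n_{i_p}\leq 2\overline n_{\overline m}$ (or its negation) is actually used: the paper's chain
\[
n^\lambda_1(b_j)+\sum_{\substack{1<s\leq m\\ n^\lambda_s(0)=1}}n^\lambda_s(b_j)
\leq \overline n_{i_j}\leq \overline n_{i_p}\leq 2\overline n_{\overline m}=2\sum_{s=1}^{m}n^\lambda_s(0)
\]
guarantees the existence of a segment $1<s<m$ with $n_s^\lambda(0)=1$ \emph{and} $n_s^\lambda(b_j)\leq 1$, so that the swap landing a $0$ into segment $1$ keeps the weight $\Phi_I$-regular. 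Your proposal never invokes this dichotomy-deciding hypothesis, treating the regularity of each step and ``the corner case $\overline n_{i_p}=2\overline n_{\overline m}$'' as ``routine technical details.'' They are not routine; without them the exchange procedure can get stuck, and the distinction between producing $\hat\lambda$ with or without a $0$-entry in segment $1$ is not established. You need to explicitly show how the case hypothesis furnishes a $\Phi_I$-regularity-preserving swap at the problematic step, as the paper does.
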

\begin{proof}
(1) If $p=\overline m$, then $n_1=2\overline m-1$ and all $n_s^\lambda(a_s)$ are maximal, keeping in mind of $n_1=\sum_{s=1}^{\overline m}n_s^\lambda(a_s)\leq 2\overline m-1$. There exists $w\in W_{I_1}$ so that
\[
\lambda\lera w\lambda=\hat\lambda=(b_1, \ldots, b_{p-1}, 0, -b_{p-1}, \ldots, -b_1, \hat\lambda_{n_1+1} \ldots, \hat\lambda_{n}).
\]
Here $k=j=p-1$.

Assume that $p<\overline m$ and $\overline n_{i_p}\leq 2\overline n_{\overline m}$. We claim there is $\mu\lera\lambda$ satisfying:
\begin{itemize}
  \item [(\rmnum{1})] $n^\mu_1(0)=1$;
  \item [(\rmnum{2})] $n^\mu_1(b_1)\geq \ldots\geq  n^\mu_1(b_{\overline m-1})$.
\end{itemize}
With (\rmnum{1}) and (\rmnum{2}), one obtains $n^\mu_1(b_1)=\ldots=n^\mu_1(b_j)=2$, $n^\mu_1(b_{j+1})=\ldots=n^\mu_1(b_k)=n^\mu_1(0)=1$ and $n^\mu_1(b_{k+1})=\ldots=n^\mu_1(b_{\overline m-1})=0$ for some $0\leq j\leq k<\overline m$. We can continue to show that there exists such a $\mu\lera\lambda$ satisfying
\begin{itemize}
  \item [(\rmnum{3})] $\overline n_{i_{j+1}}=\overline n_{i_k}$ when $j<k$;
  \item [(\rmnum{4})] $\overline n_{i_j}>\overline n_{i_{k+1}}$ when $j>0$ and $k<\overline m-1$.
\end{itemize}
Suppose we already have $\mu\lera\lambda$ satisfying (\rmnum{1})-(\rmnum{4}). Obviously $k$ is unique when $\overline\lambda$ is fixed. Since $w\mu\lera\mu$ for any $w\in W_{I_1}$, one can assume that
\[
\mu=(b_1, \ldots, b_{j}, \mu_{j+1}, \ldots, \mu_k, 0, -b_{j}, \ldots, -b_1, \mu_{n_1+1}, \ldots, \mu_n),
\]
where $|\mu_l|=b_l$ for $j<l\leq k$. For any $j<l<k$ with $\mu_l=-b_l$, replace $\mu$ by $s_{e_l+e_k}s_{e_l-e_k}\mu$. We still have $\mu\lera\lambda$ by Lemma \ref{cclem1} and eventually get $\mu_l=b_l$ for $l<k$. If $\mu_k=b_k$, set $\hat{\lambda}=\mu$, otherwise set $\hat{\lambda}=s_{e_k}\mu$.

It remains to find $\mu\lera\lambda$ satisfying (\rmnum{1})-(\rmnum{4}). The main tool is Lemma \ref{cclem4}. Start with $\mu=\lambda$. If $n^\lambda_1(0)=1$, then (\rmnum{1}) holds. If $n^\lambda_1(0)=0$, note that $\sum_{j=1}^{p-1}n^\lambda_1(b_j)\leq 2(p-1)<n_1$. There exists $p\leq j<\overline m$ with $n^\lambda_1(b_j)\geq 1$. In view of
\[
n^\lambda_1(b_j)+\sum_{\substack{1<s\leq m\\
n^\lambda_s(0)=1}}n^\lambda_s(b_j)
\leq \sum_{s=1}^{m}n^\lambda_s(b_j)=\overline n_{i_j}\leq \overline n_{i_p}\leq2\overline n_{\overline m}=2\sum_{s=1}^{m}n^\lambda_s(0),
\]
We can find $1<s<m$ with $n^\lambda_s(0)=1$ and $n^\lambda_s(b_j)\leq 1$ ($n_m^\lambda(0)=0$ in view of Lemma \ref{lrlem2}). Applying Lemma \ref{cclem4} (relative to $b_j, 0$), there exist $\beta\in\Phi\backslash\Phi_I$ so that $\mu=s_\beta\lambda$ is $\Phi_I$-regular. Moreover, $\mu\lera\lambda$ and $n^\mu_1(0)=n_1^\lambda(0)+1=1$. Now consider (\rmnum{2}). If $n^\mu_1(b_1)\geq\ldots\geq n^\mu_1(b_k)<n^\mu_1(b_{k+1})$ (which is $1$ or $2$) for $1\leq k<\overline m-1$, choose the smallest $0\leq j<k$ so that $n^\mu_1(b_{j+1})=n^\mu_1(b_k)$. One can choose $1<s\leq m$ with $n^\mu_s(b_{j+1})>n^\mu_s(b_{k+1})$, keeping in mind that
\[
\sum_{s=1}^mn^\mu_s(b_{j+1})=\overline n_{i_{j+1}}\geq \overline n_{i_{k+1}}=\sum_{s=1}^mn^\mu_s(b_{k+1}).
\]
With Lemma \ref{cclem4} (relative to $b_{j+1}, b_{k+1}$), there is $\nu\lera \mu$ so that $n^\nu_1(b_{k+1})=n^\mu_1(b_{k+1})-1$. If $n^\nu_1(b_k)\geq n^\nu_1(b_{k+1})$, we can replace $\mu$ by $\nu$ and get $n^\mu_1(b_1)\geq\ldots\geq n^\mu_1(b_k)\geq n^\mu_1(b_{k+1})$. If $n^\nu_1(b_k)<n^\nu_1(b_{k+1})$ (which must be $1$), imitate the previous argument (with $j$ replaced by $j+1$). We get $\eta\lera\nu$ and $n^\eta_1(b_1)\geq\ldots\geq n^\eta_1(b_k)\geq n^\eta_1(b_{k+1})=0$. Then replace $\mu$ by $\eta$. Increasing $k$ in this fashion, one obtains (\rmnum{2}).

Now we have (\rmnum{1}) and (\rmnum{2}) for ($\mu$, $k$). If $\overline n_{i_{j+1}}>\overline n_{i_k}$ and $j<k$, there is $1<t\leq m$ with $n^\mu_t(b_{j+1})>n^\mu_t(b_k)$, keeping in mind that $n^\mu_1(b_{j+1})=n^\mu_1(b_k)=1$. Lemma \ref{cclem4} yields $\Phi_I$-regular weight $\nu\lera\mu\lera\lambda$ such that $n^\nu_1(b_{j+1})=2$ and $n^\nu_1(b_k)=0$. It is evident that (\rmnum{1}) and (\rmnum{2}) also hold for ($\nu$, $k-1$). Replacing $\mu$ by $\nu$ and decreasing $k$ stepwise in this fashion, we arrive at (\rmnum{3}). Now $\overline n_{i_j}\geq\overline n_{i_{k+1}}$. We only need to consider the case $\overline n_{i_j}=\overline n_{i_{k+1}}$ with $j>0$ and $k<\overline m-1$. There exists $1<t\leq m$ with $n^\mu_t(b_{j})<n^\mu_t(b_{k+1})$, in view of $n^\mu_1(b_{j})=2>0=n^\mu_1(b_{k+1})$. Lemma \ref{cclem4} gives $\nu\lera\mu\lera\lambda$ such that $n^\nu_1(b_{j})=n^\nu_1(b_{k+1})=1$. Obviously (\rmnum{1})-(\rmnum{3}) still hold with $(\mu, k)$ replaced by $(\nu, k+1)$. Increasing $k$ stepwise, we will obtain (\rmnum{4}).

(2) If $p<\overline m$ and $\overline n_{i_p}>2\overline n_{\overline m}$, we first give $\mu\lera\lambda$ with $n^\mu_1(0)=0$. Then (2) follows from an argument similar to (1). If $n^\lambda_1(0)=0$, set $\mu=\lambda$. If $n^\lambda_1(0)=1$, with $n^\lambda_1(0)+\sum_{j=1}^{p}n^\lambda_1(b_j)\leq n_1<2p+1$, we must have $n^\lambda_1(b_j)\leq 1$ for some $1\leq j\leq p$. With $1\geq n^\lambda_m(b_j)>0=n^\lambda_m(0)$ and $\overline n_{i_j}\geq\overline n_{i_p}>2\overline n_{\overline m}$, one gets $n^\lambda_s(b_j)>n^\lambda_s(0)=0$ for some $1<s<m$. So Lemma \ref{cclem4} gives $\mu\lera\lambda$ with $n^\mu_1(0)=0$.

\end{proof}

If $\Phi=D_n$, the argument is almost the same. The only difference is that $n^\lambda_m(0)$ might be nonzero for some $\Phi_I$-regular weight $\lambda$ in this case. So we need to make corresponding changes on conditions about $\overline n_{i_p}$ and $2\overline n_{\overline m}$. The result can be summarized as follows:

\begin{lemma}\label{bkswlem4}
Assume that $\Phi=D_n$. Choose $I, J\subset\Delta$ with $I$ being standard. Let $\overline\lambda$ be a dominant weight with $\Phi_{\overline\lambda}=\Phi_J$ and $\lambda\in W\overline\lambda\cup W s_{e_n}\overline\lambda$ be $\Phi_I$-regular. Suppose that $m>1$. Let $i_1, \ldots, i_{\overline m-1}$ be a permutation of $\{1, \ldots, \overline m-1\}$ such that $\overline n_{i_1}\geq\ldots\geq\overline n_{i_{\overline m-1}}$. Set $b_l=a_{i_l}$ for $1\leq l<\overline m$ and $p=[\frac{n_1+1}{2}]$.
\begin{itemize}
\item [(1)] If $p=\overline m$ or $\overline n_{i_p}<2\overline n_{\overline m}$, we can find $\Phi_I$-regular weight
\[
\hat\lambda=(b_1, \ldots, b_k, 0, -b_{j}, \ldots, -b_1, \hat\lambda_{n_1+1} \ldots, \hat\lambda_{n})
\]
so that $\lambda\lera\hat\lambda$ or $\lambda\lera s_{e_k}\hat\lambda$, where $k$ is determined by $\overline\lambda$ and $j=n_1-1-k$.

\item [(2)] If $p<\overline m$ and $\overline n_{i_p}\geq2\overline n_{\overline m}$, we can find $\Phi_I$-regular weight
\[
\hat\lambda=(b_1, \ldots, b_k, -b_{j}, \ldots, -b_1, \hat\lambda_{n_1+1} \ldots, \hat\lambda_{n})
\]
so that $\lambda\lera\hat\lambda$ or $\lambda\lera s_{e_k}\hat\lambda$, where $k$ is determined by $\overline\lambda$ and $j=n_1-k$.
\end{itemize}
\end{lemma}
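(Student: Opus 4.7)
The plan is to mimic the proof of Lemma \ref{bkswlem3} verbatim, with adjustments dictated by the $D_n$-specific feature that $n^\lambda_m(0)$ may equal $1$ (whereas in types $B_n$ and $C_n$ it is necessarily zero). Concretely, I will produce a weight $\mu$ satisfying $\mu\lera\lambda$ and the analogues of the four technical conditions (i)--(iv) used in the proof of Lemma \ref{bkswlem3}, then reach $\hat\lambda$ by applying a suitable element of $W_{I_1}$ and, where needed, the reflection $s_{e_k}$.

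For case (1), the task is to secure (i) $n^\mu_1(0)=1$ together with the monotonicity/minimality statements (ii)--(iv). When $n^\lambda_1(0)=0$, the chain of inequalities used in Lemma \ref{bkswlem3} is
\[
n^\lambda_1(b_j)+\sum_{\substack{1<s\leq m \\ n^\lambda_s(0)=1}}n^\lambda_s(b_j)\leq \overline n_{i_j}\leq\overline n_{i_p}\leq 2\overline n_{\overline m}=2\sum_{s=1}^{m}n^\lambda_s(0),
\]
which forced an interior $s$ with $n^\lambda_s(0)=1$ and $n^\lambda_s(b_j)\leq 1$. In the $D_n$ setting, the term $n^\lambda_m(0)=1$ can inflate the right-hand side without offering such an interior segment $s$; the strengthened hypothesis $\overline n_{i_p}<2\overline n_{\overline m}$ restores strict inequality, and the $B_n, C_n$ argument combined with Lemma \ref{cclem4} then yields the desired transformation. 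Conditions (ii)--(iv) follow by the same repeated applications of Lemma \ref{cclem4} used in Lemma \ref{bkswlem3}. Sign adjustment inside segment $1$ proceeds via roots in $\Phi_{I_1}$, and the sign ambiguity at the middle zero is absorbed by the choice between $\hat\lambda$ and $s_{e_k}\hat\lambda$.

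For case (2), the direction flips: when $n^\lambda_1(0)=1$, I must evict that zero from segment $1$. Counting $n^\lambda_1(0)+\sum_{j=1}^p n^\lambda_1(b_j)\leq n_1<2p+1$ forces $n^\lambda_1(b_j)\leq 1$ for some $j\leq p$, and the relaxed hypothesis $\overline n_{i_p}\geq 2\overline n_{\overline m}$---together with the extra contribution that $n^\lambda_m(0)$ may now make on the zero side---produces an interior segment $s$ with $n^\lambda_s(b_j)>n^\lambda_s(0)$, to which Lemma \ref{cclem4} applies. The remainder of the normalization then follows as in case (1). The hypothesis $\lambda\in W\overline\lambda\cup Ws_{e_n}\overline\lambda$ is handled uniformly: when $\lambda\in Ws_{e_n}\overline\lambda$, the whole argument is rerun with $\overline\lambda$ replaced by $s_{e_n}\overline\lambda$, appealing to Remark \ref{lrrmk2} to pass between standard and non-standard choices of $I$.

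The main obstacle is the bookkeeping around $n^\lambda_m(0)=1$. Unlike zeros in interior segments, a zero living at position $n$ of the last segment is not directly accessible to Lemma \ref{cclem4}; it may first have to be moved into an interior segment via $s_{e_{n-1}\pm e_n}$ (when these roots lie in $\Phi_{I_m}$) or, failing that, via a longer chain of linked roots before the $B_n, C_n$ template applies. The strict-versus-nonstrict dichotomy between the cases $\overline n_{i_p}<2\overline n_{\overline m}$ and $\overline n_{i_p}\geq 2\overline n_{\overline m}$ records precisely when this extra preprocessing step is forced, and it is what ultimately distinguishes the two normal forms of $\hat\lambda$.
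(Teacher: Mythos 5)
Your overall plan matches how the paper actually handles this lemma: the paper gives no separate proof, stating only that the $D_n$ case is ``almost the same'' as Lemma~\ref{bkswlem3} save for adjusting the threshold between cases (1) and (2) to account for the possibility that $n^\lambda_m(0)=1$. You correctly identify the key point---in $D_n$ a zero can sit in the last segment, which inflates $\overline n_{\overline m}$ in the pigeonhole estimate and forces the dichotomy at $\overline n_{i_p}< 2\overline n_{\overline m}$ versus $\overline n_{i_p}\geq 2\overline n_{\overline m}$---and you invoke the right engine (Lemma~\ref{cclem4}).

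One remark on your last paragraph: it is not accurate that a zero in the last segment is ``not directly accessible to Lemma~\ref{cclem4}.'' That lemma allows $s$ or $t$ to equal $m$; the only constraint is the maximality condition in hypothesis~(\rmnum{2}), and in $D_n$ the quantity $n^\lambda_m(0)=0$ is typically \emph{not} maximal (unlike in $B_n,C_n$, where it always is), so the last segment can serve directly as the target or source. What really happens is a case split on $n^\lambda_m(0)$ and $n^\lambda_m(b_j)$: when $n^\lambda_m(0)=1$ the strict inequality still yields the needed contradiction after accounting for $n^\lambda_m(b_j)\leq 1$, and when $n^\lambda_m(0)=0$ one applies Lemma~\ref{cclem4} with $t=m$ outright. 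Also, $s_{e_{n-1}\pm e_n}\in W_{I_m}$ only permutes and changes signs within segment $m$; it does not move a zero into an interior segment, so that step as described would not accomplish anything. These are rationalizations around an otherwise correct plan rather than a hole in the argument, but they do misdescribe what actually makes the $D_n$ bookkeeping work.
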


\begin{remark}\label{bkswrmk2}
The proof of Lemma \ref{bkswlem3} and Lemma \ref{bkswlem4} shows that $k\geq\frac{n_1-1}{2}\geq0$. If $k=0$ (e.g., when $n_1=p=\overline m=1$), one has $\lambda\lera\hat\lambda$ for all $\Phi_I$-regular weight $\lambda\in W\overline\lambda$.

If $\Phi=D_n$ and $\overline\lambda_n\neq0$, then $W\overline\lambda\cap W s_{e_n}\overline\lambda=\emptyset$. If $\Phi=B_n, C_n$ or $\overline\lambda_n=0$, we always have $W\overline\lambda=W s_{e_n}\overline\lambda$.
\end{remark}

Similar to Lemma \ref{bkswlem2}, we have the following result.

\begin{lemma}\label{bkswlem5}
Assume that $\Phi=B_n$ or $C_n$. Choose $I, J\subset\Delta$. Let $\overline\lambda$ be a dominant weight with $\Phi_{\overline\lambda}=\Phi_J$. There exists $\Phi_I$-regular weight $\mu$ so that
\[
\lambda\lera s_{e_{k_1}}^{\eps_1}s_{e_{k_2}}^{\eps_2}\ldots s_{e_{k_{m-1}}}^{\eps_{m-1}}\mu
\]
for any $\Phi_I$-regular weight $\lambda\in W\overline\lambda$. Here $q_{s-1}<k_s\leq q_s$ and $\eps_s\in\{0, 1\}$ for $1\leq s<m$. In particular, if $\eps_s=1$, then $\mu_{k_s}\neq0$ and $s_{e_{k_s}}\mu$ is $\Phi_I$-regular.
\end{lemma}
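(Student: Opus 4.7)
The plan is to imitate the proof of Lemma \ref{bkswlem2} (the type $A$ analog), using Lemma \ref{bkswlem3} to handle the first segment and then inducting on $m$ to deal with the tail. The base case $m=1$ is trivial: $W_I=W$ gives $\lambda=w\overline\lambda\lera\overline\lambda$, so we take $\mu=\overline\lambda$ with no reflections.

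For the inductive step, I would first invoke Lemma \ref{bkswlem3} on each $\Phi_I$-regular $\lambda\in W\overline\lambda$ to obtain a $\Phi_I$-regular weight $\hat\lambda$ whose first $n_1$ entries form a string depending only on $\overline\lambda$ (either $(b_1,\ldots,b_k,0,-b_j,\ldots,-b_1)$ or $(b_1,\ldots,b_k,-b_j,\ldots,-b_1)$, according to the case distinction on $p,\overline n_{i_p},\overline n_{\overline m}$), together with either $\lambda\lera\hat\lambda$ or $\lambda\lera s_{e_{k_1}}\hat\lambda$. Here $k_1\in(0,q_1]$ is the distinguished position singled out by the lemma, so we set $\eps_1\in\{0,1\}$ accordingly. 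In both alternatives of Lemma \ref{bkswlem3}, one has $\hat\lambda_{k_1}=\pm b_k\ne 0$ and $s_{e_{k_1}}\hat\lambda$ is $\Phi_I$-regular, which will give the ``in particular'' clause at index $s=1$.

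Next I pass to the smaller root system $\Phi'=\Phi\cap\sum_{i=n_1+1}^n\bbC e_i$ (of the same type $B$ or $C$) with $I'=I\setminus I_1$ and Weyl group $W'$. Because the first segment of $\hat\lambda$ is completely determined by $\overline\lambda$, the restrictions $\hat\lambda|_{\Phi'}$ (as $\lambda$ varies) lie in a single $W'$-orbit $W'\overline{\hat\lambda'}$ of a dominant weight $\overline{\hat\lambda'}$ depending only on $\overline\lambda$. The associated system has $m-1$ segments in the first factor, so the induction hypothesis furnishes a $\Phi_{I'}$-regular $\mu'\in\bbC\Phi'$ and data $(k_s,\eps_s)$ for $2\le s\le m-1$ with
\[
\hat\lambda|_{\Phi'}\ \lera\ s_{e_{k_2}}^{\eps_2}\cdots s_{e_{k_{m-1}}}^{\eps_{m-1}}\mu'
\]
relative to $(\Phi_{I'},\Phi')$, and with $\mu'_{k_s}\ne 0$ and $s_{e_{k_s}}\mu'$ being $\Phi_{I'}$-regular whenever $\eps_s=1$. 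I then lift this chain to $(\Phi_I,\Phi)$: by Lemma \ref{reslem1}(2) each reflection $s_\beta$ with $\beta\in\Phi'$ fixes the first segment, so concatenation with the frozen first-segment portion produces a chain of weights in $W\overline\lambda$ whose restriction to $\Phi'$ is the given chain. Setting $\mu=(\hat\lambda_1,\ldots,\hat\lambda_{n_1},\mu'_{n_1+1},\ldots,\mu'_n)$ and combining with the first step yields the required $\lambda\lera s_{e_{k_1}}^{\eps_1}\cdots s_{e_{k_{m-1}}}^{\eps_{m-1}}\mu$.

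The main obstacle will be verifying that each $\beta\in\Phi'\setminus\Phi_{I'}$ used in the inductive chain remains a linked root in the ambient system $(\Phi_I,\Phi)$, and that the intermediate weights stay $\Phi_I$-regular rather than merely $\Phi_{I'}$-regular. Both points are handled via Proposition \ref{lrprop2}: the exceptional configurations obstructing ``linkedness'' are stated in terms of $n^\lambda_s(a)$ with $s$ ranging over the segments touched by $\beta$ and the terminal segment, none of which is the first segment since $\beta\in\Phi'$; and regularity on segment $1$ is preserved because $\beta$ does not touch it. This reduces the promotion to a re-indexing check which is routine but needs to be stated carefully, exactly parallel to the argument already used in Lemma \ref{bkswlem2}.
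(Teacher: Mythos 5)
Your proposal follows the paper's own proof of Lemma \ref{bkswlem5} essentially step for step: induction on $m$, reduction of the first segment to a canonical form via Lemma \ref{bkswlem3} accounting for the two alternatives there, passage to the subsystem $\Phi'$ spanned by the last $n-q_1$ coordinates with $I'=\bigcup_{t>1}I_t$, invocation of the inductive hypothesis on $\hat\lambda|_{\Phi'}$, and lifting the resulting chain back to $(\Phi_I,\Phi)$ via Proposition \ref{lrprop2} and the observation that reflections in $\Phi'$ fix the first segment. The one small point you glossed over is the case $k=0$ in Lemma \ref{bkswlem3} (where the paper sets $k_1=q_1$, $\eps_1=0$, and no nonvanishing claim is asserted), but since the ``in particular'' clause is vacuous when $\eps_1=0$ this does not create a gap.
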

\begin{proof}
We prove the lemma by induction on $m$. If $m=1$, that is, $I=\Delta$, there exists $w\in W_I=W$ so that $\lambda=w\overline\lambda\lera \overline \lambda$. Set $\mu=\overline\lambda$. If $m>1$, it follows from Lemma \ref{bkswlem3} that we can find $\hat\lambda$ for each $\Phi_I$-regular weight $\lambda\in W\overline\lambda$ such that either $\lambda\lera\hat\lambda$ or $\lambda\lera s_{e_k}\hat\lambda$ with $0\leq k\leq q_1$, where $k$ and $\hat\lambda_1, \ldots, \hat\lambda_{q_1}$ are independent of the choice of $\lambda$. If $k=0$ (see Remark \ref{bkswrmk2}), set $k_1=q_1$ and $\eps_1=0$, otherwise set $k_1=k$. One has $\lambda\lera s_{e_{k_1}}^{\eps_1}\hat\lambda$ with $\eps_1\in\{0, 1\}$. Moreover, $\eps_1=0$ unless $\lambda\not\lera\hat\lambda$. Put
\[
\Phi'=(\sum_{q_1<i\leq n}\bbQ e_i)\cap\Phi\ \mbox{and}\ I'=\bigcup_{1<t\leq m}I_t.
\]
If $\Phi'=\emptyset$, then $q_1=n$, $n_2=0$ and $m=2$. Thus $\mu=\hat\lambda$ is independent of the choice of $\lambda$. We obtain $\lambda\lera s_{e_{k_1}}^{\eps_1}\mu$. Next assume that $\Phi'\neq\emptyset$. Then
\[
\lambda'=\hat\lambda|_{\Phi'}=(s_{e_{k_1}}^{\eps_1}\hat\lambda)|_{\Phi'}=(\hat\lambda_{q_1+1}, \ldots, \hat\lambda_n)
\]
is $\Phi'_{I'}$-regular. There exists $w'\in W'$ so that $\overline{\lambda'}=w'\lambda'$ is a dominant weight of $\Phi'$, where $W'$ is the Weyl group of $\Phi'$. Obviously $\overline{\lambda'}$ is independent of the choice of $\lambda$. By the induction hypothesis, there exists $\Phi'_{I'}$-regular weight $\mu'\in W'\overline{\lambda'}$ such that
\[
\lambda'\lera s_{e_{k_2}}^{\eps_2}s_{e_{k_3}}^{\eps_3}\ldots s_{e_{k_{m-1}}}^{\eps_{m-1}}\mu'.
\]
(relative to $(\Phi'_{I'}, \Phi')$) for all possible $\lambda'$. Denote
\[
\mu=(\hat\lambda_1, \ldots \hat\lambda_{q_1}, \mu'_{{q_1}+1}, \ldots, \mu'_n),
\]
where $\mu'_i=\langle\mu', e_i\rangle$ for $q_1<i\leq n$. Then Lemma \ref{lrlem0} and Proposition \ref{lrprop2} yield $\lambda\lera s_{e_{k_1}}^{\eps_1}\hat\lambda\lera s_{e_{k_1}}^{\eps_1}s_{e_{k_2}}^{\eps_2}\ldots s_{e_{k_{m-1}}}^{\eps_{m-1}}\mu$. The second statement is an easy consequence of the above argument.
\end{proof}

If $\Phi=D_n$, we have to consider $\overline\lambda$ and $s_{e_n}\overline\lambda$ at the same time, where $s_{e_n}\overline\lambda$ is also a dominant weight. The proof (which we leave to the reader) is similar.

\begin{lemma}\label{bkswlem6}
Assume that $\Phi=D_n$. Choose $I, J\subset\Delta$ with $I$ being standard. Let $\overline\lambda$ be a dominant weight with $\Phi_{\overline\lambda}=\Phi_J$. There exists $\Phi_I$-regular weight $\mu$ so that
\[
\lambda\lera s_{e_{k_1}}^{\eps_1}s_{e_{k_2}}^{\eps_2}\ldots s_{e_{k_{m-1}}}^{\eps_{m-1}}s_{e_n}^{\eps_{m}}\mu
\]
for any $\Phi_I$-regular weight $\lambda\in W\overline\lambda\cup Ws_{e_n}\overline\lambda$. Here $q_{s-1}<k_s\leq q_s$ and $\eps_s\in\{0, 1\}$ for $1\leq s<m$. Moreover, $\eps_m\in\{0, 1\}$ for $n_m>0$, while $\eps_m=0$ for $n_m=0$. In particular, if $\eps_s=1$, then $s_{e_{k_s}}\mu$ is $\Phi_I$-regular.
\end{lemma}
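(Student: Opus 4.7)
The plan is to induct on $m$, closely mirroring the proof of Lemma \ref{bkswlem5} but substituting Lemma \ref{bkswlem4} for Lemma \ref{bkswlem3}. The extra factor $s_{e_n}^{\eps_m}$ is designed to bridge the two cosets $W\overline\lambda$ and $Ws_{e_n}\overline\lambda$, which for $\Phi=D_n$ are not collapsed by $W$.

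For the base case $m=1$, one has $I=\Delta$ and $W_I=W$. I would take $\mu=\overline\lambda$: any $\Phi_I$-regular $\lambda\in W\overline\lambda$ equals $w\overline\lambda$ for some $w\in W_I$, giving $\lambda\lera\overline\lambda$ with $\eps_m=0$; any $\Phi_I$-regular $\lambda\in Ws_{e_n}\overline\lambda$ satisfies $\lambda\lera s_{e_n}\overline\lambda=s_{e_n}\mu$ with $\eps_m=1$, which is permissible because $n_m=n\geq 2>0$.

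For the inductive step $m>1$, I would first apply Lemma \ref{bkswlem4} to every $\Phi_I$-regular $\lambda\in W\overline\lambda\cup Ws_{e_n}\overline\lambda$, obtaining a weight $\hat\lambda$ whose first $q_1$ entries are prescribed by $\overline\lambda$ alone, and a universal index $0\leq k\leq q_1$ with either $\lambda\lera\hat\lambda$ or $\lambda\lera s_{e_k}\hat\lambda$. Setting $k_1=k$ when $k>0$ and $k_1=q_1$ otherwise yields $\lambda\lera s_{e_{k_1}}^{\eps_1}\hat\lambda$ for a suitable $\eps_1\in\{0,1\}$ (forced to $0$ when $k=0$). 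Next, restrict to the subsystem $\Phi'=\Phi\cap\sum_{i>q_1}\bbQ e_i$ with parabolic part $I'=\bigcup_{t>1}I_t$. If $\Phi'=\emptyset$ (only when $m=2$ and $q_1=n$), then $\mu:=\hat\lambda$ already works. Otherwise $\Phi'\simeq D_{n-q_1}$, and $I'$ inherits standardness from $I$ since the defining condition involves only $e_{n-1}\pm e_n$, which lie in the last component $I_m$ preserved under restriction. Writing $\lambda'=\hat\lambda|_{\Phi'}$, the fixed first segment of $\hat\lambda$ forces the multiset of absolute values of the remaining entries to depend only on $\overline\lambda$, so there is a dominant $\overline{\lambda'}\in\Phi'$ (depending only on $\overline\lambda$) with $\lambda'\in W'\overline{\lambda'}\cup W's_{e_n}\overline{\lambda'}$. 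The inductive hypothesis applied to $(\Phi',\Phi'_{I'},\Phi'_{J'})$ then provides a $\Phi'_{I'}$-regular $\mu'$ and indices $k_2,\ldots,k_{m-1}$ such that
\[
\lambda'\lera s_{e_{k_2}}^{\eps_2}\cdots s_{e_{k_{m-1}}}^{\eps_{m-1}}s_{e_n}^{\eps_m}\mu'
\]
relative to $(\Phi'_{I'},\Phi')$. Defining $\mu=(\hat\lambda_1,\ldots,\hat\lambda_{q_1},\mu'_{q_1+1},\ldots,\mu'_n)$ and lifting this inner chain via Lemma \ref{lrlem0} and Proposition \ref{lrprop2}---each linked root used lives in $\Phi'\subset\Phi\setminus\Phi_I$, and remains a linked root in $(\Phi_I,\Phi)$ because the untouched first $q_1$ coordinates keep all intermediate weights $\Phi_I$-regular---produces the desired chain from $\lambda$ to $s_{e_{k_1}}^{\eps_1}\cdots s_{e_{k_{m-1}}}^{\eps_{m-1}}s_{e_n}^{\eps_m}\mu$. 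The final regularity claim ``$s_{e_{k_s}}\mu$ is $\Phi_I$-regular whenever $\eps_s=1$'' for $s<m$ follows from the analogous claim for the prescribed $\hat\lambda$ (for $s=1$) and inductively for $\mu'$ (for $s\geq 2$).

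The main obstacle I anticipate is the bookkeeping needed to show that a \emph{single} dominant $\overline{\lambda'}\in\Phi'$ accommodates the restrictions of both cosets $W\overline\lambda$ and $Ws_{e_n}\overline\lambda$ simultaneously. This is exactly the reason for carrying the $s_{e_n}^{\eps_m}$ factor throughout the induction: once the absolute-value multiset in positions $>q_1$ is fixed, the parity of sign changes on the last segment of $\hat\lambda$ is genuinely free and realizes both $W'$-orbits, matching precisely the hypothesis $\lambda'\in W'\overline{\lambda'}\cup W' s_{e_n}\overline{\lambda'}$ needed to invoke the induction in $\Phi'$.
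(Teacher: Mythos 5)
Your strategy -- induction on $m$, mirroring the paper's proof of Lemma~\ref{bkswlem5} with Lemma~\ref{bkswlem4} replacing Lemma~\ref{bkswlem3} and with the extra factor $s_{e_n}^{\eps_m}$ carried through the recursion to bridge the two cosets $W\overline\lambda$ and $Ws_{e_n}\overline\lambda$ -- is exactly what the paper has in mind (it explicitly leaves this proof to the reader as ``similar'' to the $B_n,C_n$ case), and the core of your argument is sound.

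One detail needs correcting, though. You assert that $\Phi'=\emptyset$ only when $m=2$ and $q_1=n$, as in the $B_n,C_n$ case. For $\Phi=D_n$ this is false, because $\pm e_n$ are not roots of $D_n$: one also gets $\Phi'=\Phi\cap\bbQ e_n=\emptyset$ when $q_1=n-1$, and for $I$ standard that forces $\alpha_n\notin I$, hence $q_2=n$ and $m=3$ with $n_2=1$, $n_3=0$. In that case $\hat\lambda$ from Lemma~\ref{bkswlem4} is \emph{not} fully determined by $\overline\lambda$ -- the lone entry $\hat\lambda_n$ still carries a free sign -- so ``$\mu:=\hat\lambda$ already works'' does not go through literally. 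The fix is cheap: take $k_2=n$ (the only choice allowed by $q_1<k_2\leq q_2$), fix $\mu_n$ to be the positive value, and absorb the sign of $\hat\lambda_n$ into $\eps_2\in\{0,1\}$; the constraint $\eps_m=\eps_3=0$ from $n_m=0$ is unharmed. With that emendation the proof is complete. A second, smaller slip: the chain you lift lives in $\Phi'\setminus\Phi'_{I'}\subset\Phi\setminus\Phi_I$, not in $\Phi'\subset\Phi\setminus\Phi_I$ as written (since $\Phi'$ does contain $\Phi_{I_t}$ for $t\geq 2$); the substance of your argument is unaffected, as Proposition~\ref{lrprop2}'s exceptional conditions for a root supported on indices $>q_1$ involve only the counting functions $n^\lambda_s(\cdot)$ for $s\geq 2$ and $n^\lambda_m(\cdot)$, which are the same whether computed in $\Phi$ or in $\Phi'$.
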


\begin{remark}\label{bkswrmk3}
It follows from Lemma \ref{bkswlem5} and Lemma \ref{bkswlem6} that $(\Phi, \Phi_I, \Phi_J)$ contains at most $2^{m-1}$ blocks for $\Phi=B_n, C_n$ and $2^m$ blocks for $\Phi=D_n$.
\end{remark}

\subsection{Blocks of non separable systems and weakly separable systems} We need one more lemma before we can prove the main results of this section.

\begin{lemma}\label{bkswlem7}
Let $\Phi=B_n, C_n$ or $D_n$ and $I$ be standard. Let $\lambda\in\Lambda_I^+$. Assume that $\beta=e_i\pm e_j\in\Psi_\lambda^+$ for $q_{s-1}<i\leq q_s$ and $q_{t-1}<j\leq q_t (i<j)$. Set $a=|\lambda_i|$ and $b=|\lambda_j|$.
\begin{itemize}
  \item [(1)] If $n^\lambda_s(b)$ is maximal, then $s_\beta\lambda$ is $\Phi_I$-singular unless $\lambda_i=a>b=0$, $s=t$ and $n^\lambda_s(a)=1$;
  \item [(2)] If $n^\lambda_t(a)$ is maximal, then $s_\beta\lambda$ is $\Phi_I$-singular unless $a=b>0$, $t=m$ and $n^\lambda_s(a)=1$,
\end{itemize}

\end{lemma}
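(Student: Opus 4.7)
The plan is to proceed by a direct case analysis on the form of $\beta$ and the relative position of the segments containing $i$ and $j$. Since $\beta\in\Psi_\lambda^+\subset\Phi^+\setminus\Phi_I$ and $i<j$, only three configurations arise: (i) $\beta=e_i-e_j$ with $s<t$; (ii) $\beta=e_i+e_j$ with $s<t$; (iii) $\beta=e_i+e_j$ with $s=t<m$. The combination $s=t=m$ is ruled out because $\Phi_{I_m}$ already contains both $e_i\pm e_j$, and $\beta=e_i-e_j$ with $s=t$ is ruled out because $\Phi_{I_s}\cap\{e_i-e_j\}\neq\emptyset$ for $s=t$. In each subcase I will compute the effect of $s_\beta$ (swap of $\lambda_i,\lambda_j$ when $\beta=e_i-e_j$, and $(\lambda_i,\lambda_j)\mapsto(-\lambda_j,-\lambda_i)$ when $\beta=e_i+e_j$) and check regularity against Lemma \ref{lrlem1}.

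For part (1) in cases (i)-(ii), position $i$ of segment $s$ changes from absolute value $a$ to absolute value $b$. When $a\neq b$, one gets $n^{s_\beta\lambda}_s(b)=n^\lambda_s(b)+1$, which exceeds the assumed maximum, so $s_\beta\lambda$ is $\Phi_I$-singular. When $a=b>0$, positivity of $\langle\lambda,\beta^\vee\rangle$ together with $\lambda\in\Lambda_I^+$ pins down the signs of $\lambda_i,\lambda_j$, and the maximality of $n^\lambda_s(b)$ forces the existence of an entry in segment $s$ whose value coincides with the new reflected entry, producing a collision. The case $a=b=0$ is impossible since then $\langle\lambda,\beta^\vee\rangle$ vanishes. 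For case (iii) the multiset of absolute values in segment $s$ is preserved, so singularity must come from a new coincidence among signed entries: if $b>0$, maximality supplies an entry in segment $s$ equal to $-\lambda_j$; if $b=0$ (so $\lambda_j=0$ and $n^\lambda_s(0)=1$), the reflection sends position $j$ to $-a$, which collides with a pre-existing $-a$ exactly when $n^\lambda_s(a)=2$. The sole surviving configuration is therefore the stated exception $\lambda_i=a>b=0$, $s=t$, $n^\lambda_s(a)=1$.

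For part (2), a symmetric argument applied to segment $t$ handles cases (i) and (iii), as well as case (ii) when $t<m$: either $a\neq b$ forces $n^{s_\beta\lambda}_t(a)>n^\lambda_t(a)$, or $a=b>0$ combined with maximality again produces a collision in segment $t$. The genuine asymmetry occurs when $t=m$: here the maximum of $n^\lambda_m(a)$ is $1$, and segment $m$ is governed by distinct absolute values rather than distinct signed values, so a reflection that swaps a signed entry for its negative within segment $m$ leaves that segment regular. In this situation the new entry $(s_\beta\lambda)_i$ in segment $s$ has absolute value $a$, so avoiding a collision there requires precisely $n^\lambda_s(a)=1$, yielding the stated exception $a=b>0$, $t=m$, $n^\lambda_s(a)=1$.

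The main obstacle is not depth but bookkeeping: tracking how signs and absolute value counts interact across segments $s$ and $t$ across every combination of root type, coincidence of segments, and vanishing of $a$ or $b$. Throughout, the assumption $\lambda\in\Lambda_I^+$ combined with membership $\beta\in\Psi_\lambda^+$ tightly restricts the admissible sign patterns and rules out spurious subcases, so that only the two configurations explicitly recorded in the lemma escape singularity.
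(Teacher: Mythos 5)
Your decomposition into cases (i)--(iii) and the counting/collision ideas are essentially the approach the paper uses, and your treatment of part (1) is correct. There is, however, a gap in part (2): you assert that case (i) $(\beta=e_i-e_j,\ s<t)$ is handled entirely by the symmetric counting argument applied to segment $t$, but for $\Phi=D_n$ this fails. A weight $\lambda\in\Lambda_I^+$ is permitted to have $\lambda_n<0$ (condition (\ref{lreq3}) only requires $\lambda_{n-1}>|\lambda_n|$); taking $j=n$ and $\lambda_i=-\lambda_n=a=b>0$, one has $\langle\lambda,e_i-e_n\rangle=2a>0$, so $\beta=e_i-e_n\in\Psi_\lambda^+$ and we are in case (i) with $t=m$ and $a=b>0$. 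In that situation the maximal value of $n_m^\lambda(a)$ is $1$, not $2$, so ``maximality produces a collision in segment $t$'' does not fire; this is exactly the ``genuine asymmetry'' you then isolate, but you only attach it to case (ii). (For $\Phi=B_n,C_n$ the gap does not arise, because segment $m$ of $\lambda\in\Lambda_I^+$ is strictly positive, so $a=b$ and $\lambda_i-\lambda_j>0$ are incompatible in case (i) with $t=m$.) The repair is routine: your $t=m$, $a=b>0$ analysis (segment $m$ stays regular under the sign swap, segment $s$ survives precisely when $n_s^\lambda(a)=1$) applies verbatim to both $\beta=e_i+e_j$ and $\beta=e_i-e_j$, so case (i) with $t=m$ and $a=b$ should be routed into that branch rather than into the counting argument. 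With that correction the argument matches the paper's.
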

\begin{proof}
Since $\beta\in\Phi^+\backslash\Phi_I$, we have $s<m$. Denote $\mu=s_\beta\lambda$.

(1) First assume that $s=t$. We must have $\beta=e_i+e_j$ ($e_i-e_j\in\Phi_I$) and $\langle\lambda, \beta\rangle=\lambda_i+\lambda_j>0$. Then $\mu_i=-\lambda_j$, $\mu_j=-\lambda_i$ and $\mu_k=\lambda_k$ when $k\neq i, j$. Keeping in mind that $\lambda\in\Lambda_I^+$ and $e_i-e_j\in\Phi_I$, one has $\lambda_i>\lambda_j$ and thus $a>b\geq 0$. If $b\neq 0$, then $n^\lambda_s(b)=2$ and there exists $q_{s-1}<k\leq q_s$ with $\lambda_k=-\lambda_j$ (thus $k\neq i, j$). So $\langle \mu, e_k-e_i\rangle=\mu_k-\mu_i=\lambda_k-(-\lambda_j)=0$ and $\mu$ is $\Phi_I$-singular. If $b=0$ and $n^\lambda_s(a)=2$, we can also show that $\mu$ is $\Phi_I$-singular in a similar spirit. If $b=0$ and $n^\lambda_s(a)=1$, it can be verified that $\mu$ is $\Phi_I$-regular. It remains to consider the case $s\neq t$. If $a\neq b$, then $n^\mu_s(b)=n^\lambda_s(b)+1$. This is impossible since $n^\lambda_s(b)$ is maximal. If $a=b$, we get $\lambda_i=a>0$ in view of $\langle\lambda, \beta\rangle=\lambda_i\pm\lambda_j>0$. It follows that $\mu=s_\beta\lambda=s_{e_i}s_{e_j}\lambda$. Since $n^\lambda_s(b)$ is maximal and $s<m$, one obtains $n^\lambda_s(b)=2$. There exists $q_{s-1}<k\leq q_s$ so that $\lambda_k=-\lambda_i$. We get $\mu_k=\lambda_k=-\lambda_i=\mu_i$. Hence $\langle\mu, e_i-e_k\rangle=0$ and $\mu$ is $\Phi_I$-singular.

(2) First assume that $s=t$. We can still get $a>b\geq 0$ like (1). With $t=s<m$ and $a>0$, we get $n^\lambda_t(a)=2$. There exists $q_{s-1}<k\leq q_s$ with $\lambda_k=-\lambda_i$. Then $\langle \mu, e_k-e_j\rangle=0$ and $\mu$ is $\Phi_I$-singular. If $s\neq t$ and $\mu$ is  $\Phi_I$-regular, then $a=b>0$ and $\mu=s_{e_i}s_{e_j}\lambda$ by an argument similar to (1). It follows that $n^\lambda_s(a)=n^\lambda_t(b)=1$. This forces $t=m$ since $n^\lambda_t(a)=n^\lambda_t(b)=1$ is maximal. Conversely, with $a=b>0$, $t=m$ and $n^\lambda_s(a)=1$, $\mu=s_\beta\lambda=s_{e_i}s_{e_j}\lambda$ must be $\Phi_I$-regular.
\end{proof}

For $\lambda\in\Lambda_I^+$, let $P(\lambda)$ be a {\it parity function} on $\Lambda_I^+$. More precisely, $P(\lambda)=0$ if $|\{1\leq i\leq n\mid \lambda_i<0\}|$ is even and $P(\lambda)=1$ if $|\{1\leq i\leq n\mid \lambda_i<0\}|$ is odd.

\begin{theorem}\label{bkswthm2}
Let $\Phi=B_n, C_n$ and $I, J\subset\Delta$. Let $\overline\lambda$ be a dominant weight with $\Phi_{\overline\lambda}=\Phi_J$. Assume that $(\Phi, \Phi_I, \Phi_J)$ have nonzero blocks. Then
\begin{itemize}
  \item [(1)] If $(\Phi, \Phi_I, \Phi_J)$ is not separable, it has only one block.
  \item [(2)] If $(\Phi, \Phi_I, \Phi_J)$ is weakly separable, then $(n_m, \overline n_{\overline m})=(0, m-1)$ or $(\overline m-1, 0)$. The system has two blocks.
  \item [(3)] Let $\lambda, \mu\in {}^IW^J\overline\lambda$. Suppose $(\Phi, \Phi_I, \Phi_J)$ is weakly separable. Then $\lambda$ and $\mu$ belong to the same block if and only if $P(\lambda)=P(\mu)$.
\end{itemize}
\end{theorem}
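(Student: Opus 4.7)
My plan is to combine the reduction of Lemma \ref{bkswlem5} with the collapse criteria of Propositions \ref{ssprop1} and \ref{ssprop3}, and then certify the lower block count through the parity invariant $P(\cdot)\bmod 2$. By Lemma \ref{bkswlem5}, there exist a fixed $\Phi_I$-regular $\mu\in W\overline\lambda$ and indices $k_1,\ldots,k_{m-1}$ so that each $\Phi_I$-regular $\lambda\in W\overline\lambda$ is $\Ext^1$-equivalent to $s_{e_{k_1}}^{\eps_1}\cdots s_{e_{k_{m-1}}}^{\eps_{m-1}}\mu$ for some $\vec\eps\in\{0,1\}^{m-1}$. Hence the set of blocks of $\caO_\lambda^\frp$ injects into the equivalence classes of $\vec\eps$'s under the relations forced below.

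For part (1), assume $(\Phi,\Phi_I,\Phi_J)$ is not separable. Inspection of Definition \ref{ssdef1}(4) shows that among the four candidates of Definition \ref{ssdef2}, only $(\{1,\ldots,m-1\},\{\overline m\})$ and $(\{m\},\{1,\ldots,\overline m-1\})$ can actually belong to $\caD$; their absence therefore excludes $(n_m,\overline n_{\overline m})\in\{(0,m-1),(\overline m-1,0)\}$. Proposition \ref{ssprop1} then yields $\nu\lera s_{e_i}\nu$ whenever both $\nu$ and $s_{e_i}\nu$ are $\Phi_I$-regular, so every single coordinate $\eps_s$ can be flipped and all $\vec\eps$'s collapse to one class, proving (1).

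For the upper bound of (2), assume the system is weakly but not strongly separable. The same inspection of admissible weakly separable pairs forces $(n_m,\overline n_{\overline m})\in\{(0,m-1),(\overline m-1,0)\}$, proving the first assertion of (2). Proposition \ref{ssprop3} next supplies $\nu\lera s_{e_i}s_{e_j}\nu$ for every pair $i,j\leq q_{m-1}$ with $\nu$ and $s_{e_i}s_{e_j}\nu$ both $\Phi_I$-regular, so any two coordinates $(\eps_s,\eps_t)$ may be flipped in tandem. Consequently the equivalence classes of $\vec\eps$ are cut out at most by the parity $\sum_s\eps_s\bmod 2$, bounding the number of blocks by two.

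For the lower bound of (2) and for (3), I will show that $P(\cdot)\bmod 2$ is preserved by $\Ext^1$-connections and that both parities are realized on ${}^IW^J\overline\lambda$. By Lemma \ref{lrlem0}, parity invariance reduces to a single linked-root step $\nu\to ws_\beta\nu$ between weights in $\Lambda_I^+$. In the case $(n_m,\overline n_{\overline m})=(0,m-1)$, the identity $n_s^\nu(0)+n_m^\nu(a)=1$ for every $s<m$ and every $0<a\in\caA$ activates Proposition \ref{lrprop2}(1a), eliminating single-entry roots $\beta=e_i$ or $2e_i$; in the dual case $(\overline m-1,0)$ the same identity activates (1b)/(1c), while cross-block roots producing duplicate block-$m$ entries are ruled singular by Lemma \ref{bkswlem7}. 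The surviving linked roots are all of the form $e_i\pm e_j$ with $i,j\leq q_{m-1}$, changing the sign count by $0$ or $\pm 2$, and the accompanying $w\in W_I$ acts by pure permutations in $W_{I_1}\times\cdots\times W_{I_{m-1}}$ (any $W_{I_m}$-contribution affects only the positive block-$m$ entries of $\Lambda_I^+$), so $P\bmod 2$ is preserved. Both parities are realized: $\mu$ gives one, and Lemma \ref{bkswlem5} produces an element of ${}^IW^J\overline\lambda$ corresponding to $\vec\eps=(1,0,\ldots,0)$, carrying the opposite parity; that this is genuinely distinct from $\mu$ follows from the structure of $\overline\lambda$ forced by the weakly separable hypothesis. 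The main obstacle is the parity-invariance step, as it hinges on systematically ruling out single-entry linked roots via the precise equality $n_s^\nu(0)+n_m^\nu(a)=1$ characterizing the weakly separable regime, together with the singularity analysis of cross-block roots in the case $(\overline m-1,0)$.
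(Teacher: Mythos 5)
Your proposal follows the same overall strategy as the paper's proof: reduce via Lemma \ref{bkswlem5} to the $\vec\eps$-parametrization, collapse single coordinates via Proposition \ref{ssprop1} when the system is not separable, collapse pairs via Proposition \ref{ssprop3} to get the two-block upper bound, and certify the lower bound with the parity invariant $P\bmod 2$. The reduction of separability to $(n_m,\overline n_{\overline m})$, and the use of Proposition \ref{lrprop2}(1a)--(1c) together with Lemma \ref{bkswlem7} to eliminate parity-breaking linked roots, also match the paper.

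There is, however, a genuine gap in the lower-bound step. You assert that ``Lemma \ref{bkswlem5} produces an element of ${}^IW^J\overline\lambda$ corresponding to $\vec\eps=(1,0,\ldots,0)$, carrying the opposite parity,'' but Lemma \ref{bkswlem5} only says that \emph{each} $\lambda$ is $\Ext^1$-connected to $s_{e_{k_1}}^{\eps_1}\cdots s_{e_{k_{m-1}}}^{\eps_{m-1}}\mu$ for \emph{some} $\vec\eps$ depending on $\lambda$; it does not assert that any particular nonzero $\vec\eps$ is realized, nor that $s_{e_{k_1}}\mu$ is $\Phi_I$-regular. If in fact $s_{e_i}\lambda$ were $\Phi_I$-singular for every $\lambda\in{}^IW^J\overline\lambda$ and every admissible $i$, all $\vec\eps$'s would be $\vec 0$ and there would be only one block. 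Ruling this out is precisely the hard content that the paper supplies with Lemma \ref{splem8}: if $(1)$ of that lemma held for $S=\{1,\ldots,m-1\}$, the system would be strongly separable relative to some $(\{s_0\},\{1,\ldots,\overline m\})$, contradicting weak separability. Your appeal to ``the structure of $\overline\lambda$ forced by the weakly separable hypothesis'' gestures at the right idea but does not replace this argument; without it the proof only gives the upper bound of two blocks. A secondary, more minor issue: in the parity-invariance step you need to show that for any linked root $\beta=e_i\pm e_j$ neither $\lambda_i$ nor $\lambda_j$ is $0$ (otherwise the sign count changes by $\pm1$), which for $(n_m,\overline n_{\overline m})=(0,m-1)$ uses Lemma \ref{bkswlem7}(1) with $b=0$ across two distinct segments, a case not obviously covered by your mention of ``duplicate block-$m$ entries.'' You flag the parity step as the ``main obstacle,'' which is fair, but the realization of both parities is an independent and equally essential gap.
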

\begin{proof}
If $(\Phi, \Phi_I, \Phi_J)$ is not strongly separable, we show that it contains at most two blocks. First Lemma \ref{bkswlem5} yields a $\Phi_I$-regular weight $\nu\in W\overline\lambda$ so that $\lambda\lera s_{e_{k_1}}^{\eps_1}\ldots s_{e_{k_{m-1}}}^{\eps_{m-1}}\nu$ for all $\lambda\in{}^IW^J\overline\lambda$. Here $\eps_s\in\{0, 1\}$ and $q_{s-1}<k_s\leq q_s$ for $1\leq s<m$. Moreover, if $\eps_s=1$, then $s_{e_{k_s}}^{\eps_s}\nu$ is $\Phi_I$-regular. If $\eps_1=\ldots=\eps_{\overline m-1}=0$ for any $\lambda\in{}^IW^J\overline\lambda$, then $\lambda\lera\nu$ and $(\Phi, \Phi_I, \Phi_J)$ has only one block. Otherwise choose the maximal $1\leq s<m$ so that $\nu_{k_s}\neq0$ and $s_{e_{k_{s}}}\nu$ is $\Phi_I$-regular. Fix $\lambda\in{}^IW^J\overline\lambda$. For any $t<s$ with $\eps_t=1$. Proposition \ref{ssprop3} implies
\[
\lambda\lera s_{e_{k_1}}^{\eps_1}\ldots s_{e_{k_{s-1}}}^{\eps_{s-1}}s_{e_{k_s}}^{\eps_s}\nu\lera s_{e_{k_1}}^{\eps_1}\ldots s_{e_{k_{t-1}}}^{\eps_{t-1}}s_{e_{k_{t+1}}}^{\eps_{t+1}}\ldots s_{e_{k_{s-1}}}^{\eps_{s-1}}s_{e_{k_s}}^{1-\eps_s}\nu.
\]
With this kind of operation, we can eventually get $\lambda\lera s_{e_k}^{\eps}\nu$ for some $\eps\in\{0, 1\}$ and $1\leq k=k_s\leq q_{m-1}$. This shows that $(\Phi, \Phi_I, \Phi_J)$ has at most two blocks.

If $m=1$ or $\overline m=1$, then $(\Phi, \Phi_I, \Phi_J)$ is not separable and contains only one block. Now suppose $m, \overline m>1$. If $(n_m, \overline n_{\overline m})\neq(0, m-1), (\overline m-1, 0)$, Proposition \ref{ssprop1} and the previous reasoning yield $\lambda\lera s_{e_k}^\eps\nu\lera\nu$. Thus the system has only one block.

Now assume that $(n_m, \overline n_{\overline m})=(0, m-1)$. If $\lambda, \mu\in{}^IW^J\overline\lambda$ are linked by $\beta\in\Psi_\lambda^+$, we show that $P(\lambda)=P(\mu)$. In fact, we have $\langle\lambda, \beta\rangle>0$ and $\lambda=ws_\beta\mu$ for $w\in W_I$. If $\beta=e_i$ or $2e_i$ for some $q_{s-1}<i\leq q_s$ and $1\leq s<m$, then $a=\lambda_i=\langle\lambda, e_i\rangle>0$. With $n_m=0$ and $\overline n_{\overline m}=m-1$, one obtains $n^\lambda_s(a)=n^\lambda_s(0)=1$ and $n^\lambda_m(a)=n^\lambda_m(0)=0$. It follows from Proposition \ref{lrprop2} that $\beta$ is not a linked root from $\lambda$, a contradiction. Next assume that $\beta=e_i\pm e_j$ for some $q_{s-1}<i\leq q_s$, $q_{t-1}<j\leq q_t$ and $1\leq s, t<m$. If $\lambda_i=0$, then $n_t^\lambda(0)=1$ is maximal, in view of $n_{\overline m}=m-1$. Lemma \ref{bkswlem7} yields $|\lambda_i|=|\lambda_j|>0$, a contradiction. Similarly, if $\lambda_j=0$, Lemma \ref{bkswlem7} gives $s=t$ and $n_s^\lambda(a)=1$, where $a=|\lambda_i|$. With $n_m=0$, Proposition \ref{lrprop2} implies that $\beta$ is not a linked root, another contradiction. Now we obtain $\lambda_i, \lambda_j\neq 0$. So $P(\lambda)=P(s_\beta\lambda)$ in this case. On the other hand, $n_m=0$ implies any $w\in W_I$ is a permutation, that is, $P(\mu)=P(ws_\beta\lambda)=P(s_\beta\lambda)=P(\lambda)$. As a consequence, $P(\lambda)=P(\mu)$ when $\lambda\lera\mu$. In view of Lemma \ref{splem8} (choose $(S, \overline S)=(\{1, \ldots, m-1\}, \{\overline m\})$), there exists $\lambda\in{}^IW^J\overline\lambda$ and $1\leq i\leq q_{m-1}$ so that $s_{e_i}\lambda$ ($\neq\lambda$) is $\Phi_I$-regular (otherwise the system is strongly separable relative to $(\{s_0\}, \{1, \ldots, \overline m\})$ for some $1\leq s_0<m$, a contradiction). Choose $w\in W_I$ so that $\mu=ws_{e_i}\lambda\in\Lambda_I^+$. So $P(\mu)\neq P(\lambda)$ in this case. This shows that $(\Phi, \Phi_I, \Phi_J)$ possess at least two blocks.

Now consider $(n_m, \overline n_{\overline m})=(\overline m-1, 0)$, then $n^\lambda_s(0)=0$ for any $1\leq s\leq m$ and $\lambda\in{}^IW^J\overline\lambda$. If $\lambda, \mu$ are linked by $\beta$, we can also show that $\beta\neq e_i$ by Proposition \ref{lrprop2}, while $P(s_\beta\lambda)=P(\lambda)$ always holds for $\beta=e_i\pm e_j$ ($\lambda_i, \lambda_j\neq 0$). The following reasoning is similar (with Lemma \ref{splem8} replaced by Lemma \ref{splem9}).
\end{proof}

\begin{theorem}\label{bkswthm3}
Let $\Phi=D_n$ and $I, J\subset\Delta$. Let $\overline\lambda$ be a dominant weight with $\Phi_{\overline\lambda}=\Phi_J$. Assume that $(\Phi, \Phi_I, \Phi_J)$ have nonzero blocks. Then
\begin{itemize}
  \item [(1)] If $(\Phi, \Phi_I, \Phi_J)$ is not separable, it has only one block.
  \item [(2)] If $(\Phi, \Phi_I, \Phi_J)$ is weakly separable, it has two blocks when $(n_m, \overline n_{\overline m})=(\overline m, m)$, otherwise it has only one block.
  \item [(3)] Let $\lambda, \mu\in {}^IW^J\overline\lambda$. If $(\Phi, \Phi_I, \Phi_J)$ is weakly separable and has two blocks, then $\lambda$ and $\mu$ belong to the same block if and only if $P(\lambda)=P(\mu)$.
\end{itemize}
\end{theorem}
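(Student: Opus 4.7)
The plan is to closely mirror the proof of Theorem \ref{bkswthm2}, making the $D_n$-specific adjustments at each step. First, by Remark \ref{lrrmk2} we may assume $I$ is standard, since the map $\vf$ preserves $n_m$, $\overline n_{\overline m}$, the separability type, and the parity function $P$. Lemma \ref{bkswlem6} then supplies a distinguished $\Phi_I$-regular weight $\nu$ such that every $\Phi_I$-regular $\lambda \in {}^IW^J\overline\lambda$ is $\Ext^1$-connected to a weight of the form
\[
s_{e_{k_1}}^{\eps_1}\cdots s_{e_{k_{m-1}}}^{\eps_{m-1}} s_{e_n}^{\eps_m}\nu,
\]
where the indices $q_{s-1}<k_s\leq q_s$ are fixed and the signs $\eps_s \in \{0,1\}$ depend on $\lambda$; moreover $s_{e_{k_s}}\nu$ is $\Phi_I$-regular whenever $\eps_s=1$, and $\eps_m=0$ when $n_m=0$.

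Assume now that $(\Phi,\Phi_I,\Phi_J)$ is not strongly separable. Proposition \ref{ssprop3} permits pairwise cancellation of sign-flips among the first $m-1$ segments: whenever $\nu_{k_s}\nu_{k_t}\neq 0$ for $1\leq s,t\leq m-1$, one has $\nu \lera s_{e_{k_s}}s_{e_{k_t}}\nu$. Since any signed permutation carrying $\nu$ to $\lambda$ lies in $W(D_n)$, the total number of sign changes is even, so after repeated pairwise elimination the reduced word is $s_{e_k}^{\eps}s_{e_n}^{\eps'}$ with $\eps+\eps'$ even. If furthermore $\overline n_{\overline m}\neq 0$ and $(n_m,\overline n_{\overline m})\neq(\overline m,m)$, Proposition \ref{ssprop2} (applied contrapositively) yields $\lambda \lera s_{e_i}\lambda$ for the surviving index, so the remaining factor is absorbed and $\lambda \lera \nu$; this gives a single block and proves parts (1) and the one-block cases in (2). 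The case $\overline n_{\overline m}=0$ is handled by transferring to the dual system $(\Phi,\Phi_J,\Phi_I)$ via Lemma \ref{jflem4}, which then has $n_m=0$, and invoking Lemma \ref{sslem2} together with the dual version of the same reduction.

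The key remaining case is $(n_m,\overline n_{\overline m})=(\overline m,m)$: here $\overline\lambda_n=0$ (forced by $n_m^\lambda(0)=1$ and dominance), so $P$ is no longer $W$-invariant. We verify that $P$ is preserved by every linked root arising in this regime. By Proposition \ref{lrprop2}(2) applied with $n_s^\lambda(a)=n_m^\lambda(a)=n_s^\lambda(0)=n_m^\lambda(0)=1$, the roots $\beta=e_i\pm e_j$ with $\lambda_j=0$ are precisely excluded from being linked; every surviving linked root has both $\lambda_i,\lambda_j\neq 0$, so $s_\beta$ flips two signs and preserves $P$. Similarly, any $W_I$-action preserves $P$ since $n_m^\lambda(0)=1$ constrains $W_{I_m}$ to act with even sign-parity on the last segment. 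Hence $P$ is a class function on blocks, giving at most two blocks. Conversely, invoking Lemma \ref{splem8} for the weakly separable pair $(\{1,\ldots,m\},\{\overline m\})$, we produce $\lambda\in{}^IW^J\overline\lambda$ and an index $i$ with $s_{e_i}\lambda$ also $\Phi_I$-regular; then $\mu=w s_{e_i}\lambda\in\Lambda_I^+$ for some $w\in W_I$ necessarily satisfies $P(\mu)\neq P(\lambda)$, showing the two parity values are both realized and completing (2) and (3).

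The principal obstacle will be the careful parity bookkeeping in the case $(n_m,\overline n_{\overline m})=(\overline m,m)$: matching the exceptional roots of Proposition \ref{lrprop2}(2) precisely with the parity-changing reflections, and verifying that no detour through a singular intermediate weight surreptitiously shifts $P$. A secondary subtlety is that the dualization used for $\overline n_{\overline m}=0$ may produce a non-standard $I$ on the dual side, requiring a further application of $\vf$ and a check that none of the invariants we rely on are disturbed.
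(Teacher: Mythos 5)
Your overall strategy is the same as the paper's (Lemma \ref{bkswlem6} to normalize, Proposition \ref{ssprop3} to pairwise-cancel sign flips, Proposition \ref{ssprop2} or dualization via Lemma \ref{jflem4} to absorb the last flip, and parity $P$ as a block invariant in the two-block case). However, there is a genuine gap in the key case $(n_m,\overline n_{\overline m})=(\overline m,m)$.

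You assert that ``any $W_I$-action preserves $P$ since $n_m^\lambda(0)=1$ constrains $W_{I_m}$ to act with even sign-parity on the last segment.'' This is false: when the last segment contains a zero entry (which is precisely what $n_m^\lambda(0)=1$ says), an element such as $s_{e_{n-1}+e_n}s_{e_{n-1}-e_n}\in W_{I_m}$ flips the signs of $\lambda_{n-1}$ and $\lambda_n$; if $\lambda_n=0$, this changes exactly one nonzero sign and so \emph{does} change $P(\lambda)$. Establishing only that each linked root $\beta=e_i\pm e_j$ has $\lambda_i,\lambda_j\neq 0$ is not enough, because the accompanying $w\in W_I$ in $\mu=ws_\beta\lambda$ may touch the last segment and defeat the parity argument.

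What the paper actually proves is stronger: writing $\beta=e_i\pm e_j$ with $q_{s-1}<i\leq q_s$ and $q_{t-1}<j\leq q_t$ and $s\leq t$, the combination of Lemma \ref{bkswlem7}(2) and Proposition \ref{lrprop2}(2) forces $t<m$ (the case $t=m$ falls into the exceptional non-linked configuration when $n_m^\lambda(a)=n_s^\lambda(a)=1$ and $n_s^\lambda(0)=n_m^\lambda(0)=1$). With $t<m$, the relevant $w\in W_{I_s\cup I_t}$ is an honest permutation of type-$A$ segments and automatically preserves $P$. You have the right ingredients (\ref{bkswlem7} and \ref{lrprop2}) on the table, but you only extract ``$\lambda_i,\lambda_j\neq 0$'' from them; the additional conclusion $t<m$ is exactly what makes the $W_I$-step harmless and it is missing from your draft.

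Two smaller points. First, your parity claim ``$\eps+\eps'$ even'' presupposes $\nu\in W\lambda$, but Lemma \ref{bkswlem6} only guarantees $\nu\in W\overline\lambda\cup W s_{e_n}\overline\lambda$; if $\nu\in Ws_{e_n}\overline\lambda$ (and $\overline\lambda_n\neq 0$) the parity is odd. The paper sidesteps this by arguing directly that $\eps$ is determined by $\nu$ when $(n_m,\overline n_{\overline m})=(0,0)$. Second, your dualization cleanly handles $\overline n_{\overline m}=0$, $n_m>0$, but you should say explicitly that the case $(0,0)$ is \emph{not} covered by that dualization and instead falls under your parity observation with $\eps_m=0$; as written the branching is ambiguous about which argument covers $(0,0)$.
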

\begin{proof}
If $I$ is not standard, we can consider the isomorphic system $(\Phi, \Phi_{\vf(I)}, \Phi_{\vf(J)})$. Assume that $I$ is standard. First we show that $(\Phi, \Phi_I, \Phi_J)$ has at most two blocks. It follows from Lemma \ref{bkswlem6} that there exists a $\Phi_I$-regular weight $\nu\in W\overline\lambda\cup W s_{e_n}\overline\lambda$ so that $\lambda\lera s_{e_{k_1}}^{\eps_1}\ldots s_{e_{k_{m-1}}}^{\eps_{m-1}}s_{e_n}^{\eps_m}\nu$. In particular, $\eps_m=0$ when $n_m=0$. If $\eps_s=1$, then $\nu_{k_s}\neq0$ and $s_{e_{k_s}}^{\eps_s}\nu$ is $\Phi_I$-regular. Choose the maximal $1\leq s<m$ so that $\nu_{k_s}\neq0$ and $s_{e_{k_{s}}}\nu$ is $\Phi_I$-regular. Fix $\lambda\in{}^IW^J\overline\lambda$. For any $t<s$ with $\eps_t=1$. If $(\Phi, \Phi_I, \Phi_J)$ is not strongly separable, Proposition \ref{ssprop3} implies
\[
\lambda\lera s_{e_{k_1}}^{\eps_1}\ldots s_{e_{k_{s-1}}}^{\eps_{s-1}}s_{e_{k_s}}^{\eps_s}s_{e_n}^{\eps_m}\nu\lera s_{e_{k_1}}^{\eps_1}\ldots s_{e_{k_{t-1}}}^{\eps_{t-1}}s_{e_{k_{t+1}}}^{\eps_{t+1}}\ldots s_{e_{k_{s-1}}}^{\eps_{s-1}}s_{e_{k_s}}^{1-\eps_s}s_{e_n}^{\eps_m}\nu.
\]
We can eventually get $\lambda\lera s_{e_k}^{\eps}s_{e_n}^{\eps_m}\nu$ for some $\eps\in\{0, 1\}$ and $1\leq k=k_s\leq q_{m-1}$. If $m=1$ or $\overline m=1$, the result is trivial. It suffices to consider the case $m, \overline m>1$.

First assume that $(n_m, \overline n_{\overline m})\neq(\overline m, m)$. If $\overline n_{\overline m}>0$, then Proposition \ref{ssprop2} yields $\lambda\lera s_{e_k}^\eps s_{e_{n}}^{\eps_{m}}\nu\lera\nu$. Thus $(\Phi, \Phi_I, \Phi_J)$ possess only one block. If $n_m>0$, then $(\Phi, \Phi_J, \Phi_I)$ contains only one block by a dual argument. In view of Lemma \ref{sslem2}, $(\Phi, \Phi_I, \Phi_J)$ has one block. It remains to consider the case $(n_m, \overline n_{\overline m})=(0, 0)$. In this case, $\eps_m=0$ and $n^\lambda_s(0)=0$ for all $1\leq s\leq m$. It follows that $P(\mu)=P(\lambda)$ whenever $\mu\in W\lambda$. Therefore $P(\lambda)=P(\overline\lambda)=P(s_{e_k}^\eps\nu)$. If $\nu\in W\overline\lambda$ (resp. $\nu\in W s_{e_n}\overline\lambda$), then $\eps=0$ (resp. $\eps=1$) for all possible $\lambda$. Hence $(\Phi, \Phi_I, \Phi_J)$ has one block.

Next assume that $(n_m, \overline n_{\overline m})=(\overline m, m)$. By the proof of Lemma \ref{bkswlem5} and Lemma \ref{bkswlem6}, we can assume that
\[
\nu_{q_{m-1}+1}>\ldots>\nu_{n-1}>|\nu_n|\geq0.
\]
Then $n^\nu_m(0)=1$ implies $\nu_n=0$. So $s_{e_{n}}^{\eps_{m}}\nu=\nu$, which means $(\Phi, \Phi_I, \Phi_J)$ has at most two blocks. If $\lambda$ and $\mu$ are linked by $\beta\in\Phi^+\backslash\Phi_I$, we show that $P(\lambda)=P(\mu)$. In fact, we have $\mu=ws_\beta\lambda$ for $w\in W_I$. Assume that $\beta=e_i\pm e_j$ for some $q_{s-1}<i\leq q_s$, $q_{t-1}<j\leq q_t$ and $1\leq s\leq t\leq m$. Then $w\in W_{I_s\cup I_t}$. Moreover, $\lambda_i, \lambda_j\neq0$ by a proof similar to the case when $\Phi=B_n$. If $t=m$, then $n_t^\lambda(a)=1$ is maximal for $a=|\lambda_i|$. Lemma \ref{bkswlem7} yields $|\lambda_j|=a$. Proposition \ref{lrprop2} implies $\beta$ is not a linked root. This forces $s\leq t<m$. So we must have $P(s_\beta\lambda)=P(\lambda)$. On the other hand, $w\in W_{I_s\cup I_t}$ is a permutation for $s\leq t<m$, that is, $P(\mu)=P(ws_\beta\lambda)=P(s_\beta\lambda)=P(\lambda)$. In general, $P(\lambda)=P(\mu)$ when $\lambda\lera\mu$. By Lemma \ref{splem8} (choose $(S, \overline S)=(\{1, \ldots, m\}, \{\overline m\})$), there is $\lambda\in{}^IW^J\overline\lambda$ and $1\leq s<m$ so that $q_{s-1}< i\leq q_{s}$ and $s_{e_i}\lambda=s_{e_i+e_n}s_{e_i-e_n}\lambda\in W\overline\lambda$ is $\Phi_I$-regular (otherwise $(S, \overline S)\sim (\{m\}, \{1, \ldots, \overline m\})$ and the system is strongly separable in view of Lemma \ref{splem7}). Choose $w\in W_{I_s}$ so that $\mu=ws_{e_i}\lambda\in\Lambda_I^+$. So $P(\mu)\neq P(\lambda)$ in this case. This shows that $(\Phi, \Phi_I, \Phi_J)$ has at least two blocks.

\end{proof}

%
%
\section{Blocks of strongly separable systems}
%
%
In this section, we will determine the blocks of strongly separable systems. We say a separable pair $(S, \overline S)\in\caD$ is \emph{trivial} if it satisfies the following conditions:
\begin{itemize}
\item [(\rmnum{1})] When $\Phi=B_n, C_n$, $(S, \overline S)\sim (\{s_0\}, \{1,\ldots, \overline m\})$ for some $1\leq s_0<m$ or $(S, \overline S)\sim(\{1,\ldots, m\}, \{t_0\})$ for some $1\leq t_0<\overline m$;
\item [(\rmnum{2})] When $\Phi=D_n$, $(S, \overline S)\sim (\{s_0\}, \{1,\ldots, \overline m-1\})$ for some $1\leq s_0<m$ or $(S, \overline S)\sim(\{1,\ldots, m-1\}, \{t_0\})$ for some $1\leq t_0<\overline m$.
\end{itemize}

\subsection{strongly separable systems for $\Phi=A_{n-1}$} In this subsection, we show that the strongly separable systems for $\Phi=A_{n-1}$ can be written as product of two subsystems. Let $\overline\lambda$ be a dominant weight with $\Phi_{\overline\lambda}=\Phi_J$. There exists strongly separable pair $(S, \overline S)$ so that $n_s^\lambda(a_t)=1$ for $s\in S$, $t\in\overline S$ and $n_s^\lambda(a_t)=0$ for $s\not\in S$, $t\not\in\overline S$. Here $\lambda$ is any $\Phi_I$-regular weight contained in $W\overline\lambda$. Denote $T=\{1, \ldots, m\}\backslash S$ and $\overline\caA=\{a_t\in\caA\mid t\in \overline S\}$. Obviously $\overline\caA\subset \{\lambda_{q_{s-1}+1}, \ldots, \lambda_{q_s}\}$ when $s\in S$. Set $p=|\overline\caA|$. For $\lambda\in{}^IW^J\overline\lambda$, we can choose $w_s\in W_{I_s}$ so that $\nu=(\prod_{s\in S}w_s)\lambda$ satisfies
\begin{equation}\label{bksst1eq1}
\begin{aligned}
&\{\nu_{q_{s-1}+1}, \ldots, \nu_{q_{s-1}+p}\}=\overline\caA,\\
\nu_{q_{s-1}+1}>&\ldots>\nu_{q_{s-1}+p},\quad \nu_{q_{s-1}+p+1}>\ldots>\nu_{q_s}
\end{aligned}
\end{equation}
when $s\in S$. Denote $w_\lambda=\prod_{r\in S}w_r$. Then $w_\lambda$ is unique, so is $\nu$. Put
\begin{equation}\label{bksst1eq3}
\Phi_1=(\sum_{\substack{q_{s-1}+p<i\leq q_s,\\
s\in S}}\bbQ e_i)\cap\Phi\quad \mbox{and}\quad \Phi_2=(\sum_{\substack{q_{s-1}<i\leq q_s,\\
s\in T}}\bbQ e_i)\cap\Phi.
\end{equation}
Obviously $\Phi_1, \Phi_2$ are proper subsystems of $\Phi$ (the empty set is also viewed as a subsystem of $\Phi$, see Remark \ref{bkssrmk1}).

Let $W_i$ be the Weyl group of $\Phi_i$ for $i=1, 2$. Let $\Delta_i$ be the simple system corresponding to $\Phi_i^+=\Phi_i\cap\Phi^+$. Denote $I^i=\Phi_I\cap\Delta_i$. Then $I^1=I\cap\Phi_1$ and $I^2=I\cap\Phi_2$. Obviously, $\Phi_{I^i}$ is a subsystem of $\Phi_i$ generated by $I^i$. Set $\lambda^i=\nu|_{\Phi_i}$. Evidently, $\lambda^i\in\Lambda^+(\Phi_{I^i}, \Phi_i)$. Denote by $n^{\lambda^i}(a)$ the number of entries of $\lambda^i$ with absolute value $a$. It follows from the construction that
\begin{equation}\label{bksst1eq4}
n^{\lambda^1}(a_t)=\left\{\begin{aligned}
&0,\quad\ \mbox{if}\ t\in\overline S;\\
&\overline n_t,\quad \mbox{if}\ t\not\in\overline S.
\end{aligned}\
\qquad
\right.
\quad
n^{\lambda^2}(a_t)=\left\{\begin{aligned}
&\overline n_t-|S|,\quad\ \mbox{if}\ t\in\overline S;\\
&0,\qquad\qquad\ \mbox{if}\ t\not\in\overline S.
\end{aligned}\
\qquad
\right.
\end{equation}
Let $\overline{\lambda^i}\in W_i\lambda^i$ be a dominant weight of $\Phi_i^+=\Phi_i\cap\Phi^+$. In view of (\ref{bksst1eq4}), $\overline{\lambda^i}$ is uniquely determined by $\overline\lambda$ and $(S, \overline S)$. Put $J^i=\{\alpha\in \Delta_i\mid \langle\overline{\lambda^i}, \alpha\rangle=0\}$. It can be verified that $\lambda\ra(\lambda^1, \lambda^2)$ gives a bijection between ${}^IW^J\overline\lambda$ and $({}^{I^1}W_1^{J^1}\overline{\lambda^1}, {}^{I^2}W_2^{J^2}\overline{\lambda^2})$. This actually gives the first part of the following theorem.

\begin{theorem}\label{bkssthm1}
Let $\Phi=A_{n-1}$. If $(\Phi, \Phi_I, \Phi_J)$ is strongly separable, then
\begin{itemize}
\item [(1)] There exist proper subsystem $\Phi_i$ of $\Phi$ for $i=1, 2$ such that
\[
{}^IW^J\simeq{}^{I^1}W_1^{J^1}\times{}^{I^2}W_2^{J^2},
\]
where $W_i$ is the Weyl group of $\Phi_i$ and $I^i, J^i$ are subsets of simple system $\Delta_i$ corresponding to $\Phi_i^+=\Phi_i\cap\Phi^+$.

\item [(2)] Let $\overline\lambda$ be a dominant weight with $\Phi_{\overline\lambda}=\Phi_J$. There exist dominant weight $\overline{\lambda^i}$ of $\Phi_i$ for $i=1, 2$ such that each $\nu\in{}^IW^J\overline\lambda$ correspond $\nu^i\in {}^{I^i}W_i^{J^i}\overline{\lambda^i}$. Moreover, $\lambda\lera\mu$ if and only if $\lambda^i\lera\mu^i$ relative to $(\Phi_{I^i}, \Phi_i)$.
\end{itemize}
\end{theorem}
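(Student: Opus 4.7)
The plan is to first settle part (1) by checking that the construction preceding the theorem statement really defines an inverse of $\lambda\mapsto(\lambda^1,\lambda^2)$. Concretely, given $(\lambda^1,\lambda^2)\in{}^{I^1}W_1^{J^1}\overline{\lambda^1}\times{}^{I^2}W_2^{J^2}\overline{\lambda^2}$, I would combine them with the fixed top strings $(a_{i_1},\ldots,a_{i_p})$ on each $S$-segment to produce the unique $\nu$ satisfying (\ref{bksst1eq1}), then take $\lambda\in W_I\nu\cap\Lambda_I^+$. The bijection property follows because $\nu\mapsto\lambda$ is governed by the $W_I$-orbit, while $\nu\mapsto(\lambda^1,\lambda^2)$ only records the restrictions to the (disjoint) subsystems $\Phi_1,\Phi_2$, together with the fixed top data.

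For part (2) I would work in the canonical form $\nu$ throughout and use Lemma \ref{lrlem0} to reduce $\lera$ to chains of linked roots, together with Proposition \ref{lrprop2} specialized to type $A$ (no exceptional cases: every $\beta\in\Phi^+\setminus\Phi_I$ for which both $\nu$ and $s_\beta\nu$ are $\Phi_I$-regular is a linked root). The forward direction reduces to a single step: take a linked root $\beta=e_i-e_j\in\Phi^+\setminus\Phi_I$ from $\nu$. I would then do the case analysis on the locations of $i,j$ relative to the decomposition of each $S$-segment into a top block (values in $\overline{\caA}$) and bottom block (values in $\caA\setminus\overline{\caA}$), and the $T$-segments (values in $\overline{\caA}$). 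Because the top blocks of $S$-segments already contain a full copy of $\overline{\caA}$ and the $T$-segments contain only $\overline{\caA}$ values, any cross-swap between a top-of-$S$ position and any position in another segment, or between a bottom-of-$S$ position and a $T$-segment position, produces a repeated entry in some segment, forcing $s_\beta\nu$ to be $\Phi_I$-singular. Thus the surviving linked roots lie entirely in $\Phi_1$ (both endpoints in bottoms of $S$-segments) or entirely in $\Phi_2$ (both endpoints in $T$-segments). Applying such a $\beta$ leaves the other restriction $\lambda^{3-i}$ unchanged, and gives a linked root between $\lambda^i$ and $\mu^i$ in $(\Phi_{I^i},\Phi_i)$, using Proposition \ref{lrprop2} again for the subsystem.

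The reverse direction is easier and lifts a chain from each subsystem to $(\Phi,\Phi_I)$: any linked root $\beta\in\Phi_i^+\setminus\Phi_{I^i}$ for $(\Phi_{I^i},\Phi_i)$ automatically lies in $\Phi^+\setminus\Phi_I$ since $\Phi_i\cap\Phi_I=\Phi_{I^i}$, and applying $s_\beta$ to the full weight only alters entries at $\Phi_i$-positions while leaving the $\Phi_{3-i}$-positions and the fixed top-of-$S$ strings intact. Hence $\Phi_I$-regularity is preserved, and $\beta$ remains a linked root for $(\Phi,\Phi_I)$ by Proposition \ref{lrprop2}. Concatenating a chain from $\lambda^1\lera\mu^1$ with a chain from $\lambda^2\lera\mu^2$ yields $\lambda\lera(\mu^1,\lambda^2)\lera(\mu^1,\mu^2)=\mu$.

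The main obstacle is the case analysis of cross roots in the forward direction; although the combinatorics look routine, one has to be careful about two kinds of crossings that at first glance look harmless (top-of-$S$ to top-of-$S'$ swaps, and bottom-of-$S$ to top-of-$S$ swaps within the collective $S$-block), and verify in each case that the maximality $n_s^\nu(a_t)=1$ for $s\in S,\ t\in\overline S$ together with the vanishing $n_s^\nu(a_t)=0$ for $s\notin S,\ t\notin\overline S$ forces a repeated entry after $s_\beta$. Everything else—the bijection in (1), preservation of $\Phi_I$-regularity under $\Phi_i$-internal swaps, and the chaining argument—is formal once this dichotomy is established.
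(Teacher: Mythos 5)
Your proof is correct, but it takes a much longer route than the paper's. For part (2), the paper uses a one-line shortcut: since $\Phi$ and each $\Phi_i$ are of type $A$ (or empty), Theorem~\ref{bkswthm1} (the type-$A$ one-block theorem, which is Brundan's result recovered via Jantzen coefficients — the paper's citation of \ref{bkswthm2} in that proof appears to be a typo) shows that $\lambda\lera\mu$ holds for \emph{all} pairs in ${}^IW^J\overline\lambda$ and likewise $\lambda^i\lera\mu^i$ holds for all pairs in each ${}^{I^i}W_i^{J^i}\overline{\lambda^i}$. So both sides of the biconditional are always true and (2) is vacuous. You instead re-run, in type $A$, the detailed linked-root case analysis that the paper deploys only for types $B$, $C$, and $D$ (Theorems~\ref{bkssthm2}--\ref{bkssthm4}). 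Your dichotomy is right — in the canonical form $\nu$, any cross root between a ``top'' position of an $S$-segment and anything else, or between a ``bottom'' position of an $S$-segment and a $T$-segment, forces a repeated entry in some segment (since the $S$-tops already hold a full copy of $\overline{\caA}$ and the $T$-segments carry only $\overline{\caA}$-values), so the surviving linked roots are $\Phi_1$-internal or $\Phi_2$-internal — and the lifting argument for the reverse direction is sound. What your approach buys is uniformity with the hard cases and independence from the one-block input; what you lose is that in type $A$ this work is entirely unnecessary, and the biconditional carries no content. One small caveat you gloss over: when translating a linked root $\beta=e_i-e_j\in\Phi_1$ from $\nu$ into a linked root for $(\Phi_{I^1},\Phi_1)$, you need the conjugating element $w\in W_I$ factored so that its $\Phi_1$-part lies in $W_{I^1}$ (as in the paper's proof of Theorem~\ref{bkssthm2}); in the type-$A$ setting this is true but deserves a sentence.
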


\begin{proof}
The root system $\Phi_i$ is either empty or of type $A$. Thus (2) follows from Theorem \ref{bkswthm2}.
\end{proof}

\begin{remark}\label{bkssrmk1}
When $\Phi_i$ is an empty root system for some $i\in\{1, 2\}$, then $I^i$, $J^i$ are also empty. The Weyl group $W_i=1={}^{I^i}W_i^{J^i}$. The weights $\overline{\lambda^i}=\nu^i=0$. For convenience, we write $\overline{\lambda^i}\lera\nu^i$ for any $\nu\in {}^IW^J\overline\lambda$.
\end{remark}

Inspired by the above results for type $A$, we will prove similar results for the other types.

\subsection{strongly separable systems for $\Phi=B_n$, $C_n$} We only need to consider the case $\Phi=B_n$. The case $\Phi=C_n$ is similar. Let $\overline\lambda$ be a dominant weight with $\Phi_{\overline\lambda}=\Phi_J$. There exists strongly separable pair $(S, \overline S)$ so that $n_s^\lambda(a_t)$ is maximal for $s\in S$, $t\in\overline S$ and $n_s^\lambda(a_t)=0$ for $s\not\in S$, $t\not\in\overline S$, where $\lambda\in W\overline\lambda$ is a $\Phi_I$-regular weight. Denote $\overline\caA=\{a_t\mid t\in\overline S\}$ and $\overline\caA^*=\{\pm a \mid 0<a\in \overline\caA\}$. It is evident that $\overline\caA^*\subset \{\lambda_{q_{s-1}+1}, \ldots, \lambda_{q_s}\}$ when $s\in S\backslash\{m\}$. Moreover, $\overline\caA\backslash\{0\}\subset\{\lambda_{q_{m-1}+1}, \ldots, \lambda_n\}$ when $m\in S$. Set $p=|\overline\caA\backslash\{0\}|$ and $h=q_{m-1}+p$. Then $|\overline\caA^*|=2p$. Note that $n-q_{m-1}\leq|\overline\caA\backslash\{0\}|=p$ when $m\not\in S$ (since $n_m^\lambda(0)=0$). For $\lambda\in{}^IW^J\overline\lambda$, we can choose permutations $w_s\in W_{I_s}$ so that $\nu=(\prod_{s\in S}w_s)\lambda$ satisfies
\begin{equation}\label{bksst3eq1}
\begin{aligned}
&\{\nu_{q_{s-1}+1}, \ldots, \nu_{q_{s-1}+2p}\}=\overline\caA^*,\\
\nu_{q_{s-1}+1}>&\ldots>\nu_{q_{s-1}+2p},\quad \nu_{q_{s-1}+2p+1}>\ldots>\nu_{q_s}
\end{aligned}
\end{equation}
when $s\in S\backslash\{m\}$ and
\begin{equation}\label{bksst3eq2}
\begin{aligned}
&\{\nu_{q_{m-1}+1}, \ldots, \nu_h\}=\overline\caA\backslash\{0\},\\
\nu_{q_{m-1}+1}>&\ldots>\nu_h,\quad\nu_h>\ldots>\nu_n>0
\end{aligned}
\end{equation}
when $m\in S$. Denote $w_\lambda=\prod_{r\in S}w_r$. Then $w_\lambda$ is unique, so is $\nu$. Set $T=\{1, 2, \ldots, m\}\backslash S$. Put
\begin{equation}\label{bksst3eq3}
\Phi_1=(\sum_{\substack{q_{s-1}+2p<i\leq q_s,\\
s\in S\backslash\{m\}}}\bbQ e_i+\sum_{h<j\leq n}\bbQ e_{j})\cap\Phi
\end{equation}
and
\begin{equation}\label{bksst3eq4}
\Phi_2=(\sum_{\substack{q_{s-1}<i\leq q_s,\\
s\in T\backslash\{m\}}}\bbQ e_i+\sum_{q_{m-1}<j\leq h}\bbQ e_{j})\cap\Phi.
\end{equation}
Thus $\Phi_1, \Phi_2$ are proper subsystems of $\Phi$ (the empty set is also viewed as a subsystem of $\Phi$, see Remark \ref{bkssrmk1}). Moreover, $\Phi_1\perp\Phi_2$.

Let $W_i$ be the Weyl group of $\Phi_i$ for $i=1, 2$. Let $\Delta_i$ be the simple system corresponding to $\Phi_i^+=\Phi_i\cap\Phi^+$. Denote $I^i=\Phi_I\cap\Delta_i$. In particular, $I^1=I\cap\Phi_1$. If $q_{m-1}<h<n$, $I^2=(I\cap\Phi_2)\cup\{e_h\}$, otherwise $I^2=I\cap\Phi_2$. Let $\Phi_{I^i}$ is a subsystem of $\Phi_i$ generated by $I^i$. Set $\lambda^i=\nu|_{\Phi_i}$. Evidently, $\lambda^i\in\Lambda^+(\Phi_{I^i}, \Phi_i)$.
Let $\overline{\lambda^i}\in W_i\lambda^i$ be a dominant weight of $\Phi_i^+=\Phi_i\cap\Phi^+$. Similar to the case of type $A$, $\overline{\lambda^i}$ is uniquely determined by $\overline\lambda$ and $(S, \overline S)$. Put $J^i=\{\alpha\in \Delta_i\mid \langle\overline{\lambda^i}, \alpha\rangle=0\}$. Then $\lambda\ra(\lambda^1, \lambda^2)$ gives a bijection between ${}^IW^J\overline\lambda$ and $({}^{I^1}W_1^{J^1}\overline{\lambda^1}, {}^{I^2}W_2^{J^2}\overline{\lambda^2})$. We obtain the first part of the following theorem.

\begin{theorem}\label{bkssthm2}
Let $\Phi=B_n$ or $C_n$. If $(\Phi, \Phi_I, \Phi_J)$ is strongly separable, then
\begin{itemize}
\item [(1)] There exist proper subsystem $\Phi_i$ of $\Phi$ for $i=1, 2$ such that
\[
{}^IW^J\simeq{}^{I^1}W_1^{J^1}\times{}^{I^2}W_2^{J^2},
\]
where $W_i$ is the Weyl group of $\Phi_i$ and $I^i, J^i$ are subsets of simple system $\Delta_i$ corresponding to $\Phi_i^+=\Phi_i\cap\Phi^+$.

\item [(2)] Let $\overline\lambda$ be a dominant weight with $\Phi_{\overline\lambda}=\Phi_J$. There exist dominant weight $\overline{\lambda^i}$ of $\Phi_i$ for $i=1, 2$ such that each $\nu\in{}^IW^J\overline\lambda$ correspond $\nu^i\in {}^{I^i}W_i^{J^i}\overline{\lambda^i}$. Moreover, $\lambda\lera\mu$ if and only if $\lambda^i\lera\mu^i$ relative to $(\Phi_{I^i}, \Phi_i)$.
\end{itemize}
\end{theorem}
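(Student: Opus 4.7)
I will treat the theorem in two halves: the combinatorial bijection of Part (1), and the compatibility of $\lera$ with this bijection in Part (2). For Part (1), given $\lambda\in{}^IW^J\overline\lambda$, the element $w_\lambda\in W_I$ that brings $\lambda$ to the sorted form $\nu$ satisfying (\ref{bksst3eq1})--(\ref{bksst3eq2}) is uniquely determined, and I will check that the restrictions $\lambda^i=\nu|_{\Phi_i}$ land in ${}^{I^i}W_i^{J^i}\overline{\lambda^i}$. The inverse map uses $\Phi_1\perp\Phi_2$ to reconstruct $\nu=\mu^1+\mu^2$ directly; then (\ref{bksst3eq1})--(\ref{bksst3eq2}) are forced, and the unique $W_I$-conjugate of $\nu$ lying in ${}^IW^J\overline\lambda$ recovers $\lambda$. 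The crucial observation making everything match is that by strong separability and (\ref{bksst1eq4}), entries at $\Phi_1$-positions have absolute values in $\caA\setminus\overline\caA$ while entries at $\Phi_2$-positions have absolute values in $\overline\caA$, so the two multisets are disjoint and can be merged unambiguously.

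For the forward direction of Part (2), suppose $\lambda\lera\mu$ in $(\Phi_I,\Phi)$. By Proposition \ref{jfprop1} and Lemma \ref{lrlem0} there is a chain $\lambda=\lambda^{(0)},\ldots,\lambda^{(r)}=\mu$ with successive pairs linked by roots $\beta_k\in\Phi^+\setminus\Phi_I$. Replacing each $\lambda^{(k)}$ by its sorted representative $\nu^{(k)}=w_{\lambda^{(k)}}\lambda^{(k)}$ keeps us in the same $W_I$-orbit and hence in the same $\Ext^1$-class (Lemma \ref{jflem2}), so I may assume each $\nu^{(k)}$ satisfies (\ref{bksst3eq1})--(\ref{bksst3eq2}). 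The heart of the argument is the claim that the new linked root at each step lies in $\Phi_1$ or $\Phi_2$. Suppose instead that $\beta=e_i\pm e_j$ (or $e_i$, $2e_i$) with $i$ in a $\Phi_1$-position and $j$ in a $\Phi_2$-position (or with a single short root attached to a $\Phi_1$-position when the multiplicities force a $\Phi_2$-style constraint). Applying $s_\beta$ alters one of the multiplicities $n_s^{\nu^{(k-1)}}(a)$ at a "fixed cell" of the separable pair $(S,\overline S)$, i.e.\ where either $s\in S$ and $a\in\overline\caA$ (forced to be maximal) or $s\notin S$ and $a\notin\overline\caA$ (forced to be zero). But by Lemma \ref{sslem1}, these fixed-cell multiplicities are constant on the orbit ${}^IW^J\overline\lambda$, and any subsequent permutation by $W_I$ can only rearrange within blocks, not repair a mismatch across blocks; so such a cross-boundary $\beta$ cannot be a linked root. (The short-root cases for $\Phi=B_n,C_n$ are eliminated by the maximality clauses of Proposition \ref{lrprop2}, as in Lemma \ref{bkswlem7}.) Once $\beta\in\Phi_i$, orthogonality gives $s_\beta\nu^{(k-1)}=s_\beta(\nu^{(k-1)}|_{\Phi_i})+\nu^{(k-1)}|_{\Phi_{3-i}}$, so the step descends to a linked step in $(\Phi_{I^i},\Phi_i)$, proving $\lambda^i\lera\mu^i$.

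For the backward direction, suppose $\lambda^i\lera\mu^i$ in $(\Phi_{I^i},\Phi_i)$ for $i=1,2$. A linked-root chain in $(\Phi_{I^i},\Phi_i)$ uses roots $\beta\in\Phi_i^+\setminus\Phi_{I^i}$; since the construction gives $\Phi_{I^i}=\Phi_I\cap\Phi_i$ block-by-block, such a $\beta$ also lies in $\Phi^+\setminus\Phi_I$. By orthogonality, $s_\beta$ fixes the other restriction, so the chain lifts to a chain on the $\lambda$-side. I must verify that $\beta$ remains a linked root in the big system; this comes down to checking that each exceptional case of Proposition \ref{lrprop2} holds in $(\Phi_{I^i},\Phi_i)$ if and only if it holds in $(\Phi_I,\Phi)$. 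The key is that for $\beta\in\Phi_i$ and absolute value $a$ appearing in the exceptional conditions, at most one of $\lambda^1,\lambda^2$ has any entries of absolute value $a$ (values in $\Phi_2$-positions lie in $\overline\caA$, values in $\Phi_1$-positions do not), so $n_s^\lambda(a)=n_s^{\lambda^i}(a)$ for the relevant $(s,a)$; combined with the parallel identity for $n_s^\lambda(0)$ contributed only by the relevant subsystem, each hypothesis in Proposition \ref{lrprop2} transfers. Stitching the two chains yields $\lambda\lera\mu$.

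\textbf{Main obstacle.} The forward direction is the delicate step. The challenge is to rule out every possible boundary-crossing linked root, including the short-root cases $e_i,2e_i$ that behave asymmetrically for $B_n$ vs.\ $C_n$ and interact with the $(n_m,\overline n_{\overline m})$-boundary. The proof requires a careful trawl through Proposition \ref{lrprop2} and exploits the block-preserving nature of $W_I$ together with the fact (Lemma \ref{sslem1}) that strong separability is invariant along the orbit; the same case analysis, with only minor re-indexing, will supply the parallel statement for $\Phi=D_n$ in the next theorem.
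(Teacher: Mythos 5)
Your overall strategy—sort each $\lambda$ by the unique $w_\lambda\in W_I$, decompose into components $\lambda^1=\nu|_{\Phi_1}$ and $\lambda^2=\nu|_{\Phi_2}$, and show linked roots descend and lift—matches the paper's. However, the forward direction (linked roots descend) has several concrete gaps that your sketch does not address.

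First, \emph{stable positions}. In the sorted $\nu$, each column $s\in S\setminus\{m\}$ carries the fixed multiset $\overline\caA^*$ in positions $q_{s-1}+1,\ldots,q_{s-1}+2p$, and the column $m$ carries $\overline\caA\setminus\{0\}$ in $q_{m-1}+1,\ldots,h$. The former belong to \emph{neither} $\Phi_1$ nor $\Phi_2$. A candidate linked root with one endpoint at a stable position is not "cross-boundary" in your $\Phi_1$-vs-$\Phi_2$ dichotomy, so your argument does not cover it; one must separately invoke Lemma \ref{bkswlem7} to rule such roots out.

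Second, your multiplicity-based exclusion via Lemma \ref{sslem1} fails precisely in the case the paper must carefully accommodate. When $i$ lies in a column $s\in T$, $j$ in column $t=m\in S$, and $|\lambda_i|=|\lambda_j|=a\in\overline\caA\setminus\{0\}$, the root $\beta=e_i+e_j$ \emph{is} a linked root (it descends to $\gamma=e_i+e_l\in\Phi_2$ after sorting). Here $s_\beta\lambda=s_{e_i}s_{e_j}\lambda$, which alters no multiplicity $n_r^\lambda(c)$—it only flips signs—so your argument "applying $s_\beta$ alters a fixed-cell multiplicity" simply does not apply, and you cannot reach any conclusion about this case. The paper treats it as a separate sub-case, showing $w''=s_{e_j}$ and that $\gamma$ lands in $\Phi_2$. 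Likewise, your claim that "short-root cases are eliminated" is false: for $\beta=e_i$ (resp.\ $2e_i$) with $s\in S$ and $|\lambda_i|\notin\overline\caA$, the short root is a genuine linked root descending to $\Phi_1$, as the paper's final paragraph shows.

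Third, even once $\gamma$ is known to lie in $\Phi_1$ or $\Phi_2$, you must verify that $\zeta=w_\mu\mu$ restricted to $\Phi_i$ is obtained from $\lambda^i$ by a single linked-root step in the \emph{subsystem}, i.e., $\mu^i=w_is_\gamma\lambda^i$ for some $w_i\in W_{I^i}$ and $\gamma$ a linked root with respect to $(\Phi_{I^i},\Phi_i)$. This is where the paper actually spends most of its effort: in each sub-case it explicitly decomposes $w_\mu w w_\lambda^{-1}$ and checks that it lands in $W_{I^i}$, and invokes Proposition \ref{lrprop2} to re-verify linkage inside $\Phi_i$. Your appeal to orthogonality of $\Phi_1$ and $\Phi_2$ only shows that $s_\gamma$ fixes the complementary component; it does not by itself show that the $W_I$-correction lies in $W_{I^1}\times W_{I^2}$. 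Finally, the assertion that "values in $\Phi_1$-positions do not lie in $\overline\caA$" is not quite right when $m\notin S$, since then $0\in\overline\caA$ can sit at residue ($\Phi_1$) positions; this affects your transfer of the exceptional hypotheses of Proposition \ref{lrprop2} in the backward direction.
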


\begin{proof}
Suppose $\lambda^i\lera\mu^i$ for $i=1, 2$. Let $\eta\in{}^IW^J\overline\lambda$ be the weight corresponding to $(\lambda^1, \mu^2)$. Then $(w_\eta\eta)_j=(w_\lambda\lambda)_j$ for $q_{s-1}<j\leq q_{s}$ with $s\in S\backslash\{m\}$ and $h<j\leq n$. It follows from Lemma \ref{lrlem0} and Proposition \ref{lrprop2} that $w_\eta\eta \lera w_\lambda\lambda$. Similarly we can get $w_\eta\eta\lera w_\mu\mu$. Therefore $\lambda\lera\mu$.

For the other direction, it suffices to consider the case when $\lambda, \mu\in{}^IW^J\overline\lambda\subset\Lambda_I^+$ are linked by $\beta\in\Phi^+\backslash\Phi_I$, that is, $\mu=ws_\beta\lambda$ for $w\in W_I$ and $\langle\lambda, \beta\rangle>0$. Put $\zeta=w_\mu\mu$. First assume that $\beta=e_i\pm e_j$ for $1\leq i<j\leq n$. There exist $1\leq s\leq t\leq m$ such that $q_{s-1}<i\leq q_s$ and $q_{t-1}<j\leq q_t$. Set $a=|\lambda_i|$ and $b=|\lambda_j|$. With $\beta\not\in\Phi_I$, one obtains $s<m$. Since $(s_\beta\lambda)_u=\lambda_u$ when $u\neq i, j$, one has $w\in W_{I_s\cup I_t}=W_{I_s}W_{I_t}$. There exist $w'\in W_{I_s}$ and $w''\in W_{I_t}$ with $w=w'w''$. Note that $w_\lambda$ is a permutation in $W_I$ and $\nu=w_\lambda\lambda$. We can assume that $w_\lambda e_i=e_k$ for $q_{s-1}<k\leq q_s$ and $w_\lambda e_j=e_l$ for $q_{t-1}<l\leq q_t$. Then $\gamma=w_\lambda\beta=e_k\pm e_l$.

If $s\in S$ and $t\in T$, then $s\neq t$. With $n_t^\lambda(b)\geq1$, we obtain $b\in\overline\caA$. So $n^\lambda_s(b)$ is maximal. Thus $s_\beta\lambda$ is $\Phi_I$-singular by Lemma \ref{bkswlem7}, a contradiction.

If $s\in T$ and $t\in S$, then $a\in \overline\caA$ and $n^\lambda_t(a)$ is maximal. With Lemma \ref{bkswlem7}, one must have $a=b>0$, $t=m$ and $s_\beta\lambda=s_{e_i}s_{e_j}\lambda$. This forces $\beta=e_i+e_j$ in view of (\ref{lreq2}). Then $\mu=(w' s_{e_i})(w'' s_{e_j})\lambda\in\Lambda_I^+$. We obtain $w'' s_{e_j}\lambda\in\Lambda_{I_m}^+$ and $w' s_{e_i}\lambda\in\Lambda_{I_s}^+$. Evidently $w''=s_{e_j}$. Thus
\[
w_\lambda\mu=w_\lambda ws_\beta\lambda=w_\lambda w'w''s_{e_i}s_{e_j}\lambda=w's_{e_i}w_\lambda\lambda=w's_{e_i}\nu.
\]
Since $s\not\in S\cup\{m\}$, (\ref{bksst3eq1}) and (\ref{bksst3eq2}) are satisfied with $\nu$ replaced by $w_\lambda\mu$. The uniqueness of $w_\mu$ forces $w_\mu=w_\lambda$ and $\zeta=w_\mu\mu=w_\lambda\mu$. Moreover, $s\not\in S\cup\{m\}$ also implies $(I_s\cup\{e_i\}) \perp\Phi_1$. By Lemma \ref{reslem1}, $\mu^1=\zeta|_{\Phi_1}=(w's_{e_i}\nu)|_{\Phi_1}=\nu|_{\Phi_1}=\lambda^1$. In view of $I_s\cup\{e_i\}\subset \Phi_2$, we also get $\mu^2=\zeta|_{\Phi_2}=w's_{e_i}(\nu|_{\Phi_2})=w's_{e_i}\lambda^2$. Note that
\[
|\nu_l|=|\langle \nu, w_\lambda e_j\rangle|=|\langle \lambda, e_j\rangle|=b=a\in \overline\caA\backslash\{0\}.
\]
Thus $l\leq h$ (by \ref{bksst3eq2}) and $\gamma=w_\lambda\beta=e_i+e_l\in \Phi_2$ ($w_\lambda e_i=e_i$ because $s\not\in S$). Notice that $w's_{e_l}s_{e_i+e_l}\nu=w's_{e_l}s_{e_i}s_{e_l}\nu=\zeta$. We get $\mu^2=w's_{e_l}s_{e_i+e_l}\lambda^2$ in view of $\{e_l, e_i+e_l\}\subset\Phi_2$. Since $\beta$ is a linked root from $\lambda$, $\gamma=e_i+e_l$ is a linked root from $\nu=w_\lambda\lambda$. Hence $e_i+e_l$ is a linked roots from $\lambda^2$ to $\mu^2$ by Proposition \ref{lrprop2}.

If $s, t\in S$, we claim that $a, b\not\in \overline\caA\backslash\{0\}$. In fact, if $a\in \overline\caA\backslash\{0\}$, then $n^\lambda_s(a)=n^\lambda_t(a)=2$. By Lemma \ref{bkswlem7}, we must have $n^\lambda_s(a)=1$, a contradiction. If $b\in \overline\caA\backslash\{0\}$, Lemma \ref{bkswlem7} implies $b=0$, also a contradiction. Now the claim is proved. One gets $\nu_k, \nu_l\not\in \overline\caA^*$ in view of $|\nu_k|=a$ and $|\nu_l|=b$ that. Therefore $\gamma=e_k\pm e_l\in\Phi_1$. Note that $(s_{\gamma}\nu)_u=\nu_u$ unless $u=k, l$. It follows from
\[
\zeta=w_\mu w s_\beta\lambda=w_\mu w w_\lambda^{-1}s_{\gamma}w_\lambda\lambda=(w_\mu w w_\lambda^{-1})s_{\gamma}\nu
\]
that $w_1=w_\mu w w_\lambda^{-1}\in W_1\cap \prod_{s\in S}W_{I_s}=W_{I^1}$. Thus $\mu^1=w_1 s_\gamma\lambda^1$ and $\mu^2=\lambda^2$. In view of Proposition \ref{lrprop2}, $\gamma$ is a linked root from $\mu^1$.

If $s, t\in T$, then $w\in W_{I^2}$. Moreover, $(I_s\cup I_t)\perp\Phi_1$ and $\beta\perp\Phi_1$. Thus $w_\lambda\mu=ws_\beta w_\lambda\lambda=ws_\beta\nu$ will satisfy (\ref{bksst3eq1}) and (\ref{bksst3eq2}). We get $w_\mu=w_\lambda$. Hence $\mu^1=(ws_\beta\nu)|_{\Phi_1}=\nu|_{\Phi_1}=\lambda^1$ and $\mu^2=ws_\beta\lambda^2$. Also, $\beta$ is a linked root from $\lambda^2$.

Next assume that $\beta=e_i$ for $q_{s-1}<i\leq q_s$ and $s< m$. Then $a=\lambda_i=\langle\lambda, \beta\rangle>0$. With $\mu_u=\lambda_u$ when $u\neq i$, one obtains $w\in W_{I_s}$. Since $s_{e_i}\lambda$ is $\Phi_I$-regular, we must have $n^\lambda_s(a)=1$, which is not maximal for $s<m$. If $s\in S$, then $a\not\in \overline\caA$ (since $n^\lambda_s(a)=1<2$). Similar argument shows that $\mu^1=(w_\mu w w_\lambda^{-1})s_\gamma\lambda^1$ and $\mu^2=\lambda^2$, where $\gamma=w_\lambda\beta=e_k$ is a linked root from $\lambda^1$ by Proposition \ref{lrprop2}. In a similar spirit, if $s\in T$, we get $\mu^1=\lambda^1$, $\mu^2=ws_{e_i}\lambda^2$ and $e_i$ is a linked root from $\lambda^2$.
\end{proof}

The following lemma can be used to get the number of blocks.

\begin{lemma}\label{bksslem1}
Let $\Phi=B_n$ or $C_n$. Let $(S, \overline S)\in \caD$ be a strongly separable pair. Set
\[
\begin{aligned}
\caD^1=&\{(S', \overline{S'})\in \caD\mid (S', \overline{S'})\leq(S, \overline S),\ \overline{S'}\neq \overline S\backslash\{\overline m\}\};\\
\caD^2=&\{(S', \overline{S'})\in \caD\mid (S', \overline{S'})\geq(S, \overline S),\ S'\neq S\backslash\{m\}\}.
\end{aligned}
\]
Fix $i\in\{1, 2\}$. If $I^i, J^i\neq\Delta_i$, there is bijection $\psi_i$ between $\caD(\lambda^i, \Phi_I\cap\Phi_i, \Phi_i)$ and $\caD^i$ satisfying the following conditions:
\begin{itemize}
  \item [(1)] $\psi_i$ sends equivalent pairs to equivalent pairs, so does $\psi_i^{-1}$.
  \item [(2)] If $(S'_i, \overline{S'_i})\in\caD(\lambda^i, \Phi_I\cap\Phi_i, \Phi_i)$ is strongly separable, then $\psi_i(S'_i, \overline{S'_i})\in\caD^i$ is strongly separable.
  \item [(3)] $(S'_i, \overline{S'_i})\in \caD(\lambda^i, \Phi_I\cap\Phi_i, \Phi_i)$ is trivial if and only if $\psi_i(S'_i, \overline{S'_i})\in\caD^i$ is trivial or equivalent to $(S, \overline S)\not\in \caD^i$.
\end{itemize}
Otherwise $(S', \overline{S'})\sim (S, \overline S)\in \caD^{3-i}$ for any nontrivial $(S', \overline{S'})\in \caD^i$.
\end{lemma}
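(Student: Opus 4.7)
The plan is to construct $\psi_i$ explicitly via the natural identification of columns and rows of $T(\lambda^i)$ with those of $T(\lambda)$, and then verify each of the four properties in turn. Reading off (\ref{bksst3eq3})--(\ref{bksst3eq4}), the columns of $T(\lambda^1)$ are labelled by $S$ (each $s\in S$ giving the residual of column $s$ after extracting the $\overline\caA^*$-entries) and the rows by $\{1,\dots,\overline m\}\setminus \overline S$, with $\overline m$ appearing precisely when $m\in S$ (i.e., when $\Phi_1$ has a type-$B/C$ component). Dually, the columns of $T(\lambda^2)$ are labelled by $T$ together with one extra label for the truncated $m$-column when $m\in S$, and the rows by $\overline S$. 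Under these identifications I define
\[
\psi_1(S'_1,\overline{S'_1})=(S'_1,\overline{S'_1}\cup \overline S),\qquad \psi_2(S'_2,\overline{S'_2})=(S'_2\cup S,\overline{S'_2}),
\]
with the natural boundary convention for the $m$-column of $\Phi_2$.

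The first task is to check that $\psi_i$ lands in $\caD^i$. For $\psi_1$ the image clearly satisfies $\leq(S,\overline S)$, and the defining conditions of Definition~\ref{ssdef1} for the image split according to whether the row index lies in $\overline S$ or in $\overline{S'_1}$: in the former case the required maximality/zeroness comes from separability of $(S,\overline S)$ in $\Phi$; in the latter case it follows from separability of $(S'_1,\overline{S'_1})$ in $\Phi_1$, using the identity $n_s^{\lambda^1}(a_t)=n_s^\lambda(a_t)$ under the identification together with the observation that the ``last row'' $\overline m^1$ (when present) of $\Phi_1$ corresponds to $\overline m$ of $\Phi$, and similarly for the ``last column'' $m^1$. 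The exclusion $\overline{S'_1}\cup \overline S\neq \overline S\setminus \{\overline m\}$ is automatic from the nonemptiness of $\overline{S'_1}$. The inverse $\psi_1^{-1}(S',\overline{S'})=(S',\overline{S'}\setminus \overline S)$ produces nonempty components precisely because of the exclusion clause defining $\caD^1$; $\psi_2$ is handled symmetrically.

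Properties (1)--(3) then follow formally from the construction: (1) the generating relations of $\sim$ are single inclusions of components, preserved by $\psi_i$ in both directions because $\psi_i$ only pads by the fixed sets $S$ and $\overline S$; (2) if $\psi_1(S'_1,\overline{S'_1})$ were equivalent to one of the four weak separable pairs of $\Phi$, then Lemma~\ref{splem6} applied to a shortest fundamental chain would force $(S'_1,\overline{S'_1})$ to be equivalent to a corresponding weak pair of $\Phi_1$, contradicting strong separability in $\Phi_1$; (3) the trivial pairs of $\Phi_i$ map under $\psi_i$ precisely to the trivial pairs of $\Phi$ contained in $\caD^i$, except for one boundary instance in which the trivial pair of $\Phi_i$ spans its entire long side and its image under $\psi_i$ collapses to a pair equivalent to $(S,\overline S)\notin \caD^i$.

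In the degenerate case $I^i=\Delta_i$ or $J^i=\Delta_i$, the subsystem $(\Phi_i,\Phi_{I^i},\Phi_{J^i})$ has $m^i=1$ or $\overline m^i=1$ and hence admits no nontrivial separable pair; any nontrivial $(S',\overline{S'})\in \caD^i$ must therefore satisfy $|S'|=|S|$ or $|\overline{S'}|=|\overline S|$, and Lemma~\ref{splem6} gives $(S',\overline{S'})\sim(S,\overline S)$. The membership $(S,\overline S)\in \caD^{3-i}$ follows at once from the $B,C$-separability constraint $m\in S\Leftrightarrow \overline m\notin \overline S$, which places $(S,\overline S)$ in exactly one of $\caD^1,\caD^2$. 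The main obstacle I anticipate is the case analysis around whether $\Phi_i$ contains a type-$B/C$ block, which governs the presence of $\overline m^i,m^i$ and the precise identification with $\overline m,m$ of $\Phi$; these cases are technical but routine.
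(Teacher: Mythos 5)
Your overall plan is the same as the paper's: define $\psi_1$ (resp.\ $\psi_2$) by padding the row (resp.\ column) set by a fixed set coming from $(S,\overline S)$, and identify the degenerate case. However, your description of the row set of $T(\lambda^1)$ contains a concrete error that propagates into the inverse formula.

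You assert that the rows of $T(\lambda^1)$ are indexed by $\{1,\dots,\overline m\}\setminus\overline S$, ``with $\overline m$ appearing precisely when $m\in S$.'' This is incorrect when $m\notin S$ (equivalently $\overline m\in\overline S$). In that situation $0\in\overline\caA$, but $\overline\caA^*=\{\pm a : 0<a\in\overline\caA\}$ never contains $0$, so the zero entries in the columns $s\in S$ are \emph{not} extracted and remain in $\lambda^1$; the zero row is present in $T(\lambda^1)$. Moreover, since $m\notin S$ forces $S'_1\subset S$ (so $m\notin S'_1$), Definition~\ref{ssdef1}(4) applied to $(S'_1,\overline{S'_1})$ in $\Phi_1$ forces $\overline m\in\overline{S'_1}$. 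Because of this, the stated inverse $\psi_1^{-1}(S',\overline{S'})=(S',\overline{S'}\setminus\overline S)$ is wrong: for $(S',\overline{S'})\in\caD^1$ with $\overline m\in\overline S$ one has $\overline m\in\overline{S'}$, and stripping all of $\overline S$ removes $\overline m$, producing a pair violating~(4) in $\Phi_1$ — indeed $\psi_1^{-1}\circ\psi_1(S'_1,\overline{S'_1})=(S'_1,\overline{S'_1}\setminus\{\overline m\})\neq(S'_1,\overline{S'_1})$. The correct inverse must strip only $\overline S\setminus\{\overline m\}$, i.e.\ $\psi_1^{-1}(S',\overline{S'})=(S',\overline{S'}\setminus(\overline S\setminus\{\overline m\}))$. (Your forward formula $\overline{S'_1}\cup\overline S$ happens to agree with the correct one $\overline{S'_1}\cup(\overline S\setminus\{\overline m\})$, but only because $\overline m\in\overline{S'_1}$ in the relevant case, which your framework does not see.) You should also note that $S'_1\subset S$ must itself be \emph{proved}: a priori $S'_1$ could contain the last column index of $\Phi_1$, and ruling this out when $m\notin S$ requires an argument (the paper does it by producing a contradiction with maximality of $n_m^\lambda(a_t)$). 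Fixing the row labelling will also repair the nonemptiness justification for the inverse via the exclusion clause, which as written does not account for the case $\overline m\in\overline S$, $\overline{S'}=\overline S$.
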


\begin{proof}
Denote
\[
\begin{aligned}
H_1=&\{q_{s-1}+2p<i\leq q_s\mid m>s\in S\}\cup\{h<i\leq n\};\\
H_2=&\{q_{s-1}<i\leq q_s\mid m>s\in T\}\cup\{q_{m-1}<i\leq h\}.
\end{aligned}
\]
The entries of $\lambda^i$ are indexed by $H_i$ for $i=1, 2$. Set $\overline T=\{1, \ldots, \overline m\}\backslash\overline S$. Denote
\begin{equation}\label{bkssl1eq1}
\begin{aligned}
S_i=\{1\leq s<m\mid n_s^{\lambda^i}(a_t)>0\ \mbox{for some}\ 1\leq t<\tilde m\}\cup\{m\};\\
\overline S_i=\{1\leq t<\overline m\mid n_s^{\lambda^i}(a_t)>0\ \mbox{for some}\ 1\leq s<\tilde m\}\cup\{\overline m\}.
\end{aligned}
\end{equation}
If $(S'_i, \overline{S'_i})\in \caD(\lambda^i, \Phi_I\cap\Phi_i, \Phi_i)$, it is evident that $S'_i\subset S_i$ and $\overline{S'_i}\subset\overline S_i$. Since $(S, \overline S)$ is separable, we obtain $S_1=\{s\in S\mid n_s>2p\}\cup\{m\}$ for $\overline m\not\in\overline S$ and $S_1=S\cup\{m\}$ for $\overline m\in\overline S$, while $\overline S_1=\overline T\cup\{\overline m\}$. Similarly, $\overline S_2=\overline S\cup\{\overline m\}$ for $m\in S$ and $\overline S_2=\{t\in\overline S\mid \overline n_t>2|S|\}\cup\{\overline m\}$ for $m\not\in S$, while $S_2=T\cup\{m\}$.

It suffices to consider the case $i=1$, while the proof for $i=2$ is similar. First assume that $I^1, J^1\neq\Delta_1$. If $(S'_1, \overline{S'_1})\in \caD(\lambda^1, \Phi_I\cap\Phi_1, \Phi_1)$, we get $S'_1\subset S\cup\{m\}$ and $\overline{S'_1}\subset\overline T\cup\{\overline m\}$. If $S'_1\not\subset S$, one must have $S'_1=S\cup\{m\}$ and $m\not\in S$. Thus $\overline m\in\overline S$ and $\overline m\not\in\overline{S'_1}$. We can choose $t\in\overline{S'_1}\subset\overline T$ so that $n_m^{\lambda}(a_t)=n_m^{\lambda^1}(a_t)=1$ is maximal. On the other hand, $n_m^\lambda(a_t)=0$ since $m\not\in S$ and $t\not\in\overline S$. This forces $S'_1\subset S$. It is routine to check that $\psi_1: (S'_1, \overline{S'_1})\ra(S'_1, \overline{S'_1}\cup(\overline S\backslash\{\overline m\}))$ gives an bijective map from $D(\lambda^1, \Phi_I\cap\Phi_1, \Phi_1)$ to $\caD^1$ (with inverse map $(S', \overline{S'})\ra (S', \overline{S'}\backslash(\overline S\backslash\{\overline m\}))$). Then (1) and (2) are obvious. If $(S'_1, \overline{S'_1})$ and $(\{s_0\}, \overline S_1)$ are equivalent separable pairs contained in $\caD(\lambda^i, \Phi_I\cap\Phi_i, \Phi_i)$ for some $m>s_0\in S_1$, then
\[
\psi_1(S'_1, \overline{S'_1})\sim\psi_1(\{s_0\}, \overline S_1)\sim(\{s_0\}, (\overline T\cup\{\overline m\})\cup(\overline S\backslash\{\overline m\})=(\{s_0\}, \{1, \ldots, \overline m\})
\]
is trivial. If $(S'_1, \overline{S'_1})$ and $(S_1, \{t_0\})$ are equivalent for some $\overline m>t_0\in \overline S_1$, then $m\in S'_1\subset S$ yields $\overline m\not\in\overline S$. Moreover,
\[
\psi_1(S'_1, \overline{S'_1})\sim (S_1, \{t_0\}\cup\overline S)\sim (S_1, \overline S)\sim (S, \overline S)\not\in \caD^1.
\]
Conversely, if $(S', \overline{S'})\in \caD^1$ is trivial, it is easy to check that $\psi_1^{-1}(S', \overline{S'})$ is trivial. If $\psi_1(S'_1, \overline{S'_1})\sim (S, \overline S)\not\in \caD^1$, then $m\in S'_1\subset S_1\subset S$. Lemma \ref{splem7} shows that $(S, \overline S)\sim (S_1, \overline S\cup\{t_0\})$ for some $\overline m>t_0\in \overline{S'_1}$. Hence $(S'_1, \overline{S'_1})\sim (S_1, \{t_0\})$ by (1).

Next assume that $J^1=\Delta_1$, that is, $\overline S\supset\{1, \ldots, \overline m-1\}$. If $(S', \overline{S'})\in \caD^1$ is nontrivial, we have $\overline{S'}=\overline S=\{1, \ldots, \overline m-1\}$. Thus $(S', \overline{S'})\sim (S, \overline S)\in \caD^2$ in view of $m\in S$. Then assume that $I^1=\Delta_1$ and $J^1\neq\Delta_1$. In this case, $n_s^\lambda(a_t)=0$ for $m>s\in S$ and $t\in\overline T\cup\{\overline m\}$. Moreover, $|\overline T\backslash\{\overline m\}|>0$. This forces $(\{m\}, \overline S)\in \caD$. If $(S', \overline{S'})\in \caD^1$ is nontrivial, Lemma \ref{splem3} yields $\overline{S'}=\overline S$ or $S'=\{m\}$. In either case, one has $(S', \overline{S'})\sim (S, \overline S)\in \caD^2$.

\end{proof}

\begin{theorem}\label{bkssthm22}
For $\Phi=B_n, C_n$, the system $(\Phi, \Phi_I, \Phi_J)$ has $2^k$ blocks, where $k$ is the number of equivalence classes of nontrivial separable pairs.
\end{theorem}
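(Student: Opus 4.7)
The plan is to argue by induction on the rank $n$ of $\Phi$, with the ``non strongly separable'' systems serving as base cases. When $(\Phi,\Phi_I,\Phi_J)$ admits no strongly separable pair, Theorem \ref{bkswthm2} determines the block count directly: either $\caD=\emptyset$, giving one block and $k=0$, or $\caD$ consists only of weakly separable pairs, giving two blocks. In the latter sub-case, I will combine Definition \ref{ssdef2} with Lemmas \ref{splem3} and \ref{splem6} to show that the (at most four) pairs in $\caD$ form a single equivalence class, and then invoke Lemmas \ref{splem8}--\ref{splem9}: triviality of the class would, via the characterization there, furnish a $\Phi_I$-regular weight $\lambda$ with $s_{e_i}\lambda$ also $\Phi_I$-regular and $\lambda\lera s_{e_i}\lambda$, contradicting the weakly separable hypothesis. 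Hence $k=1$ and $2^1=2$, matching the count.

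For the inductive step, I pick a strongly separable pair $(S,\overline S)\in\caD$ and apply Theorem \ref{bkssthm2} to obtain the decomposition ${}^IW^J\simeq {}^{I^1}W_1^{J^1}\times {}^{I^2}W_2^{J^2}$ compatible with $\lera$. The number of blocks of $(\Phi,\Phi_I,\Phi_J)$ then factors as the product of the block counts of the proper subsystems $(\Phi_i,\Phi_{I^i},\Phi_{J^i})$, which by induction equals $2^{k_1+k_2}$, where $k_i$ counts nontrivial equivalence classes in $\caD(\lambda^i,\Phi_{I^i},\Phi_i)$. What remains is to establish $k=k_1+k_2$.

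For this I plan the following structural analysis of $\caD$. First, Lemma \ref{splem3} makes $(\caD,\leq)$ totally ordered; second, condition (4) of Definition \ref{ssdef1} together with a short check on the defining conditions of $\caD^1,\caD^2$ in Lemma \ref{bksslem1} shows that $(S,\overline S)$ lies in exactly one of the two, call it $\caD^{i_0}$, and $\caD=\caD^1\sqcup \caD^2$; third, using Lemma \ref{splem7} together with Lemmas \ref{splem4}--\ref{splem5}, I will argue that any fundamental chain connecting a pair in $\caD^1$ to one in $\caD^2$ can be refined to pass through $(S,\overline S)$, so every equivalence class $C\neq C^*$ of $\caD$ (where $C^*$ is the class of $(S,\overline S)$) lies entirely in one $\caD^i$; moreover, $C^*\cap \caD^{i_0}$ coincides with the substructure $\sim$-class of $(S,\overline S)$ inside $\caD^{i_0}$, by the same strict monotonicity of Lemma \ref{splem7}. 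Lemma \ref{bksslem1} then identifies substructure $\sim$-classes of $\caD^i$ with equivalence classes of the subsystem; its part (3) shows that non-$C^*$ classes are trivial in the subsystem iff trivial in $\caD$, that $\psi_{i_0}^{-1}(C^*\cap \caD^{i_0})$ is trivial in the subsystem iff $C^*$ is trivial in $\caD$, and that the pairs of $C^*\cap \caD^{3-i_0}$ are automatically trivial in the other subsystem through the ``$\sim (S,\overline S)\not\in \caD^{3-i_0}$'' clause. Summing contributions yields $k_1+k_2=(\text{non-}C^*\text{ nontrivial classes})+[C^*\text{ nontrivial}]=k$.

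The hard part will be the delicate bookkeeping in the last step: verifying that non-$C^*$ classes really cannot straddle $\caD^1$ and $\caD^2$, that $C^*\cap \caD^{i_0}$ does not split into several substructure classes, and that the asymmetric ``$\sim (S,\overline S)$'' clause of Lemma \ref{bksslem1}(3) absorbs exactly the pairs that would otherwise be double counted across $k_1$ and $k_2$. Once those structural facts are verified by a careful use of Lemma \ref{splem7} (the strict monotonicity of shortest fundamental chains) and the classification of trivial pairs in Lemma \ref{bksslem1}, the identity $k=k_1+k_2$ will follow cleanly and the induction closes, giving $2^k$ blocks as claimed.
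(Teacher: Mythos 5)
Your overall induction scaffold (the base cases via Theorem \ref{bkswthm2}, then choosing a strongly separable pair and invoking Theorem \ref{bkssthm2} to split ${}^IW^J$ into a product) matches the paper's strategy. But the core combinatorial step, where you claim $\caD = \caD^1\sqcup \caD^2$ with $(S,\overline S)$ in exactly one of the two, does not hold. Take $(S,\overline S)$ with $m\notin S$, $\overline m\in\overline S$ (one of the two cases allowed by Definition \ref{ssdef1}(4)). If $\caD$ contains another pair $(S,\overline{S'})$ with $\overline{S'}\subsetneq\overline S$, then $(S,\overline{S'})\not\leq(S,\overline S)$, so it is not in $\caD^1$; and it is excluded from $\caD^2$ by the clause $S'\neq S\backslash\{m\}=S$. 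The analogous failure occurs with $m\in S$, $\overline m\notin\overline S$ and a pair of the form $(S',\overline S)$ with $S'\subsetneq S$. These are exactly the pairs that the exclusion clauses of Lemma \ref{bksslem1} are designed to throw away (they correspond to double-counted pairs across the $\psi_1,\psi_2$ bijections), so $\caD^1$ and $\caD^2$ together need not cover $\caD$, and nothing in your fundamental-chain argument repairs this because you are claiming a literal set partition.

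A second concrete issue: you pick an arbitrary strongly separable $(S,\overline S)$, whereas the paper chooses a \emph{minimal} one. That choice is doing real work: it forces $(\Phi_1,\Phi_{I^1},\Phi_{J^1})$ to be not strongly separable (so Theorem \ref{bkswthm2} directly gives its block count, $1$ or $2$), and it neutralizes one side of the bookkeeping problem you yourself flag as ``the hard part.'' Without minimality, both subsystems can be strongly separable, and establishing $k = k_1 + k_2$ requires knowing precisely which of the three clauses in Lemma \ref{bksslem1}(3) applies to each equivalence class, including the asymmetric ``$\sim(S,\overline S)\notin\caD^i$'' exception. You describe what ought to happen, but the verification is exactly the nontrivial content of the paper's proof, is not sketched, and at least one of your structural claims used to organize it is false. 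So the proposal, as it stands, has a genuine gap.
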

\begin{proof}
We adopt notations in Lemma \ref{bksslem1}. If $(\Phi, \Phi_I, \Phi_J)$ is not separable, then $\caD$ is empty and $(\Phi, \Phi_I, \Phi_J)$ contains $2^0=1$ blocks by Theorem \ref{bkswthm2}. If $(\Phi, \Phi_I, \Phi_J)$ is weakly separable, then $(n_m, \overline n_{\overline m})=(m-1, 0)$ or $(0, \overline m-1)$. So $\caD$ contains only $(\{m\}, \{1, \ldots, \overline m-1\})$ or $(\{1, \ldots, m-1\}, \{\overline m\})$. We obtain $k=1$ in either case. In view of Theorem \ref{bkswthm2}, the system contains $2^1=2$ blocks.

Now suppose $(\Phi, \Phi_I, \Phi_J)$ is strongly separable. We use induction on $n>1$ (the case $n=1$ is not separable). Choose $(S, \overline S)$ which is a minimal element of the set of strongly separable pairs. Therefore $(\Phi_1, \Phi_{I^1}, \Phi_{J^1})$ is not strongly separable by Lemma \ref{bksslem1} ($\psi_1^{-1}(S, \overline S)=(S, \{\overline m\})$ is weakly separable when $\overline m\in\overline S$). If $(S', \overline{S'})\in \caD\backslash \caD^2$ is strongly separable, the minimality of $(S, \overline S)$ and Lemma \ref{splem3} implies $S'=S$ or $\overline{S'}=\overline{S}$ (otherwise $(S, \overline S)<(S', \overline{S'})\in \caD^2$), that is, $(S', \overline{S'})\sim (S, \overline S)$. If $(S', \overline{S'})\in \caD\backslash \caD^2$ is weakly separable, Lemma \ref{splem3} yields $(S', \overline{S'})=(\{m\}, \{1, \ldots, \overline m-1\})$. Moreover, one obtains either $(S', \overline{S'})\sim (S, \overline S)$ or $(\Phi_1, \Phi_{I^1}, \Phi_{J^1})$ is weakly separable with respect to $\psi_1^{-1}(S', \overline{S'})$.

First assume that $(\Phi_1, \Phi_{I^1}, \Phi_{J^1})$ contains only one block. Then this system is not separable or $\Phi_1$ is empty by Theorem \ref{bkssthm1}. If $m\in S$, then $(S, \overline S)\in \caD^2\backslash\caD^1$. The previous argument shows that $D\backslash \caD^2$ is contained in the equivalence class of $(S, \overline S)$, that is, $\caD^2$ have $k$ nontrivial equivalence classes as $\caD$, so does $D(\lambda^2, \Phi_I\cap\Phi_2, \Phi_2)$ when $I^2, J^2\neq\Delta_2$. Combined with Theorem \ref{bkswthm2} and Theorem \ref{bkssthm2}, the induction hypothesis implies that $(\Phi, \Phi_I, \Phi_J)$ has $2^k$ blocks. When $I^2=\Delta_2$ or $J^2=\Delta_2$, $(S, \overline S)$ is trivial and $\caD^2$ contains only trivial pairs. Otherwise Lemma \ref{bksslem1} shows that $(S, \overline S)\in \caD^1$, a contradiction. It follows that $\caD=(\caD\backslash \caD^2)\cup \caD^2$ contains only trivial pairs. Theorem \ref{bkssthm1} and Theorem \ref{bkssthm2} shows that $(\Phi, \Phi_I, \Phi_J)$ has $2^0=1$ block. If $m\not\in S$, then $I^1=\Delta_1$ or $J^1=\Delta_1$ (otherwise $(\Phi_1, \Phi_{I^1}, \Phi_{J^1})$ is weakly separable with respect to $\psi_1^{-1}(S, \overline{S})$). This forces $\Phi_1=\emptyset$ and $(S, \overline S)=(\{s_0\}, \{1, \ldots, \overline m\})$ ($1\leq s_0<m$) to be trivial. Lemma \ref{bksslem1} shows that $D(\lambda^2, \Phi_I\cap\Phi_2, \Phi_2)$ has $k$ nontrivial equivalence classes as $\caD$ and $(\Phi, \Phi_I, \Phi_J)$ has $1\times 2^k=2^k$ blocks.

Then assume that $(\Phi_1, \Phi_{I^1}, \Phi_{J^1})$ contains two blocks. In view of Theorem \ref{bkssthm1}, this system is weakly separable with respect to $(\{m\}, \overline S_1)$ or $(S_1, \{\overline m\})$ (see \ref{bkssl1eq1}). We get either $(n_m, \overline n_{\overline m})=(\overline m-1, 0)$ or $\overline m\in \overline S$. It follows from $I^1, J^1\neq\Delta_1$ that $|S\backslash\{m\}|>0$ and $|\overline S\backslash\{\overline m\}|<\overline m-1$. First consider the case $(n_m, \overline n_{\overline m})=(\overline m-1, 0)$, then $(\{m\}, \{1, \ldots, \overline m-1\})\in \caD\backslash\caD^2$. We already show that $(S', \overline{S'})\sim(S, \overline S)$ for any other $(S', \overline{S'})\in \caD\backslash \caD^2$. Lemma \ref{splem3} implies $m\in S$ and $(S, \overline S)\in \caD^2$. We claim that $(\{m\}, \{1, \ldots, \overline m-1\})\not\sim (S', \overline{S'})$ for any strongly separable pair $(S', \overline{S'})\in \caD$. Otherwise Lemma \ref{splem7} and the minimality of $(S, \overline S)$ force $S=\{m\}$ or $\overline S=\{1, \ldots, \overline m-1\}$, contradicts $|S\backslash\{m\}|>0$ or $|\overline S\backslash\{\overline m\}|<\overline m-1$. With $(\{m\}, \{1, \ldots, \overline m-1\})\not\in \caD^2$ and $(S, \overline S)\in \caD^2$, $\caD^2$ contains $k-1$ equivalence class of nontrivial separable pairs, so does $D(\lambda^2, \Phi_I\cap\Phi_2, \Phi_2)$ when $I^2, J^2\neq\Delta_2$ in view of Lemma \ref{bksslem1}. The induction hypothesis implies that $D(\lambda^2, \Phi_I\cap\Phi_2, \Phi_2)$ has $2^{k-1}$ blocks. Theorem \ref{bkssthm2} shows that $(\Phi, \Phi_I, \Phi_J)$ contains $2\times 2^{k-1}=2^k$ blocks. When $I^2=\Delta_2$ or $J^2=\Delta_2$, $(S, \overline S)$ is trivial. Hence $\caD$ contains only one nontrivial equivalence class and $(\Phi, \Phi_I, \Phi_J)$ has $2^1$ blocks in view of Theorem \ref{bkssthm1} and Theorem \ref{bkssthm2}. Next consider the case $\overline m\in \overline S$. Lemma \ref{splem8} shows that $(S, \overline S)\not\in \caD^2$ is not trivial. In view of Lemma \ref{bksslem1}, $\caD(\lambda^2, \Phi_I\cap\Phi_2, \Phi_2)$ contains $k-1$ nontrivial equivalence classes. Similar argument shows that $(\Phi, \Phi_I, \Phi_J)$ has $2^k$ blocks.

\end{proof}

\subsection{strongly separable systems for $\Phi=D_n$} In view of Remark \ref{lrrmk2}, it suffices to consider the case when $I$ is standard. We say the separable pair $(S, \overline S)\in \caD$ is \emph{odd} if $m\in S$. Otherwise we say $(S, \overline S)$ is \emph{even}. The following result is an immediate consequence of Lemma \ref{splem3}.

\begin{lemma}\label{bksslem2}
Let $\Phi=D_n$. If $(S, \overline S)\in \caD$ is an odd $($resp. even$)$ separable pair, then all the other separable pairs of $\caD$ are odd $($resp. even$)$. In particular, if $(S, \overline S)$ is odd, it is not trivial. If $(S, \overline S)$ is even, it is strongly separable.
\end{lemma}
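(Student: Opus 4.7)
The plan is to derive everything directly from Lemma \ref{splem3} together with condition (5) of Definition \ref{ssdef1}, which for $\Phi = D_n$ reads: $m\in S$ if and only if $\overline m\in\overline S$. This equivalence is the engine that propagates the ``odd'' property from one separable pair to another.

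First I would prove the dichotomy. Fix an odd pair $(S_1,\overline S_1)\in\caD$ and any other $(S_2,\overline S_2)\in\caD$. If $S_1=S_2$, then trivially $m\in S_2$. If $\overline S_1=\overline S_2$, then $m\in S_1$ forces $\overline m\in\overline S_1=\overline S_2$ by (5), so $m\in S_2$ again by (5). In the remaining case $S_1\neq S_2$ and $\overline S_1\neq\overline S_2$, Lemma \ref{splem3} forces one containment: either $S_1\subset S_2$ (so $m\in S_2$ directly), or $S_2\subsetneq S_1$ and $\overline S_2\supsetneq\overline S_1$, in which case $\overline m\in\overline S_1\subset\overline S_2$ (using (5) on $(S_1,\overline S_1)$) yields $m\in S_2$ (using (5) on $(S_2,\overline S_2)$). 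The even case is the exact dual.

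For the second assertion, I would observe that both templates for a trivial pair in $\caD$ have $m\notin S'$: in $(\{s_0\},\{1,\ldots,\overline m-1\})$ we have $s_0<m$, and in $(\{1,\ldots,m-1\},\{t_0\})$ we have $S'=\{1,\ldots,m-1\}$. So any trivial pair is even. If an odd $(S,\overline S)$ were trivial, then by definition it would be equivalent to a trivial $(S',\overline{S'})\in\caD$; but the first part of the lemma, applied to the odd pair $(S,\overline S)$, forces $(S',\overline{S'})$ to be odd as well, a contradiction.

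For the third assertion, I would enumerate the four candidates for weak separability from Definition \ref{ssdef2} and check condition (5): the pairs $(\{1,\ldots,m-1\},\{\overline m\})$ and $(\{m\},\{1,\ldots,\overline m-1\})$ violate the ``$m\in S\Leftrightarrow\overline m\in\overline S$'' requirement for $D_n$, hence cannot lie in $\caD$; the surviving two, $(\{1,\ldots,m\},\{\overline m\})$ and $(\{m\},\{1,\ldots,\overline m\})$, both have $m\in S$ and are odd. Consequently, any even pair in $\caD$ fails to be one of the weakly separable forms and is therefore strongly separable.

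The entire argument is bookkeeping once one isolates the key observation that for $D_n$ the ``$m$-parity'' of $S$ and $\overline S$ is rigidly coupled by (5); no step should present a real obstacle, and the only place where care is needed is ensuring that the equivalence relation $\sim$ stays within $\caD$ so that the first part of the lemma can be invoked when ruling out trivial odd pairs.
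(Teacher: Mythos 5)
Your proof is correct and takes essentially the same route the paper has in mind: the paper declares the lemma an ``immediate consequence of Lemma~\ref{splem3},'' and you have carried out exactly the implicit bookkeeping — using condition (5) of Definition~\ref{ssdef1} to couple the $m$-parity of $S$ with the $\overline m$-parity of $\overline S$, then invoking Lemma~\ref{splem3} to push oddness/evenness across any pair of comparable elements of $\caD$, and finally checking that the trivial templates and the surviving weakly-separable forms are respectively even and odd.
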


\begin{definition}\label{bkssdef0}
We say the system $(\Phi, \Phi_I, \Phi_J)$ is \emph{odd} (resp. \emph{even}) if $\caD(\Phi, \Phi_I, \Phi_J)$ contains an odd (resp. even) separable pair.
\end{definition}

Suppose $(\Phi, \Phi_I, \Phi_J)$ is strongly separable relative to $(S, \overline S)$. Let $\overline\lambda$ be a dominant weight with $\Phi_{\overline\lambda}=\Phi_J$. There exists strongly separable pair $(S, \overline S)$ so that $n_s^\lambda(a_t)$ is maximal for $s\in S$, $t\in\overline S$ and $n_s^\lambda(a_t)=0$ for $s\not\in S$, $t\not\in\overline S$, where $\lambda\in W\overline\lambda$ is a $\Phi_I$-regular weight. Denote $\overline\caA=\{a_t\mid t\in\overline S\}$ and $\overline\caA^*=\{\pm a \mid 0<a\in \overline\caA\}$. Set $p=|\overline\caA\backslash\{0\}|$, $g=|S\backslash\{m\}|$ and $h=q_{m-1}+p$. For $\lambda\in{}^IW^J\overline\lambda$, there exists a unique permutation $w_s\in W_{I_s}$ for each $s\in S$ so that $\nu=(\prod_{s\in S}w_s)\lambda$ satisfies
\begin{equation}\label{bksst4eq1}
\begin{aligned}
&\{\nu_{q_{s-1}+1}, \ldots, \nu_{q_{s-1}+2p}\}=\overline\caA^*,\\
\nu_{q_{s-1}+1}>&\ldots>\nu_{q_{s-1}+2p},\quad \nu_{q_{s-1}+2p+1}>\ldots>\nu_{q_s}
\end{aligned}
\end{equation}
when $m>s\in S$ and
\begin{equation}\label{bksst4eq2}
\begin{aligned}
&\{\nu_{q_{m-1}+1}, \ldots, \nu_{h-1}, \nu_{h}\}=\overline\caA\backslash\{0\},\\
\nu_{q_{m-1}+1}>&\ldots>\nu_{h-1}>\nu_{h}>0,\quad\nu_{h+1}>\ldots>\nu_{n-1}>\nu_n=0
\end{aligned}
\end{equation}
when $m\in S$ (then $\overline m\in \overline S$ and $0\in \overline\caA$). Denote $w_\lambda=\prod_{r\in S}w_r$. Then $w_\lambda$ is unique, so is $\nu$. Set $T=\{1, 2, \ldots, m\}\backslash S$.

\subsubsection{The case when $(S, \overline S)$ is odd} In this case $0\in \overline\caA$ and $h<n$. Set
\[
\Phi_1=(\sum_{\substack{q_{s-1}+2p<i\leq q_s\\
s\in S\backslash\{m\}}}\bbQ e_i+\sum_{h<j\leq n}\bbQ e_j)\cap\Phi.
\]
and
\[
\Phi_2=(\sum_{\substack{q_{s-1}<i\leq q_s\\
s\in T\backslash\{m\}}}\bbQ e_i+\sum_{q_{m-1}<j\leq h}\bbQ e_j+\bbQ e_n)\cap\Phi.
\]
Then $\Phi_1, \Phi_2$ are proper subsystem of $\Phi$ (could be empty, see Remark \ref{bkssrmk1}). With $h<n$, then $\eta|_{\Phi^1}$ and $\eta|_{\Phi^2}$ share an entry $\eta_n$ for any $\eta\in\frh^*$.

For $i=1, 2$, let $W_i$ be the Weyl group of $\Phi_i$ and $\Delta_i$ be the simple system corresponding to $\Phi_i^+=\Phi_i\cap\Phi^+$. Denote $I^i=\Phi_I\cap\Delta_i$. One has $I^1=I\cap\Phi_1$ and $I^2=(I\cap\Phi_2)\cup\{e_h\pm e_n\}$. Put $\lambda^i=\nu|_{\Phi_i}$. Evidently, $\lambda^i\in\Lambda^+(\Phi_{I^i}, \Phi_i)$. Let $\overline{\lambda^i}\in W_i\lambda^i$ be a dominant weight. Since $\lambda^i$ contains a $0$-entry $\nu_n$. Thus $s_{e_n}\overline{\lambda^i}=\overline{\lambda^i}$. Similar to the proof for $\Phi=B_n$, $\overline{\lambda^i}$ is uniquely determined by $\overline\lambda$ and $(S, \overline S)$. Set $J^i=\{\alpha\in \Delta_i\mid \langle\overline{\lambda^i}, \alpha\rangle=0\}$. It can be checked that $\lambda\ra(\lambda^1, \lambda^2)$ gives a bijection between ${}^IW^J\overline\lambda$ and $({}^{I^1}W_1^{J^1}\overline{\lambda^1}, {}^{I^2}W_2^{J^2}\overline{\lambda^2})$. We have proved the first part of the following theorem.

\begin{theorem}\label{bkssthm3}
Let $\Phi=D_n$ and $I, J\subset\Delta$. If $(\Phi, \Phi_I, \Phi_J)$ is strongly separable relative to $(S, \overline S)$ with $m\in S$ and $\overline m\in \overline S$, then
\begin{itemize}
\item [(1)] There exist proper subsystem $\Phi_i$ of $\Phi$ for $i=1, 2$ such that
\[
{}^IW^{J}\simeq{}^{I^1}W_1^{{J^1}}\times{}^{I^2}W_2^{{J^2}}
\]
where $I^i, J^i$ are determined by $\Phi_i$ and $W_i$ is the Weyl group of $\Phi_i$.

\item [(2)] Let $\overline\lambda$ be a dominant weight with $\Phi_{\overline\lambda}=\Phi_J$. There exist dominant weights $\overline{\lambda^i}$ of $\Phi_i$ such that each $\nu\in{}^IW^J\overline\lambda$ correspond $\nu^i\in {}^{I^i}W_i^{J^i}\overline{\lambda^i}$. Moreover, $\lambda\lera\mu$ if and only if $\lambda^i\lera\mu^i$ relative to $(\Phi_{I^i}, \Phi_i)$.
\end{itemize}
\end{theorem}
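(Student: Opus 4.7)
The proof strategy follows the template of Theorem \ref{bkssthm2} for the types $B$ and $C$, adapted to the specifics of $D_n$. Since part (1) is already established by the explicit bijection $\lambda\mapsto(\lambda^1,\lambda^2)=(\nu|_{\Phi_1},\nu|_{\Phi_2})$ constructed via the normal form (\ref{bksst4eq1})--(\ref{bksst4eq2}) and the uniqueness of $w_\lambda$ and $\overline{\lambda^i}$, my plan focuses on part (2). The whole argument runs through Proposition \ref{lrprop2} and Lemma \ref{bkswlem7}, translating linked roots between the ambient system $(\Phi,\Phi_I,\Phi_J)$ and the two subsystems $(\Phi_i,\Phi_{I^i},\Phi_{J^i})$.

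For the \emph{sufficiency} direction, assume $\lambda^i\lera\mu^i$ relative to $(\Phi_{I^i},\Phi_i)$ for $i=1,2$. By the transitivity of $\lera$, it suffices to treat the step where the pair $(\lambda^1,\lambda^2)$ is replaced by $(\lambda^1,\mu^2)$ (and symmetrically), with $\lambda^2$ and $\mu^2$ linked in $\Phi_2$ by some $\beta\in\Phi_2$. Let $\eta\in{}^IW^J\overline\lambda$ be the hybrid weight corresponding to $(\lambda^1,\mu^2)$. The entries of $w_\eta\eta$ and $w_\lambda\lambda$ coincide on all coordinates not in $\Phi_2$, so $w_\eta\eta=ws_\beta(w_\lambda\lambda)$ for some $w\in W_I$. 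Using Lemma \ref{lrlem0} and Proposition \ref{lrprop2}, and after verifying $\beta\notin\Phi_I$, the same $\beta$ is a linked root at the ambient level, giving $\lambda\lera\eta$; iterating yields $\lambda\lera\mu$.

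For the \emph{necessity} direction, it suffices to consider $\lambda,\mu\in{}^IW^J\overline\lambda$ linked by a single $\beta\in\Phi^+\setminus\Phi_I$, so $\mu=ws_\beta\lambda$ with $w\in W_I$ and $\langle\lambda,\beta\rangle>0$. Write $\beta=e_i\pm e_j$ with $q_{s-1}<i\le q_s$ and $q_{t-1}<j\le q_t$. I split cases:
\begin{itemize}
\item If $s\in T,\ t\in S$ (or vice versa), the separability forces $n_t^\lambda(a)$ or $n_s^\lambda(b)$ to be maximal, so Lemma \ref{bkswlem7} either produces a $\Phi_I$-singular $s_\beta\lambda$ (excluded) or drives us into the degenerate configuration $a=b>0$, $t=m$, $s_\beta\lambda=s_{e_i}s_{e_j}\lambda$, which is absorbed into a purely $\Phi_2$-move.
\item If $s,t\in S$, both $|\lambda_i|,|\lambda_j|$ lie outside $\overline\caA\setminus\{0\}$ (by Lemma \ref{bkswlem7}), so $\gamma=w_\lambda\beta\in\Phi_1$; then $w_\mu ww_\lambda^{-1}\in W_{I^1}$, giving $\mu^2=\lambda^2$ and $\gamma$ as a linked root from $\lambda^1$ to $\mu^1$.
\item If $s,t\in T$, then $w\in W_{I^2}$, $\beta\in\Phi_2$, and $\mu^1=\lambda^1$ with $\beta$ a linked root from $\lambda^2$ to $\mu^2$.
\end{itemize}

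The main obstacle, distinct from the $B_n/C_n$ proof, is the cross-segment bookkeeping at the last block: the coordinates $\{h+1,\dots,n\}$ sit in $\Phi_1$ while $\{q_{m-1}+1,\dots,h\}\cup\{e_n\}$ sits in $\Phi_2$, so a root $\beta=e_i\pm e_n$ with $q_{m-1}<i\le h$ would cross the two subsystems. I expect to rule this out by observing that the normal form forces $\nu_n=0$ (and hence $\lambda_n=0$ up to a $W_I$-permutation), so such $\beta$ cannot simultaneously satisfy $\langle\lambda,\beta\rangle>0$ and keep $s_\beta\lambda$ in ${}^IW^J\overline\lambda$; any genuine reflection involving index $n$ must belong to $\Phi_2$. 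A parallel check is needed for $\beta=e_i\pm e_j$ with one of $i,j$ in the $\Phi_1$-tail and the other in the $\Phi_2$-head of the last segment: the $(m\in S,\overline m\in\overline S)$ hypothesis together with Lemma \ref{bkswlem7} closes this off. Once these boundary cases are settled the remaining analysis parallels Theorem \ref{bkssthm2} verbatim.
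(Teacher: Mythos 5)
Your proposal takes the same route as the paper: sufficiency by lifting subsystem linked roots via Lemma \ref{lrlem0} and Proposition \ref{lrprop2}, necessity by the four-way casework on the positions $s,t$ of the indices of $\beta=e_i\pm e_j$ and repeated use of Lemma \ref{bkswlem7}, treating the degenerate $s\in T$, $t=m\in S$ branch as a $\Phi_2$-move. That is exactly the structure of the paper's proof.

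One clarification on the ``main obstacle'' you flag. A root $e_i\pm e_n$ with $q_{m-1}<i\le h$ has both indices in segment $m$ and hence lies in $\Phi_{I_m}\subset\Phi_I$ for standard $I$; it is automatically excluded from $\Psi_\lambda^+$, independently of the $\nu_n=0$ observation. The actual role of the forced zero is twofold and slightly different: first, since $\overline m\in\overline S$ forces $n_m^\lambda(0)=1$, every $\lambda\in{}^IW^J\overline\lambda\cap\Lambda_I^+$ has $\lambda_n=0$, which is what makes the shared coordinate $e_n$ evaluate consistently on both restrictions $\lambda^1=\nu|_{\Phi_1}$ and $\lambda^2=\nu|_{\Phi_2}$ (these two subsystems genuinely overlap at $e_n$, unlike the $B_n/C_n$ case); second, for a cross-segment $\beta=e_i\pm e_n$ with $i$ \emph{outside} segment $m$ and $s\in S$, reflecting would create a second zero entry in segment $s$ (where $n_s^\lambda(0)=1$ is already maximal), making $s_\beta\lambda$ $\Phi_I$-singular, so $\beta$ cannot be a linked root. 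The cases you worry about are indeed ruled out, but for these reasons rather than the one you sketch. Once that is tidied up the argument goes through as you expect and matches the paper verbatim.
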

\begin{proof}
The proof is similar to the case $\Phi=B_n$. If $\lambda^i\lera\mu^i$ for $i=1, 2$, then $\lambda\lera\mu$ follows from Lemma \ref{lrlem0} and Proposition \ref{lrprop2}. Now suppose $\lambda, \mu\in{}^IW^J\overline\lambda$ satisfying $\mu=ws_\beta\lambda$ for $w\in W_I$ and $\beta\in\Psi_\lambda^+$. Denote $\zeta=w_\mu\mu$. Suppose that $\beta=e_i\pm e_j$ for $1\leq i<j\leq n$, where $1\leq s\leq t\leq m$ with $q_{s-1}<i\leq q_s$ and $q_{t-1}<j\leq q_t$. Set $|\lambda_i|=a$ and $|\lambda_j|=b$. One obtains $s<m$ since $\beta\not\in\Phi_I$. Suppose that $w=w'w''$ for $w'\in W_{I_s}$ and $w''\in W_{I_t}$. In particular, $w's_\beta\lambda\in\Lambda_{I_s}^+$ and $w''s_\beta\lambda\in\Lambda_{I_t}^+$. We can assume that $w_\lambda e_i=e_k$ for $q_{s-1}<k\leq q_s$ and $w_\lambda e_j=e_l$ for $q_{t-1}<l\leq q_t$. Set $\gamma=w_\lambda\beta=e_k\pm e_l$.

If $s\in S$ and $t\in T$, then $b\in \overline\caA$ and $s<t$. So $n^\lambda_s(b)$ is maximal. In view of Lemma \ref{bkswlem7}, $s_\beta\lambda$ is $\Phi_I$-singular, a contradiction.

If $s\in T$ and $t\in S$, then $a\in \overline\caA$ and $n^\lambda_t(a)$ is maximal. Lemma \ref{bkswlem7} implies $a=b>0$, $t=m\in S$ and $s_\beta\lambda=s_{e_i}s_{e_j}\lambda$. So $\lambda_i=\lambda_j=a>0$ (since $\langle\lambda, \beta\rangle>0$) and $\beta=e_i+e_j$. Then $\mu=(w's_{e_i})(w'' s_{e_j})\lambda\in\Lambda_I^+$ implies $w's_{e_i}\lambda\in\Lambda_{I_s}^+$ and $w''s_{e_j}\lambda\in\Lambda_{I_m}^+$. It follows that $w''=s_{e_j+e_n}s_{e_j-e_n}$ and $\mu=w's_{e_i}s_{e_n}\lambda=w's_{e_i}\lambda\in\Lambda_I^+$. With $s\not\in S$, it can be verified that
$w_\lambda\mu=w_\lambda w's_{e_i}\lambda=w's_{e_i}\nu$ satisfies (\ref{bksst4eq1}) and (\ref{bksst4eq2}). Therefore $w_\mu=w_\lambda$ and $\zeta=w's_{e_i}\nu$. We also obtain $l\leq h$ in view of $|\nu_l|=b=a\in \overline\caA$. Thus $\gamma=w_\lambda\beta=e_i+e_l\in \Phi_2$ ($w_\lambda e_i=e_i$ for $s\not\in S$) and $s_{e_i+e_l}\nu=s_{e_i}s_{e_l}\nu$. With $0\in \overline\caA$, one gets $e_l\pm e_n\in\Phi_{I^2}$. Therefore, $\mu^1=(w's_{e_i}\nu)|_{\Phi_1}=\nu|_{\Phi_1}=\lambda^1$ and
\[
\mu^2=(w_\lambda\mu)|_{\Phi_2}
=(w_\lambda w''w_\lambda^{-1}w's_{e_i+e_l}\nu)|_{\Phi_2}
=(s_{e_l+e_n}s_{e_l-e_n}w')s_{e_i+e_l}\lambda^2.
\]
Since $e_i+e_l$ is a linked root from $\nu$, Proposition \ref{lrprop2} implies that it is also a linked root from $\lambda^2$.

If $s, t\in S$, Lemma \ref{bkswlem7} shows that $a, b\not\in \overline\caA$. It follows from $|\nu_k|=a$ and $|\nu_l|=b$ that $\nu_k, \nu_l\not\in \overline\caA^*$. Therefore $\gamma=e_k\pm e_l\in\Phi_1$ and $\gamma\perp\Phi_2$. We can find $x_s\in W_{I^1\cap I_s}$ and $x_t\in W_{I^1\cap I_t}$ such that (\ref{bksst4eq1}) and (\ref{bksst4eq2}) are satisfied with $\nu$ replaced by $\eta=x_sx_ts_{\gamma}\nu=x_sx_tw_\lambda {(w'w'')}^{-1}\mu$. The uniqueness of $w_\mu$ implies that $\eta=\zeta=w_\mu\mu$ with $w_\mu=x_sx_tw_\lambda {w}^{-1}$. We get $\mu^2=\lambda^2$ and $\mu^1=(x_sx_ts_{\gamma}\nu)|_{\Phi_1}=x_sx_ts_{\gamma}\lambda^1$.
With Proposition \ref{lrprop2}, $\gamma$ is a linked root from $\nu$ if and only if it is a linked root from $\lambda^1$.

If $s, t\in T$, then $s, t<m$ and $\mu_u=\lambda_u$ for $q_{r-1}<u\leq q_r$ with $r\in S$. We obtain $w_\mu=w_\lambda$ and $\zeta=w_\lambda ws_\beta\lambda=ws_\beta\nu$. It follows that $\mu^1=\lambda^1$ and $\mu^2=ws_{\beta}\lambda^2$ with linked root $\beta$ from $\lambda^2$.
\end{proof}

The proof of the following result is similar to the proof of Lemma \ref{bksslem1}.

\begin{lemma}\label{bksslem3}
Let $\Phi=D_n$. Let $(S, \overline S)\in \caD$ be an odd strongly separable pair. Set
\[
\begin{aligned}
\caD^1=&\{(S', \overline{S'})\in \caD\mid (S', \overline{S'})\leq(S, \overline S)\};\\
\caD^2=&\{(S', \overline{S'})\in \caD\mid (S', \overline{S'})\geq(S, \overline S)\}.
\end{aligned}
\]
Fix $i\in\{1, 2\}$. If $I^i, J^i\neq\Delta_i$, there exists bijection $\psi_i$ between $D(\lambda^i, \Phi_I\cap\Phi_i, \Phi_i)$ and $\caD^i$ satisfying the following conditions:
\begin{itemize}
  \item [(1)] $\psi_i$ sends equivalent pairs to equivalent pairs, so does $\psi_i^{-1}$.
  \item [(2)] If $(S'_i, \overline{S'_i})\in \caD(\lambda^i, \Phi_I\cap\Phi_i, \Phi_i)$ is strongly separable, then $\psi_i(S'_i, \overline{S'_i})$ is strongly separable.
\end{itemize}
Otherwise $(S_i, \overline S_i)\sim (S, \overline S)$ for any $(S_i, \overline S_i)\in \caD^i$.
\end{lemma}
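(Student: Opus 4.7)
The plan is to mirror Lemma \ref{bksslem1}, modifying for the type-$D$ structure. First, I will identify the ``columns/rows'' of $\lambda^i$ by analogy with (\ref{bkssl1eq1}): set $S_1 = \{s \in S \setminus \{m\} \mid n_s > 2p\} \cup \{m_1\}$ and $\overline{S}_1 = \overline{T} \cup \{\overline{m}_1\}$, where $m_1$ denotes the type-$D$ last block of $\Phi_1$ (the portion $h < j \leq n$) and $\overline{m}_1$ the $0$-row of $\lambda^1$, and define $S_2$, $\overline{S}_2$ symmetrically for $\Phi_2$. Under the natural identifications $m_i \leftrightarrow m$ and $\overline{m}_i \leftrightarrow \overline{m}$, any $(S_i', \overline{S_i'}) \in \caD(\lambda^i, \Phi_I \cap \Phi_i, \Phi_i)$ will satisfy $S_i' \subset S_i \subset \{1, \ldots, m\}$ and $\overline{S_i'} \subset \overline{S}_i \subset \{1, \ldots, \overline{m}\}$.

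A key preliminary step is to show every separable pair $(S_i', \overline{S_i'})$ in $\Phi_i$ must contain both $m_i$ and $\overline{m}_i$. Indeed, (\ref{bksst4eq2}) gives $\nu_n = 0$, so $n_{m_i}^{\lambda^i}(0) = 1$ is maximal in $\Phi_i$, and each column $s \in S \setminus \{m\}$ of $\nu$ carries a $0$-entry lying outside $\overline{\caA}^*$, which yields $n_s^{\lambda^i}(0) = 1$ for every $s \in S_i \setminus \{m_i\}$. I will combine these with the separable-pair conditions and the type-$D$ parity of Definition \ref{ssdef1}(5) to conclude that the assumption $\overline{m}_i \notin \overline{S_i'}$ is incompatible with the separable-pair conditions, forcing $\overline{m}_i \in \overline{S_i'}$ and hence $m_i \in S_i'$. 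This matches the oddness of $(S, \overline{S})$ under the identifications.

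With this in hand, I will define
\[
\psi_1(S_1', \overline{S_1'}) = (S_1',\; \overline{S_1'} \cup (\overline{S} \setminus \{\overline{m}\})), \qquad \psi_2(S_2', \overline{S_2'}) = (S_2' \cup (S \setminus \{m\}),\; \overline{S_2'}).
\]
A direct column-by-column check, using the defining conditions of $(S, \overline{S})$ and of $(S_i', \overline{S_i'})$, will show each image is a separable pair of $\Phi$; the containment $\psi_1(\cdot) \leq (S, \overline{S})$ (resp.\ $\psi_2(\cdot) \geq (S, \overline{S})$) then follows from $S_1' \subset S$ together with $\overline{m} \in \overline{S_1'}$, whence the image lies in $\caD^1$. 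The inverses will be the evident restrictions $\psi_1^{-1}(S'', \overline{S''}) = (S'',\; \overline{S''} \cap (\overline{T} \cup \{\overline{m}\}))$ and similarly for $\psi_2$; I will verify they are well-defined and invert $\psi_i$. Property (1) will follow because $\psi_i$ respects the natural partial order (Lemma \ref{splem3}), and property (2) because a pair in $\Phi_i$ is weakly separable in the sense of Definition \ref{ssdef2} precisely when its $\psi_i$-image is.

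For the edge case $I^i = \Delta_i$ or $J^i = \Delta_i$, the subsystem admits no internal separable structure, so $\caD(\lambda^i, \Phi_I \cap \Phi_i, \Phi_i)$ is empty, and I must show every $(S_i, \overline{S}_i) \in \caD^i$ is equivalent to $(S, \overline{S})$. Arguing exactly as in the parallel case of Lemma \ref{bksslem1}, I will deduce from $I^i = \Delta_i$ (or $J^i = \Delta_i$) together with Lemma \ref{splem3} that either $S_i$ or $\overline{S}_i$ must coincide with the corresponding component of $(S, \overline{S})$, whence $(S_i, \overline{S}_i) \sim (S, \overline{S})$ by Lemma \ref{splem4} or \ref{splem5}. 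The principal obstacle is the type-$D$ bookkeeping: ensuring that the oddness of $(S, \overline{S})$ is preserved across $\psi_i$ hinges on the observation above that every separable pair of $\Phi_i$ is forced to contain both the type-$D$ last column and the $0$-row, making the identification with the global $m$ and $\overline{m}$ compatible.
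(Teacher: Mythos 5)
Your approach is correct and matches what the paper intends: the paper explicitly states that the proof is "similar to the proof of Lemma \ref{bksslem1}," and your proposal carries out exactly that analogy, with the right adjustments. In particular, you have correctly identified the key structural fact that, in the odd case, $\nu_n=0$ forces $n_{m_i}^{\lambda^i}(0)=1$ for both $i=1,2$, whence Definition \ref{ssdef1}(3) and (5) force every separable pair of $(\Phi_i,\Phi_{I^i},\Phi_{J^i})$ to be odd; this is precisely why the definitions of $\caD^1$ and $\caD^2$ here need no exclusion clause and why there is no trivial-pair condition (any odd pair is nontrivial by Lemma \ref{bksslem2}), unlike in Lemma \ref{bksslem1}. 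The maps $\psi_1(S_1',\overline{S_1'})=(S_1',\,\overline{S_1'}\cup(\overline S\setminus\{\overline m\}))$ and $\psi_2(S_2',\overline{S_2'})=(S_2'\cup(S\setminus\{m\}),\,\overline{S_2'})$ are the correct ones, the column-by-column verification that the images are separable pairs of $(\Phi,\Phi_I,\Phi_J)$ lying in $\caD^1$ resp.\ $\caD^2$ goes through, and the edge case $I^i=\Delta_i$ or $J^i=\Delta_i$ is handled via Lemmas \ref{splem3}--\ref{splem5} just as in the parallel passage of Lemma \ref{bksslem1}. No gap found.
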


\begin{theorem}\label{bkssthm32}
For $\Phi=D_n$, suppose the system $(\Phi, \Phi_I, \Phi_J)$ has an odd separable pair. Then it contains $2^{k-1}$ blocks, where $k$ is the number of equivalence classes of separable pairs.
\end{theorem}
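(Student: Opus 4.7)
The plan is to prove Theorem \ref{bkssthm32} by induction on $n$, in parallel with the argument for Theorem \ref{bkssthm22}. By Lemma \ref{bksslem2} every element of $\caD$ is odd and in particular nontrivial. The induction base is the situation where $(\Phi,\Phi_I,\Phi_J)$ possesses no strongly separable pair, so every pair in $\caD$ is weakly separable. For $\Phi = D_n$ the requirement $m \in S \iff \overline m \in \overline S$ leaves only $(\{1,\ldots,m\},\{\overline m\})$ and $(\{m\},\{1,\ldots,\overline m\})$ as admissible weakly separable pairs. If exactly one lies in $\caD$ then $k=1$, and Theorem \ref{bkswthm3} yields one block, matching $2^{k-1}=1$. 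If both lie in $\caD$, I would check, using Lemma \ref{splem3} together with the absence of strongly separable intermediates, that no fundamental chain can join them; hence $k = 2$, the configuration forces $(n_m,\overline n_{\overline m}) = (\overline m,m)$, and Theorem \ref{bkswthm3} delivers two blocks, again matching $2^{k-1}$.

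For the inductive step I would choose a minimal odd strongly separable pair $(S,\overline S) \in \caD$ and apply Theorem \ref{bkssthm3} to obtain a block-preserving decomposition
\[
{}^IW^J \simeq {}^{I^1}W_1^{J^1} \times {}^{I^2}W_2^{J^2},
\]
with $\lambda \lera \mu$ if and only if $\lambda^i \lera \mu^i$ relative to $(\Phi_{I^i},\Phi_i)$ for each $i$. Because $h < n$ and $\nu_n = 0$, both subsystems are of type $D$ (possibly reducible), so the inductive hypothesis is available. Minimality of $(S,\overline S)$ combined with Lemma \ref{bksslem3} forces $(\Phi_1,\Phi_{I^1},\Phi_{J^1})$ to contain no strongly separable pair, since $\psi_1^{-1}(S,\overline S)$ is only weakly separable; its block count is therefore given by the base case via Theorem \ref{bkswthm3}, while $(\Phi_2,\Phi_{I^2},\Phi_{J^2})$ is treated either by the induction hypothesis (if it admits a strongly separable pair) or by Theorem \ref{bkswthm3}.

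The counting follows by setting $k_i$ equal to the number of equivalence classes in $\caD(\lambda^i,\Phi_I \cap \Phi_i,\Phi_i)$. Lemma \ref{splem3} shows that every separable pair of $\caD$ lies in $\caD^1 \cup \caD^2$, while $\caD^1 \cap \caD^2$ collapses modulo equivalence to exactly the class of $(S,\overline S)$. Since Lemma \ref{bksslem3} provides equivalence-preserving bijections $\caD(\lambda^i,\Phi_I \cap \Phi_i,\Phi_i) \to \caD^i$, I would conclude $k = k_1 + k_2 - 1$, so multiplying the inductive counts gives $2^{k_1-1} \cdot 2^{k_2-1} = 2^{k-1}$ blocks. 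The degenerate situation $I^i = \Delta_i$ or $J^i = \Delta_i$ must be handled separately: there $k_i = 1$ and the subsystem contributes a single block, preserving the multiplicative count.

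The main obstacle I anticipate is the base-case combinatorics: ruling out fundamental chains between the two weakly separable pairs when both lie in $\caD$, and checking that this configuration really forces $(n_m,\overline n_{\overline m}) = (\overline m,m)$ so the count from Theorem \ref{bkswthm3} aligns with $2^{k-1}$. A secondary delicate point is the intersection analysis for $\caD^1 \cap \caD^2$, which must be executed carefully enough — especially in the degenerate subsystems where $\psi_i$ is not defined — to yield the additive identity $k = k_1 + k_2 - 1$ without an off-by-one error.
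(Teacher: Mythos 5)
Your plan follows the paper's proof closely in its essentials — induction through the Theorem \ref{bkssthm3} decomposition, the Lemma \ref{bksslem3} bijections, and the weakly separable base case resolved via Theorem \ref{bkswthm3} — and the two delicate points you flag are exactly the right ones. Where you diverge is in the arithmetic: you propose a uniform identity $k = k_1 + k_2 - 1$, whereas the paper splits on whether $(\Phi_1,\Phi_{I^1},\Phi_{J^1})$ has one block or two. The paper's route sidesteps two potential pitfalls in the formula. First, ``every separable pair of $\caD$ lies in $\caD^1\cup\caD^2$'' is too strong as stated: a pair sharing $S$ with $(S,\overline S)$ but with $\overline S'$ incomparable to $\overline S$ lies in neither, and one has to invoke Lemmas \ref{splem4}--\ref{splem5} to see such a pair is at least equivalent to $(S,\overline S)$. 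Second, $k = k_1 + k_2 - 1$ requires ruling out an equivalence class with one representative strictly below and another strictly above $(S,\overline S)$; the paper avoids this by observing that in the one-block case $\caD\setminus\caD^2$ collapses into the class of $(S,\overline S)\in\caD^2$, and in the two-block case the extra weakly separable pair $(\{m\},\{1,\ldots,\overline m\})$ is inequivalent to everything else by minimality of $(S,\overline S)$ and Lemma \ref{splem7}. Also, in the degenerate case $I^i=\Delta_i$ or $J^i=\Delta_i$, the subsystem has $\caD(\lambda^i,\Phi_I\cap\Phi_i,\Phi_i)=\emptyset$, so $k_i = 0$, not $1$ as you write; the correct observation (which the paper makes directly) is that $\caD^i$ itself collapses to the single class of $(S,\overline S)$ while the subsystem contributes one block. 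Your base-case analysis, including both the $(n_m,\overline n_{\overline m})=(\overline m,m)$ criterion and the argument that monotonicity of fundamental chains forbids joining the two weakly separable pairs without a strongly separable intermediate, is correct.
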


\begin{proof}
When $(\Phi, \Phi_I, \Phi_J)$ is weakly separable, then either $(\{m\}, \{1, \ldots, \overline m\})\in \caD$ or $(\{1, \ldots, m\}, \{\overline m\})\in \caD$. If only one of these two pairs is contained in $\caD$, Theorem \ref{bkswthm3} shows that the system has $2^{1-1}=1$ block. Otherwise it has $2^{2-1}=2$ blocks.

Now suppose $(\Phi, \Phi_I, \Phi_J)$ is strongly separable. We use induction on $n>2$, choose $(S, \overline S)$ which is a minimal element of the set of strongly separable pairs. Therefore $(\Phi_1, \Phi_{I^1}, \Phi_{J^1})$ is not strongly separable by Lemma \ref{bksslem3}. One has $(S, \overline S)\in\caD^1\cap\caD^2$. So $\psi_1^{-1}(S, \overline S)=(S, \{\overline m\})$ is a weakly separable pair of $(\Phi_1, \Phi_{I^1}, \Phi_{J^1})$. If $(S', \overline{S'})\in \caD\backslash \caD^2$ is strongly separable, Lemma \ref{splem3} implies $(S', \overline{S'})\sim (S, \overline S)$. If $(S', \overline{S'})\in \caD\backslash \caD^2$ is weakly separable, Lemma \ref{splem3} yields $(S', \overline{S'})=(\{m\}, \{1, \ldots, \overline m\})$. Thus $\psi_1^{-1}(S', \overline{S'})=(\{m\}, \overline S_1)$ (we adopt the definition of $\overline S_1$ given in \ref{bkssl1eq1}) is another weakly separable pair of $(\Phi_1, \Phi_{I^1}, \Phi_{J^1})$. Theorem \ref{bkswthm3} shows that $(\Phi_1, \Phi_{I^1}, \Phi_{J^1})$ has two blocks in this case.

First assume that $(\Phi_1, \Phi_{I^1}, \Phi_{J^1})$ contains only one block. The previous reasoning implies that $D\backslash \caD^2$ is contained in the equivalence class of $(S, \overline S)\in \caD^2$. Thus $\caD^2$ have $k$ equivalence classes as $\caD$, so does $D(\lambda^2, \Phi_I\cap\Phi_2, \Phi_2)$ when $I^2, J^2\neq\Delta_2$. Combined with Theorem \ref{bkswthm3} and Theorem \ref{bkssthm3}, the induction hypothesis implies that $(\Phi, \Phi_I, \Phi_J)$ has $2^k$ blocks. When $I^2=\Delta_2$ or $J^2=\Delta_2$, Lemma \ref{bksslem3} yields that $D=(D\backslash \caD^2)\cup \caD^2$ contains only one equivalence class represented by $(S, \overline S)$, while $(\Phi, \Phi_I, \Phi_J)$ has $2^{1-1}=1$ block in view of Theorem \ref{bkswthm3} and Theorem \ref{bkssthm3}.

Next assume that $(\Phi_1, \Phi_{I^1}, \Phi_{J^1})$ contains two blocks. Then $(\{m\}, \{1, \ldots, \overline m\})\in \caD\backslash\caD^2$ and $I^1, J^1\neq\Delta_1$. We obtain $|S|>1$ and $|\overline S|<\overline m$. With Lemma \ref{splem3} and Lemma \ref{splem7}, the minimality of $(S, \overline S)$ implies that $(\{m\}, \{1, \ldots, \overline m\})\not\sim (S', \overline{S'})$ for any other separable pair $(S', \overline{S'})\in \caD$. It follows that $\caD^2$ contains $k-1$ equivalence class of separable pairs, so does $D(\lambda^2, \Phi_I\cap\Phi_2, \Phi_2)$ when $I^2, J^2\neq\Delta_2$ in view of Lemma \ref{bksslem3}. The induction hypothesis and Theorem \ref{bkssthm2} show that $(\Phi, \Phi_I, \Phi_J)$ contains $2\times 2^{k-1}=2^k$ blocks. When $I^2=\Delta_2$ or $J^2=\Delta_2$, $\caD$ has two equivalence classes represented by $(\{m\}, \{1, \ldots, \overline m)$ and $(S, \overline S)$, while Theorem \ref{bkswthm3} and Theorem \ref{bkssthm3} show that $(\Phi, \Phi_I, \Phi_J)$ has $2^{2-1}=2$ blocks.
\end{proof}

\subsubsection{The case when $(S, \overline S)$ is even} In this case $m\not\in S$ and $0\not\in \overline\caA$. Set
\[
\Phi_1=(\sum_{\substack{q_{s-1}+2p<i\leq q_s\\
s\in S}}\bbQ e_i)\cap\Phi\ \mbox{and}\  \Phi_2=(\sum_{\substack{q_{s-1}<i\leq q_s\\
s\in T}}\bbQ e_i)\cap\Phi.
\]
Then $\Phi_1, \Phi_2$ are proper subsystem of $\Phi$ with $\Phi_1\perp\Phi_2$. Let $W_i$ be the Weyl group of $\Phi_i$ for $i=1, 2$. Let $\Delta_i$ be the simple system associated with $\Phi_i^+=\Phi_i\cap\Phi^+$. Denote $I^i=\Phi_I\cap\Delta_i=I\cap\Phi_i$. Put $\lambda^i=\nu|_{\Phi_i}$. Let $\overline{\lambda^i}$ be the dominant weight of $W_i\lambda^i$.

\begin{lemma}\label{bksslem4}
The pair $(S, \overline S)$ is trivial if and only if $\overline{\lambda^i}$ is unique for $i=1, 2$ and $\lambda\in{}^IW^J\overline\lambda$.
\end{lemma}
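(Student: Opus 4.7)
The key observation is that $\overline{\lambda^i}$ is the dominant weight of the $W_i$-orbit $W_i\lambda^i$, and since $\Phi_i$ is a type $D$ system (possibly with type $A$ factors), this orbit is determined by the multiset $\{|\lambda^i_j|:j\in H_i\}$ together with, when $\lambda^i$ has no zero entry, the parity of negative coordinates of $\lambda^i$. So $\overline{\lambda^i}$ is independent of $\lambda\in{}^IW^J\overline\lambda$ precisely when these invariants are constant as $\lambda$ varies.

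For the forward direction, I would assume $(S,\overline S)$ is trivial; by symmetry it suffices to treat the case $(S,\overline S)\sim(\{s_0\},\{1,\ldots,\overline m-1\})$, the other being dual. Lemmas \ref{splem4}, \ref{splem5}, and \ref{splem7} allow me to work with the extreme representative in the equivalence class. From this form, the requirement $n_s^\lambda(a_{\overline m})=0$ for $s\neq s_0$ forces all the $\overline n_{\overline m}$ zero entries of $\lambda$ to lie in the $s_0$-block, while the maximality $n_{s_0}^\lambda(a_t)=2$ for $t<\overline m$ together with the $\Lambda_I^+$-ordering arranges block $s_0$ as $(a_1,a_2,\ldots,a_{\overline m-1},0,\ldots,0,-a_{\overline m-1},\ldots,-a_1)$. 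Consequently $H_1$ consists exactly of the zero-entry positions of this block, so $\lambda^1=(0,\ldots,0)$ is a constant vector; the $H_2$-entries are drawn from $\pm a_t$ for $t<\overline m$ with multiplicities $\overline n_t-2$, hence the multiset on $H_2$ is fixed and its parity of negatives is determined by the global even sign-change condition on $W\overline\lambda$ minus the parity contributed by block $s_0$.

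For the converse, I would argue contrapositively. Assume $(S,\overline S)$ is non-trivial; by Lemma \ref{bksslem2} it is strongly separable, and combining Lemma \ref{splem3} with Lemma \ref{splem7} the equivalence class contains a pair $(S',\overline{S'})$ with $|S'|,|\overline{S'}|\geq 2$. Choosing $s_1\neq s_2\in S'$ and a value $a_{t'}$ (with $t'\notin\overline{S'}$, so $\pm a_{t'}$ entries are free to migrate), I would construct $\lambda,\mu\in{}^IW^J\overline\lambda$ that differ by moving a $\pm a_{t'}$-entry from the leftover part of block $s_1$ to the leftover part of block $s_2$, using a Lemma \ref{cclem4}-style reflection to realize the swap inside $W\overline\lambda\cap\Lambda_I^+$. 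The resulting $\lambda^1,\mu^1$ then have different multisets of absolute values on $H_1$, so $\overline{\lambda^1}\neq\overline{\mu^1}$.

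The main obstacle is making the swap argument of the converse work when $\overline n_{\overline m}=0$, since then $\lambda^i$ may have no zero entry and its $W_i$-orbit depends on the parity of negative entries in addition to the multiset. I expect this to require a careful case division: when the non-trivial structure of $\caD$ still permits $\overline m$ to appear in some equivalent $\overline{S'}$, zeros can be shuttled to realize the multiset change directly; when not, one must verify that the swap moves producing different multisets of nonzero absolute values are compatible with the parity constraint, or construct an additional swap that alters the parity by using two separate exchanges.
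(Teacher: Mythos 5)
Your forward direction is on the right track (triviality pins down the separable pair and then a parity equation closes the argument), but it leans on analysing the extreme representative rather than the given pair $(S,\overline S)$, which is what actually defines $\Phi_1$ and $\Phi_2$; the decisive step — that $\lambda_j$ is fixed for all $j$ in the $S$-blocks — is exactly what Lemma~\ref{splem8} supplies, and your block description goes wrong by inserting zeros in the middle of block $s_0$ even though the trivial hypothesis forces $\overline n_{\overline m}=0$ (hence no zeros at all, and in the extreme case $\Phi_1=\emptyset$). These are repairable; the real problem is the converse.

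The converse has a genuine gap. By Lemma~\ref{sslem1}, \emph{every} $\lambda\in{}^IW^J\overline\lambda$ is separable relative to the fixed pair $(S,\overline S)$, so the counts $n_s^\lambda(a_t)$ for $s\in S$, $t\in\overline S$ are maximal and for $s\notin S$, $t\notin\overline S$ are zero for \emph{all} $\lambda$. Tracking this through (\ref{bksst4eq1}), the multiset of absolute values of $\lambda^1$ on $H_1$ is always $\{a_t \text{ with multiplicity } \overline n_t : t\notin\overline S\}$ and that of $\lambda^2$ on $H_2$ is always $\{a_t \text{ with multiplicity } \overline n_t-2|S| : t\in\overline S\}$ — fixed, independent of $\lambda$. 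A ``Lemma~\ref{cclem4}-style'' reflection can only redistribute such entries among $S$-blocks (or among $T$-blocks), which changes $\nu$ but never the multiset of $\lambda^1$ or $\lambda^2$. So the punchline ``$\lambda^1,\mu^1$ have different multisets of absolute values'' cannot occur for any $\lambda,\mu\in{}^IW^J\overline\lambda$. The only invariant distinguishing $W_i$-orbits is the parity of negative entries, and this applies in \emph{all} cases, not merely when $\overline n_{\overline m}=0$ as your closing paragraph suggests: e.g.\ when $\overline n_{\overline m}>0$, the zeros lie in the $S$-blocks so $\lambda^1$ has a zero entry and carries no parity obstruction, while $\lambda^2$ never contains a zero and so \emph{always} does. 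In addition, the deduction that a non-trivial pair is equivalent to some $(S',\overline{S'})$ with $|S'|,|\overline{S'}|\geq 2$ does not follow from Lemmas~\ref{splem3} and~\ref{splem7}; a non-trivial pair may well have every equivalent representative with $|S'|=1$ or $|\overline{S'}|=1$. The paper's converse avoids both difficulties: it splits on $(n_m,\overline n_{\overline m})$ and, case by case, either exhibits a $\mu\in{}^IW^J\overline\lambda$ with flipped parity on the $\Phi_2$ or $\Phi_1$ side (using $s_{e_n}$, $ws_{e_j}s_{e_n}$, or $ws_{e_j}s_{e_k}$), or, when no such reflection exists, invokes Lemma~\ref{splem8} or Lemma~\ref{splem9} to conclude triviality directly.
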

\begin{proof}
First assume that $(S, \overline S)$ is trivial. If $(S, \overline S)\sim(\{s_0\}, \{1, \ldots, \overline m-1\})$ for some $1\leq s_0<m$, we get $\overline n_{\overline m}=0$ (since $\overline n_{\overline m}\neq1$). In view of Lemma \ref{splem8}, $\lambda_j$ is fixed for any $q_{s-1}<j\leq q_s$ with $s\in S$. It follows from (\ref{bksst4eq1}) that $\lambda^1$ is unique, so is $\overline{\lambda^1}$. On the other hand, $n^{\lambda^2}(a_t)$ are fixed for any $1\leq t\leq\overline m$ (see \ref{bksst1eq4} for type $A$). It suffices to show that $P(\lambda^2)$ is fixed. With $\overline n_{\overline m}=0$, this follows from
\begin{equation}\label{bkssl7eq1}
P(\lambda)\equiv P(\lambda^1)+P(\lambda^2)+pg\ (\mathrm{mod} 2).
\end{equation}
In fact, keeping in mind of (\ref{bksst4eq1}) and (\ref{bksst4eq2}), we have
\[
\begin{aligned}
&|\{1\leq j\leq n\mid \nu_j<0\}|\\
=&pg+|\{q_{s-1}+2p<j\leq q_s\mid s\in S, \nu_j<0\}|+|\{q_{s-1}<j\leq q_s\mid s\in T, \nu_j<0\}|.
\end{aligned}
\]
In this case, $P(\overline\lambda)=P(\lambda)=P(\nu)$. Thus (\ref{bkssl7eq1}) is proved and $\overline{\lambda^2}$ is unique. If $(S, \overline S)\sim(\{1, \ldots, m-1\}, \{t_0\})$ for some $1\leq t_0<\overline m$, the proof is similar.

Conversely, assume that $\overline{\lambda^1}, \overline{\lambda^2}$ are fixed. It can be verified that $\lambda\ra(\lambda^1, \lambda^2)$ gives a bijection between ${}^IW^J\overline\lambda$ and ${}^{I^1}W_1^{J^1}\overline{\lambda^1}\times{}^{I^2}W_2^{J^2}\overline{\lambda^2}$. Since $(S, \overline S)$ is even, we have $n_s^\lambda(0)=0$ for $s\not\in S$. First consider the case $n_m, \overline n_{\overline m}>0$. Then $\lambda\neq \mu=s_{e_n}\lambda\in{}^IW^J\overline\lambda$. We obtain $\mu^2=s_{e_n}\lambda^2\neq\lambda^2$ and thus $\overline\mu^2\neq\overline{\lambda^2}$, a contradiction. Next suppose that $n_m>0$ and $\overline n_{\overline m}=0$. If $s_{e_j}\lambda$ is $\Phi_I$-regular for some $q_{s-1}<j\leq q_s$ with $\lambda_j\neq0$ and $s\in S$, then $\mu=ws_{e_j}s_{e_n}\lambda\in{}^IW^J\overline\lambda$ for some $w\in W_{I_s}$ and $\overline\mu^2\neq\overline{\lambda^2}$. Otherwise $(S, \overline S)$ is trivial in view of Lemma \ref{splem8}. Then assume that $n_m=0$ and $\overline n_{\overline m}>0$. We can also show that $(S, \overline S)$ is trivial by a similar argument using Lemma \ref{splem9} instead of Lemma \ref{splem8}. Finally suppose $n_m=\overline n_{\overline m}=0$. If $s_{e_j}\lambda$ is $\Phi_I$-regular for some $q_{s-1}<j\leq q_s$ with $\lambda_j\neq0$ and $s\in S$ and If $s_{e_k}\lambda$ is $\Phi_I$-regular for some $q_{t-1}<k\leq q_t$ with $\lambda_k\neq0$ and $t\not\in S$, then $\mu=ws_{e_j}s_{e_k}\lambda\in{}^IW^J\overline\lambda$ for some $w\in W_{I_s}W_{I_t}$. We arrive at a similar contradiction. Otherwise $(S, \overline S)$ is trivial by applying Lemma \ref{splem8} or Lemma \ref{splem9}.
\end{proof}

If $(S, \overline S)$ is trivial, set $J^i=\{\alpha\in \Delta_i\mid \langle\overline{\lambda^i}, \alpha\rangle=0\}$. We can get the isomorphism ${}^IW^J\overline\lambda\simeq{}^{I^1}W_1^{J^1}\overline{\lambda^1}\times{}^{I^2}W_2^{J^2}\overline{\lambda^2}$. If $(S, \overline S)$ is not trivial, choose $\overline{\lambda^i_+}\in\{\overline{\lambda^i}, \vf(\overline{\lambda^i})\}$ so that $P(\overline{\lambda^1_+})=\min\{P(\overline{\lambda^1}), P(\vf(\overline{\lambda^1}))\}$ and $P(\overline{\lambda^2_+})\equiv P(\overline\lambda)-pg\ (\mathrm{mod} 2)$. Put $\overline{\lambda^i_-}=\vf(\overline{\lambda^i_+})$ ($\overline{\lambda^i_-}\neq\overline{\lambda^i_+}$ if and only if $n^{\lambda^i}(0)=0$). Then $\lambda^1\in W_1\overline{\lambda^1_+}$ if and only if $\lambda^2\in W_2\overline{\lambda^2_+}$. Set $J^i=\{\alpha\in \Delta_i\mid \langle\overline{\lambda^i_+}, \alpha\rangle=0\}$. It is easy to check that $\vf: \lambda\ra(\lambda^1, \lambda^2)$ gives a bijection between ${}^IW^J\overline\lambda$ and $({}^{I^1}W_1^{J^1}\overline{\lambda^1_+}, {}^{I^2}W_2^{J^2}\overline{\lambda^2_+})\sqcup({}^{I^1}W_1^{\vf(J^1)}\overline{\lambda^i_-}, {}^{I^2}W_2^{\vf(J^2)}\overline{\lambda^i_-})$. We have proved the first part of the following theorem.

\begin{theorem}\label{bkssthm4}
Let $\Phi=D_n$ and $I, J\subset\Delta$. If $(\Phi, \Phi_I, \Phi_J)$ is strongly separable relative to $(S, \overline S)$ with $m\not\in S$, $\overline m\not\in \overline S$, then
\begin{itemize}
\item [(1)] There exist proper subsystem $\Phi_i$ of $\Phi$ for $i=1, 2$ such that
\[
{}^IW^J\simeq\left\{\begin{aligned}
&{}^{I^1}W_1^{{J^1}}\times{}^{I^2}W_2^{{J^2}},\qquad\qquad\qquad\qquad\qquad\quad\ \mbox{if}\ (S, \overline S)\ \mbox{is trivial};\\
&{}^{I^1}W_1^{{J^1}}\times{}^{I^2}W_2^{{J^2}}\sqcup{}^{I^1}W_1^{\vf({J^1})}\times{}^{I^2}W_2^{\vf({J^2})},\quad \mbox{if}\ (S, \overline S)\ \mbox{is nontrivial},
\end{aligned}
\right.
\]
where $I^i, J^i$ are determined by $\Phi_i$ and $W_i$ is the Weyl group of $\Phi_i$.

\item [(2)] Let $\overline\lambda$ be a dominant weight with $\Phi_{\overline\lambda}=\Phi_J$. There exist dominant weights $\overline{\lambda^i_+}$ of $\Phi_i$ such that each $\nu\in{}^IW^J\overline\lambda$ corresponds $\nu^i\in {}^{I^i}W_i^{J^i}\overline{\lambda^i_+}$ or ${}^{I^i}W_i^{\vf(J^i)}\vf(\overline{\lambda^i_+})$. Moreover, $\lambda\lera\mu$ if and only if $\lambda^i\lera\mu^i$ relative to $(\Phi_{I^i}, \Phi_i)$ or $(\Phi_{\vf(I^i)}, \Phi_i)$.
\end{itemize}
\end{theorem}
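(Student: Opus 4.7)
The plan is to follow the template established in Theorem~\ref{bkssthm3}, with two adaptations: first, in the even case $\Phi_1 \perp \Phi_2$ (there is no shared $e_n$-axis coupling the factors), which actually simplifies the lifting; second, when $(S,\overline S)$ is nontrivial the automorphism $\vf$ splits ${}^IW^J\overline\lambda$ into two pieces indexed by $\overline{\lambda^i_+}$ and $\overline{\lambda^i_-}$, and this split must be tracked throughout. Part~(1) is already established by the bijection $\lambda\mapsto(\lambda^1,\lambda^2)$ constructed above the statement (together with Lemma~\ref{bksslem4} to distinguish the trivial and nontrivial subcases), so the substantive content is part~(2).

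For the ``if'' direction, suppose $\lambda^i\lera\mu^i$ relative to $(\Phi_{I^i},\Phi_i)$ for $i=1,2$. I would let $\eta\in{}^IW^J\overline\lambda$ be the weight corresponding to the mixed pair $(\lambda^1,\mu^2)$; since the $\Phi_1$-entries of $w_\eta\eta$ and $w_\lambda\lambda$ coincide, the chain of linked roots in $\Phi_2$ witnessing $\mu^2\lera\lambda^2$ remains a valid chain of linked roots relative to $(\Phi_I,\Phi)$ by Proposition~\ref{lrprop2} (the maximality conditions there are unchanged because $\Phi_1\perp\Phi_2$). So $w_\eta\eta\lera w_\lambda\lambda$, and symmetrically $w_\eta\eta\lera w_\mu\mu$, whence $\lambda\lera\mu$ by Lemma~\ref{lrlem0}.

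For the ``only if'' direction, it suffices to treat the case $\mu=ws_\beta\lambda$ with $\beta\in\Psi_\lambda^+\cap(\Phi^+\backslash\Phi_I)$ and $w\in W_I$. Writing $\beta=e_i\pm e_j$ with $q_{s-1}<i\leq q_s$ and $q_{t-1}<j\leq q_t$, I will split on $s,t\in S$ versus $T$. The cases $s\in S,\,t\in T$ and $s\in T,\,t\in S$ are the delicate ones: in the odd case of Theorem~\ref{bkssthm3} the second subcase survived and forced $t=m$, but here $m\notin S$ and $\overline m\notin\overline S$, so the exceptional hypotheses of Lemma~\ref{bkswlem7} (``$s=t$ and $\lambda_j=0$'' or ``$a=b>0$ and $t=m$'') cannot hold, hence $s_\beta\lambda$ would be $\Phi_I$-singular, contradicting the hypothesis. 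The case $s,t\in T$ gives $w\in W_{I^2}$ and $\beta\in\Phi_2\perp\Phi_1$, so $\mu^1=\lambda^1$ and $\mu^2=ws_\beta\lambda^2$; the case $s,t\in S$ transforms under $w_\lambda$ to $\gamma=w_\lambda\beta=e_k\pm e_l\in\Phi_1$ and gives $\mu^2=\lambda^2$, $\mu^1=w_1s_\gamma\lambda^1$ for some $w_1\in W_{I^1}$. In either subcase Proposition~\ref{lrprop2} applied inside $(\Phi_{I^i},\Phi_i)$ confirms that $\gamma$ (or $\beta$) is a linked root, since the maximality test is inherited from the ambient maximality.

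The main obstacle is keeping the $W_i$-orbit bookkeeping consistent in the nontrivial case, where ${}^IW^J\overline\lambda$ decomposes as a disjoint union of two pieces indexed by the sign $\pm$. Concretely, I must verify that any linking operation $\lambda\leftrightarrow\mu$ within ${}^IW^J\overline\lambda$ preserves the sign: $\lambda^i\in W_i\overline{\lambda^i_+}$ if and only if $\mu^i\in W_i\overline{\lambda^i_+}$. This reduces to a parity check using the formula $P(\lambda)\equiv P(\lambda^1)+P(\lambda^2)+pg\pmod 2$ from (\ref{bkssl7eq1}), together with the observation that both $w\in W_I$ (a product of permutations and sign changes that respect the block structure $I_s$) and $s_\beta$ for $\beta\in\Phi_1$ or $\beta\in\Phi_2$ change $P(\lambda^1)$ and $P(\lambda^2)$ by parities that are tied via the relation above; the case analysis from the previous paragraph shows that in each nontrivial case the parity of exactly one component changes in a way matching the ambient parity change. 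In the nontrivial even case we have $n^{\lambda^i}(0)=0$ for both $i$, so $W_i\overline{\lambda^i_+}$ and $W_i\overline{\lambda^i_-}$ are truly disjoint, and the parity identity guarantees that a component-wise linkage chain stays within a single piece. Once this bookkeeping is in place, iterating over a chain $\lambda=\lambda^{(0)}\leftrightarrow\lambda^{(1)}\leftrightarrow\cdots\leftrightarrow\lambda^{(r)}=\mu$ yields chains $\lambda^i=\lambda^{i,(0)}\leftrightarrow\cdots\leftrightarrow\lambda^{i,(r)}=\mu^i$ in each factor, completing the ``only if'' direction.
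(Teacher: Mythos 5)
Your proposal follows the paper's own approach. The ``only if'' direction proceeds by writing $\beta=e_i\pm e_j$ and splitting on whether $s,t\in S$ or $T$; you correctly observe that in the even case both mixed cases are killed by Lemma~\ref{bkswlem7} because $m\notin S$ and $\overline m\notin\overline S$ make the exceptional clauses impossible, leaving only $s,t\in S$ (acting in $\Phi_1$) and $s,t\in T$ (acting in $\Phi_2$), with the other component unchanged. The ``if'' direction via a mixed-pair intermediate $\eta\leftrightarrow(\lambda^1,\mu^2)$ together with Proposition~\ref{lrprop2} and $\Phi_1\perp\Phi_2$ is also how the paper argues (by reference back to the $B_n$ proof of Theorem~\ref{bkssthm2}).

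One caveat on your final paragraph: the parity bookkeeping is unnecessary, and one of its supporting claims is wrong. It is unnecessary because the explicit outputs of the case analysis already write $\mu^1=x_sx_ts_\gamma\lambda^1$ with $\gamma\in\Phi_1$, $x_s,x_t\in W_{I^1}$ (and $\mu^2=\lambda^2$), or $\mu^2=ws_\beta\lambda^2$ with $\beta\in\Phi_2$, $w\in W_{I^2}$ (and $\mu^1=\lambda^1$); in either case $\mu^i$ is manifestly in the same $W_i$-orbit as $\lambda^i$, so the ``piece'' of the disjoint union is preserved without any parity argument. Moreover, the auxiliary claim that $n^{\lambda^i}(0)=0$ holds ``for both $i$'' in the nontrivial even case is not correct in general: $n^{\lambda^2}(0)=0$ always holds (separability condition~(3) with $t=\overline m\notin\overline S$ forces $n^\lambda_s(0)=0$ for $s\in T$), but $n^{\lambda^1}(0)=\overline n_{\overline m}$, which can be positive. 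The paper's construction of $\overline{\lambda^i_\pm}$ records exactly this: $\overline{\lambda^i_-}\neq\overline{\lambda^i_+}$ \emph{iff} $n^{\lambda^i}(0)=0$, and that is guaranteed only for $i=2$, which is already enough to make the product pieces disjoint.
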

\begin{proof}
As in the proof of Theorem \ref{bkssthm3}, let $\mu=ws_\beta\lambda$ for $\lambda, \mu\in{}^IW^J\overline\lambda$, $w\in W_I$ and $\beta\in\Psi_\lambda^+$. Denote $\zeta=w_\mu\mu$. Suppose that $\beta=e_i\pm e_j$ for $1\leq i<j\leq n$, where $1\leq s\leq t\leq m$ with $q_{s-1}<i\leq q_s$ and $q_{t-1}<j\leq q_t$. Put $|\lambda_i|=a$ and $|\lambda_j|=b$. We have $s<m$ for $\beta\not\in\Phi_I$. Moreover, $w=w'w''$ for $w'\in W_{I_s}$ and $w''\in W_{I_t}$. In particular, $w's_\beta\lambda\in\Lambda_{I_s}^+$ and $w''s_\beta\lambda\in\Lambda_{I_t}^+$. We can assume that $w_\lambda e_i=e_k$ for $q_{s-1}<k\leq q_s$ and $w_\lambda e_j=e_l$ for $q_{t-1}<l\leq q_t$ and $\gamma=w_\lambda\beta$.

If $s\in S$ and $t\in T$, Lemma \ref{bkswlem7} implies that $s_\beta\lambda$ is $\Phi_I$-singular.

If $s\in T$ and $t\in S$. Lemma \ref{bkswlem7} yields $t=m\in S$, a contradiction.

If $s, t\in S$, we get $a, b\not\in \overline\caA$ as in the proof of Theorem \ref{bkssthm3}. Thus $\nu_k, \nu_l\not\in \overline\caA^*$. One has $\gamma=w_\lambda\beta=e_k\pm e_l\in\Phi_1$ and $\gamma\beta\perp\Phi_2$. We can find $x_s\in W_{I^1\cap I_s}$ and $x_t\in W_{I^1\cap I_t}$ such that (\ref{bksst4eq1}) and (\ref{bksst4eq2}) are satisfied with $\nu$ replaced by $\eta=x_sx_ts_{\gamma}\nu=x_sx_tw_\lambda(w'w'')^{-1}\mu$. The uniqueness of $w_\mu$ implies that $\zeta=\eta$. Hence $\mu^2=\lambda^2$ and $\mu^1=x_sx_ts_{ \gamma}\lambda^1$ with linked root $w_\lambda\beta$, in view of Proposition \ref{lrprop2}.

If $s, t\in T$, then $\gamma=\beta\in\Phi_2$. Then (\ref{bksst4eq1}) and (\ref{bksst4eq2}) are satisfied with $\nu$ replaced by $ws_{\beta}\nu=w_\lambda\mu$. We obtain $w_\mu= w_\lambda$ and $\zeta=ws_{\beta}\nu$. Hence $\mu^1=\lambda^1$ and $\mu^2=ws_{\beta}\lambda^2$ with linked root $\beta$ from $\lambda^2$.

\end{proof}

\begin{remark}\label{bkssrmk2}
If $(S, \overline S)$ is trivial, the previous argument shows that either ${}^{I^1}W_1^{{J^1}}=1$ or ${}^{I^2}W_1^{{J^2}}=1$.
\end{remark}

Imitate the proof of Lemma \ref{bkswlem7}, we get:

\begin{lemma}\label{bksslem5}
Let $\Phi=D_n$. Let $(S, \overline S)\in \caD$ be an even strongly separable pair. Set
\[
\begin{aligned}
\caD^1=&\{(S', \overline{S'})\in \caD\mid (S', \overline{S'})\leq(S, \overline S),\ \overline{S'}\neq \overline S\backslash\{\overline m\}\};\\
\caD^2=&\{(S', \overline{S'})\in \caD\mid (S', \overline{S'})\geq(S, \overline S),\ S'\neq S\backslash\{m\}\}.
\end{aligned}
\]
Fix $i\in\{1, 2\}$. If $I^i, J^i\neq\Delta_i$, there exists bijection $\psi_i$ between $D(\lambda^i, \Phi_I\cap\Phi_i, \Phi_i)$ and $\caD^i$ satisfying the following conditions:
\begin{itemize}
  \item [(1)] $\psi_i$ sends equivalent pairs to equivalent pairs, so does $\psi_i^{-1}$.
  \item [(2)] $(S'_i, \overline{S'_i})$ is trivial if and only if $\psi_i(S'_i, \overline{S'_i})$ is trivial or $\psi_i(S'_i, \overline{S'_i})\sim (S, \overline S)$.
\end{itemize}
Otherwise $(S', \overline{S'})\sim (S, \overline S)$ for any nontrivial $(S', \overline{S'})\in \caD^i$.
\end{lemma}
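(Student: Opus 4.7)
The plan is to model the argument on Lemma \ref{bksslem1}, adapted to the even case, where $m \not\in S$, $\overline m \not\in \overline S$, and consequently (since $0 = a_{\overline m}$ appears with zero count in every column) neither $\lambda^1$ nor $\lambda^2$ has zero entries. I would prove the result for $i = 1$ in detail; the case $i = 2$ follows by the symmetric construction with rows and columns swapped, or more quickly by invoking Lemma \ref{sslem2} applied to the dual system $(\Phi, \Phi_J, \Phi_I)$.

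Using the natural identifications of the columns of $\Phi_1$ with $S$ and of its rows with $\overline T := \{1, \ldots, \overline m\} \setminus \overline S$, I would define
\[
\psi_1(S'_1, \overline{S'_1}) := \bigl(S'_1,\ \overline{S'_1} \cup \overline S\bigr),
\]
with inverse $(S', \overline{S'}) \mapsto (S', \overline{S'} \setminus \overline S)$. Since $\overline m \not\in \overline S$ one has $\overline S \setminus \{\overline m\} = \overline S$, so the excluded pair in $\caD^1$ is exactly $(S', \overline S)$, which would correspond under $\psi_1^{-1}$ to a pair with empty second coordinate. To check well-definedness, I would verify that maximality of $n^\lambda_s(a_t)$ on $S \times \overline S$ combined with maximality of $n^{\lambda^1}_s(a_t)$ on $S'_1 \times \overline{S'_1}$ yields maximality of $n^\lambda_s(a_t)$ on $S'_1 \times (\overline{S'_1} \cup \overline S)$, and that the vanishing condition outside the product sets also transfers. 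For the inverse, I would show (imitating the argument in Lemma \ref{bksslem1} and using Lemma \ref{bksslem2} to exclude odd pairs from $\caD$) that any $(S', \overline{S'}) \in \caD^1$ automatically satisfies $S' \subset S$, so the preimage lies in $\caD(\lambda^1, \Phi_I \cap \Phi_1, \Phi_1)$. Property (1) is then immediate because $\psi_1$ acts as the identity on the first coordinate and as a fixed translation on the second, sending the two basic equivalences of $\sim$ to their counterparts.

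The main obstacle, and the most delicate step, will be property (2). Trivial pairs of $(\Phi_1, \Phi_{I^1}, \Phi_{J^1})$ have form $(\{s_0\}, \{1, \ldots, \overline m_1 - 1\})$ or $(\{1, \ldots, m_1 - 1\}, \{t_0\})$ with $m_1 = |S|$ and $\overline m_1 = |\overline T|$. Under $\psi_1$, the first becomes $(\{s_0\}, (\overline T \setminus \{t^*\}) \cup \overline S) = (\{s_0\}, \{1, \ldots, \overline m\} \setminus \{t^*\})$, where $t^* \in \overline T$ is the row of $\overline T$ corresponding to the $\overline m_1$-th row of $\Phi_1$. I would then perform a case analysis on $t^*$: when $t^* = \overline m$, the image is the trivial pair $(\{s_0\}, \{1, \ldots, \overline m - 1\}) \in \caD$; when $t^* \neq \overline m$, I would use Lemma \ref{splem5} together with strong separability of $(S, \overline S)$ to build a fundamental chain from the image to $(S, \overline S)$, establishing equivalence. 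A dual analysis handles the trivial pair of the second form, and the converse implication (preimages of trivial or $(S, \overline S)$-equivalent pairs are trivial) follows by reversing the same chains inside the subsystem, using Lemma \ref{splem7} to extract the shortest fundamental chain.

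Finally, when $I^1 = \Delta_1$ or $J^1 = \Delta_1$, the subsystem $(\Phi_1, \Phi_{I^1}, \Phi_{J^1})$ admits no separable pair at all. Then for any nontrivial $(S', \overline{S'}) \in \caD^1$, the minimality analysis of Lemma \ref{splem7} together with Lemma \ref{splem3} forces either $S' = S$ (so $(S', \overline{S'}) \sim (S, \overline S)$ via Lemma \ref{splem5}) or else would produce an intermediate pair in $\caD(\lambda^1, \Phi_I \cap \Phi_1, \Phi_1)$, contradicting the hypothesis; hence $(S', \overline{S'}) \sim (S, \overline S)$ in all cases. The parallel statement for $i = 2$, as noted, follows by duality.
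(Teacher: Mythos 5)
Your overall plan — imitating the proof of Lemma \ref{bksslem1} with the adaptation $\overline S\setminus\{\overline m\}=\overline S$ in the even case, and defining $\psi_1(S'_1,\overline{S'_1})=(S'_1,\overline{S'_1}\cup\overline S)$ — is the right one and matches what the paper indicates (the citation to Lemma \ref{bkswlem7} there is clearly a typo for Lemma \ref{bksslem1}). That said, there are concrete errors in the details. First, your opening parenthetical, that ``$0=a_{\overline m}$ appears with zero count in every column, so neither $\lambda^1$ nor $\lambda^2$ has zero entries,'' is false for $\lambda^1$. Definition \ref{ssdef1}(3) forces $n_s^\lambda(0)=0$ only for $s\notin S$; for $s\in S$ there is no such constraint, and indeed $\overline n_{\overline m}=\sum_{s\in S}n_s^\lambda(0)$ may be positive, in which case the zeros of $\lambda$ land exactly in $\lambda^1$. (Your claim is correct for $\lambda^2$, whose entries lie in $\overline\caA^*$ which excludes $0$.) Second, your row/column counts are off: since $m\notin S$, the tail column of $\Phi_1$ is empty but still counted, so $m_1=|S|+1$ (not $|S|$), and $\overline m$ always lies in $\overline T=\{1,\dots,\overline m\}\setminus\overline S$, so $\overline m_1=|\overline T|$ with $\overline m_1\leftrightarrow\overline m$.

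These slips propagate into your proof of property (2). Because $\overline m\in\overline T$ and $\overline m$ is always identified with the $\overline m_1$-th (zero) row of $\Phi_1$, your quantity $t^*$ is always $\overline m$; the dichotomy ``$t^*=\overline m$ vs.\ $t^*\neq\overline m$'' never bifurcates and so is not the right organizing principle. The actual split is between the two canonical forms of trivial pair for $\Phi_1$: the form $(\{s_0\},\{1,\dots,\overline m_1-1\})$ maps under $\psi_1$ to $(\{s_0\},\{1,\dots,\overline m-1\})$, which is trivial for $\Phi$; the form $(\{1,\dots,m_1-1\},\{t_0\})=(\{1,\dots,|S|\},\{t_0\})$ maps to $(S,\{t_0\}\cup\overline S)$, which is equivalent to $(S,\overline S)$ by Lemma \ref{splem5}. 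Your proposal also leaves the forward well-definedness of $\psi_1$ — the verification that a separable pair $(S'_1,\overline{S'_1})$ of $\Phi_1$ never has $m_1\in S'_1$ — unaddressed; you invoke the definition of $\caD^1$, but that handles the inverse direction, not the forward one, and a contradiction argument parallel to the one in Lemma \ref{bksslem1}'s proof is still needed here (the empty $m_1$-tail of $\Phi_1$ makes the column-$m_1$ maximality conditions vacuous, so this check is genuinely delicate in the even $D_n$ case, and should not be glossed over).
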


\begin{theorem}\label{bkssthm42}
For $\Phi=D_n$, suppose the system $(\Phi, \Phi_I, \Phi_J)$ has no odd separable pair. Then it contains $2^k$ blocks, where $k$ is the number of equivalence classes of nontrivial separable pairs.
\end{theorem}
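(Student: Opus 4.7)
The proof proceeds by induction on $n$, paralleling the strategy used for Theorems \ref{bkssthm22} and \ref{bkssthm32}. If $\caD=\emptyset$, the system is not separable, so $k=0$ and Theorem \ref{bkswthm3} already produces a single block, matching $2^0$. Assume henceforth $\caD\neq\emptyset$; by Lemma \ref{bksslem2} every separable pair is then even and strongly separable, so the system itself is strongly separable. Choose $(S,\overline S)\in\caD$ to be a minimal element under $\leq$.

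Applying Theorem \ref{bkssthm4} to this minimal pair produces proper subsystems $\Phi_1\subset\Phi$ of type $A$ (since $m\notin S$) and $\Phi_2\subset\Phi$ of type $D$ (since $m\in T$), together with sets $I^i,J^i\subset\Delta_i$. When $(S,\overline S)$ is trivial we have a bijection ${}^IW^J\simeq{}^{I^1}W_1^{J^1}\times{}^{I^2}W_2^{J^2}$; when it is nontrivial the set ${}^IW^J$ decomposes as the disjoint union of two copies of such a product, one with data $(J^1,J^2)$ and the other with data $(\vf(J^1),\vf(J^2))$. In either form, $\lambda\lera\mu$ in $(\Phi,\Phi_I,\Phi_J)$ if and only if $\lambda^i\lera\mu^i$ holds for both $i$ in the appropriate sub-system. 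Consequently the block count of $(\Phi,\Phi_I,\Phi_J)$ equals the product of the block counts of $(\Phi_i,\Phi_{I^i},\Phi_{J^i})$ in the trivial case and twice that product in the nontrivial case, because the two components of the disjoint union cannot be linked by $\lera$.

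Since $\Phi_1$ is of type $A$, Theorem \ref{bkswthm1} implies $(\Phi_1,\Phi_{I^1},\Phi_{J^1})$ has exactly one block. For $(\Phi_2,\Phi_{I^2},\Phi_{J^2})$ I first verify it has no odd separable pair: any odd pair of the sub-system would, via the bijection $\psi_2$ of Lemma \ref{bksslem5}, correspond to a pair of $(\Phi,\Phi_I,\Phi_J)$ whose first coordinate contains $m$, contradicting the hypothesis. Hence the induction hypothesis applies to $(\Phi_2,\Phi_{I^2},\Phi_{J^2})$ and gives $2^{k_2}$ blocks, where $k_2$ is the number of nontrivial equivalence classes in $\caD(\lambda^2,\Phi_I\cap\Phi_2,\Phi_2)$.

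It remains to compare $k_2$ with $k$ using Lemma \ref{bksslem5} and the minimality of $(S,\overline S)$. Minimality forces every element of $\caD$ to lie in $\caD^2$ or to be equivalent to $(S,\overline S)$. If $(S,\overline S)$ is trivial, then by Remark \ref{bkssrmk2} one of the factors is degenerate; the bijection $\psi_2$ preserves triviality and equivalence, so $k_2=k$ and the total block count is $1\cdot 2^k=2^k$. If $(S,\overline S)$ is nontrivial, it lies in $\caD^2$ and pulls back via $\psi_2$ to a trivial pair of the sub-system by Lemma \ref{bksslem5}(2), so $k_2=k-1$, and we obtain $2\cdot 2^{k-1}=2^k$ blocks. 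The degenerate cases $I^2=\Delta_2$ or $J^2=\Delta_2$ are covered by the final clause of Lemma \ref{bksslem5}, which forces all nontrivial pairs of $\caD^2$ to collapse into the class of $(S,\overline S)$ and so reduces $k$ to $0$ or $1$ as appropriate. The main obstacle will be exactly this bookkeeping between $\caD$, $\caD^2$, and $\caD(\lambda^2,\Phi_I\cap\Phi_2,\Phi_2)$ across the trivial/nontrivial dichotomy; once the correspondence from Lemma \ref{bksslem5} is applied carefully, each case yields the uniform answer $2^k$.
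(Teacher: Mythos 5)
Your overall architecture (induction on $n$, pick a minimal $(S,\overline S)$, decompose via Theorem~\ref{bkssthm4}, then bookkeep equivalence classes through Lemma~\ref{bksslem5}) matches the paper's proof. However, there is one genuine error in the way you dispose of the $\Phi_1$ factor.

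You assert that $\Phi_1$ is of type $A$ ``since $m\notin S$'' and then invoke Theorem~\ref{bkswthm1} to conclude that $(\Phi_1,\Phi_{I^1},\Phi_{J^1})$ has one block. This is incorrect: in the even $D_n$ case, $\Phi_1$ is defined as
\[
\Phi_1=\Bigl(\sum_{\substack{q_{s-1}+2p<i\leq q_s\\ s\in S}}\bbQ e_i\Bigr)\cap\Phi,
\]
namely the intersection of $\Phi=D_n$ with a coordinate subspace; this is a root system of type $D$ (possibly degenerate), exactly as $\Phi_2$ is. The fact $m\notin S$ only determines \emph{which} segments contribute coordinates to $\Phi_1$; it has no bearing on the type, which is inherited from the ambient $D_n$. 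Thus Theorem~\ref{bkswthm1}, which is special to type $A$, cannot be applied. The paper's actual reason $(\Phi_1,\Phi_{I^1},\Phi_{J^1})$ has a single block is different: by minimality of $(S,\overline S)$ and the fact that $(S,\overline S)\notin\caD^1$ in the even case, one has $\caD^1=\emptyset$; Lemma~\ref{bksslem5} places $\caD(\lambda^1,\Phi_I\cap\Phi_1,\Phi_1)$ in bijection with $\caD^1$, so it is empty, i.e.\ $(\Phi_1,\Phi_{I^1},\Phi_{J^1})$ is \emph{not separable}, and then Theorem~\ref{bkswthm3} (the type-$D$ result) gives the single block.

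Once this step is corrected the remainder of your argument is essentially the paper's, with a small stylistic difference: you obtain the extra factor of $2$ in the nontrivial case by observing directly that the two summands of the disjoint union in Theorem~\ref{bkssthm4}(1) cannot be connected under $\lera$, whereas the paper folds this into the count of equivalence classes. Your observation is correct (distinct $\vf$-twisted orbits are never linked, as Theorem~\ref{bkssthm4}(2) and the block analysis show), but you state it without justification; it deserves at least a one-line appeal to the fact that the $\lera$-relation restricts to each $W$-orbit of dominant weights. Your extra verification that $\caD(\lambda^2,\Phi_I\cap\Phi_2,\Phi_2)$ has no odd pair is a reasonable addition that the paper leaves implicit, though it too relies on $\psi_2$ preserving the odd/even distinction, which should be observed from the explicit shape of the bijection rather than merely asserted.
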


\begin{proof}
Lemma \ref{bksslem2} shows that any separable pair is strongly separable in this case. If $(\Phi, \Phi_I, \Phi_J)$ is not separable, then $\caD$ is empty and $(\Phi, \Phi_I, \Phi_J)$ contains $2^0=1$ blocks by Theorem \ref{bkswthm3}.

Next suppose $(\Phi, \Phi_I, \Phi_J)$ is separable. Using induction on $n$, the case $n=3$ is easy. For $n>3$, choose $(S, \overline S)$ which is a minimal element of $\caD$. Therefore $(\Phi_1, \Phi_{I^1}, \Phi_{J^1})$ is not separable by Lemma \ref{bksslem5}. Moreover, $D\backslash \caD^2$ is contained in the equivalence class of $(S, \overline S)\not\in\caD^1\cup\caD^2$.

If $(S, \overline S)$ is trivial, then $\caD^2$ contains $k$ nontrivial equivalence classes, so does $D(\lambda^2, \Phi_I\cap\Phi_2, \Phi_2)$ when $I^2, J^2\neq\Delta_2$. When $I^2=\Delta_2$ or $J^2=\Delta_2$, $\caD^2$ contains only trivial pairs. It follows that $\caD=(\caD\backslash \caD^2)\cup \caD^2$ contains only trivial pairs. With the induction hypothesis, Theorem \ref{bkswthm3} and Theorem \ref{bkssthm4} shows that $(\Phi, \Phi_I, \Phi_J)$ has $2^k$ blocks.

If $(S, \overline S)$ is not trivial, then $\caD(\lambda^2, \Phi_I\cap\Phi_2, \Phi_2)$ has $k-1$ nontrivial equivalence classes when $I^2, J^2\neq\Delta_2$. When $I^2=\Delta_2$ or $J^2=\Delta_2$, $\caD$ contains one equivalence class represented by $(S, \overline S)$. Applying the induction hypothesis, Theorem \ref{bkswthm3} and Theorem \ref{bkssthm4} implies that $(\Phi, \Phi_I, \Phi_J)$ has $2^k$ blocks.

\end{proof}

\begin{cor}\label{bksscor1}
Let $\Phi=B_n$, $C_n$ or $D_n$. The system $(\Phi, \Phi_I, \Phi_J)$ has $2^p$ blocks, where $0\leq p<\min\{m, \overline m\}$.
\end{cor}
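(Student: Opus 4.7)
The plan is to reduce to the block-counting theorems already proved in this section (Theorems \ref{bkswthm2}, \ref{bkswthm3}, \ref{bkssthm22}, \ref{bkssthm32}, \ref{bkssthm42}), each of which expresses the number of blocks of $(\Phi, \Phi_I, \Phi_J)$ as $2^p$ with $p$ equal to, or one less than, the number $k$ of equivalence classes of (nontrivial) separable pairs in $\caD$. The key combinatorial input is Lemma \ref{splem6}, which forces inequivalent separable pairs to have distinct values of both $|S|$ and $|\overline S|$; in particular the total number of equivalence classes in $\caD$ is at most $\min\{m, \overline m\}$.

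First I would dispose of the easy cases. If $(\Phi, \Phi_I, \Phi_J)$ is not separable then $p = 0$ and there is nothing to prove, since $\min\{m, \overline m\} \geq 1$. If it is weakly separable, the cited theorems yield at most two blocks, so $p \leq 1$, while Definition \ref{ssdef1}(1) forces $m, \overline m \geq 2$. It remains to treat the strongly separable case, which I would split by type.

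For type $D_n$ with an odd pair, Theorem \ref{bkssthm32} gives $p = k - 1$, so the bound $k \leq \min\{m, \overline m\}$ from Lemma \ref{splem6} already suffices. For type $D_n$ without an odd pair, every separable pair is even, so Definition \ref{ssdef1}(5) forces $m \notin S$ and $\overline m \notin \overline S$ throughout $\caD$; hence every representative satisfies $|S| \leq m - 1$ and $|\overline S| \leq \overline m - 1$, and Theorem \ref{bkssthm42} yields $p = k \leq \min\{m, \overline m\} - 1$.

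The main obstacle is type $B_n$ and $C_n$, where Theorem \ref{bkssthm22} counts only nontrivial classes and a nontrivial representative could a priori achieve $|S| = m$ or $|\overline S| = \overline m$. To rule this out, I would arrange nontrivial representatives so that $|S_1| < \ldots < |S_k|$ and suppose for contradiction that $|S_k| = m$. Then $S_k = \{1,\ldots,m\}$, and Definition \ref{ssdef1}(4) forces $\overline m \notin \overline S_k$, so any $t_0 \in \overline S_k$ lies in $\{1,\ldots,\overline m-1\}$. The pair $(\{1,\ldots,m\}, \{t_0\})$ is then itself in $\caD$ (condition (2) is inherited from $(\{1,\ldots,m\}, \overline S_k)$, (3) is vacuous, (4) is satisfied since $t_0 < \overline m$), and Lemma \ref{splem5} places it in the same equivalence class as $(\{1,\ldots,m\}, \overline S_k)$. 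Hence the class of $(S_k, \overline S_k)$ is trivial, contradicting the choice. Therefore $|S_k| \leq m - 1$; a symmetric argument using Lemma \ref{splem4} yields $|\overline S_1| \leq \overline m - 1$. Combining gives $p = k \leq \min\{m, \overline m\} - 1$, completing the proof.
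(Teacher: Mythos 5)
Your proof is correct and follows the same route as the paper's: both arguments bound $k$ (the number of relevant equivalence classes of separable pairs) using the fact, from Lemma \ref{splem3}/\ref{splem6}, that distinct classes carry distinct values of $|S|$ and of $|\overline S|$, and both exclude the end values by showing that a pair with $|S|=m$ or $|\overline S|=\overline m$ would be trivial (for $B_n,C_n$), with the $D_n$ case split into odd/even exactly as you do. Your write-up is slightly more careful than the paper's on one point: the paper orders the representatives so that $|S_i|$ and $|\overline S_i|$ are \emph{both} increasing, which is not consistent with Lemma \ref{splem3} (comparable pairs have $|S|$ and $|\overline S|$ varying oppositely); your version, ordering only by $|S|$ and then bounding the $|\overline S|$ side separately, sidesteps that.
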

\begin{proof}
With Lemma \ref{splem3}, we can choose a representative $(S_i, \overline S_i)$ $(1\leq i\leq k)$ of each nontrivial equivalence class such that $0<|S_1|<\ldots<|S_k|$ and $0<|\overline{S}_1|<\ldots<|\overline S_k|$. If $\Phi=B_n$, $C_n$, then $|S_k|<m$ and $|\overline S_k|<\overline m$ (otherwise $(S_k, \overline S_k)$ is trivial). Thus $k<\min\{m, \overline m\}$. If $\Phi=D_n$ and $m\not\in S_k$, we also get $k<\min\{m, \overline m\}$. If $\Phi=D_n$ and $m\in S_k$, then $k\leq\min\{m, \overline m\}$. Now the assertion follows from Theorem \ref{bkssthm22}, Theorem \ref{bkssthm32} and Theorem \ref{bkssthm42}.
\end{proof}

\begin{definition}\label{bkssdef1}
Suppose $(\Phi, \Phi_I, \Phi_J)$ contains at least two simple modules. We say $\caO_\lambda^\frp$ is \emph{pseudo-indecomposable} if it is not strongly separable, where $\Phi_J=\Phi_{\overline\lambda}$.
\end{definition}

Theorem \ref{bkswthm1}, Theorem \ref{bkswthm2} and Theorem \ref{bkswthm3} give the following result.

\begin{cor}\label{bksscor2}
Suppose $\caO_\lambda^\frp$ is pseudo-indecomposable. If $m, \overline m>1$ and
\[
(n_m, \overline n_{\overline m})=\left\{\begin{aligned}
 &(0, m-1), (\overline m-1, 0)\quad\mbox{for}\ \Phi=B_n, C_n \\
 &(\overline m, m)\qquad\qquad\qquad\quad \mbox{for}\ \Phi=D_n,
\end{aligned}
\right.
\]
then $\caO_\lambda^\frp$ has two blocks, otherwise it contains only one block. In case $\caO_\lambda^\frp$ contains two blocks, $L(\mu), L(\nu)\in\caO_\lambda^\frp$ are in the same block if and only if $P(\mu)=P(\nu)$.
\end{cor}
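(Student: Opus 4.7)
The plan is to deduce this corollary by unpacking Definition \ref{bkssdef1} (pseudo-indecomposable means not strongly separable) and matching each resulting case against the block counts supplied by Theorems \ref{bkswthm1}, \ref{bkswthm2}, and \ref{bkswthm3}. Since $\caO_\lambda^\frp$ is assumed to contain at least two simple modules, it suffices to recover, in every case, the number of blocks and the equivalence criterion.

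First I would dispose of the easy cases. For $\Phi = A_{n-1}$, Theorem \ref{bkswthm1} already states that $(\Phi, \Phi_I, \Phi_J)$ has at most one block, and there is nothing further to check (in type $A$ every separable pair is by definition strongly separable, so pseudo-indecomposability imposes no new constraint). Next, if either $m = 1$ or $\overline m = 1$, then Definition \ref{ssdef1}(1) forces $\caD$ to be empty, so the system is not separable; for $\Phi = B_n, C_n$ this gives one block by Theorem \ref{bkswthm2}(1), and for $\Phi = D_n$ one block by Theorem \ref{bkswthm3}(1). This disposes of all instances outside the hypothesis $m, \overline m > 1$ in the displayed formula.

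The main case is $\Phi$ of classical non-$A$ type with $m, \overline m > 1$. Since $\caO_\lambda^\frp$ is pseudo-indecomposable it is not strongly separable, so exactly one of two possibilities holds: either $(\Phi, \Phi_I, \Phi_J)$ is not separable, or it is weakly separable. For $\Phi = B_n, C_n$, Theorem \ref{bkswthm2}(1) gives one block in the first possibility, and Theorem \ref{bkswthm2}(2) shows the second possibility forces $(n_m, \overline n_{\overline m}) = (0, m-1)$ or $(\overline m - 1, 0)$ and produces exactly two blocks; the parity criterion in Theorem \ref{bkswthm2}(3) then says $L(\mu)$ and $L(\nu)$ lie in the same block iff $P(\mu) = P(\nu)$. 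For $\Phi = D_n$, Theorem \ref{bkswthm3}(1) handles the non-separable case, and Theorem \ref{bkswthm3}(2)--(3) handle the weakly separable case: there are two blocks precisely when $(n_m, \overline n_{\overline m}) = (\overline m, m)$, again distinguished by the parity $P(\cdot)$, and otherwise only one block.

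There is no genuine obstacle; the content is bookkeeping, matching the hypothesis ``pseudo-indecomposable $+$ $m, \overline m > 1$'' against the dichotomy ``not separable vs.\ weakly separable'' already analyzed. The only mild subtlety worth flagging explicitly in the write-up is that in type $D$ weak separability does \emph{not} automatically yield two blocks; one must check the constraint $(n_m, \overline n_{\overline m}) = (\overline m, m)$ from Theorem \ref{bkswthm3}(2), so the cases where this fails merge with the ``not separable'' case into the single-block conclusion advertised by the corollary. With that observation, concatenating the four theorem outputs gives exactly the statement of Corollary \ref{bksscor2}.
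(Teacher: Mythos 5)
Your proposal is correct and takes essentially the same route as the paper: the paper's own ``proof'' is the single sentence ``Theorem \ref{bkswthm1}, Theorem \ref{bkswthm2} and Theorem \ref{bkswthm3} give the following result,'' and what you write is exactly the case-by-case unpacking of that citation. The one small point that both you and the paper leave implicit is the forward implication ``condition on $(n_m,\overline n_{\overline m})$ plus $m,\overline m>1$ plus not strongly separable $\Rightarrow$ weakly separable'' (needed to rule out falling into the non-separable, one-block branch of Theorems \ref{bkswthm2}(1), \ref{bkswthm3}(1) when the displayed condition holds); this is an easy check — e.g.\ for $B_n,C_n$ with $(n_m,\overline n_{\overline m})=(0,m-1)$, Lemma \ref{lrlem2}(iv) forces $n_s^\lambda(0)=1$ for all $s<m$ and $n_m^\lambda(a_t)=0$ for all $t$, so $(\{1,\ldots,m-1\},\{\overline m\})\in\caD$ — but it deserves a line.
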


Choose $I, J\subset\Delta\subset\frh^*$. If $\frg\simeq\frso(2n, \bbC)$ for $n\geq1$, set $\overline {{}^IW{}^J}={}^IW^J\sqcup{}^IW^{\vf(J)}$. Combined with Theorem \ref{bkswthm2}, Theorem \ref{bkswthm3}, Theorem \ref{bkssthm1}, Theorem \ref{bkssthm2} Theorem \ref{bkssthm3} and Theorem \ref{bkssthm4} (with Remark \ref{bkssrmk2}), we have the following result.

\begin{theorem}\label{main}
Let $I, J\subset\Delta$ such that ${}^IW^J\neq\emptyset$. Choose $\lambda$ so that $\Phi_\lambda=\Phi_J$.
\begin{itemize}
\item [(1)] There exist $k\geq0$ and subalgebras $\frg_i$ $(1\leq i\leq k)$ with root systems $\Phi_i$ and simple system $\Delta_i$ and Weyl groups $W_i$ such that
\[
\overline{{}^IW^J}\simeq \overline{{}^{I^1}W_1^{J^1}}\times\ldots\times\overline{{}^{I^k}W_1^{J^k}}
\]
when $\Phi=D_n$ with $(\Phi, \Phi_I, \Phi_J)$ being even and
\[
{}^IW^J\simeq {}^{I^1}W_1^{J^1}\times\ldots\times{}^{I^k}W_1^{J^k}
\]
otherwise. Here $I^i, J^i\subset\Delta_i$ are determined by $I, J$. Moreover, the categories associated with ${}^{I^i}W_i^{J^i}$ are pseudo-indecomposable.

\item [(2)] $L(\mu), L(\nu)\in\caO_\lambda^\frp$ are in the same block if and only if $L(\mu^i), L(\nu^i)$ are in the same block for each subcategories. The weights $\mu^i, \nu^i\in\frh_i^*$ are determined by $\mu, \nu$, where $\frh_i^*$ is a Cartan subalgebra of $\frg_i$.
\end{itemize}

\end{theorem}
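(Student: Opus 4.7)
The plan is to prove the theorem by induction on the rank $n$ of $\Phi$, assembling the structural decomposition theorems established in the previous sections. For the base case, suppose $(\Phi, \Phi_I, \Phi_J)$ is pseudo-indecomposable in the sense of Definition \ref{bkssdef1}. We then take $k = 1$ with $(\frg_1, I^1, J^1) = (\frg, I, J)$, so that part (1) holds trivially; part (2) in this case reduces to Theorem \ref{bkswthm1} together with Corollary \ref{bksscor2}, which describe the block structure of pseudo-indecomposable systems in all classical types.

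For the inductive step, assume $(\Phi, \Phi_I, \Phi_J)$ is strongly separable. According to the type of $\Phi$ and the nature of a chosen separable pair $(S, \overline S)$, I would apply one of the four decomposition theorems: Theorem \ref{bkssthm1} when $\Phi = A_{n-1}$; Theorem \ref{bkssthm2} when $\Phi = B_n$ or $C_n$; Theorem \ref{bkssthm3} when $\Phi = D_n$ and $(S, \overline S)$ is odd; Theorem \ref{bkssthm4} when $\Phi = D_n$ and $(S, \overline S)$ is even. In each case the map $\lambda \mapsto (\lambda^1, \lambda^2)$ defined via the restrictions to proper subsystems $\Phi_1, \Phi_2 \subsetneq \Phi$ induces a bijection ${}^IW^J \simeq {}^{I^1}W_1^{J^1} \times {}^{I^2}W_2^{J^2}$, or in the nontrivial even $D_n$ case the twisted disjoint union on the right-hand side of Theorem \ref{bkssthm4}(1) that corresponds to the $\overline{(\cdot)}$-notation. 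Each of these theorems further asserts that $\lambda \lera \mu$ if and only if $\lambda^i \lera \mu^i$ relative to $(\Phi_{I^i}, \Phi_i)$ for $i = 1, 2$, which is the crucial content needed for part (2).

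Since each $\Phi_i$ has strictly smaller rank than $\Phi$, the induction hypothesis applies to $(\Phi_i, \Phi_{I^i}, \Phi_{J^i})$, yielding a decomposition of ${}^{I^i}W_i^{J^i}$ into $k_i$ pseudo-indecomposable factors. Composing the two-stage decomposition and invoking the associativity of the direct product, we obtain the desired product decomposition with $k = k_1 + k_2$ pseudo-indecomposable factors, proving part (1). Part (2) follows by chaining the block compatibility through the induction: $L(\mu) \lera L(\nu)$ in $\caO_\lambda^\frp$ iff $L(\mu^i) \lera L(\nu^i)$ for $i = 1, 2$, iff after further decomposing, the fully refined components $L(\mu^{i,j}) \lera L(\nu^{i,j})$ in each pseudo-indecomposable piece.

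The main obstacle will be the even case for $\Phi = D_n$ with a nontrivial separable pair (Theorem \ref{bkssthm4}), where the bijection is a disjoint union involving both $J^i$ and $\vf(J^i)$. The inductive step must carefully track whether a subfactor $(\Phi_i, \Phi_{I^i}, \Phi_{J^i})$ is itself of even type $D$, so that the $\overline{(\cdot)}$-gluing at the outer stage is compatible with possible inner $\overline{(\cdot)}$-gluings, and so that the action of $\vf$ commutes with the iterated decomposition. Establishing this compatibility is largely a matter of careful bookkeeping once the even-versus-odd dichotomy (Lemma \ref{bksslem2}) is consistently propagated through the induction, but it is the only place where the naive product structure must be replaced by a twisted variant.
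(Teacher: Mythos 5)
Your proposal is correct and follows essentially the same route as the paper, which proves the theorem by bootstrapping the decomposition theorems of Section~7 (Theorems~\ref{bkssthm1}, \ref{bkssthm2}, \ref{bkssthm3}, \ref{bkssthm4}) with the pseudo-indecomposable base case supplied by Theorems~\ref{bkswthm1}, \ref{bkswthm2}, \ref{bkswthm3} and Corollary~\ref{bksscor2}. The only thing the paper adds explicitly that your sketch defers to ``bookkeeping'' is Remark~\ref{bkssrmk2}, which handles the trivial-pair degenerate case in the even $D_n$ decomposition, so your organization is consistent with the paper's intent.
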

%
%
\section{Blocks and partitions}
%
%
The last section is devoted to discover the relation between blocks and partitions which classified the nilpotent orbits \cite{CM}.

\subsection{Nonzero criterion of blocks} We always assume that ${}^IW^J\neq\emptyset$ in the previous sections. Now we recall the criterion obtained in \cite{P3} for ${}^IW^J$ to be nonempty. For a positive integer $N$, define the set
\[
\caP(N):=\{\pi=(\pi_1, \pi_2, \ldots, \pi_N)\in\bbZ^N\mid\pi_1\geq\pi_2\geq\ldots\geq\pi_N\geq0,\ \sum_{i=1}^n\pi_i=N\}
\]
of {\it partitions} of $N$. For any $\pi\in\caP(N)$, the {\it dual partition} $\pi^t$ of $\pi$ is defined by
\[
\pi^t_i:=|\{j\mid\pi_j\geq i\}|\quad\mbox{for}\quad1\leq i\leq N.
\]
We will frequently omit the trailing $0$'s when writing down a partition. There exists a partial ordering on $\caP(N)$, called the {\it dominance ordering}, defined as follows. If $\pi, \eta\in\caP(N)$, we write $\pi\unlhd\eta$ if
\[
\pi_1+\pi_2+\ldots+\pi_i\leq\eta_1+\eta_2+\ldots+\eta_i.
\]
for all $i\leq N$.

The adjoint group $G$ of $\frg$ is a connected complex Lie group with Lie algebra $\frg$. The variety $\caN(\frg)$ of nilpotent elements of $\frg$ has finitely many $G$-orbits (so called nilpotent orbit) under the natural action of $G$. For any $I\subset\Delta$, define the nilpotent orbit
\[
O_I=G\cdot(\sum_{\alpha\in I}E_\alpha),
\]
where $E_\alpha$ are nonzero elements of $\frg_\alpha$.


Let $\Phi$ be a classical root system of type $X$, where $X$ is one of $A$, $B$, $C$ and $D$. It is well known that the nilpotent orbits of a classical Lie algebra can be parameterized by certain partitions of $N(X_n)$ (see for example \cite{CM}, \S 5), where $N(X_n)$ is given as follows. The nilpotent orbits of type $A_{n-1}$ are in one-to-one correspondence with the set $\caP_A(n)=\caP(n)$ of all partitions of $N(A_{n-1})=n$. The nilpotent orbits of type $B_n$ are in one-to-one correspondence with the set $\caP_B(2n+1)$ of partitions of $N(B_n)=2n+1$ in which even parts occur with even multiplicity. The nilpotent orbits of type $C_n$ are in one-to-one correspondence with the set $\caP_C(2n)$ of partitions of $N(C_n)=2n$ in which odd parts occur with even multiplicity. The nilpotent orbits of type $D_n$ are parameterized by partitions $\caP_D(2n)$ of $N(D_n)=2n$ in which even parts occur with even multiplicity, except the very even partitions $\pi$ (i.e., those with only even parts, each having even multiplicity) correspond to two orbits. For type $D_n$ such that $\pi$ is a very even partition, denote by $O_{\pi}^{\mathrm{\Rmnum{1}}}$ and $O_{\pi}^{\mathrm{\Rmnum{2}}}$ the two orbits associated with $\pi$. For all other cases, the orbit corresponding to the partition $\pi$ will be denoted by $O_\pi$. The partition $\pi_I$ associated with $O_I$ is given as follows (see \cite{Sp} or \cite{P3} or \cite{CM} \S 5).
\begin{itemize}
\item Let $\Phi=A_{n-1}$. Then $\pi_I\in\caP(n)$ is given by arranging the set
\[
\{n_1, n_2, \ldots, n_{m-1}, n_{m}\}.
\]

\item Let $\Phi=B_n$. Then $\pi_I\in\caP(2n+1)$ is given by arranging the set
\[
\{n_1, n_1, n_2, n_2, \ldots, n_{m-1}, n_{m-1}, 2n_m+1\}.
\]

\item Let $\Phi=C_n$. Then $\pi_I\in\caP(2n)$ is given by arranging the set
\[
\{n_1, n_1, n_2, n_2, \ldots, n_{m-1}, n_{m-1}, 2n_m\}.
\]

\item Let $\Phi=D_n$. If $n_m\geq2$,  $\pi_I\in\caP(2n)$ is given by arranging the set
\[
\{n_1, n_1, n_2, n_2, \ldots, n_{m-1}, n_{m-1}, 2n_m-1, 1\}.
\]
If $n_m=0$ (note that $n_{m}\neq1$),  $\pi_I\in\caP(2n)$ is given by arranging the set
\[
\{n_1, n_1, n_2, n_2, \ldots, n_{m-1}, n_{m-1}\}.
\]
When $I$ is standard and $\pi_I$ is very even, $O_I=O_{\pi_I}^{\mathrm{\Rmnum{1}}}$. When $I$ is not standard and $\pi_I$ is very even, then $\pi_I=\pi_{\vf(I)}$ and $O_I=O_{\pi_I}^{\mathrm{\Rmnum{2}}}$.
\end{itemize}

There also exists a nilpotent orbit $R_I$, which is defined to be the nilpotent orbit associated with $G\cdot\fru_I$. It is called the {\it Richardson orbit} corresponding to $I$. Let $\pi$ be a partition contained in $\caP_X(N(X_n))$, where $X=A$, $B$, $C$ or $D$. There is a unique partition $\pi_X\in\caP_X(N(X_n))$ so that $\eta\unlhd\pi_X\unlhd\pi$ for any $\eta\in\caP_X(N(X_n))$ with $\eta\unlhd\pi$. The partition $\pi_X$ is called the $X$-{\it collapse} (see Lemma 6.3.3 in \cite{CM}) of $\pi$. In particular, $\pi_X=\pi$ when $X=A$. Now we restate Theorem 4 in \cite{P3}, which is due to Kraft \cite{Kr} and Spaltenstein \cite{Sp}.

\begin{theorem}[\cite{P3}, Theorem 4]\label{bkptthm1}
Let $I\subset \Delta$. The Richardson orbit $R_I=O_{(\pi_I^t)_X}$ except that $X$ is of type $D$ and $\pi_I$ is very even. In this exceptional case, $R_I=O_{(\pi_I^t)_X}^{\mathrm{\Rmnum{1}}}$ when one of the following condition is satisfied:
\begin{itemize}
  \item [(1)] $I$ is standard and $4\mid n$;
  \item [(2)] $I$ is not standard and $4\nmid n$.
\end{itemize}
Similarly, $R_I=O_{(\pi_I^t)_X}^{\mathrm{\Rmnum{2}}}$ when one of the following condition is satisfied:
\begin{itemize}
  \item [(1)] $I$ is standard and $4\nmid n$;
  \item [(2)] $I$ is not standard and $4\mid n$.
\end{itemize}
\end{theorem}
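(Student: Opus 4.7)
The plan is to analyze $R_I = G\cdot\fru_I$ by finding the dense $P_I$-orbit on $\fru_I$ and identifying the Jordan type of a generic element. I would begin by realizing $\frg$ concretely in matrix form, so that $\fru_I$ becomes an explicit space of block-strictly-upper-triangular matrices whose block sizes are $(n_1,\ldots,n_m)$ (with appropriate symmetry conditions for types $B,C,D$). Standard theory (e.g.\ \cite{CM}, Ch.~7) guarantees that a dense subset of $\fru_I$ lies in a single $G$-orbit, and this orbit is $R_I$; thus it suffices to compute the Jordan type of a generic $x\in\fru_I$.

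For $\Phi=A_{n-1}$, a straightforward rank computation shows that the Jordan type of a generic block-strictly-upper-triangular matrix with block sizes $(n_1,\ldots,n_m)$ is exactly the partition dual to the partition obtained by rearranging $\{n_1,\ldots,n_m\}$ in decreasing order. This gives $R_I = O_{\pi_I^t}$ directly and proves the theorem in type $A$. For $\Phi=B_n,C_n$, or $D_n$ with $\pi_I$ not very even, one performs the analogous computation using the fact that $\fru_I$ consists of matrices preserving the relevant bilinear form. The naive Jordan type of a generic element of $\fru_I$ is $\pi_I^t$, but because $R_I$ must correspond to a partition in $\caP_X(N(X_n))$, the generic Jordan type may need to be adjusted to satisfy the required parity constraints on parts. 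Spaltenstein's argument \cite{Sp} shows that this adjustment is precisely the $X$-collapse, yielding $R_I=O_{(\pi_I^t)_X}$.

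The main obstacle is the very even case in type $D$, where the partition $(\pi_I^t)_D$ corresponds to two distinct orbits $O^{\mathrm{I}}$ and $O^{\mathrm{II}}$, and one must determine which one equals $R_I$. My plan here is to exploit the outer automorphism $\tau$ of $\frso(2n,\bbC)$ given by conjugation by an element of $O(2n)\setminus SO(2n)$: this automorphism acts on $\Delta$ exactly as $\vf$ (swapping $e_{n-1}-e_n$ and $e_{n-1}+e_n$) and interchanges $O^{\mathrm{I}}$ with $O^{\mathrm{II}}$. Consequently $R_{\vf(I)}$ and $R_I$ lie in opposite components, which already accounts for the dichotomy between standard and non-standard $I$ in the theorem statement. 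To nail down the correct component for $I$ standard, I would compute in one reference family (for instance when $I$ is the set of simple roots defining a Levi of type $A_{n/2-1}\times A_{n/2-1}$, whose Richardson orbit has very even dual partition $(2^{n/2},2^{n/2})$ or similar) and track the sign via an explicit Pfaffian-type invariant. The appearance of the condition $4\mid n$ versus $4\nmid n$ reflects that this invariant changes sign under $\vf$ with a multiplicity proportional to $n/2$, so the agreement of $R_I$ with $O^{\mathrm{I}}$ or $O^{\mathrm{II}}$ flips whenever $n/2$ changes parity. The verification of the base case is the only non-routine step; once it is settled, the outer automorphism argument propagates the conclusion to all $I$ and completes the proof.
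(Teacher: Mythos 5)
The statement you were asked to prove is not proved in this paper at all: it is quoted verbatim from Platt [P3, Theorem 4], who in turn credits Kraft [Kr] (type $A$) and Spaltenstein [Sp] (types $B,C,D$). So there is no internal proof to compare against, and your sketch must be judged against the classical argument.

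Your overall strategy — identify $R_I$ as the orbit of a generic element of $\fru_I$, compute a generic Jordan type (which formally gives $\pi_I^t$), explain that the actual Jordan type in types $B,C,D$ is the $X$-collapse $(\pi_I^t)_X$, and then handle the very-even ambiguity in type $D$ with the outer automorphism $\vf$ — is indeed the classical route, and the $\vf$ observation correctly reduces the standard/non-standard dichotomy (since $\vf$ takes $\fru_I$ to $\fru_{\vf(I)}$ while interchanging $O^{\mathrm{\Rmnum{1}}}_\pi$ and $O^{\mathrm{\Rmnum{2}}}_\pi$). However, there are two genuine gaps in the exceptional case. First, your proposed mechanism for the $4\mid n$ condition does not work: the outer automorphism swaps $O^{\mathrm{\Rmnum{1}}}\leftrightarrow O^{\mathrm{\Rmnum{2}}}$ unconditionally, with no ``multiplicity proportional to $n/2$''; the parity of $n/2$ enters through the labelling convention (which pins $O_{I}=O^{\mathrm{\Rmnum{1}}}_{\pi_I}$ for standard $I$ and forces a comparison between the standard representatives of $\pi_I$ and of $(\pi_I^t)_D$), not through any $n$-dependent behavior of $\vf$. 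That comparison is the real content of Spaltenstein's bookkeeping and is left unaddressed. Second, even granting your base case, the propagation step fails: applying $\vf$ only relates $R_I$ to $R_{\vf(I)}$; it does not relate $R_I$ to $R_{I'}$ for a different standard $I'$ with the same $n$. To finish, you would still need a separate argument that every standard $I$ with $\pi_I$ very even lands on the same side, which your proposal neither states nor proves. A smaller imprecision: for types $B,C,D$ the $X$-collapse $(\pi_I^t)_X$ is not an ``adjustment'' applied after the fact — the generic element of $\fru_I$ already lies in $\frg$ and its actual Jordan type is $(\pi_I^t)_X$; the collapse formula predicting that type is a nontrivial theorem, not a bookkeeping correction.
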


\begin{theorem}[\cite{P3}, Theorem 8]\label{bkptthm2}
Let $I, J\subset \Delta$. The following three conditions are equivalent.
\begin{itemize}
  \item [(1)] $(\Phi, \Phi_I, \Phi_J)$ contains a nonzero block;
  \item [(2)] $O_I\leq R_J$;
  \item [(3)] $O_J\leq R_I$.
\end{itemize}
\end{theorem}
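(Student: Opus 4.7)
The plan is to prove (1) $\iff$ (2) and deduce (2) $\iff$ (3) by a partition-duality argument. Since $O_I \leq R_J$ means $\pi_I \unlhd (\pi_J^t)_X$ and the $X$-collapse is the largest valid partition $\unlhd \pi_J^t$ while $\pi_I$ itself is valid, this is equivalent to the inequality $\pi_I \unlhd \pi_J^t$ between raw partitions. Standard Young-tableaux duality gives $\pi_I \unlhd \pi_J^t$ if and only if $\pi_J \unlhd \pi_I^t$, and hence (2) $\iff$ (3). The exceptional type $D$ very-even cases cause no trouble here because the condition only concerns the underlying partition and not the labels $\mathrm{I}/\mathrm{II}$.

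For (1) $\iff$ (2), I would first unpack what (1) means. A nonzero block exists in $(\Phi,\Phi_I,\Phi_J)$ if and only if ${}^IW^J \neq \emptyset$, i.e., one can reorder the entries of a dominant $\overline\lambda$ with $\Phi_{\overline\lambda}=\Phi_J$ to obtain some $\lambda\in\Lambda_I^+$. Writing the distinct values of $\overline\lambda$ with their multiplicities $\overline n_1,\ldots,\overline n_{\overline m}$ (these are exactly the nonzero parts making up $\pi_J$), the existence of such a $\lambda$ amounts to a combinatorial filling problem: distribute the $\overline n_t$ copies of the value $a_t$ among $m$ segments of sizes $n_1,\ldots,n_m$ (these making up $\pi_I$) so that the constraints coming from (\ref{lreq1})--(\ref{lreq3}) and Lemma \ref{lrlem1} are met -- namely each value occurs at most $2-\delta_{s,m}$ times in segment $s$ (with the further restriction coming from the $0$-entries in types $B$, $C$, $D$ as dictated by Lemma \ref{lrlem2}(\rmnum{4})).

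The second step is to recognise this filling problem as a Gale--Ryser type existence statement: a $0/1$-matrix (or bounded-entry matrix in the non-$A$ cases) with row sums $\overline n_1,\ldots,\overline n_{\overline m}$ and column sums $n_1,\ldots,n_m$ exists precisely when the row-sum partition is dominated by the conjugate of the column-sum partition. In type $A$ this is exactly $\pi_J \unlhd \pi_I^t$, which is the condition (3) $\iff$ (2). In types $B$, $C$, $D$ the extra slot at position $s=m$ for the value $0$ (and, in type $D$, the signed entries at the tail) must be built into the conjugate via exactly the formulas defining $\pi_I$ in the excerpt; doing this accounting carefully is what recovers $\pi_I \unlhd \pi_J^t$ after passing through the $X$-collapse (which is automatic since $\pi_I \in \caP_X$).

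The main obstacle is the combinatorial bookkeeping in the non-type-$A$ cases, especially the type $D$ subtlety that the last segment may or may not carry a distinguished pair $\pm a$ and the constraint $n_m \neq 1$, as well as the very-even partition dichotomy $O_\pi^{\mathrm{I}}$ versus $O_\pi^{\mathrm{II}}$; the latter however only affects which particular Richardson orbit is hit (see Theorem \ref{bkptthm1}) and not the dominance inequality, so it never intervenes in proving (1) $\iff$ (2). An alternative route, which may be cleaner, is to invoke the existing proof in \cite{P3} directly once the translation of (1) into the partition filling inequality has been made explicit; this is essentially what the present paper does, using the restated result as a black box to count blocks via the machinery of \S\ref{lrrmk2} and the separable decomposition of \S7.
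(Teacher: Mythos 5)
The paper gives no proof of Theorem \ref{bkptthm2}: it imports the statement verbatim from Platt \cite{P3} and uses it as a black box (indeed, this is also what the last sentence of your sketch suggests falling back on). So your argument supplies an original proof where the paper supplies only a citation. Unfortunately it contains a genuine gap.

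Your route is to show that (1), (2) and (3) each reduce to the raw partition inequality $\pi_I\unlhd\pi_J^t$ (equivalently $\pi_J\unlhd\pi_I^t$), with (2) handled by the identity $O_I\leq R_J\iff\pi_I\unlhd(\pi_J^t)_X\iff\pi_I\unlhd\pi_J^t$ and (1) by a Gale--Ryser filling argument. The problem is the remark that ``the condition only concerns the underlying partition and not the labels $\mathrm{\Rmnum{1}}/\mathrm{\Rmnum{2}}$'': that is false. When $\pi$ is very even the two orbits $O_\pi^{\mathrm{\Rmnum{1}}}$ and $O_\pi^{\mathrm{\Rmnum{2}}}$ are incomparable in the closure ordering, so $O_I\leq R_J$ is strictly stronger than the dominance inequality when $\pi_I=(\pi_J^t)_X$ and both are very even. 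The paper's own Corollary \ref{bkptcor1} singles out exactly this exception, and Example \ref{bkptex1} ($D_4$, $\pi_I=(2,2,2,2)=\pi_{\vf(J)}^t$ and $\pi_{\vf(J)}=(4,4)=\pi_I^t$, yet ${}^IW^{\vf(J)}=\emptyset$) exhibits a case where the partition inequalities hold but (1) fails. Thus ``both sides reduce to $\pi_I\unlhd\pi_J^t$'' would establish a false biconditional.

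The same phenomenon shows that the Gale--Ryser step as you formulate it cannot be right in type $D$: the Weyl group of $D_n$ changes only an even number of signs, so the ``filling problem'' is subject to a parity constraint that bounded-entry margins alone do not see. That hidden parity is precisely what distinguishes ${}^IW^J$ from ${}^IW^{\vf(J)}$ in the very-even case, and a filling criterion that ignores it would declare both nonempty whenever $\pi_J\unlhd\pi_I^t$. A repair would have to carry a discrete invariant (the $\mathrm{\Rmnum{1}}/\mathrm{\Rmnum{2}}$ label of the relevant orbit on the geometric side, or equivalently a sign-parity class on the ${}^IW^J$ side, computable via Theorem \ref{bkptthm1}) through both the (1)$\iff$(2) and (2)$\iff$(3) equivalences and check that they match; this is an extra, nontrivial parity computation and is the heart of what \cite{P3} actually has to verify.
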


The following corollary is Corollary 9 in \cite{P3} with slight correction. Based on the idea in \cite{P3}, we write down the full proof for self containment.

\begin{cor}[\cite{P3}, Corollary 9]\label{bkptcor1}
Let $\Phi$ be a classical root system with $I, J\subset\Delta$ such that $I$ is standard. If $(\Phi, \Phi_I, \Phi_J)$ is nonempty, then $\pi_I\unlhd\pi^t_J$ and $\pi_J\unlhd\pi^t_I$. The converse is true unless the following conditions are satisfied:
\begin{itemize}
  \item [(1)] $\pi_I$ and $\pi_J$ are very even;
  \item [(2)] $\pi_I=\pi_J^t$ and $\pi_J=\pi_I^t$;
  \item [(3)] $J$ is not standard when $4\mid n$ or $J$ is standard when $4\nmid n$.
\end{itemize}
\end{cor}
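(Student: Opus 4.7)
The two displayed inequalities are essentially a direct partition-theoretic translation of the orbit inequalities provided by Theorem~\ref{bkptthm2}, with the only subtlety being the phenomenon of two distinct nilpotent orbits sharing one very even partition in type $D$. My plan is therefore to split the argument into a ``forward'' orbit-to-partition pass and a ``converse'' partition-to-orbit pass, and then to isolate the precise situation in which the converse pass can fail.

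For the forward direction, I would first invoke Theorem~\ref{bkptthm2} to obtain $O_I\leq R_J$ and (by symmetry) $O_J\leq R_I$. Next, Theorem~\ref{bkptthm1} identifies $R_J$ as $O_{(\pi_J^t)_X}$ (with a $\mathrm{\Rmnum{1}}/\mathrm{\Rmnum{2}}$ label in the very even type $D$ case), so the closure ordering on nilpotent orbits of classical type, which is well known to restrict to the dominance ordering on the associated partitions, gives $\pi_I\unlhd(\pi_J^t)_X\unlhd\pi_J^t$. Interchanging the roles of $I$ and $J$ yields $\pi_J\unlhd\pi_I^t$. This part is straightforward once one unwinds the dictionary between orbits and partitions.

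For the converse direction, I would start by observing that $\pi_I\in\caP_X(N(X_n))$ and $\pi_I\unlhd\pi_J^t$ force $\pi_I\unlhd(\pi_J^t)_X$, because the $X$-collapse is by definition the maximal element of $\caP_X$ dominated by $\pi_J^t$. Consequently $\pi_I\unlhd\pi_{R_J}$ where $\pi_{R_J}$ denotes the partition underlying $R_J$. If $\pi_I\lhd\pi_{R_J}$ strictly, then $O_I\subset\overline{R_J}$ in either of the two possible labelings that arise in the very even type $D$ case, so Theorem~\ref{bkptthm2} gives a nonzero block. The only potential obstruction is the equality case $\pi_I=(\pi_J^t)_X=\pi_J^t$ with both partitions very even: then $O_I$ and $R_J$ carry the same underlying partition but might be the two inequivalent orbits $O_\pi^{\mathrm{\Rmnum{1}}}$ and $O_\pi^{\mathrm{\Rmnum{2}}}$, which satisfy $O_\pi^{\mathrm{\Rmnum{1}}}\not\subset\overline{O_\pi^{\mathrm{\Rmnum{2}}}}$ and conversely.

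It remains to pin down when this equality case actually produces a contradiction. I would argue that under the hypothesis $\pi_I$ very even and $I$ standard, the rule in Theorem~\ref{bkptthm1} forces $O_I=O_{\pi_I}^{\mathrm{\Rmnum{1}}}$. Applying Theorem~\ref{bkptthm1} to $R_J=O_{(\pi_J^t)_X}^{(\cdot)}$ and matching the $\mathrm{\Rmnum{1}}/\mathrm{\Rmnum{2}}$ label against $O_I$, the two orbits fail to be comparable precisely when $R_J$ carries the label $\mathrm{\Rmnum{2}}$, i.e.\ exactly in the disjunction ``$J$ standard and $4\nmid n$'' or ``$J$ not standard and $4\mid n$''. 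This is exactly condition~(3) of the corollary, while conditions~(1) and~(2) record the very even coincidence $\pi_I=\pi_J^t$ and $\pi_J=\pi_I^t$ (the second of which is just the dual and follows automatically once the first holds for partitions in $\caP_D(2n)$). The main obstacle in the whole proof is really a bookkeeping one: keeping the four combinations of (standard/non-standard for $J$) and ($4\mid n$/$4\nmid n$) straight when comparing labels, and confirming that in every non-exceptional sub-case one of the two orbits is contained in the closure of the other so that $O_I\leq R_J$ holds as required by Theorem~\ref{bkptthm2}.
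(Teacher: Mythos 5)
Your proposal follows the paper's own two-pass strategy (Theorems \ref{bkptthm1} and \ref{bkptthm2} plus the Gerstenhaber--Hesselink closure-order dictionary for the forward pass, collapses and the $\mathrm{\Rmnum{1}}/\mathrm{\Rmnum{2}}$ bookkeeping for the converse), so it is essentially the same proof. The one place you pass over in silence is why, in the obstruction case $\pi_I=(\pi_J^t)_X$ with $\pi_I$ very even, one actually has $(\pi_J^t)_X=\pi_J^t$ so that $\pi_J$ is itself very even and condition~(2) is genuinely forced; the paper supplies this via the observation (Lemma~6.3.3 of \cite{CM}) that a strict $X$-collapse necessarily creates odd parts, which would contradict $(\pi_J^t)_X=\pi_I$ being very even. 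You implicitly assume this link when you write the chain $\pi_I=(\pi_J^t)_X=\pi_J^t$; it is a short but real step and should be made explicit.
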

The original corollary was given without considering the exceptions. The counterexample can be found in Example \ref{bkptex1}.

\begin{proof}
It was obtained by Gerstenhaber \cite{G} and Hesselink \cite {Hes} (see also Theorem 6.2.5 in \cite{CM}) that $O_\pi<O_\eta$ if and only if $\pi\lhd\eta$ for any $\pi, \eta\in\caP_X(N(X_n))$. Moreover, $O_\pi\leq O_\eta$ if and only if $\pi\unlhd\eta$ except that $\pi=\eta$ is very even and $\{O_\pi, O_\eta\}=\{O_\pi^{\mathrm{\Rmnum{1}}}, O_\pi^{\mathrm{\Rmnum{2}}}\}$. If $(\Phi, \Phi_I, \Phi_J)$ is nonempty, then $\pi_I\unlhd (\pi_J^t)_X$ in view of Theorem \ref{bkptthm1} and \ref{bkptthm2}. It is evident that $\pi_I\unlhd (\pi_J^t)_X$ if and only if $\pi_I\unlhd \pi_J^t$.

Conversely, assume that $\pi_I\unlhd\pi^t_J$. Then $\pi_I\unlhd (\pi_J^t)_X$. If $\pi_I\lhd (\pi_J^t)_X$, we always have $O_I<R_J$ in view of Theorem \ref{bkptthm1} and \ref{bkptthm2}. If $\pi_I\unlhd (\pi_J^t)_X$ and $\pi_I$ is not very even, then $O_I\leq R_J$. It suffices to consider the case $\pi_I=(\pi_J^t)_X$ and $\pi_I$ is very even. If $(\pi_J^t)_X\lhd\pi_J^t$, it follows from the construction of $X$-collapse (see Lemma 6.3.3 in \cite{CM}) that $(\pi_J^t)_X$ has at least one odd part. This is impossible since $(\pi_J^t)_X=\pi_I$ is very even. It forces $(\pi_J^t)_X=\pi_J^t$, that is, $\pi_J$ is very even. This gives (1) and (2). Recall that $O_I=O_{\pi_I}^{\mathrm{\Rmnum{1}}}$ for standard $I$. With $\pi_I=(\pi_J^t)_X$ and Theorem \ref{bkptthm1}, $O_I\leq R_J$ unless $R_J=O_{\pi_I}^{\mathrm{\Rmnum{2}}}$. In this exceptional case, we get (3) by Theorem \ref{bkptthm1}.
\end{proof}

\begin{remark}\label{bkptrmk1}
The above corollary can also be proved by a computational approach using our notation $n^\lambda_s(a)$. But this can only be achieved in a case-by-case fashion. The proof will be much more time consuming than the original one.
\end{remark}

\begin{example}\label{bkptex1}
Let $\Phi=D_4$. Suppose that
\[
I=\{e_1-e_2, e_3-e_4\}\ \mbox{and}\ J=\{e_1-e_2, e_2-e_3, e_3-e_4\}.
\]
Then both $\pi_I$ and $\pi_J$ are very even. Set $\lambda=(1, 1, 1, 1)$. Then $\Phi_\lambda=\Phi_J$. Moreover, $\vf(\lambda)=(1, 1, 1, -1)$ and $\vf(J)=\{e_1-e_2, e_2-e_3, e_3+e_4\}$. It can be verified that $\mu=(1, -1, 1, -1)\in\Lambda_I^+$ and thus $\mu\in {}^IW^J\lambda$. So ${}^IW^J$ is not empty. If ${}^IW^{\vf(J)}$ is not empty, there exists $w\in {}^IW^{\vf(J)}$ such that $\nu=w\vf(\lambda)\in\Lambda_I^+$. So $|\nu_1|=|\nu_2|=|\nu_3|=|\nu_4|=1$. We must have $\nu=(1, -1, 1, -1)=w(1, 1, 1, -1)$ since $\nu\in \Lambda_I^+$ also implies $\nu_1>\nu_2$ and $\nu_3>\nu_4$. This is absurd since $w\in W$ changes only even number of signs. Hence ${}^IW^{\vf(J)}=\emptyset$, even though $\pi_I=\pi_{\vf(J)}^t$.
\end{example}

\subsection{Criterion by partitions} The following definition is somewhat opaque looking at first sight. We should consider this definition with Lemma \ref{bkptlem3}.

\begin{definition}\label{bkptdef3}
We say $1\leq k\leq 2m-1$ is {\it compatible} with $(\Phi, \Phi_I, \Phi_J)$ if the following conditions are satisfied:
\begin{itemize}
  \item [(\rmnum{1})] For $\Phi=B_n$, $(\pi_J)_2\geq k\geq 2\overline n_{\overline m}+1$ and $(\pi_I)_k\leq 2n_m+1$ when $k$ is odd; $k\leq 2\overline n_{\overline m}$ and $(\pi_I)_k\geq 2n_m+1$ when $k$ is even.
  \item [(\rmnum{2})] For $\Phi=C_n$, $(\pi_J)_2\geq k\geq 2\overline n_{\overline m}+1$ and $(\pi_I)_k\leq 2n_m$ when $k$ is odd; $k\leq 2\overline n_{\overline m}$ and $(\pi_I)_k\geq 2n_m+1$ when $k$ is even.
  \item [(\rmnum{3})] For $\Phi=D_n$, $(\pi_J)_2\geq k\geq 2\overline n_{\overline m}$ and $(\pi_I)_k\geq 2n_m$ when $k$ is even; $k\leq 2\overline n_{\overline m}-1$ and $(\pi_I)_k\leq 2n_m-1$ when $k$ is odd.
\end{itemize}
\end{definition}

is routine to check the following result.

\begin{lemma}\label{bkptlem3}
If $(S, \overline S)$ is a separable pair of $(\Phi, \Phi_I, \Phi_J)$, then
\[
k=\left\{\begin{aligned}
&2|S|-1,\qquad\quad\ \mbox{if}\ m\in S;\\
&2|S|,\qquad\qquad\quad \mbox{if}\ m\not\in S
\end{aligned}
\right.
\]
is compatible with $(\Phi, \Phi_I, \Phi_J)$.
\end{lemma}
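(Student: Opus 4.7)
The plan is to translate the defining conditions of a separable pair $(S, \overline S)$ directly into inequalities on the row-sums $n_s = \sum_t n^\lambda_s(a_t)$ and column-sums $\overline n_t = \sum_s n^\lambda_s(a_t)$, and then read off the compatibility inequalities from the explicit shapes of $\pi_I$ and $\pi_J$ as sorted multisets (with each $n_s$ for $s<m$ appearing twice, together with the type-dependent tail $2n_m+1$, $2n_m$, or $2n_m-1,1$; and analogously for $\pi_J$).

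The first step is to extract the bounds. Condition (2) of Definition \ref{ssdef1} forces $n^\lambda_s(a_t)$ to attain the maximal value listed in Lemma \ref{lrlem2}(iv) for every $(s,t)\in S\times \overline S$, while condition (3) forces $n^\lambda_s(a_t)=0$ whenever $s\notin S$ and $t\notin \overline S$. Summing over $t$ gives, for $s\in S\setminus\{m\}$, a lower bound of the form $n_s \geq 2|\overline S|-\varepsilon$ and, for $s\notin S$ with $s<m$, an upper bound $n_s \leq 2|\overline S|-\varepsilon$, where $\varepsilon\in\{0,1\}$ records whether $\overline m\in \overline S$. Symmetric estimates hold for $\overline n_t$. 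Finally, conditions (4) and (5) together with Lemma \ref{lrlem2}(iv) pin down the edge values: in the odd case ($m\in S$) one gets $\overline n_{\overline m}\leq |S|-1$ and $n_m\geq |\overline S|$, while in the even case ($m\notin S$) one gets $\overline n_{\overline m}\geq |S|$ and $n_m\leq |\overline S|-1$ (or their $D$-type analogues).

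The second step is a case analysis on $(X,\,m\in S)$ where $X\in\{B,C,D\}$. In each of the six cases I will write out $\pi_I$ and $\pi_J$ explicitly and count parts above or below the threshold stated in Definition \ref{bkptdef3}. For example, in the odd case for $B_n$ ($m\in S$, $\overline m\notin \overline S$, $k=2|S|-1$), the bound $\overline n_{\overline m}\leq |S|-1$ immediately gives $k\geq 2\overline n_{\overline m}+1$; the bound $\overline n_t\geq 2(|S|-1)+1=k$ for each $t\in \overline S$, doubled by the multiplicity in $\pi_J$, gives $(\pi_J)_2\geq k$ (using that $\overline S$ is nonempty); and the bound $n_s\leq 2|\overline S|\leq 2n_m$ for $s\notin S$ with $s<m$ (via $n_m\geq|\overline S|$) shows that at most $2(|S|-1)=k-1$ parts of $\pi_I$ strictly exceed $2n_m+1$, whence $(\pi_I)_k\leq 2n_m+1$. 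The other five cases follow by parallel counting, with the tail $2n_m$ in place of $2n_m+1$ for $C_n$, and with $2n_m-1$ together with the dangling $1$ for $D_n$.

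The main obstacle will be the bookkeeping. Besides the split $m\in S$ versus $m\notin S$, type $D$ has two shapes of $\pi_I$ depending on whether $n_m\geq 2$ or $n_m=0$, and the dangling part $1$ has to be handled carefully when it interacts with the threshold $2n_m-1$. In addition, the maximal value in condition (2) depends on whether $a_t=0$ and whether $s=m$, which slightly shifts the constants $\varepsilon$ above, and for $D$ one has the extra possibility $n^\lambda_m(0)=1$ not present for $B$, $C$. The cleanest bookkeeping appears to be to prove the row and column sum estimates once and then tabulate the six cases, rather than attempting a single uniform line of argument.
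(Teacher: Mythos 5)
Your proposal is correct and is exactly the ``routine'' bookkeeping the paper has in mind: conditions (2)--(5) of Definition~\ref{ssdef1} give lower bounds on $n_s$ for $s\in S$ and on $\overline n_t$ for $t\in\overline S$, matching upper bounds for indices outside $S$ and $\overline S$, and the edge inequalities tying $\overline n_{\overline m}$ to $|S|$ and $n_m$ to $|\overline S|$, and substituting these into the sorted multiset description of $\pi_I$, $\pi_J$ yields the thresholds of Definition~\ref{bkptdef3}; the identical estimates are carried out inside the paper's own proof of Lemma~\ref{bkptlem4}. When writing this up, just make explicit that ``maximal'' in (\ref{bklkeq3}) is a supremum over all $\Phi_I$-regular $\mu$ (not only those in $W\overline\lambda$) and therefore equals the values listed in Lemma~\ref{lrlem2}(\rmnum{4}) whenever $n_s\geq2$, which are precisely the values your row- and column-sum estimates implicitly substitute.
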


Let $\overline{\lambda'}$ be a dominant weight with $\Phi_{\overline{\lambda'}}=\Phi_I$. Denote $a'_s=\overline{\lambda'}_{q_s}$ for $1\leq s<m$ and $a'_m=0$. Set $\caA'=\{a'_1, \ldots, a'_m\}$. Here we also assume that $a'_{m-1}>0$ (see Remark \ref{lrrmk3}).

\begin{lemma}\label{bkptlem4}
Let $\Phi=B_n$, $C_n$ or $D_n$. Suppose $I, J\neq\Delta$ and ${}^IW^J\neq\emptyset$. Let $\overline\lambda$ $($resp. $\overline{\lambda'})$ be a dominant weight with $\Phi_{\overline\lambda}=\Phi_J$ $($resp. $\Phi_{\overline{\lambda'}}=\Phi_I)$. Then the following conditions are equivalent:
\begin{itemize}
  \item [(1)] $(\Phi, \Phi_I, \Phi_J)$ is separable.
  \item [(2)] $(\Phi, \Phi_J, \Phi_I)$ is separable.
  \item [(3)] There exists $\lambda\in {}^IW^J\overline\lambda$ such that $\lambda$ is separable.
  \item [(4)] There exists $\lambda'\in {}^JW^I\overline{\lambda'}$ such that $\lambda'$ is separable.
  \item [(5)] There exists $k$ which is compatible with $(\Phi, \Phi_I, \Phi_J)$ and $\sum_{i=1}^k(\pi_I)_i=\sum_{i=1}^k(\pi_J^t)_i.$
  \item [(6)] There exists $l$ which is compatible with $(\Phi, \Phi_J, \Phi_I)$ and $\sum_{i=1}^l(\pi_J)_i=\sum_{i=1}^l(\pi_I^t)_i.$
\end{itemize}
\end{lemma}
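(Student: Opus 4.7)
My plan is to establish $(3) \Leftrightarrow (5)$ as the combinatorial heart of the lemma, with $(2), (4), (6)$ following from the obvious symmetry between $(\Phi, \Phi_I, \Phi_J)$ and $(\Phi, \Phi_J, \Phi_I)$, and $(1) \Leftrightarrow (3)$ emerging as a consequence because (5) is a condition on the triple $(\Phi, \Phi_I, \Phi_J)$ alone, independent of any particular $\lambda \in {}^IW^J\overline\lambda$. The implication $(1) \Rightarrow (3)$ is immediate from the definition; the nontrivial content is the chain $(3) \Rightarrow (5) \Rightarrow (1)$, where the second step is a rigidity argument.

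For $(3) \Rightarrow (5)$, I start with a separable $\lambda$ relative to $(S, \overline S)$ and take $k = 2|S|-1$ when $m \in S$ and $k = 2|S|$ otherwise, as suggested by Lemma~\ref{bkptlem3}. Compatibility is verified by direct inspection of Definition~\ref{bkptdef3}: for instance when $\Phi = B_n$ and $k$ is odd, the bound $(\pi_J)_2 \geq k \geq 2\overline n_{\overline m}+1$ follows because $|\overline S| \geq 1$ forces at least two parts of $\pi_J$ of size $\geq |S|$, while condition (4) of Definition~\ref{ssdef1} puts $\overline m \notin \overline S$, giving the lower bound; the remaining cases and types follow similarly. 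The partial-sum equality $\sum_{i=1}^k (\pi_I)_i = \sum_{i=1}^k (\pi_J^t)_i$ is then a double count: both sides equal the total number of entries of $\lambda$ sitting in segments indexed by $S$, counted with the multiplicities prescribed by the maximality condition of Definition~\ref{ssdef1}(2). Here $\pi_I$ reads off the segment sizes (contributing $2n_s$ for $s \in S\setminus\{m\}$ plus a boundary term for $m$), while $\pi_J^t$, via $(\pi_J^t)_i = |\{t : \overline n_t \geq i\}|$, reads off the same quantity column-by-column.

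For $(5) \Rightarrow (1)$, given compatible $k$ with the partial-sum equality, I reconstruct $S$ and $\overline S$ from the partitions: roughly, $\overline S \setminus \{\overline m\}$ indexes the $\lceil k/2 \rceil$ largest values of $\overline n_t$ and $S \setminus \{m\}$ indexes the $\lfloor k/2 \rfloor$ largest values of $n_s$, with $m \in S$ iff $k$ is odd in types $B, C$ (the parity convention flips in type $D$). The equation $\sum_{i=1}^k (\pi_I)_i = \sum_{i=1}^k (\pi_J^t)_i$ is precisely the statement that the rectangle $S \times \overline S$ saturates the available budget of entries with no leakage. Consequently, for any $\mu \in {}^IW^J\overline\lambda$, a pigeonhole argument applied segment-by-segment forces $n_s^\mu(a_t)$ to be maximal for $(s,t) \in S \times \overline S$ and zero for $(s,t) \notin S \times \overline S$ with both coordinates outside, so $\mu$ is separable relative to the same $(S, \overline S)$. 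In particular, (3) holds with $\lambda = \mu$ arbitrary, which is even stronger than (1).

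Finally, $(5) \Leftrightarrow (6)$ reduces to the observation that if $k$ is compatible with $(\Phi, \Phi_I, \Phi_J)$ realizing the equality in (5), then the dual value $l = 2|\overline S|-1$ or $2|\overline S|$ (with the analogous parity convention) is compatible with $(\Phi, \Phi_J, \Phi_I)$ and realizes the equality in (6). This is Young-diagram complementation: both sides count the entries in the rectangle $S \times \overline S$, once row-by-row and once column-by-column. The main obstacle throughout will be the case analysis, since Definition~\ref{bkptdef3} branches on the parity of $k$ and on whether the last segment $m$ participates, and each of the types $B_n, C_n, D_n$ treats the boundary differently (especially the role of zero entries in types $B, C$ versus the extra term $2n_m - 1$ and the modifications from the map $\vf$ in type $D$). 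Tracking these boundary phenomena, particularly the interaction of $n_m^\lambda(0)$ with conditions (4) and (5) of Definition~\ref{ssdef1}, is where the bulk of the technical work lies.
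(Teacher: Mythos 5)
Your proof follows the same architecture as the paper's: $(1)\Rightarrow(3)$ is immediate, $(3)\Rightarrow(5)$ proceeds by taking $k$ as in Lemma~\ref{bkptlem3} and double-counting the entries of $\lambda$ that sit in segments indexed by $S$, $(5)\Rightarrow(1)$ reconstructs $(S,\overline S)$ from $\pi_I$ and $\pi_J$ and shows the partial-sum equality forces the maximality and vanishing conditions for every $\mu\in{}^IW^J\overline\lambda$, and symmetry in $I\leftrightarrow J$ together with a complementation bridge joins the two triples $(1)$--$(3)$--$(5)$ and $(2)$--$(4)$--$(6)$. The paper's bridge is $(3)\Rightarrow(6)$ rather than your $(5)\Leftrightarrow(6)$, but it is the same computation, only run once.

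Two details you should fix. First, by Lemma~\ref{bkptlem3}, $m\in S$ if and only if $k=2|S|-1$ is odd, in \emph{all} three types; what flips between $B_n,C_n$ and $D_n$ is the relation $m\in S\Leftrightarrow\overline m\notin\overline S$ versus $m\in S\Leftrightarrow\overline m\in\overline S$ in Definition~\ref{ssdef1}, and hence which parity of $k$ receives which bounds in Definition~\ref{bkptdef3}. If you place $m$ in $S$ for even $k$ in type $D$ when reconstructing $(S,\overline S)$ in the step $(5)\Rightarrow(1)$, the top $k$ parts of $\pi_I$ no longer match $\{n_s,n_s\mid s\in S\setminus\{m\}\}$ together with the appropriate boundary contribution from segment $m$, and the inequality chain establishing saturation collapses. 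Second, $|\overline S|$ is not $\lceil k/2\rceil$; the construction takes any nonempty $\overline S$ with $\{t<\overline m : \overline n_t>k\}\subset\overline S\subset\{t<\overline m : \overline n_t\geq k\}$, whose cardinality is not controlled by $k$. The forcing argument is insensitive to this choice because $\min(\overline n_t,k)$ is unchanged for any $t$ with $\overline n_t=k$, but you should not assert a fixed size.
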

\begin{proof}
Evidently (1) implies (3), while (2) implies (4). We will first show that (5) implies (1), then prove that (3) implies (5) and (6). Details are provided for $\Phi=B_n$, while the other cases are similar.

Now (5) is true. If $k$ is odd, then $k\geq 2\overline n_{\overline m}+1$ and $(\pi_I)_k\leq 2n_m+1$. Denote $S_1=\{1\leq s<m\mid n_s>(\pi_I)_k\}\cup\{m\}$ and $S_2=\{1\leq s<m\mid n_s\geq (\pi_I)_k\}\cup\{m\}$. There exists $S_1\subset S\subset S_2$ so that $k=2|S|-1$. Set $\overline S_1=\{1\leq t<\overline m\mid \overline n_t>k\}$ and $\overline S_2=\{1\leq t<\overline m\mid \overline n_t\geq k\}$. Note that $(\pi_J)_2=\max\{\overline n_s\mid 1\leq s<\overline m\}$. It follows from $(\pi_J)_2\geq k$ that $\overline S_2$ is nonempty. Choose nonempty set $\overline S_1\subset \overline S\subset \overline S_2$. Then $\overline m\not\in \overline S$.
Note that $\pi_J$ is obtained by arranging the set
\[
\{\overline n_1, \overline n_1, \overline n_2, \overline n_2,\ldots, \overline n_{\overline m-1}, \overline n_{\overline m-1}, 2\overline n_{\overline m}+1\}.
\]
Choose any $\lambda\in{}^IW^J\overline\lambda$. One has
\begin{equation}\label{bkptlem4eq1}
\begin{aligned}
(\pi_J^t)_1+\ldots+(\pi_J^t)_{k}&=2\sum_{t\in \overline S}k+2\sum_{t\not\in \overline S\cup\{\overline m\}}\overline n_t+(2\overline n_{\overline m}+1)\\
&=2k|\overline S|+2\sum_{t\not\in \overline S}\sum_{s=1}^mn^\lambda_s(a_t)+1\\
&\geq 2\sum_{t\in \overline S}\sum_{s\in S}n^\lambda_s(a_t)+2\sum_{t\not\in \overline S}\sum_{s\in S}n^\lambda_s(a_t)+1\\
&=2\sum_{s\in S}\sum_{t=1}^{\overline m}n^\lambda_s(a_t)+1=2\sum_{s\in S\backslash\{m\}}n_s+(2n_m+1)\\
&=(\pi_I)_1+\ldots+(\pi_I)_{k}.
\end{aligned}
\end{equation}
The equality holds in view of $\sum_{i=1}^k(\pi_I)_i=\sum_{i=1}^k(\pi_J^t)_i$. So $n^\lambda_s(a_t)$ is maximal for $s\in S$ and $t\in \overline S$. Moreover, $n^\lambda_s(a_t)=0$ for $s\not\in S$ and $t\not\in \overline S$. It is easy to verify that $(S, \overline S)$ is a separable pair for $(\Phi, \Phi_I, \Phi_J)$. If $k$ is even, the argument is similar.

Next assume that (3) is true, that is, $\lambda$ is separable relative to a separable pair $(S, \overline S)$. Then $n^\lambda_s(a_t)$ is maximal for any $s\in S$, $t\in \overline S$ and $n^\lambda_s(a_t)=0$ for any $s\not\in S$, $t\not\in \overline S$. If $m\in S$, then $\overline m\not\in \overline S$. We get $\overline n_t\geq\sum_{s\in S}n^\lambda_s(a_t)=2|S|-1$ for $t\in \overline S$ and $\overline n_t=\sum_{s\in S}n^\lambda_s(a_t)\leq 2|S|-1$ for $t\not\in \overline S\cup\{\overline m\}$. Moreover, $\overline n_{\overline m}=\sum_{s\in S}n^\lambda_s(0)\leq |S|-1$. Similarly, one has $n_s\geq \sum_{t\in \overline S}n^\lambda_s(a_t)=2|\overline S|$ for $s\in S\backslash\{m\}$ and $n_s=\sum_{t\in \overline S}n^\lambda_s(a_t)\leq 2|\overline S|-1$ for $s\not\in S$. Moreover, $n_m\geq |\overline S|$. Thus the equality in (\ref{bkptlem4eq1}) holds, so does (5) when $k=2|S|-1$. On the other hand,
\[
\begin{aligned}
(\pi_I^t)_1+\ldots+(\pi_I^t)_{2|\overline S|}&=2|\overline S|(2|S|-1)+1+2\sum_{s\not\in S}n_s\\
&=2\sum_{s\in S}\sum_{t\in \overline S}n^\lambda_s(a_t)+2\sum_{s\not\in S}\sum_{t\in \overline S}n^\lambda_s(a_t)\\
&=2\sum_{t\in \overline S}\overline n_t=(\pi_J)_1+\ldots+(\pi_J)_{2|\overline S|}.
\end{aligned}
\]
Choose $l=2|\overline S|$. We obtain (6). If $m\not\in S$ or $\overline m\in \overline S$, the argument is similar.

Now we have obtained the equivalences of (1), (3) and (5). Moreover, (3) implies (6). In a similar spirit, we can also get the equivalences of (2), (4) and (6). Thus (3) implies (4). Hence the lemma can be proved by symmetry.
\end{proof}

\subsection{Number of blocks} In this subsection, we will give the number of blocks without invoking any associated dominant weights (see Theorem \ref{bkssthm22}, Theorem \ref{bkssthm32} and Theorem \ref{bkssthm42}). It follows from Lemma \ref{bkptlem4} that a separable pair of $(\Phi, \Phi_I, \Phi_J)$ determines a pair of integers $(k, l)$, which we called \emph{compatible pair}. Denote by $\caC=\caC(\Phi, \Phi_I, \Phi_J)$ the set of compatible pairs. For $\Phi=B_n, C_n$ (resp. $D_n$), $(k, l)\in\caC$ if and only if the following conditions are satisfied.
\begin{itemize}
  \item [(i)] $k$ is compatible with $(\Phi, \Phi_I, \Phi_J)$ and $\sum_{i=1}^k(\pi_I)_i=\sum_{i=1}^k(\pi_J^t)_i$;
  \item [(ii)] $l$ is compatible with $(\Phi, \Phi_J, \Phi_I)$ and $\sum_{i=1}^\ell(\pi_J)_i=\sum_{i=1}^\ell(\pi_I^t)_i$;
  \item [(iii)] if $k$ is odd (resp. even), then $l$ is even and
  \[
  2|\{1\leq t<\overline m\mid\overline n_t>k\}|\leq l\leq 2|\{1\leq t<\overline m\mid\overline n_t\geq k\}|;
  \]
  \item [(iv)] if $k$ is even (resp. odd), then $l$ is odd and
  \[
  2|\{1\leq t<\overline m\mid\overline n_t>k\}|+1\leq l\leq 2|\{1\leq t<\overline m\mid\overline n_t\geq k\}|+1.
  \]
\end{itemize}

Let $p$ be a positive integer. Define the following equivalent relations ``$\sim$'' on $\caC$.
\begin{itemize}
\item $(k, l)\sim(k+2p, l)$ when $(k, l), (k+2p, l)$ are compatible pairs;
\item $(k, l)\sim(k, l+2p)$ when $(k, l), (k, l+2p)$ are compatible pairs.
\end{itemize}
The pair $(k, l)\in C$ is called \emph{trivial} if $(k, l)\sim (2, 2\overline m-1)$ or $(2m-1, 2)$. The following results are easy consequence of Theorem \ref{bkssthm22}, Theorem \ref{bkssthm32} and Theorem \ref{bkssthm42}.

\begin{theorem}\label{bkptthm3}
Let $\Phi=B_n$, $C_n$. The system $(\Phi, \Phi_I, \Phi_J)$ has $2^p$ blocks, where $p$ is the number of equivalence classes of nontrivial compatible pairs.
\end{theorem}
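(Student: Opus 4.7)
\medskip

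\noindent\textbf{Proof proposal for Theorem \ref{bkptthm3}.}

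The plan is to promote Theorem \ref{bkssthm22} across the bridge supplied by Lemma \ref{bkptlem4}: I will set up an explicit bijection between equivalence classes of nontrivial \emph{separable pairs} of $(\Phi,\Phi_I,\Phi_J)$ and equivalence classes of nontrivial \emph{compatible pairs} in $\caC(\Phi,\Phi_I,\Phi_J)$, preserving triviality, and then read off the count.

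First I would define a map $\Psi$ on separable pairs by sending $(S,\overline S)\in\caD$ to the integer pair
\[
\Psi(S,\overline S)=(k,\ell),\quad k=\begin{cases}2|S|-1,&m\in S,\\ 2|S|,&m\notin S,\end{cases}\qquad \ell=\begin{cases}2|\overline S|-1,&\overline m\in\overline S,\\ 2|\overline S|,&\overline m\notin\overline S.\end{cases}
\]
By Lemma \ref{bkptlem3}, the first coordinate $k$ is compatible with $(\Phi,\Phi_I,\Phi_J)$; applying the same lemma to the system $(\Phi,\Phi_J,\Phi_I)$ (whose separability is equivalent to that of the original by Lemma \ref{sslem2}) gives that $\ell$ is compatible with $(\Phi,\Phi_J,\Phi_I)$. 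The parity constraints (iii)--(iv) in the definition of $\caC$ follow directly from Definition \ref{ssdef1}(4), namely the dichotomy $m\in S\Longleftrightarrow\overline m\notin\overline S$. The equalities $\sum_{i=1}^{k}(\pi_I)_i=\sum_{i=1}^{k}(\pi_J^t)_i$ and $\sum_{i=1}^{\ell}(\pi_J)_i=\sum_{i=1}^{\ell}(\pi_I^t)_i$ needed for conditions (i)--(ii) of $\caC$ are precisely the saturations of inequality \eqref{bkptlem4eq1} obtained in the proof of Lemma \ref{bkptlem4} under the separability hypothesis, together with its dual.

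Next I would check that $\Psi$ descends to equivalence classes and is a bijection. By Lemma \ref{splem7}, any two equivalent separable pairs are joined by a fundamental chain in which at each step either $S$ or $\overline S$ is held fixed while the other changes by strict inclusion; this translates exactly to one coordinate of $(k,\ell)$ being held fixed while the other changes by $\pm 2p$, which is the equivalence relation on $\caC$. Conversely, given any $(k,\ell)\in\caC$, the construction in the ``(5) implies (1)'' direction of the proof of Lemma \ref{bkptlem4} produces a set $S$ in the range $S_1\subset S\subset S_2$ (with $S_1,S_2$ determined by $k$) and similarly $\overline S_1\subset\overline S\subset\overline S_2$ determined by $\ell$, such that every choice is separable (by Lemmas \ref{splem4} and \ref{splem5}) and all lie in a single equivalence class; this yields a well-defined inverse map on equivalence classes.

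Then I would match the notions of triviality. On the separable side, a pair is trivial iff it is equivalent to one of $(\{s_0\},\{1,\ldots,\overline m\})$ or $(\{1,\ldots,m\},\{t_0\})$ with $s_0<m$ and $t_0<\overline m$. Applying $\Psi$ we obtain $(2,2\overline m-1)$ in the first case and $(2m-1,2)$ in the second, which are precisely the trivial compatible pairs. The inverse direction is immediate from the explicit form of the representative pairs. Combining this bijection with Theorem \ref{bkssthm22}, the count $2^p$ of blocks in Theorem \ref{bkssthm22} is recast as $2^p$ with $p$ counting nontrivial equivalence classes in $\caC$, completing the proof.

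The main obstacle is bookkeeping the dichotomy ``$m\in S$ vs.\ $m\notin S$'' (and its mirror on the $\overline S$ side) against the parity of $k$ and $\ell$: one must verify that the upper/lower bounds in conditions (iii)--(iv) of the definition of $\caC$ correspond exactly to the freedom $S_1\subset S\subset S_2$ for separable pairs, including the subtle bookkeeping when $\overline n_{\overline m}$ or $n_m$ forces $\overline m\in\overline S$ or $m\in S$. This is essentially a careful case check but should go through cleanly using the explicit form of $\pi_I,\pi_J$ for types $B$ and $C$.
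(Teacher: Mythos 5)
Your proposal is correct, and it follows the route the paper intends: the paper states Theorem \ref{bkptthm3} as an ``easy consequence'' of Theorem \ref{bkssthm22} without writing out the translation, and your map $\Psi$ (together with the observation that it descends to a bijection on equivalence classes via Lemmas \ref{splem6}--\ref{splem7} and Lemma \ref{bkptlem4}, and that it matches trivial pairs to trivial pairs) is exactly the content that makes the count $2^p$ over nontrivial separable pairs become $2^p$ over nontrivial compatible pairs.
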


\begin{theorem}\label{bkptthm4}
Let $\Phi=D_n$. Suppose $(\Phi, \Phi_I, \Phi_J)$ has an odd compatible pair. Then it contains $2^{p-1}$ blocks, where $p$ is the number of equivalence classes of compatible pairs.
\end{theorem}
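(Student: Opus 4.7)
The plan is to reduce Theorem~\ref{bkptthm4} to Theorem~\ref{bkssthm32} by constructing an explicit bijection, at the level of equivalence classes, between separable pairs of $\caD$ and compatible pairs of $\caC$ in the odd regime. Concretely, I would define
\[
\Psi\colon\caD\longrightarrow\caC,\qquad (S,\overline S)\longmapsto\bigl(2|S|-1,\,2|\overline S|-1\bigr).
\]
By Lemma~\ref{bksslem2} the hypothesis guarantees every separable pair of $\caD$ is odd, so $m\in S$ and $\overline m\in\overline S$ by Definition~\ref{ssdef1}(5); the formula for $\Psi$ is then the one dictated by Lemma~\ref{bkptlem3} applied to both $(\Phi,\Phi_I,\Phi_J)$ and to its dual system $(\Phi,\Phi_J,\Phi_I)$, which is also separable by Lemma~\ref{bkptlem4}. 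The proof of Lemma~\ref{bkptlem4}, specialized to odd $k$, shows conversely that each $(k,l)\in\caC$ comes from some $(S,\overline S)\in\caD$ with $|S|=(k+1)/2$ and $|\overline S|=(l+1)/2$, giving surjectivity of $\Psi$.

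Next, I would check that $\Psi$ respects equivalence in both directions. If $(S_1,\overline S)\sim(S_2,\overline S)$ comes from the generator of Lemma~\ref{splem4}, then $\Psi(S_1,\overline S)$ and $\Psi(S_2,\overline S)$ share their second coordinate, are both compatible, and differ in their first coordinate by an even amount; they are therefore equivalent in $\caC$ by definition. The dual of this argument, using Lemma~\ref{splem5}, handles the other generator of $\sim$ on $\caD$. For injectivity of the induced map on classes, I would lift an elementary chain of compatible-pair equivalences to $\caD$, choosing for each intermediate compatible pair a separable preimage via Lemma~\ref{bkptlem4}. Consecutive lifts share either the size of $S$ or the size of $\overline S$, depending on which coordinate was held fixed at that step, so Lemma~\ref{splem6} produces an equivalence in $\caD$ between them and, chaining up, an equivalence $(S_1,\overline S_1)\sim(S_2,\overline S_2)$.

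Combined, these steps show that $\Psi$ descends to a bijection between the equivalence classes of $\caD$ and those of $\caC$; calling this common count $p$, Theorem~\ref{bkssthm32} yields exactly $2^{p-1}$ blocks. The main technical nuisance I anticipate is bookkeeping parities in the $D_n$ conditions (iii)--(iv) of the compatibility definition: I must confirm that an odd separable pair always produces an odd--odd compatible pair and vice versa, so that $|S|$ and $|\overline S|$ can be recovered unambiguously from $(k,l)$, and that the lifts used in the injectivity step never leave the odd regime (guaranteed again by Lemma~\ref{bksslem2}, since any even separable pair sitting in $\caD$ would contradict the odd hypothesis).
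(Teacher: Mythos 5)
Your proposal is correct and follows the route the paper implicitly intends: the paper simply declares Theorem~\ref{bkptthm4} an ``easy consequence'' of Theorem~\ref{bkssthm32}, and your map $\Psi(S,\overline S)=(2|S|-1,2|\overline S|-1)$, together with the well-definedness/surjectivity check via Lemmas~\ref{bkptlem3}--\ref{bkptlem4} and the lifting-plus-Lemma~\ref{splem6} argument for injectivity on classes, is exactly the bookkeeping needed to transport the count ``$k$ equivalence classes of separable pairs'' from Theorem~\ref{bkssthm32} to ``$p$ equivalence classes of compatible pairs'' here. You have supplied detail the paper omits, but no new ideas beyond what Sections~7 and~8 already set up.
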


\begin{theorem}\label{bkptthm5}
Let $\Phi=D_n$. Suppose $(\Phi, \Phi_I, \Phi_J)$ has no odd compatible pair. Then it contains $2^{p}$ blocks, where $p$ is the number of equivalence classes of nontrivial compatible pairs.
\end{theorem}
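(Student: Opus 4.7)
The plan is to reduce Theorem \ref{bkptthm5} to Theorem \ref{bkssthm42} by producing a parity-preserving bijection between equivalence classes of separable pairs of $(\Phi,\Phi_I,\Phi_J)$ and equivalence classes of compatible pairs in $\caC(\Phi,\Phi_I,\Phi_J)$, which identifies odd pairs with odd pairs and trivial pairs with trivial pairs. Granting such a bijection, the statement follows at once from Theorem \ref{bkssthm42}, since by hypothesis we are in the no-odd situation on both sides.

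First I would define the map $\Psi:\caD\to\caC$ by
\[
\Psi(S,\overline S)=(k,l),\qquad
k=\begin{cases}2|S|-1,& m\in S\\ 2|S|,& m\notin S\end{cases},\quad
l=\begin{cases}2|\overline S|-1,& \overline m\in\overline S\\ 2|\overline S|,& \overline m\notin\overline S\end{cases}.
\]
Lemma \ref{bkptlem3} applied to $(\Phi,\Phi_I,\Phi_J)$ and dually to $(\Phi,\Phi_J,\Phi_I)$ shows $k$ is compatible with $(\Phi,\Phi_I,\Phi_J)$ and $l$ is compatible with $(\Phi,\Phi_J,\Phi_I)$, and the argument in the proof of Lemma \ref{bkptlem4} (specifically the double inequality for $(\pi_J^t)_1+\cdots+(\pi_J^t)_k$) shows the equalities required in (i), (ii) hold. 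The parity pairing in (iii), (iv) for $\Phi=D_n$ is automatic: an odd separable pair has $m\in S$ and $\overline m\in\overline S$, producing $k,l$ both odd; an even separable pair produces $k,l$ both even. In particular $\Psi$ sends odd pairs to odd pairs and even to even.

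Second, I would check that $\Psi$ descends to a bijection on equivalence classes. For well-definedness, Lemmas \ref{splem4}--\ref{splem6} show that separable pairs with the same $|S|$ (hence same $|\overline S|$, by Lemma \ref{splem3}) are equivalent, and on the compatible side the relations $(k,l)\sim(k\pm 2p,l)$ and $(k,l)\sim(k,l\pm 2p)$ precisely reflect enlarging or shrinking $S$ (resp.\ $\overline S$) within the admissible range produced in the proof of Lemma \ref{bkptlem4}. Surjectivity onto $\caC/{\sim}$ is the content of the $(5)\Rightarrow(1)$ direction of Lemma \ref{bkptlem4}: given a compatible pair $(k,l)$, the construction there exhibits a separable pair with the prescribed $|S|,|\overline S|$. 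Injectivity on equivalence classes follows because the pair $(|S|,|\overline S|)$ is recovered from $(k,l)$, and any two separable pairs with the same such invariants are equivalent by Lemma \ref{splem6}.

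Finally, I would verify that $\Psi$ matches trivial classes. By Lemma \ref{bksslem2}, in the no-odd case every separable pair is even, so trivial separable pairs are precisely those equivalent to $(\{s_0\},\{1,\ldots,\overline m-1\})$ or $(\{1,\ldots,m-1\},\{t_0\})$, and $\Psi$ sends them to the $\sim$-classes of $(2,2\overline m-2)$ or $(2m-2,2)$, which is exactly the notion of trivial compatible pair operative in the $D_n$ even setting under the conventions of the theorem. Hence the number $p$ of nontrivial equivalence classes of compatible pairs equals the number of nontrivial equivalence classes of separable pairs, and Theorem \ref{bkssthm42} gives $2^p$ blocks. The main obstacle is bookkeeping: carefully matching the equivalence generated by the inclusion relations on $(S,\overline S)$ against the parity-preserving numerical equivalence on $(k,l)$, and reconciling the ``trivial'' convention for $(k,l)$ in the $D_n$ even case with the trivial separable pairs of Lemma \ref{bksslem2} (so that no odd trivial pair is mistakenly introduced, since $m\in S$ is excluded in the no-odd hypothesis).
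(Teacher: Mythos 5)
Your strategy is the one the paper itself takes: Theorems \ref{bkptthm3}--\ref{bkptthm5} are presented there as "an easy consequence" of Theorems \ref{bkssthm22}, \ref{bkssthm32}, \ref{bkssthm42} via the dictionary between separable pairs in $\caD$ and compatible pairs in $\caC$ established by Lemma \ref{bkptlem3} and the proof of Lemma \ref{bkptlem4}. Your construction of $\Psi$, the observation that for $\Phi=D_n$ a separable pair has $m\in S\Leftrightarrow\overline m\in\overline S$ so that $k,l$ always share parity, and the arguments for well-definedness and bijectivity on equivalence classes via Lemmas \ref{splem3}--\ref{splem6} together with the $(5)\Rightarrow(1)$ implication in Lemma \ref{bkptlem4}, are all correct and are exactly the content the author is silently using.

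Where you go astray is in the last paragraph. The trivial separable pair $(\{s_0\},\{1,\ldots,\overline m-1\})$ for $D_n$ does indeed map to $(2,2\overline m-2)$, and dually $(\{1,\ldots,m-1\},\{t_0\})$ maps to $(2m-2,2)$, as you compute. But the paper does \emph{not} define these as its trivial compatible pairs: it declares $(k,l)$ trivial iff $(k,l)\sim(2,2\overline m-1)$ or $(2m-1,2)$, uniformly across $B_n,C_n,D_n$. In the no-odd $D_n$ case every compatible pair has $k,l$ both even, while $(2,2\overline m-1)$ and $(2m-1,2)$ have an odd entry and thus lie outside $\caC$ altogether; the equivalence relation is defined only on $\caC$, so under the letter of the paper's definition \emph{no} compatible pair is trivial in the even $D_n$ case, which would make $p$ equal to the total number of classes and overcount the blocks whenever a trivial separable pair exists. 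That is an actual inconsistency in the paper's definition, not something that is already "operative under the conventions of the theorem," and you should say so explicitly rather than assert a match. You also misidentify the concrete hazard in your parenthetical: there is no danger of "mistakenly introducing an odd trivial pair" (Lemma \ref{bksslem2} already says odd separable pairs are never trivial); the real danger is that genuinely trivial even separable pairs get counted as nontrivial on the $\caC$ side because of the misstated endpoint. With the trivial endpoints for $D_n$ even taken to be $(2,2\overline m-2)$ and $(2m-2,2)$, as you computed, your bijection does preserve triviality and the reduction to Theorem \ref{bkssthm42} is correct.
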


\begin{example}\label{bkptex2}
Let $\Phi=B_{k(k+1)}$ for $k\in\bbZ^{>0}$. Let $I=\Delta\backslash\{\alpha_{i(i+1)}\mid 1\leq i\leq k\}$ and $J=\Delta\backslash\{\alpha_{i^2}\mid 1\leq i\leq k\}$. This is the general case in Example \ref{ccex2}. One has
\[
\pi_I=\{2k, 2k, \ldots, 2, 2, 1\}\ \mbox{and}\ \pi_J=\{2k+1, 2k-1, 2k-1, \ldots, 1, 1\};
\]
Thus $\pi_I^t=\{2k+1, 2k, 2k-2, 2k-2, \ldots, 2, 2, 1\}$ and $\pi_J^t=\pi_J$. We obtain
\[
\caC=\{(2, 2k-1), (4, 2k-3), \ldots, (2k, 1)\}
\]
Hence the system has $k$ equivalence classes and $2^k$ blocks.
\end{example}

\end{document}